\theoremstyle{definition}
\def\XXint#1#2#3{{\setbox0=\hbox{$#1{#2#3}{\int}$ }
\vcenter{\hbox{$#2#3$ }}\kern-.6\wd0}}
\newtheorem{theorem}{Theorem}[section]
\newtheorem{definition}[theorem]{Definition}
\newtheorem{lemma}[theorem]{Lemma}
\newtheorem{proposition}[theorem]{Proposition}
\newtheorem{corollary}[theorem]{Corollary}
\newtheorem{example}[theorem]{Example}
\newtheorem{remark}[theorem]{Remark}
\makeatletter \renewenvironment{proof}[1][\proofname] {\par\pushQED{\qed}\normalfont\topsep6\p@\@plus6\p@\relax\trivlist\item[\hskip\labelsep\bfseries#1\@addpunct{.}]\ignorespaces}{\popQED\endtrivlist\@endpefalse} \makeatother
\newtheorem{rem}{Remark}
\newcommand{\EM}{{\mathbb E}}
\newcommand{\diam}{\mbox{\rm diam}}              
\newcommand{\dist}{\mbox{\rm dist}}              
\newcommand{\loc}{\mbox{\rm loc}}
\newcommand{\epi}{\mbox{\rm epi}} 
\newcommand{\convn}{\;\overrightarrow{_{_{n\rightarrow \infty}}}\;}
\newcommand{\convk}{\;\overrightarrow{_{_{k\rightarrow \infty}}}\;}
\mathchardef\mhyphen="2D
\newcommand{\Glim}{\Gamma \mhyphen \lim_{n\rightarrow \infty}}
\newcommand{\liminfn}{\liminf_{n\rightarrow \infty}}
\newcommand{\supp}{\mbox{\rm supp}}
\newcommand{\av}{\mbox{\rm av}}
\begin{document}
\doublespacing  



\newcommand{\thesisTitle}{Local Space and Time Scaling Exponents for Diffusion on Compact Metric Spaces}
\newcommand{\yourName}{John William Dever}
\newcommand{\yourSchool}{Mathematics}
\newcommand{\yourMonth}{August}
\newcommand{\yourYear}{2018}


\begin{titlepage}
\begin{center}

\begin{singlespacing}

\textbf{\MakeUppercase{\thesisTitle}}\\
\vspace{10\baselineskip}
A Dissertation\\
Presented to\\
The Academic Faculty\\
\vspace{3\baselineskip}
By\\
\vspace{3\baselineskip}
\yourName\\
\vspace{3\baselineskip}
In Partial Fulfillment\\
of the Requirements for the Degree\\
Doctor of Philosophy in the\\
School of \yourSchool\\
\vspace{3\baselineskip}
Georgia Institute of Technology\\
\vspace{\baselineskip}
\yourMonth{} \yourYear{}
\vfill
Copyright \copyright{} \yourName{} \yourYear{}

\end{singlespacing}

\end{center}
\end{titlepage}

\currentpdfbookmark{Title Page}{titlePage}  



\newcommand{\committeeMemberOne}{Dr. Jean Bellissard, Advisor}
\newcommand{\committeeMemberOneDepartment}{School of Mathematics}
\newcommand{\committeeMemberOneAffiliation}{Georgia Institute of Technology}

\newcommand{\committeeMemberTwo}{Dr. Evans Harrell, Advisor}
\newcommand{\committeeMemberTwoDepartment}{School of Mathematics}
\newcommand{\committeeMemberTwoAffiliation}{Georgia Institute of Technology}

\newcommand{\committeeMemberThree}{Dr. Michael Loss}
\newcommand{\committeeMemberThreeDepartment}{School of Mathematics}
\newcommand{\committeeMemberThreeAffiliation}{Georgia Institute of Technology}

\newcommand{\committeeMemberFour}{Dr. Molei Tao}
\newcommand{\committeeMemberFourDepartment}{School of Mathematics}
\newcommand{\committeeMemberFourAffiliation}{Georgia Institute of Technology}

\newcommand{\committeeMemberFive}{Dr. Yuri Bakhtin}
\newcommand{\committeeMemberFiveDepartment}{Courant Institute of Mathematical Sciences}
\newcommand{\committeeMemberFiveAffiliation}{New York University}

\newcommand{\committeeMemberSix}{Dr. Predrag Cvitanovi\'{c}}
\newcommand{\committeeMemberSixDepartment}{School of Physics}
\newcommand{\committeeMemberSixAffiliation}{Georgia Institute of Technology}

\newcommand{\committeeMemberSeven}{Dr. Alexander Teplyaev}
\newcommand{\committeeMemberSevenDepartment}{School of Mathematics}
\newcommand{\committeeMemberSevenAffiliation}{The University of Connecticut}

\newcommand{\approvalDay}{30}
\newcommand{\approvalMonth}{April}
\newcommand{\approvalYear}{2018}


\begin{titlepage}
\begin{singlespacing}
\begin{center}

\textbf{\MakeUppercase{\thesisTitle}}\\
\vspace{10\baselineskip}

\end{center}
\vfill

\ifdefined\committeeMemberFour

Approved by:
\vspace{2\baselineskip}		

\begin{minipage}[b]{0.4\textwidth}
	
	\committeeMemberOne\\
	\committeeMemberOneDepartment\\
	\textit{\committeeMemberOneAffiliation}\\
	
	\committeeMemberTwo\\
	\committeeMemberTwoDepartment\\
	\textit{\committeeMemberTwoAffiliation}\\
	
	\committeeMemberThree\\
	\committeeMemberThreeDepartment\\
	\textit{\committeeMemberThreeAffiliation}\\
	
	\committeeMemberFour\\
	\committeeMemberFourDepartment\\
	\textit{\committeeMemberFourAffiliation}\\
	
	\vspace{2\baselineskip}		
	
\end{minipage}
\hspace{0.1\textwidth}
\begin{minipage}[b]{0.4\textwidth}

	\committeeMemberFive\\
	\committeeMemberFiveDepartment\\
	\textit{\committeeMemberFiveAffiliation}\\
	
	\committeeMemberSix\\
	\committeeMemberSixDepartment\\
	\textit{\committeeMemberSixAffiliation}\\
	
	\committeeMemberSeven\\
	\committeeMemberSevenDepartment\\
	\textit{\committeeMemberSevenAffiliation}\\

	Date Approved: \approvalMonth{} \approvalDay, \approvalYear
	\vspace{1\baselineskip}		

\end{minipage}

\end{singlespacing}
\end{titlepage}


\pagenumbering{roman}
\addcontentsline{toc}{chapter}{Acknowledgments}
\setcounter{page}{3} 
\clearpage
\begin{centering}
\textbf{ACKNOWLEDGEMENTS}\\
\vspace{\baselineskip}
\end{centering}

I would like to thank my advisors Dr. Jean Bellissard and Dr. Evans Harrell for their patient help, guidance, and encouragement. 

I would also like to thank Dr. Gerard Buskes and Dr. Luca Bombelli for inspiring me to become interested in mathematics in the first place. 

I would like to thank my family for their love and support, especially my mother Sharon Dever, my father William Dever, and my stepmother Marci Dever. 

Finally, I would like to thank the people at St. John the Wonderworker Orthodox Church, especially Fr. Chris Williamson, Fr. Tom Alessandroni, Pamela Showalter, and Marcia Shafer, for their support and inviting community. 

\clearpage




\renewcommand{\cftchapdotsep}{\cftdotsep}  
\renewcommand{\cftchapfont}{\bfseries}  
\renewcommand{\cftchappagefont}{}  
\renewcommand{\cftchappresnum}{Chapter }
\renewcommand{\cftchapaftersnum}{:}
\renewcommand{\cftchapnumwidth}{6em}
\renewcommand{\cftchapafterpnum}{\vskip\baselineskip} 
\renewcommand{\cftsecafterpnum}{\vskip\baselineskip}  
\renewcommand{\cftsubsecafterpnum}{\vskip\baselineskip} 
\renewcommand{\cftsubsubsecafterpnum}{\vskip\baselineskip} 

\titleformat{\chapter}[display]
{\normalfont\bfseries\filcenter}{\chaptertitlename\ \thechapter}{0pt}{\MakeUppercase{#1}}

\renewcommand\contentsname{Table of Contents}

\begin{singlespace}
\tableofcontents
\end{singlespace}

\currentpdfbookmark{Table of Contents}{TOC}

\clearpage


\addcontentsline{toc}{chapter}{List of Figures}
\begin{singlespace}
\setlength\cftbeforefigskip{\baselineskip}  
\listoffigures
\end{singlespace}

\clearpage


\clearpage
\begin{centering}
\textbf{SUMMARY}\\
\vspace{\baselineskip}
We provide a new definition of a local walk dimension $\beta$ that depends only on the metric and not on the existence of a particular regular Dirichlet form or heat kernel asymptotics.  
Moreover, we study the local Hausdorff dimension $\alpha$ and prove that any variable Ahlfors regular measure of variable dimension $Q$ is strongly equivalent to the local Haudorff measure with $Q=\alpha,$ generalizing the constant dimensional case. Additionally, we provide constructions of several variable dimensional spaces, including a new example of a variable dimensional Sierpinski carpet. We show also that there exist natural examples where $\alpha$ and $\beta$ both vary continuously. We prove $\beta\geq 2$ provided the space is doubling.  
We use the local exponent $\beta$ in time-scale renormalization of discrete time random walks, that are approximate at a given scale in the sense that the expected jump size is the order of the space scale. In analogy with the variable Ahlfors regularity space scaling condition involving $\alpha$, we consider the condition that the expected time to leave a ball scales like the radius of the ball to the power $\beta$ of the center. 
Under this local time scaling condition along with the local space scaling condition of Ahlfors regularity,  we then study the $\Gamma$ and Mosco convergence of the resulting continuous time approximate walks as the space scale goes to zero. We prove that a non-trivial Dirichlet form with Dirichlet boundary conditions on a ball exists as a Mosco limit of approximate forms. One of the novel ideas in this construction is the use of exit time functions, analogous to the torsion functions of Riemannian geometry, as test functions to ensure the resulting domain contains enough functions. We also prove tightness of the associated continuous time processes.
\end{centering}

%
%


\clearpage
\pagenumbering{arabic}
\setcounter{page}{1} 

\titleformat{\chapter}[display]
{\normalfont\bfseries\filcenter}{\MakeUppercase\chaptertitlename\ \thechapter}{0pt}{\MakeUppercase{#1}}  
\titlespacing*{\chapter}
  {0pt}{0pt}{30pt}	
  
\titleformat{\section}{\normalfont\bfseries}{\thesection}{1em}{#1}

\titleformat{\subsection}{\normalfont}{\uline{\thesubsection}}{0em}{\uline{\hspace{1em}#1}}

\titleformat{\subsubsection}{\normalfont\itshape}{\thesubsection}{1em}{#1}

\chapter{Introduction}
Classical Brownian motion in Euclidean space largely characterized by its space and time scaling, that is the square of the distance traversed in a time period $\delta t$ by a particle undergoing Brownian motion is proportional to $\delta t.$  Since time interval $\delta t$ approaches zero quadratically faster than the root mean square of the distance the particle travels in $\delta t$, Brownian paths have a fractal character. This scaling guides any method of approximation of Brownian motion by random walks, in that if an approximating discrete random walk takes jumps with distance of order $\delta x$ then the time between jumps $\delta t$ should be of order $(\delta x)^2.$ 
\vspace*{1 in}
\begin{figure}[H]
\centering
\includegraphics[scale=.2]{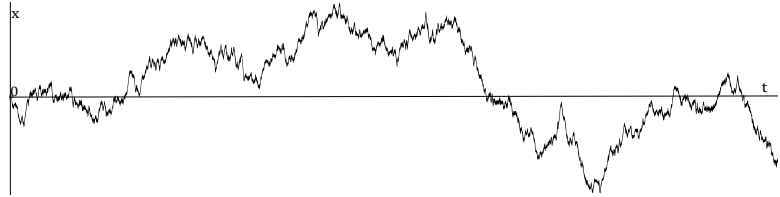}
\caption{A graph of a sample path segment of an approximation of 1 dimensional Brownian motion. The horizontal coordinate is time and the vertical coordinate space.}
\end{figure}
\vspace*{1 in}
In many cases of interest such as classical Brownian motion and various cases of diffusion on fractals, the process $(\bm{x}_t)_{t\geq 0}$ obeys a power law scaling $\mathbb{E}^x(|\bm{x}_t-\bm{x}_0|^2)\asymp t^{2/\beta}$ for some exponent $\beta.$ Cases in which $\beta\neq 2$ are called anomalous; superdiffusive if $\beta<2$ and subdiffusive if $\beta>2.$ 

We study the relationship between the space and time scaling of approximate random walks on a compact metric space. Our approach is novel in that we allow both the space scaling exponent $\alpha$ and the time scaling exponent $\beta$ to be variable in space. Moreover, we provide a new intrinsic definition of a variable walk dimension $\beta$ that depends only on the given metric measure space structure. We then consider functional and probabilistic convergence of approximating walks.

\section{Background}
It has become clear that the domain of the generator of a diffusion process on many non-homogeneous metric spaces such as fractals is often a type of Besov-Lipschitz function space characterized by an exponent $\beta.$ Unlike the case of Euclidean space, it often happens that Lipschitz functions are not in the domain of the generator, or Laplacian. Informally, to define the quadratic form of the Laplacian, instead of integrating the square of a gradient, one must integrate the square of a ``fractal gradient" of the form $``\frac{df}{dx^{\beta/2}}"$ for some exponent $\beta.$ From heat kernel asymptotics in many notable examples, it has been found that the heat kernel bounds are characterized by two scaling exponents, $\alpha$ and $\beta.$ The exponent $\alpha$ is the Hausdorff dimension, often appearing as a space scaling exponent in a suitable geometric measure. From the heat kernel bounds, one often finds that the expected square of the metric distance traveled by the process in a time $t$ scales like $t^{\frac{2}{\beta}}.$ Hence one gets the interpretation of $\beta$ as a kind of walk dimension. In many cases of interest this $\beta,$ in its guise as a walk dimension, is precisely the same $\beta$ that one should use in the ``fractal gradient" ``$\frac{df}{dx^{\beta/2}}$" in order to define the Laplacian. The problem, however, is how might one define this exponent $\beta$, preferably in a primarily geometric manner, without first knowing about functions in the domain of a possibly existing diffusion process. 

\subsection{Overview and Results}
In this paper we propose for a compact metric space a primarily geometric definition of a walk dimension exponent $\beta,$ defined purely in terms of the metric. Moreover, we show that this exponent may be localized, and indeed, there are natural examples where $\beta$ takes on a continuum of values. Our definition of the exponent $\beta$ appears to be new. As we shall see, it may be informally interpreted as a localized ``walk packing dimension" or as a local time scaling exponent.

We also consider another local exponent $\alpha,$ the local Hausdorff dimension. The exponent $\alpha$ is the local Hausdorff dimension, which has been considered previously in \cite{Loc} and \cite{dever} as well as implicitly in \cite{Sob}. It may be informally interpreted as a space scaling exponent, especially when considered in relation to a variable Ahlfors regular measure. A measure is variable Ahlfors regular when it satisfies the geometric property that the measure of a ball of a given radius scales like the radius to some power $Q$ depending on the center of the ball. We show that we must have $Q=\alpha.$ Moreover, we prove a kind of uniqueness result for variable Ahlfors regular measures, showing that any such measure is essentially equivalent, in a precise sense, to a local Hausdorff measure. Additionally, we provide several new examples of spaces in which $\alpha$ varies continuously, including a variable dimensional Sierpi\'nski carpet. See the figure below for an example.
\vspace*{1 in}

\begin{figure}[H]
\centering
\includegraphics[scale=.25]{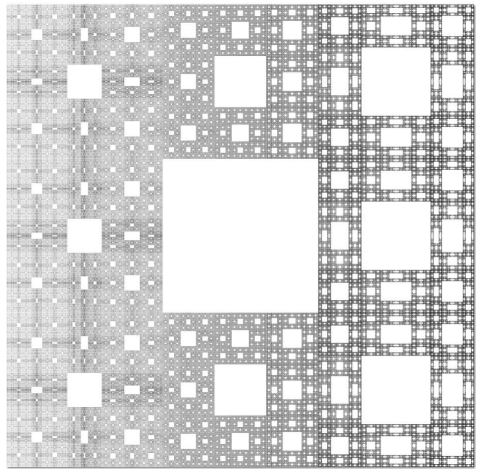}
\caption{A variable dimensional Sierpi\'nski carpet.}
\end{figure}

\newpage
An overview of the definition of $\beta$ is as follows. Given a compact metric space $X$ and a positive scale $\epsilon$, one may discretely approximate $X$ by a maximally separated set at that scale. Given such a discrete approximation, one may define in a geometric manner a graph whose vertices are the elements of the approximating set. This then induces a discrete time random walk on the graph defined by jumping uniformly, or with an appropriate ``partial symmetrization" of uniform probabilities, to an adjacent vertex. Given a ball $B$ of radius $R$ in $X$ and a vertex $y$, one may consider the expected time it takes for a 
walker on the approximating graph at scale $\epsilon$ starting at $y$ to 
leave the ball. One then defines $\beta(B)$ as a critical exponent where the behavior of the maximum exit time from the ball at scale $\epsilon$ multiplied by $\epsilon^\gamma$ changes, when $\gamma$ varies after letting $\epsilon\rightarrow 0$. One may show that if $B\subset B'$ where $B'$ is a ball, then $\beta(B)\leq \beta(B').$ We then define 
$\beta(x)$ as the infimum of the $\beta(B)$ where $B$ is a ball about $x.$ A precise definition is given in Section 4. We also argue there for our interpretation of $\beta$ as a kind of local walk packing dimension.

However, in this thesis, we mainly consider a $\beta$ defined with respect to partial symmetrizations of $\epsilon \mhyphen$jump random walks on $X$ with respect to a given Borel measure $\mu$ of full support. That is, at stage $\epsilon$, given $x\in X,$ we assume that the walker jumps from $x$ to $y,$ for $y$ a distance at most $\epsilon$ from $x,$ with $\mu$-density proportional to $1+\frac{v_\epsilon(x)}{v_\epsilon(y)},$ where $v_\epsilon$ of a point is the $\mu$ volume of the ball of radius $\epsilon$ about that point.  Similarly we may consider the expected number of steps, $E_{\epsilon, B}(x)$, needed for a walker at stage $\epsilon$ to leave a ball $B$ starting at $x.$ One may then again define $\beta(B)$ as a critical exponent where $\sup_{y\in B}E_{\epsilon,B}(y)\epsilon^\gamma$ changes behavior in $\gamma$ as $\epsilon\rightarrow 0$ and $\beta(x)$ as a limit of $\beta(B_r(x))$ as $r\rightarrow 0.$

\begin{figure}[H]
\centering
\includegraphics[scale=.2]{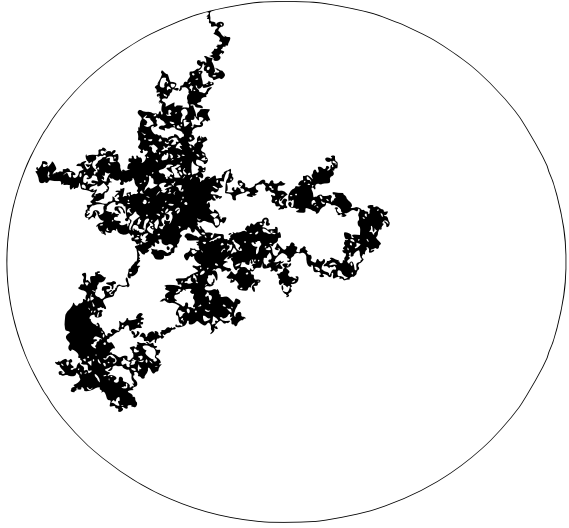}
\caption{A discrete random walk exiting a 2 dimensional ball. }
\end{figure}
\vspace*{1 in}

Once we have constructed $\beta,$ we use it to re-normalize the time scale of the discrete time walks by requiring that a walker at stage $\epsilon$ at site $x$ wait on average $\epsilon^{\beta(x)}$ before jumping, with probability proportional to $(1+\frac{d_\epsilon(x)}{d_\epsilon(y)}),$ where $d_\epsilon(\cdot)=\mu(B_\epsilon(\cdot))\epsilon^{\beta(\cdot)},$ to a neighboring site. This induces a continuous time walk $(\bm{x}(\epsilon)_t)_{t\geq 0}$. Given a ball $B$ we examine the expected exit time of the continuous time walk from $B.$ We will especially be interested in studying the case where the maximum expected exit time from a ball $B_r(x)$ scales like $r^{\beta(x)}$. Let us call such a condition the variable time regularity condition, or $E_\beta$ (see also \cite{grigortelcs} for constant $\beta$). Under $E_\beta$, the exponent $\beta$ has the interpretation of a local time scaling exponent.

Our primary motivation for the definition of $\beta$ is to attempt to define a suitable notion of a Laplace-Beltrami operator on $X$. As such, we consider (minus) the generator $\mathscr{L}_r$ of the continuous time walk $(\bm{x}(r)_t)_{t\geq 0}$ at stage $r.$ We then have that for $f\in L^2(X,\mu),$ 
\[\mathscr{L}_{r}f(x)=\frac{q_r(x)^{-1}}{r^{\beta(x)}\mu(B_r(x))}\int_{B_r(x)}\left(1+\frac{d_r(x)}{d_r(y)}\right) (f(y)-f(x))d\mu(y),\] where $q_r(x)$ is the normalization factor $q_r(x)=\frac{1}{\mu(B_r(x))}\int_{B_r(x)}\left(1+\frac{d_r(x)}{d_r(y)}\right)d\mu(y).$

It is known that on $\mathbb{R}^n$ (and indeed on any Riemannian $n-$manifold) that if $\mu$ is the standard Lebesgue measure (or volume measure on a Riemannian manifold) and if $f$ is any smooth function, then $\frac{1}{r^2\mu(B_r(x))}\int_{B_r(x)}(f(y)-f(x))d\mu(y)$ converges as $r$ approaches $0$ to a constant multiple of the Laplace-Beltrami operator evaluated at $f$ (See \cite{buragol}). Hence we should expect that for $x\in R^n,$ under its Euclidean metric with Lebesgue measure, that $\beta(x)=2.$ We show this to be the case. 

Moreover, by a variant of the well known Faber-Krahn inequality, if $B=B_R(0)$ is the ball of radius $R$ about the origin in $\mathbb{R}^n$ and $\lambda_1(B)$ is the first positive eigenvalue of (minus) the Dirichlet Laplacian on $B$, then 
\[\lambda_1(B)\geq CR^{-2},\] where $C$ is a constant independent of $R$. 

As a preliminary step towards form convergence estimates, we will establish the following as an analog of the Faber-Krahn inequality. Under the time scaling condition $E_\beta,$ if $\lambda_{1,r}(B)$ is the bottom of the spectrum of $\mathscr{L}_r$ with Dirichlet boundary conditions on a ball $B=B_R(x)$, then \[\lambda_{1,r}(B) \geq CR^{-\beta(x)},\] where $C$ is independent of $r, R,$ and $x$ provided $r$ and $R$ are small enough.

Additionally, under the assumption that the measure $\mu$ is doubling, we show that $\beta(x)\geq 2$ for all $x\in X.$

Next we construct a Dirichlet form limit of approximate random walks via $\Gamma$-convergence techniques in the spirit of the paper \cite{sturm}. 

We use as approximating forms \[\mathscr{E}_r(f)=\langle f, \mathscr{L}_r f\rangle_{\mu_r}=\int_X \frac{1}{r^{\beta(x)}\mu(B_r(x))}\int_{B_r(x))}(f(y)-f(x))^2d\mu(y)d\mu(x),\] where $d\mu_r(x)=q_r(x)d\mu(x)$ is the equilibrium measure for the continuous time walk $(X^{(r)}_t)_{t\geq 0}.$ Under variable Ahlfors regularity and time regularity, this measure is comparable to $\mu$ with constants independent of $r$ provided $r$ is small enough.

Our approach is notable in that our time scaling is allowed to be spatially dependent. Moreover, we use the exit time functions as test functions instead of the traditional approach of using harmonic functions. This approach has the advantage of not relying on an elliptic Harnack inequality. Lastly, we show via a probabilistic argument that under time regularity, a subsequence of approximating walks converges weakly to a limit that has continuous paths almost surely.

In \cite{peabel} and \cite{pearson} a Laplacian was constructed on a ultrametric Cantor set using a non-commutative geometric approach. Ideas for how to generalize this procedure to an arbitrary compact metric space were proposed in \cite{palmer}. However, it was recognized that the approach did not work for many many examples and that another exponent was needed for a Dirac operator. The original impetus for beginning this line of research was to find the appropriate exponent for the Dirac operator, with the ultimate goal of understand the conditions needed on a metric space to construct a strongly local, regular Dirichlet form. It is my hope that the results and ideas in this paper may be even a small step toward this goal.

\subsection{Related Work}

Similar exit time scaling exponents and power law scaling conditions on such exponents in various contexts have been considered by a myriad of other authors. 

There is a notion of a walk dimension found in the literature on fractal graphs. In \cite{telcs1}, a local exponent $d_W(x)$ is defined for a random walk on an infinite graph $G$ as follows. If $x$ is a vertex and $N>0$ an integer, let $E_N(x)$ be the expected number of steps needed for a random walk on $G$ starting at $x$ to reach of vertex of graph distance more than $N$ away from $x.$ In other words $E_N(x)$ is the expected exit time of the walk from the ``graph ball" of graph distance (or ``chemical distance") $N$ about $x.$ Then set $d_W(x):=\limsup_{N\rightarrow \infty} \frac{\log(E_N(x))}{\log(N)}.$ In the literature on random walks on infinite graphs, scaling 
conditions of the exit time $E_R(x)$ from a graph ball of graph radius 
$R$ about $x$ of the form $E_R(x)\asymp R^\beta$ have been considered  
\cite{telcs2}, \cite{grigortelcs}, \cite{barlowesc}. It is clear that if a graph satisfies 
such a condition then $\beta$ must be $d_W$ as defined above.  Barlow has shown in \cite{barlowesc} that if a graph satisfies such an exit time condition and an additional volume scaling condition analogous to Ahlfors regularity, then it 
must be the case that $2\leq d_W \leq 1+\alpha,$ where $\alpha$ is a dimension arising from a volume scaling condition of graph balls analogous to Ahlfors regularity. In \cite{telcseins} conditions are given for the so called Einstein relation to hold connecting resistance growth and volume growth on annuli to mean exit time growth on graph metric balls. The monograph \cite{art} presents an excellent exposition and overview of the general theory of random walks on infinite graphs. Moreover, Theorems 7.7 and 7.8 presented here follow in part ideas presented in the proofs for the graph case in Lemma 2.2 and 2.3 in \cite{art}.

On many fractals such as the Sierpi\'nski gasket and carpet it is known that the fractal may be represented by an infinite graph. On the Sierpi\'nski gasket one may compute explicitly the mean exit time from graph metric balls, and one finds that $\beta=\frac{\log5}{\log 2}.$ For the Sierpi\'nski carpet it is known that it must satisfy an exit time scaling condition for some power of $\beta.$ However the exact value of $\beta$ in this case is unknown.

In the setting of metric measure Dirichlet spaces, a walk dimension $\beta$ has also appeared in certain sub-diffusive heat kernel estimates on various fractals and infinite fractal graphs. For a wide class of fractals, including the Sierpi\'nski gasket and carpet, for which diffusion processes are known to exist, heat kernel estimates of the form 
\[p_t(x,y)\asymp c_1t^{-\frac{\alpha}{\beta}}\exp\left({-c_2\left(\frac{d(x,y)^{\beta}}{t}\right)^{\frac{1}{\beta-1}}}\right)\]
have been shown to hold \cite{barlowgasket}, \cite{barlowcarpet}, \cite{fitz}, \cite{grigor2012two}. It is known that such heat kernel estimates imply that the underlying mesaure is Ahlfors regular with Hausdorff dimension $\alpha$ \cite{grigorlau}, thus providing us with another motiviation for its study. The exponent $\beta$ appearing in these estimates is called the walk dimension \cite{barlowcarpet1}. Moreover, in the setting of metric measure Dirichlet spaces, consequences of a mean exit time scaling condition similar to $E_\beta$ is considered in \cite{grigor2012two}. Such conditions, together with volume doubling and an elliptic Harnack inequality, have been shown to imply the existence of a heat kernel along with certain heat kernel estimates \cite{grigor2012two}. Additionally, an adjusted Poincar\'{e} inequality involving the mean exit time has been proposed in \cite{bass2013} and \cite{barlowcarpet1}. Such Poincar\'{e} inequalities or resistance estimates together with an elliptic Harnack inequality often play an important role in the proofs of the existence of diffusion processes (See \cite{kusuokazhou}, \cite{barlowres}.) 

In the setting of Riemannian geometry, the function giving the mean exit time from a ball starting at a given point is known as the torsion function, and the integral of the mean exit time function is known as the torsional rigidity \cite{vand}. 

It is known that the domains of many diffusions on fractals are a type of Besov-Lipschitz function space \cite{jon}, \cite{kumagai}. For $\sigma>0, r>0,$ let \[\mathscr{E}_{r,\sigma}(f)=\int_X\frac{1}{r^\sigma\mu(B_r(x))}\int_{B_r(x)}(f(y)-f(x))^2d\mu(y)d\mu(x).\] Then let \[W^\sigma(X,\mu):=\{f\in L^2(X,\mu)\;|\;\sup_{r>0}\mathscr{E}_{r,\sigma}(f)<\infty\}.\] Then $W^\sigma(X,\mu)$ is a Banach space with norm $\|f\|^2_{W^\sigma(X,\mu)}=\|f\|^2_{L^2(X,\mu)}+ \sup_{r>0}\mathscr{E}_r(f).$ A potential theoretic definition of a walk dimension has been given as a critical exponent $\beta^*$, obtained by varying $\sigma$, where $W^\sigma(X,\mu)$ changes behavior to containing only constant functions \cite{grigorlau}, \cite{sturm}. Moreover, in \cite{grigorlau}, conditions were given for $\beta^*$ to equal $\beta,$ provided $\beta$ may be defined from heat kernel estimates. In \cite{gu} it was proven that $\beta^*$ is a Lipschitz invariant among metric measure spaces with an Ahlfors regular measure.

Recently, a proposal for a method to define $\beta^*$ without reference to diffusion was proposed by Grigor'yan \cite{grigorwalk}. The method, applied there to the Sierpi\'nski gasket, involves the procedure of forming a weighted hyperbolic graph induced from the graph approximations to the space and seeing the original space as a Gromov hyperbolic boundary (See, for instance,  \cite{lau} and \cite{Piaggio} for more on this method). A random walk on the hyperbolic graph induces a non-local form on the boundary whose domain is another type of Besov-Lipschitz space. Again, $\beta^*$ is seen as a critical exponent where the space changes to have sufficiently many non-constant functions. The hyperbolic graph approximation allows one to examine these functions in terms of the random walk on the hyperbolic graph.
 
The local Hausdorff dimension was defined in \cite{Loc}. A curve with continuously varying local dimension was considered, somewhat informally, in \cite{nottale}. A variable dimensional Koch curve and a local Hausdorff measure were defined in \cite{Sob}. Also, variable Ahlfors $Q(\cdot)\mhyphen$regular measures were considered in \cite{Sob}.  For $Q=d$ constant, it is known that $d=\dim_H(X)$ and that if $\mu$ is any other Ahlfors $d\mhyphen$regular measure, then $H^d\asymp \mu,$ where $H^d$ is the Hausdorff measure at dimension $d$ \cite{Hein}. 

In \cite{burago2013graph} and \cite{buragol} the authors used a discrete approximation with weighted $\epsilon\mhyphen$net graphs and a continuous approximation with a given Borel measure of full support, respectively, to create approximate Dirichlet forms on a compact metric space. In \cite{sturm} variational ($\Gamma\mhyphen$)convergence was used to study limits of certain approximating forms defined both through approximating graphs and by means of a given Borel measure of full support. Additionally, in \cite{sturm}, sufficient conditions were given for a ($\Gamma\mhyphen$)limit of such approximating forms to generate a non-trivial diffusion process.


\chapter{Preliminaries}
In this chapter we cover many of the mathematical preliminaries, aside from basic analysis, point-set topology, and operator theory, that will be used in the sequel. 

\subsubsection{Notation}
We adopt the following notational conventions. 

By $\mathbb{N}$ we mean the set of all non-negative integers, and by $\mathbb{Z}^+$ we mean $\mathbb{N}\setminus\{0\}$. 

We shall use positive and non-negative synonymously. In particular, non-negative operator or non-negative quadratic form means the same as positive operator or positive quadratic form, respectively. If we wish to exclude zero, we shall use the terminology ``strictly positive".   

Let $\overline{\mathbb{R}}=\mathbb{R}\cup \{-\infty,\infty\}$ denote the extended real numbers. We extend the partial order $\leq$ on $\mathbb{R}$ to $\overline{\mathbb{R}}$ by setting $-\infty\leq c\leq \infty$ for all $c\in \mathbb{R}.$ The topology on $\overline{\mathbb{R}}$ is the order topology generated by a basis of sets of the form $(c,\infty]$ and $[-\infty,c)$ for $c\in \mathbb{R}.$ Note that this topology makes $\overline{\mathbb{R}}$ homeomorphic to $[0,1];$ hence it is compact. 

Let $A\subset \overline{\mathbb{R}}.$ By $A_{+}$ we mean the set of all non-negative elements of $A.$ In particular $\mathbb{Z}_+=\mathbb{N}\supsetneq \mathbb{Z}^+.$ 
If $A$ is a non-empty and closed, by $\inf_A$ and $\sup_A$ we mean that the infimum or supremum, respectively, is restricted to the partially ordered set $A$ with ordering $\leq$ from $\overline{\mathbb{R}}.$ That is, in particular, $\inf_A\emptyset = \sup_{x\in A}x$ and $\sup_A\emptyset = \inf_{x\in A} x.$   By $\inf$ or $\sup$ without subscript we shall mean either $\inf_{\overline{\mathbb{R}}}$ and $\sup_{\overline{\mathbb{R}}},$ respectively, however in some cases $\inf_{\overline{\mathbb{R}}_+}$ and $\sup_{\overline{\mathbb{R}}_+},$ respectively, when appropriate and if there is little risk of confusion.

We adopt the conventions that the empty sum is $0$ and the empty product is $1.$ Moreover, if $\mathscr{A}$ is a collection of subsets of a set $S,$ then $\cup\mathscr{A}:=\cup_{A\in \mathscr{A}} A$ and $\cap\mathscr{A}=\cap_{A\in\mathscr{A}} A.$ In particular $\cup\emptyset =\emptyset$ and $\cap \emptyset=S.$ 

If $A$ is a countable set, by $\sharp A$ we mean the cardinality of $A;$ where if $A$ is a countably infinite set, we write $\sharp A = \infty.$ If $A$ is any set, by $\sharp A\leq \infty$ we mean $A$ is a countable set. Additionally, if $A$ is a set, we denote the power set of $A$ by $\mathscr{P}(A).$ 

If $A$ is a set, both $\chi_A$ and $1_A$ denote the characteristic function of $A.$ However, we will reserve the notation $1_A$ for when $A$ is a member of a $\sigma$-algebra of a probability space under discussion.

If $f$ and $g$ are extended real valued functions, we write $f\asymp g$ if there exists a $C>0$ such that $\frac{1}{C}f\leq g\leq Cf.$ 

For $\mathscr{H}$ a Hilbert space, we write $\langle f,g \rangle,$ or $\langle f, g \rangle_{\mathscr{H}}$ if there is a possible ambiguity, for the inner product of $f,g\in\mathscr{H}$. We assume all vector spaces to be over the real numbers unless otherwise specified. By $\mathscr{B}(\mathscr{H})$ we mean the space of bounded linear operators on $\mathscr{H}.$

If $(X,d)$ is a metric space, topological notions such as closure of subsets of $X$ shall be considered, unless otherwise specified, with respect to the metric topology on $X.$ Let $(X,d)$ a metric space. If $A\subset X,$ by the diameter of $A,$ written $\diam(A)$ or $|A|,$ we mean $\sup_{\overline{\mathbb{R}}_+}\{d(x,y)\;|\;x,y\in A\}.$ If $A, B\subset X,$ by the distance from $A$ to $B$ we mean $\dist(A,B):=\inf_{\overline{\mathbb{R}}_+}\{d(x,y)\;|\;x\in A,\; y\in B\}.$ For $x\in X,\; A\subset X$ we write $\dist(x, A)$ for $\dist(\{x\},A).$ If $r>0$, $x\in X,$ by $B_r(x)$ we mean the open ball about (or centered at) $x,$ $B_r(x)=\{y\in X\;|\;d(x,y)<r\}$. For $r>0, x\in X,$ by $B_r[x]$ we mean the closed ball about $x,$ $B_r[x]=\{y\in X\;|\;d(x,y)\leq r\}.$ Note in general that the closure of $B_r(x),\; \overline{B_r(x)},$ may be properly contained in $B_r[x].$ If $B$ is an open ball about $x,$ by the radius of $B$ we mean the number $r:=\inf\{r'>0 \;|\; B_{r'}(x)= B\}.$ However, if $X$ is connected and $0<r<\diam(x),$ then the radius of $B_r(x)$ is $r$ for any $x.$

For $X$ a topological space, we let $C(X)$ stand for the real valued continuous functions on $X.$

\section{Basic Geometric Measure Theory}
In this section, we recall various basic concepts of Geometric Measure Theory that will be needed in the sequel, including the definition of Hausdorff dimension and the construction of the $s$-dimensional Hausdorff measure. See also \cite{Dever1}. Throughout let $(X,d)$ be a metric space.
\subsection{Metric Outer Measures and the Carath\'{e}odory Construction}

\begin{definition}
An outer measure on $X$ is a function $\mu^*:\mathscr{P}(X)\rightarrow [0,\infty]$ with $\mu^*(\emptyset)=0$ such that 
\begin{equation*} \begin{split} &\mbox{(\textbf{a})\;if\;} A,B\subset X \mbox{\;with\;} A\subset B, \mbox{\;then\;} \mu^*(A)\leq \mu^*(B);\\ &\mbox{(\textbf{b})\;and if\;} 
\mathscr{C} \subset \mathscr{P}(X) \mbox{\;is countable, then\;} 
\mu^*(\cup \mathscr{C})\leq \sum_{A\in\mathscr{C}} \mu^*(A).\end{split}\end{equation*}\end{definition} Condition (a) is called monotonicity, and condition (b) is called countable subadditivity. Recall that a $\sigma$-algebra of subsets of $X$ is a subset $\mathscr{M}\subset \mathscr{P}(X)$ that is closed under countable unions and complementation. Note that since the empty set is countable, any algebra or $\sigma$-algebra contains $\cup\emptyset =\emptyset$. If $\mathscr{C}\subset \mathscr{P}(X),$ then $\sigma(\mathscr{C})$, called the $\sigma$-algebra generated by $\mathscr{C}$, is the smallest $\sigma$-algebra of subsets of $X$ containing $\mathscr{C}.$ Formally, $\sigma(A)$ is the intersection of all $\sigma$-algebras of subsets of $X$ containing $\mathscr{C}.$ 

A subset of $\mathscr{P}(X)$ is called disjoint if the intersection of any two (distinct) elements is empty. 
Recall further that a (positive) measure $\mu$ on $\mathscr{M}$ is a function $\mu:\mathscr{M}\rightarrow [0,\infty]$ such that if $\mathscr{C}\subset \mathscr{P}(X)$ is countable and disjoint, then $\mu(\cup\mathscr{C})= \sum_{A\in\mathscr{C}}\mu(A).$

The Borel sigma algebra is the smallest $\sigma\mhyphen$algebra containing the open sets of $X.$ Elements of the Borel $\sigma\mhyphen$algebra are called Borel sets. A Borel measure is a measure defined on the $\sigma\mhyphen$algebra of Borel sets. By a measure on $X,$ we mean, unless otherwise specified, a measure on the Borel $\sigma$-algebra of $X.$ A measure $\mu$ on a $\sigma\mhyphen$algebra $\mathscr{M}$ is called complete if for all $N\in \mathscr{M}$ with $\mu(N)=0,$ $\mathscr{P}(N)\subset \mathscr{M}$.

The following Carath\'{e}odory construction of measures from outer measures is essential to the subject. The second half of the following proposition is known as the Carath\'{e}odory Extension Theorem. The proof, following \cite{Folland}, may be found in Appendix A.
\begin{proposition} \label{Car} If $\mu^*$ is an outer measure on $X,$ then if \[\mathscr{M^*}:=\{A\subset X\;|\;\forall E\subset X\;[\;\mu^*(E)=\mu^*(E\cap A)+\mu^*(E\cap A^c)\;]\;\},\] $\mathscr{M^*}$ is a $\sigma\mhyphen$algebra and $\mu^*|_{\mathscr{M^*}}$ is a complete measure.  
\end{proposition}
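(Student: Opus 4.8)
The plan is to follow the standard Carath\'eodory argument, which splits naturally into showing first that $\mathscr{M}^*$ is a $\sigma$-algebra and then that $\mu^*$ restricted to it is a complete measure. The defining condition for $A \in \mathscr{M}^*$ is that $A$ \emph{splits} every test set $E$ additively; since subadditivity gives $\mu^*(E) \leq \mu^*(E\cap A) + \mu^*(E\cap A^c)$ for free, membership is equivalent to the reverse inequality $\mu^*(E) \geq \mu^*(E\cap A) + \mu^*(E\cap A^c)$, and this is trivially satisfied whenever $\mu^*(E) = \infty$. I would record this reduction at the outset, as it shortens every subsequent verification.

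For the $\sigma$-algebra claim I would proceed in the usual order. First, $\emptyset \in \mathscr{M}^*$ and the definition is manifestly symmetric in $A$ and $A^c$, so $\mathscr{M}^*$ is closed under complementation. Next, closure under finite unions: given $A, B \in \mathscr{M}^*$ and any $E$, apply the splitting property of $A$ to $E$, then the splitting property of $B$ to each piece, and reassemble using subadditivity on the decomposition of $E \cap (A\cup B)$ into three pieces to get $\mu^*(E) \geq \mu^*(E \cap (A\cup B)) + \mu^*(E \cap (A\cup B)^c)$; hence $A \cup B \in \mathscr{M}^*$, and so $\mathscr{M}^*$ is an algebra. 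The key intermediate step is \emph{finite additivity on disjoint sets relative to a test set}: if $A_1,\dots,A_n \in \mathscr{M}^*$ are disjoint, then $\mu^*\big(E \cap \bigcup_{k=1}^n A_k\big) = \sum_{k=1}^n \mu^*(E \cap A_k)$ for every $E$, proved by induction, peeling off $A_n$ using its splitting property. Finally, for a countable disjoint family $\{A_k\}$ with union $A$, write $B_n = \bigcup_{k=1}^n A_k \in \mathscr{M}^*$; apply the splitting property of $B_n$ to $E$, use monotonicity ($(B_n)^c \supset A^c$) and the relative finite additivity just established, then let $n \to \infty$ and invoke countable subadditivity on the tail to conclude $\mu^*(E) \geq \sum_k \mu^*(E \cap A_k) + \mu^*(E \cap A^c) \geq \mu^*(E \cap A) + \mu^*(E \cap A^c)$. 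A general countable union is reduced to the disjoint case by the standard disjointification $A_k' = A_k \setminus \bigcup_{j<k} A_j$, which stays in the algebra $\mathscr{M}^*$.

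That the restriction $\mu^*|_{\mathscr{M}^*}$ is a measure then falls out almost for free: taking $E = A = \bigcup_k A_k$ in the displayed inequality of the previous step gives $\mu^*(A) \geq \sum_k \mu^*(A_k)$, and the reverse is countable subadditivity, so $\mu^*$ is countably additive on $\mathscr{M}^*$; and $\mu^*(\emptyset) = 0$ by hypothesis. Completeness is immediate: if $\mu^*(N) = 0$ and $Z \subset N$, then for any $E$, monotonicity gives $\mu^*(E \cap Z) \leq \mu^*(N) = 0$ and $\mu^*(E \cap Z^c) \leq \mu^*(E)$, so $\mu^*(E) \geq \mu^*(E\cap Z) + \mu^*(E \cap Z^c)$, whence $Z \in \mathscr{M}^*$.

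I do not anticipate a genuine obstacle here — this is the textbook Carath\'eodory construction and the excerpt explicitly cites \cite{Folland} with the proof deferred to Appendix A. The only point requiring care is the bookkeeping in the countable case: one must apply the splitting property at the \emph{finite} level $B_n$ (elements of the algebra) and only pass to the limit afterward, rather than trying to split directly by the countable union, and one must make sure the monotonicity step comparing $(B_n)^c$ with $A^c$ is invoked in the right direction. Everything else is routine manipulation of monotonicity and countable subadditivity.
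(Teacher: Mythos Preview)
Your proposal is correct and follows essentially the same route as the paper's proof in Appendix~A: both reduce membership to the one-sided inequality via subadditivity, verify the algebra structure, prove the relative finite additivity $\mu^*(E\cap\bigcup_{i=1}^n A_i)=\sum_{i=1}^n\mu^*(E\cap A_i)$ by induction, pass to countable disjoint unions via the finite-stage splitting and monotonicity, disjointify for general unions, and finish with the same monotonicity argument for completeness. This is the standard Folland argument the paper cites, so there is nothing to add.
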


Sets $A,B\subset X$ are called positively separated if $\dist(A,B)>0.$ \begin{definition} An outer measure $\mu^*$ on $X$ is called a metric outer measure if for all $A,B\subset X$ with $A,B$ positively separated, $\mu^*(A\cup B)=\mu^*(A)+\mu^*(B).$ \end{definition} Metric outer measures provide a convenient way to construct Borel measures, as may be seen by the following well known proposition. The proof, found in Appendix A, follows closely the one found in \cite{Falc1}.

\begin{proposition} \label{prop2.2}
If $\mu^*$ is a metric outer measure on a metric space $X,$ then $\mathscr{M}^*$ contains the $\sigma\mhyphen$algebra of Borel sets. In particular, $\mu^*$ may be restricted to a Borel measure. 
\end{proposition}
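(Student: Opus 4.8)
The plan is to show that a metric outer measure $\mu^*$ is Borel regular in the Carath\'eodory sense, i.e.\ that every Borel set is $\mu^*$-measurable, by reducing everything to a single claim: every \emph{closed} set $F \subset X$ lies in $\mathscr{M}^*$. Indeed, once closed sets are shown to be $\mu^*$-measurable, Proposition~\ref{Car} tells us that $\mathscr{M}^*$ is a $\sigma$-algebra, and since it contains all closed sets it contains their complements (the open sets) and hence the $\sigma$-algebra they generate, namely the Borel $\sigma$-algebra. The final sentence of the statement is then immediate: restrict the complete measure $\mu^*|_{\mathscr{M}^*}$ (from Proposition~\ref{Car}) to the Borel sets.

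So the whole proof comes down to the measurability criterion for a closed set $F$. Fix an arbitrary test set $E \subset X$; by subadditivity we always have $\mu^*(E) \le \mu^*(E \cap F) + \mu^*(E \cap F^c)$, so only the reverse inequality needs proof, and we may assume $\mu^*(E) < \infty$. The idea is to approximate $E \cap F^c$ from inside by the sets
\[
A_n := \{\, x \in E \cap F^c : \dist(x, F) \ge \tfrac{1}{n} \,\}, \qquad n \in \ZZ^+,
\]
which increase to $E \cap F^c$ because $F$ is closed (every point of $F^c$ has positive distance to $F$). Now $A_n$ and $E \cap F$ are positively separated — their distance is at least $1/n$ — so the metric outer measure property gives
\[
\mu^*(E \cap F) + \mu^*(A_n) = \mu^*\big( (E\cap F) \cup A_n \big) \le \mu^*(E).
\]
Hence it suffices to prove $\lim_{n\to\infty} \mu^*(A_n) = \mu^*(E \cap F^c)$.

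The main obstacle — and the one genuinely delicate point — is exactly this limiting step, since $\mu^*$ is only countably subadditive, not countably additive, on arbitrary sets, so one cannot simply invoke continuity from below. The standard device (this is the heart of the classical Carath\'eodory argument, as in \cite{Falc1}) is to consider the "annular" differences
\[
D_k := A_{k+1} \setminus A_k = \{\, x \in E \cap F^c : \tfrac{1}{k+1} \le \dist(x,F) < \tfrac1k \,\},
\]
and observe that $D_k$ and $D_j$ are positively separated whenever $|k-j| \ge 2$. Splitting into even and odd indices and using the metric property repeatedly gives $\sum_{k=1}^{N} \mu^*(D_{2k}) \le \mu^*(A_{2N+1}) \le \mu^*(E) < \infty$ and likewise for odd indices, so the series $\sum_k \mu^*(D_k)$ converges. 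Then, since $E \cap F^c = A_n \cup \bigcup_{k \ge n} D_k$, countable subadditivity yields
\[
\mu^*(E \cap F^c) \le \mu^*(A_n) + \sum_{k \ge n} \mu^*(D_k) \le \mu^*(A_n) + \sum_{k\ge n}\mu^*(D_k),
\]
and the tail sum $\to 0$ as $n \to \infty$; combined with the trivial $\mu^*(A_n) \le \mu^*(E \cap F^c)$ this gives $\mu^*(A_n) \to \mu^*(E \cap F^c)$, completing the argument. I would present the convergence of $\sum_k \mu^*(D_k)$ carefully (it is the only place the hypothesis $\mu^*(E) < \infty$ and the positive-separation structure really get used) and treat the rest as routine bookkeeping.
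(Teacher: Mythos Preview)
Your proof is correct and follows essentially the same route as the paper's. The paper works with open sets $U$ and the approximating sets $U_n=\{y\in U:\dist(y,U^c)\ge 1/n\}$, whereas you work with closed sets $F$ and $A_n=\{x\in E\cap F^c:\dist(x,F)\ge 1/n\}$; these are dual formulations of the same argument, and both hinge on the same annular-difference trick (your $D_k$'s) to pass to the limit.
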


Let $\mathscr{B}$ be the collection of all open balls in $X,$ where we consider $\emptyset$ and $X$ open balls, $\emptyset = B_0(x)$ and $X=B_\infty(x)$ for 
any $x\in X.$ By a covering class we mean a collection $\mathscr{A}\subset \mathscr{P}(X)$ with $\emptyset, X \in \mathscr{A}.$ We will primarily work with the covering classes $\mathscr{C}:=\mathscr{P}(X)$ and $\mathscr{B}.$ For $\mathscr{A}$ a covering class and $A\subset X$, let \[\mathscr{A}_\delta(A) := \{\mathscr{U} \in \mathscr{P}(\mathscr{A}) \;|\;\mathscr{U} \mbox{\;at 
most countable},\; |U|\leq \delta \mbox{\;for}\; U\in \mathscr{U},\; A\subset \cup 
\mathscr{U}\}.\] 

For $\tau:\mathscr{C}\rightarrow [0,\infty]$ with $\tau(\emptyset)=0,$
let $\mu^* _{\tau, \delta} (A):=\inf \{ \sum_{U\in \mathscr{U}}\tau(U)\;|\; 
\mathscr{U}\in \mathscr{C}_\delta(A)\}$ and $\mu^* _{\tau}(A)=\sup_{\delta>0} \mu^* _{\tau, \delta} 
(A).$ 
The following Proposition, see also \cite{Fed} or \cite{Dever1}, provides a convenient method to construct metric outer measures, and hence Borel measures, from suitable functions defined on covering classes. While the proof is straightforward, we include it in Appendix A for completeness.
\begin{proposition}  \label{outer} $\mu^* _{\tau}$ is a metric outer measure. \end{proposition}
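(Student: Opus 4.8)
$\mu^*_\tau$ is a metric outer measure.

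The plan is to verify the three outer-measure axioms first, then the extra metric-separation property. For the axioms, note $\mu^*_{\tau,\delta}(\emptyset)=0$ because the empty cover $\mathscr U=\emptyset\in\mathscr C_\delta(\emptyset)$ (recall the empty sum is $0$), so $\mu^*_\tau(\emptyset)=0$. Monotonicity is immediate: if $A\subset B$ then every $\mathscr U\in\mathscr C_\delta(B)$ also covers $A$, so $\mathscr C_\delta(B)\subset\mathscr C_\delta(A)$ and the infimum defining $\mu^*_{\tau,\delta}(A)$ is over a larger family, giving $\mu^*_{\tau,\delta}(A)\le\mu^*_{\tau,\delta}(B)$; take $\sup_\delta$. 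For countable subadditivity, fix a countable $\mathscr D=\{A_n\}\subset\mathscr P(X)$ and $\delta>0$. Given $\eta>0$, for each $n$ pick $\mathscr U_n\in\mathscr C_\delta(A_n)$ with $\sum_{U\in\mathscr U_n}\tau(U)\le\mu^*_{\tau,\delta}(A_n)+\eta 2^{-n}$ (if some $\mu^*_{\tau,\delta}(A_n)=\infty$ there is nothing to prove). Then $\mathscr U:=\bigcup_n\mathscr U_n$ is a countable subfamily of $\mathscr C$ with all elements of diameter $\le\delta$ covering $\bigcup_n A_n$, so it lies in $\mathscr C_\delta(\bigcup_n A_n)$, and $\mu^*_{\tau,\delta}(\bigcup_n A_n)\le\sum_{U\in\mathscr U}\tau(U)\le\sum_n\mu^*_{\tau,\delta}(A_n)+\eta\le\sum_n\mu^*_\tau(A_n)+\eta$. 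Let $\eta\to0$, then $\sup_{\delta>0}$ on the left to conclude $\mu^*_\tau(\bigcup_n A_n)\le\sum_n\mu^*_\tau(A_n)$.

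It remains to show $\mu^*_\tau$ is a \emph{metric} outer measure: if $A,B\subset X$ with $\rho:=\dist(A,B)>0$, then $\mu^*_\tau(A\cup B)=\mu^*_\tau(A)+\mu^*_\tau(B)$. The inequality $\le$ is a special case of subadditivity, so only $\ge$ needs work. Fix $\delta<\rho$ and let $\mathscr U\in\mathscr C_\delta(A\cup B)$ be any admissible cover with $\sum_{U\in\mathscr U}\tau(U)<\infty$. Split $\mathscr U$ into $\mathscr U_A:=\{U\in\mathscr U: U\cap A\ne\emptyset\}$ and $\mathscr U_B:=\{U\in\mathscr U: U\cap B\ne\emptyset\}$. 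The key point: since $|U|\le\delta<\rho=\dist(A,B)$, no single $U$ can meet both $A$ and $B$, so $\mathscr U_A$ and $\mathscr U_B$ are disjoint subfamilies; moreover $\mathscr U_A\in\mathscr C_\delta(A)$ and $\mathscr U_B\in\mathscr C_\delta(B)$ (adjoining $\emptyset$ if needed to satisfy the covering-class convention). Hence
\[
\mu^*_{\tau,\delta}(A)+\mu^*_{\tau,\delta}(B)\le\sum_{U\in\mathscr U_A}\tau(U)+\sum_{U\in\mathscr U_B}\tau(U)\le\sum_{U\in\mathscr U}\tau(U),
\]
using $\tau\ge0$ for the last step. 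Taking the infimum over $\mathscr U\in\mathscr C_\delta(A\cup B)$ gives $\mu^*_{\tau,\delta}(A)+\mu^*_{\tau,\delta}(B)\le\mu^*_{\tau,\delta}(A\cup B)$ for all $\delta<\rho$. Now let $\delta\to0$: the left side increases to $\mu^*_\tau(A)+\mu^*_\tau(B)$ (monotone limits), while the right side is $\le\mu^*_\tau(A\cup B)$ throughout, so the desired inequality $\mu^*_\tau(A)+\mu^*_\tau(B)\le\mu^*_\tau(A\cup B)$ follows.

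I do not expect any serious obstacle here; the only point requiring care is the bookkeeping in the separation argument — specifically the observation that $\delta<\dist(A,B)$ forces each cover element into at most one of $\mathscr U_A,\mathscr U_B$, and the need to respect the covering-class convention $\emptyset,X\in\mathscr A$ when asserting $\mathscr U_A\in\mathscr C_\delta(A)$. A secondary subtlety is the order of limits: one must establish the additivity inequality at each fixed $\delle<\rho$ before passing to the supremum over $\delta$, since $\mathscr C_\delta(A\cup B)$ shrinks as $\delta$ decreases. Everything else is the standard Carathéodory/Hausdorff-measure template, and the arguments are purely formal manipulations of infima and countable sums.
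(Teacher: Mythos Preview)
Your proof is correct and follows essentially the same approach as the paper's: verify the outer-measure axioms via the empty cover, cover inclusion, and the $\eta 2^{-n}$ slack trick, then for the metric property take $\delta$ below the separation distance and split any admissible cover of $A\cup B$ into disjoint subfamilies meeting $A$ and $B$ respectively. The paper formalizes the splitting step by first passing to covers $\mathscr{C}_\delta(E)'$ whose elements all meet $E$, but this is only a cosmetic difference; note also the typo ``$\delle$'' for ``$\delta$'' in your final paragraph.
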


We may restrict to any covering class $\mathscr{A}$ containing $\emptyset$ and $X$ by setting $\tau(U)=\infty$ for $U\in \mathscr{C}\setminus \mathscr{A}.$

It then follows by Proposition \ref{prop2.2} that the $\mu^* _{\tau}$ measurable sets contain the Borel $\sigma\mhyphen$algebra. Let $\mu_\tau$ be the restriction of $\mu^* _{\tau}$ to the Borel sigma algebra. Then by Proposition \ref{Car}, $\mu_\tau$ is a Borel measure on $X.$

\subsection{Hausdorff Measure and Dimension}
\begin{definition} For $s\geq 0$ let $H^s$ be the measure obtained from the choice $\tau(U)=|U|^s$ for $U\neq \emptyset$ and $\tau(\emptyset)=0.$ If $U$ is non-empty and $|U|=0$ we adopt the convention $|U|^0=1$. $H^s$ is called the $s$-dimensional Hausdorff measure. Let $\lambda^s$ be the measure obtained by restricting $\tau$ to the smaller covering class $\mathscr{B}$ of open balls. Concretely, let $\lambda^s$ be the measure obtained by setting $\tau(B)=|B|^s$ for $B$ a non-empty open ball, $\tau(\emptyset)=0,$ and $\tau(U)=\infty$ otherwise. We call $\lambda^s$ the $s$-dimensional open spherical measure \cite{Fed}, \cite{Dever1}. \end{definition}

\begin{definition} Following \cite{Dever1}, we call Borel measures $\mu,\nu$ on $X$ \textit{strongly equivalent}, written $\mu \asymp \nu$, if there exists a constant $C>0$ such that for every Borel set $E$, \[\frac{1}{C}\nu(E)\leq \mu(E)\leq C\nu(E).\] \end{definition}

The proof of the following lemma may also be found in \cite{Dever1}.
\begin{lemma} For any $s\geq 0,$ $\lambda^s \asymp H^s.$ 
\end{lemma}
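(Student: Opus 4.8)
The plan is to show the two measures $\lambda^s$ and $H^s$ dominate each other up to a dimensional constant by comparing their defining premeasures on the two covering classes $\mathscr{B} \subset \mathscr{C}$. One inequality is immediate: since $\mathscr{B} \subset \mathscr{C}$, every ball-cover $\mathscr{U} \in \mathscr{B}_\delta(A)$ is also an admissible cover in $\mathscr{C}_\delta(A)$, and on balls the two choices of $\tau$ agree ($\tau(B) = |B|^s$ in both cases). Hence the infimum defining $\mu^*_{\tau,\delta}$ over the larger class $\mathscr{C}_\delta(A)$ is at most the infimum over $\mathscr{B}_\delta(A)$, so $H^s(A) \leq \lambda^s(A)$ for every $A \subset X$, with constant $1$.

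For the reverse inequality, I would start from an arbitrary countable cover $\mathscr{U} \in \mathscr{C}_\delta(A)$ by sets of diameter at most $\delta$ and replace each nonempty $U \in \mathscr{U}$ by an open ball $B_U$ containing it: fixing any point $x_U \in U$, the ball $B_U := B_{|U| + \eta}(x_U)$ (for a small $\eta > 0$ to be sent to zero, or a geometric series $\eta_U$ summing to a controlled total) contains $U$ and has diameter at most $2|U| + 2\eta$. Then $\{B_U\}$ is a countable cover of $A$ by open balls with $|B_U| \leq 2|U| + 2\eta \leq 2\delta + 2\eta$, so it lies in $\mathscr{B}_{2\delta + 2\eta}(A)$, and $\sum_U \tau(B_U) = \sum_U |B_U|^s \leq \sum_U (2|U| + 2\eta)^s$. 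Taking $\eta \to 0$ gives $\sum_U |B_U|^s \to 2^s \sum_U |U|^s$ (handling the $|U| = 0$ case via the convention $|U|^0 = 1$ and noting a point is covered by an arbitrarily small ball). Taking the infimum over $\mathscr{U}$ yields $\mu^*_{\tau,\,2\delta}(\text{ball class})(A) = \lambda^*_{2\delta}(A) \leq 2^s \mu^*_{\tau,\delta}(\text{power-set class})(A) = 2^s H^s_\delta(A)$, and then letting $\delta \to 0$ gives $\lambda^s(A) \leq 2^s H^s(A)$.

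Combining the two directions gives $H^s \leq \lambda^s \leq 2^s H^s$, so $\lambda^s \asymp H^s$ with $C = 2^s$ in the sense of the strong equivalence definition (the inequality holds for all sets, in particular all Borel sets). I expect the only mild subtlety — the ``main obstacle,'' such as it is — to be the bookkeeping around diameter-zero sets and the $\delta \to 0$ order of limits: one must be careful that enlarging each $U$ to a ball of diameter $\le 2|U| + \varepsilon$ keeps the cover admissible at a scale that still tends to $0$, which is why one works at scale $2\delta$ and sends $\delta \to 0$ afterward, and that the additive slack $\eta_U$ is organized (e.g. $\eta_U = \varepsilon 2^{-k}$ over an enumeration) so that $\sum_U(2|U| + \eta_U)^s \leq 2^s\sum_U|U|^s + o(1)$ uniformly. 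None of this requires anything beyond the elementary inequality $(a+b)^s \le$ (a constant times) $(a^s + b^s)$ for the range of $s$ in play and the subadditivity already recorded in Proposition \ref{outer}.
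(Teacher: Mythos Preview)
Your proposal is correct and follows essentially the same strategy as the paper: the easy inequality $H^s\le\lambda^s$ comes from $\mathscr{B}\subset\mathscr{C}$, and for the reverse you enlarge each set $U$ in an arbitrary cover to a containing open ball of comparable diameter. The only cosmetic differences are that the paper treats $s=0$ as a separate case and, for $s>0$, uses the cruder radius $r_U=2|U|$ (avoiding any $\eta\to0$ bookkeeping) to obtain the constant $4^s$ instead of your $2^s$; your sharper constant requires the $\eta_U$ organization you describe (via continuity of $t\mapsto t^s$ rather than the inequality $(a+b)^s\le C(a^s+b^s)$, which would degrade the constant for $s>1$), but either route suffices for strong equivalence.
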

\begin{proof} It is clear that $H^s \leq \lambda^s.$ Conversely, let $A$ be a Borel set. We may assume $A\neq \emptyset.$ Also, we may assume $H^s(A)<\infty,$ since 
otherwise the reverse inequality is clear. If $s=0$ then $H^0$ is the counting measure. So let $H^0(A)=n<\infty.$ Let $x_1,...,x_n$ be an enumeration of the elements of $A.$ Let $r$ be the minimum distance between distinct elements of $A.$ For $0<\delta<r$ let $B_i = 
B_{\frac{\delta}{2}}(x_i)$ for each $i.$ Then $(B_i)_{i=1}^n \in \mathscr{B}_\delta(A)$ 
and $\lambda^{0,*}_\delta(A)\leq n=H^0(A)$. Hence $\lambda^0(A) \leq H^0(A).$ So we may 
assume $s>0.$  Let $\epsilon>0.$ Let $\delta>0$ and let $\mathscr{U}\in 
\mathscr{C}_\delta(A)$ with $\sum_{U\in \mathscr{U}} |U|^s <\infty.$ We may assume $U\neq 
\emptyset$ for each $U\in \mathscr{U}.$ Choose $x_U\in U$ for each $U\in \mathscr{U}.$ 
Let $\mathscr{U}_0=\{U\in \mathscr{U}\;|\;|U|=0\}, \mathscr{U}_1:=\{U\in \mathscr{U}\;|
\;|U|>0\}.$ Then for each $U\in \mathscr{U}_0$ choose $0<r_U<\delta$ such that 
$\sum_{U\in \mathscr{U}_0} r_U^s<\frac{\epsilon}{2^s}.$ For $U\in \mathscr{U}_1$ let $r_U = 2|U|.$ Then let $B_U:=B_{r_U}(x_U)$ for $U\in \mathscr{U}.$ It follows that 
$(B_U)_{U\in \mathscr{U}}\in \mathscr{B}_{4\delta}(A)$ and $\lambda^{s,*}_{4\delta}
(A)\leq \sum_{U\in \mathscr{U}}|B_U|^s \leq 2^s(\sum_{U\in \mathscr{U}_0} r_U^s + 
\sum_{U\in \mathscr{U}_1} r_U^s) \leq \epsilon + 4^s\sum_{U\in \mathscr{U}} |U|^s.$ Hence 
$\lambda^s(A)\leq 4^sH^s(A).$ \end{proof}

Let $X$ be a metric space and $A\subset X$.  Let $0\leq t<s.$ If $(U_i)_{i=1}^\infty \in \mathscr{C}_\delta(A)$ then $H^s_\delta(A)\leq \sum_i |U_i|^s \leq \delta^{s-t}\sum_i |U_i|^t.$ So $H^s_\delta(A) \leq \delta^{s-t} H_\delta^t(A)$ for all $\delta>0$.

Suppose $H^t(A)<\infty$. Then since $\delta^{s-t}\overrightarrow{_{_{\delta \rightarrow 0^+}}} 0,$ $H^s(A)=0.$ Similarly, if $H^s(A)>0$ and $t<s,$ $H^t(A)=\infty.$ It follows that we may make the following definition.
\begin{definition} We have \[{\sup}_{{\overline{\mathbb{R}}_+}}\{s\geq 0\;|\;H^s(A)=\infty\}={\inf}_{\overline{\mathbb{R}}_+}\{s\geq 0\;|\;H^s(A)=0\}.\] We denote the common number in $[0,\infty]$ by $\dim(A).$ It is called the Hausdorff dimension of $A.$\end{definition} Since $H^s\asymp \lambda^s,$ we also have \[\dim(A)= {\sup}_{{\overline{\mathbb{R}}_+}}\{s\geq 0\;|\;\lambda^s(A)=\infty\}={\inf}_{{\overline{\mathbb{R}}_+}}\{s\geq 0\;|\;\lambda^s(A)=0\}.\]

\begin{lemma} \label{mon} If $A\subset B \subset X$ then $\dim(A)\leq \dim(B).$\end{lemma}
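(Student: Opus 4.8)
The plan is to reduce everything to monotonicity of the Hausdorff measures $H^s$, which is essentially immediate, and then quote the definition of $\dim$.

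First I would fix $s\geq 0$ and $\delta>0$. Since $A\subset B$, we have $\mathscr{C}_\delta(B)\subset \mathscr{C}_\delta(A)$: any at most countable family $\mathscr{U}\subset\mathscr{P}(X)$ with $|U|\leq\delta$ for all $U\in\mathscr{U}$ and $B\subset\cup\mathscr{U}$ automatically satisfies $A\subset\cup\mathscr{U}$. Taking the infimum of $\sum_{U\in\mathscr{U}}|U|^s$ over the (possibly) larger class $\mathscr{C}_\delta(A)$ can only make it smaller, so $H^s_\delta(A)\leq H^s_\delta(B)$; taking the supremum over $\delta>0$ gives $H^s(A)\leq H^s(B)$ for every $s\geq 0$. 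Alternatively, this is just monotonicity (condition (a)) of the outer measure $H^s=\mu^*_\tau$ with $\tau(U)=|U|^s$, which is a metric outer measure by Proposition \ref{outer}.

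Next I would feed this inequality into the definition of Hausdorff dimension. Using the characterization $\dim(A)=\inf_{\overline{\mathbb{R}}_+}\{s\geq 0\;|\;H^s(A)=0\}$, observe that if $H^s(B)=0$ then $0\leq H^s(A)\leq H^s(B)=0$, so $\{s\geq 0\;|\;H^s(B)=0\}\subset\{s\geq 0\;|\;H^s(A)=0\}$; the infimum over the smaller set is $\geq$ the infimum over the larger one, hence $\dim(A)\leq\dim(B)$. (Equivalently one may argue with the $\sup$-characterization: $H^s(A)=\infty$ forces $H^s(B)=\infty$, so the set defining $\dim(A)$ is contained in the one defining $\dim(B)$.)

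I do not expect a genuine obstacle here; the only points needing mild care are getting the directions of the set inclusions right and, should one of the sets $\{s\;|\;H^s(\cdot)=0\}$ happen to be empty, invoking the conventions for $\inf_{\overline{\mathbb{R}}_+}\emptyset$ and $\sup_{\overline{\mathbb{R}}_+}\emptyset$ from the Notation section — but since $\dim$ has already been shown to take values in $[0,\infty]$, this is automatic.
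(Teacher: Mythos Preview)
Your proof is correct and follows essentially the same approach as the paper: use monotonicity of $H^s$ to conclude that $H^s(B)=0$ implies $H^s(A)=0$, then apply the $\inf$-characterization of $\dim$. The paper's version is simply terser, invoking monotonicity of measure directly rather than unpacking the covering definition.
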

\begin{proof} By monotonicity of measure, $H^s(A)\leq H^s(B)$. So $H^s(B)=0$ implies 
$H^s(A)=0.$ Therefore $\dim(A)=\inf\{s\geq 0\;|\;H^s(A)=0\}\leq \inf\{s\geq 0\;|
\;H^s(B)=0\}=\dim(B).$\end{proof} 

\subsection{Metric Measure Spaces}
\begin{definition}By a \textit{metric measure space} we mean a triple $(X,d,\mu)$ consisting of a metric space $(X,d)$ and a finite, positive Borel measure $\mu$ of full support\footnote{Full support for a Borel measure means $\mu(B_r(x))>0$ for all $x\in X, r>0.$}.\end{definition} By a compact metric measure space we mean that the underlying metric space $(X,d)$ is compact.

\begin{definition}
Let $Q>0$ be a constant\footnote{In chapter 3 we will allow $Q$ to be variable.}. A Borel measure is called \textit{Ahlfors regular} of dimension $Q$ if there exists a $C>0$ such that for all $x$ and every $0<r<\diam(X)/2,$
\[\frac{1}{C}r^Q\leq \mu(B_r(x))\leq Cr^Q.\]\end{definition} It is well known that if $X$ possesses an Ahlfors regular measure $\mu$ of dimension $Q,$ then $\dim_H(X)=Q$ and $\mu\asymp H^Q$ (see e.g. \cite{Piaggio} \cite{Hein}). We will generalize this result to variable Hausdorff dimension in Chapter 2. 

\begin{definition}A Borel measure $\mu$ on a metric space is called \textit{doubling} if there exists a constant $C>0$ such that for all $x\in X$ and all $0<r<\diam(X),$ \[0<\mu(B_{2r}(x))\leq C\mu(B_r(x)).\]\end{definition} In particular, if $\mu$ is Ahlfors regular of dimension $Q,$ then $\mu$ is doubling since there exists a constant $C>0$ such that $0<(1/C)2^Qr^Q\leq \mu(B_{2r}(x)C\leq 2^Qr^Q\leq C^22^Q\mu(B_r(x)).$

In particular, a metric space with a doubling measure $\mu$ is a metric measure space\footnote{Metric spaces with a doubling measure $\mu$ are often called in the literature, especially in relation to harmonic analysis, \textit{spaces of homogeneous type}. See, for example, \cite{semmes}.}.

\section{Graph Approximation}
In this section we discuss methods of approximating a compact metric space by graphs. Our main tools will be epsilon nets and dyadic cubes. We discuss these topics below. First we define graphs.

\begin{definition} By a (undirected) graph, we mean a pair $G:=(V(G),E(G)),$ where $V(G)$ is a finite non-empty set and $E(G)\subset V(G)\times V(G)$ such that if $(x,y)\in E(G)$ then $(y,x)\in E(G)$. The set $V(G)$ is called the set of vertices, and the set $E(G)$ is called the set of edges.\end{definition} If $(x,y)\in E(G)$, we write $x\sim y,$ or $x\sim_G y$ in case of possible ambiguity. For $x\in V(G),$ the set $V(G)_x:=\{y\in V(G)\;|\;y\sim x\}$ is called the set of neighbors of $x,$ and the \textit{degree} of $x,$ written $\deg(x),$ or $\deg_G(x)$ in case of possible ambiguity, is the number \[\deg(x)=\sharp V(G)_x.\]

A path in $G$ is a finite list $\gamma=(x_k)_{k=0}^{n-1}$ of elements of $V(G)$ such that $n\in \mathbb{N}$ and for all $k<n,$ $x_k\sim x_{k+1}.$ Since if $n=0$ the condition is vacuous, single vertices $(x_0)$ are paths. The length of a path, written $\ell(\gamma),$ is one less than the number of vertices in the list $\gamma$. For example the length of a single vertex path is $0$. If $\gamma=(x_k)_{k=0}^{n-1},$ we say $\gamma$ is a path connecting $x_0$ and $x_{n-1}$, which we abbreviate with symbols as $\gamma:x_0\rightarrow x_{n-1}.$ We then define $d_G:V(G)\times V(G)\rightarrow \overline{\mathbb{R}}_+$, called the graph distance, by
\[d_G(x,y):={\inf}_{\overline{\mathbb{R}}_+}\{\ell(\gamma)\;|\;\gamma:x\rightarrow y\},\] where $x,y\in G.$ Since single vertices are paths, $d_G(x,y)=0$ if and only if $x=y.$ Clearly $d_G$ is symmetric, since any path $\gamma=(x_k)_{k=0}^{n-1}$ connecting $x=x_0$ to $y=x_{n-1}$ has a reverse $\bar{\gamma}:=(x_{n-1-k})_{k=0}^{n-1},$ connecting $y$ to $x.$ Moreover, if $x,y,z\in V(G)$ and $\gamma_1=(x_k)_{k=0}^{n-1}$ is a path connecting $x$ to $y$ and $\gamma_2=(y_k)_{k=0}^{m-1}$ is a path connecting $y$ to $z$, then let $x_{n-1+k}:=y_k$ for $0\leq k<m.$ Then $\gamma:=(x_k)_{k=0}^{n+m-2}$ is a path connecting $x$ to $z$ of length $\ell(\gamma_1)+\ell(\gamma_2).$ Hence $d_G(x,z)\leq \ell(\gamma_1)+\ell(\gamma_2).$ Since $\gamma_1,$ $\gamma_2$ are arbitrary, it follows $d_G(x,z)\leq d_G(x,y)+d_G(y,z).$ Hence $d_G$ is a distance.\footnote{By distance, we mean it satisfies all properties of a metric, except that it may take on the value $+\infty.$} However, $d_G$ is not necessarily a metric since we may have $d_G(x,y)=\infty$ if there are no paths connecting $x$ to $y,$ as $\inf_{\overline{\mathbb{R}}_+}\emptyset=\infty.$ In the case that for any $x,y\in G,$ there exist at least one path connecting $x$ to $y$, we say that $G$ is connected. In such a case, $d_G$ is finite, and is thus a metric.

\subsection{Epsilon Nets}
Let $(X,d)$ be a metric space. We will be interested in ways to approximate $X$ with graphs, defined from maximally $\epsilon$-separated sets called $\epsilon$-nets.

\begin{definition}
Let $\epsilon>0.$ An $\epsilon$-net in $X$ is a set $\mathcal{N}\subset X$ such that \begin{equation*}
\begin{split}
\mbox{(a)}&\;\cup_{x\in \mathcal{N}}B_{\epsilon}(x)=X, \mbox{\;and}\\
\mbox{(b)}&\;B_\epsilon(x)\cap \mathcal{N}=\{x\}\;\mbox{for all\;}x\in \mathcal{N}.
\end{split}
\end{equation*}
\end{definition}
Call a set $A\subset X$ $\epsilon$-separated if $d(x,y)\geq \epsilon$ for all $x,y\in A$ with $x\neq y.$ Then suppose $\mathcal{N}$ is $\epsilon$-separated and is maximal in the sense that if $\mathcal{M}$ is another $\epsilon$-separated set containing $\mathcal{N}$ then $\mathcal{N}=\mathcal{M}.$ Then clearly $\mathcal{N}$ satisfies (b). Also $\mathcal{N}$ satisfies (a) since if a $y$ could be chosen outside of the union of all of the epsilon balls over points in $\mathcal{N},$ then $\mathcal{N}\cup\{y\}$ would be $\epsilon$-separated, a contradiction. Hence $\mathcal{N}$ is an $\epsilon$-net. Similarly, if $\mathcal{N}$ is an $\epsilon$-net, then it is maximally $\epsilon$-separated, since for $\mathcal{N}\subset \mathcal{M}$, if $y\in \mathcal{M}\setminus \mathcal{N},$ then $\mathcal{M}$ cannot be $\epsilon$-separated since $d(x,y)<\epsilon$ for some point $x\in \mathcal{N}$ by (a). 

We now show that there always exist $\epsilon$-separated sets, that they may be refined, and that they are finite if $X$ is compact. 
\begin{proposition} \label{netexistance} Suppose $(X,d)$ is a metric space and $\epsilon>0.$ Let $A\subset X$  be $\epsilon$-separated. Then there exists an $\epsilon$-net $\mathcal{N}$ containing $A.$ If $X$ is compact, then $\mathcal{N}$ is finite.
\end{proposition}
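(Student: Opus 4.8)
The plan is to prove the three claims in order: existence of an $\epsilon$-net containing $A$, the refinement aspect (already subsumed, since we produce a net from an arbitrary $\epsilon$-separated set), and finiteness under compactness. For the existence claim, I would invoke Zorn's lemma. Consider the collection $\mathscr{P}$ of all $\epsilon$-separated subsets of $X$ that contain $A$, partially ordered by inclusion. This collection is nonempty since $A\in\mathscr{P}$. Given a chain $\mathscr{C}\subset\mathscr{P}$, its union $\cup\mathscr{C}$ is again $\epsilon$-separated: any two distinct points $x,y\in\cup\mathscr{C}$ both lie in some single member of the chain (by the chain property, one of the two members containing them contains the other), hence $d(x,y)\geq\epsilon$; and $\cup\mathscr{C}\supset A$. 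So every chain has an upper bound in $\mathscr{P}$, and Zorn's lemma yields a maximal element $\mathcal{N}\in\mathscr{P}$. By the discussion immediately preceding the proposition, a maximally $\epsilon$-separated set is an $\epsilon$-net, so $\mathcal{N}$ is an $\epsilon$-net containing $A$.

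For finiteness when $X$ is compact, I would argue that $\mathcal{N}$ is itself compact and discrete, hence finite. Indeed, the open cover $\{B_{\epsilon/2}(x)\}_{x\in X}$ of $X$ has a finite subcover by compactness, say indexed by $x_1,\dots,x_m$. Each ball $B_{\epsilon/2}(x_i)$ contains at most one point of $\mathcal{N}$, since two distinct points of $\mathcal{N}$ in the same such ball would be at distance less than $\epsilon$ by the triangle inequality, contradicting $\epsilon$-separation. Since the $m$ balls cover $X\supset\mathcal{N}$, we get $\sharp\mathcal{N}\leq m<\infty$.

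There is no serious obstacle here; the only point requiring minor care is the chain-union step in the Zorn's lemma argument (checking that the union of a chain of $\epsilon$-separated sets is $\epsilon$-separated, which relies on the totally-ordered structure of the chain rather than arbitrary unions — an arbitrary union of $\epsilon$-separated sets need not be $\epsilon$-separated). An alternative to Zorn's lemma, valid without choice when $X$ is compact (and separable), would be to extract a finite maximal $\epsilon$-separated set directly: start from $A$, which must already be finite by the covering argument above applied to $A$ in place of $\mathcal{N}$, and repeatedly adjoin any point at distance $\geq\epsilon$ from all chosen points; this process terminates after finitely many steps by the same finite-subcover bound. I would present the Zorn's lemma version for the general metric space statement and then specialize.
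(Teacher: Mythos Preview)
Your proof is correct. The existence half via Zorn's lemma is essentially identical to the paper's argument, down to the chain-union verification.

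For finiteness under compactness you take a slightly different route. The paper extracts a finite subcover from the open cover $\{B_\epsilon(x)\}_{x\in\mathcal{N}}$ of $X$, obtaining a finite $\mathcal{N}'\subset\mathcal{N}$; since $\mathcal{N}'$ is then itself an $\epsilon$-net (hence maximally $\epsilon$-separated) and $\mathcal{N}'\subset\mathcal{N}$ with $\mathcal{N}$ $\epsilon$-separated, maximality forces $\mathcal{N}'=\mathcal{N}$. Your pigeonhole argument with $\epsilon/2$-balls is equally valid and has the minor advantage of yielding an explicit cardinality bound $\sharp\mathcal{N}\leq m$ in terms of any finite $\epsilon/2$-subcover, whereas the paper's argument uses the maximality of $\mathcal{N}'$ once more. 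Both are standard; the difference is cosmetic.
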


\begin{proof} We apply Zorn's lemma. Let $\mathscr{S}:=\{\mathcal{M}\subset X\;|\;\mathcal{M}\;\epsilon\mbox{-separated with\;} A\subset \mathcal{M}\}.$ Then $\mathscr{S}$ is partially ordered under set inclusion $\subset.$ Note $\mathscr{S}\neq \emptyset$ as it contains $A.$ Let $\mathscr{C}$ be a totally ordered subset of $\mathscr{S}.$ Let $\mathcal{M}=\cup\mathscr{C}.$ Then clearly $A\subset \mathcal{M}\subset X.$ Moreover, if $x,y\in \mathcal{M},$ then let $\mathcal{M}_1,\mathcal{M}_2\in\mathscr{C}$ with $x\in \mathcal{M}_1,$ $y\in \mathcal{M}_2.$ Then since $\mathscr{C}$ is totally ordered, we may assume $\mathcal{M}_1\subset \mathcal{M}_2.$ Then $x,y\in \mathcal{M}_2,$ and so $d(x,y)\geq \epsilon.$ Hence $\mathcal{M}\in \mathscr{S}$ and $\mathcal{M}'\subset \mathcal{M}$ for all $\mathcal{M}'\in\mathscr{C}.$ By Zorn's lemma, it follows that $\mathscr{S}$ contains maximal elements. By the previous discussion, any such maximal element $\mathcal{N}$ is an $\epsilon$-net containing $A.$

If $X$ is compact, then since $\cup_{x\in \mathcal{N}} B_\epsilon(x)$ is an open cover of $X,$ there exists a finite $\mathcal{N}'\subset \mathcal{N}$ with $\cup_{x\in \mathcal{N}'}B_\epsilon(x)=X.$ Hence $\mathcal{N}'$ is an $\epsilon$-net. Since $\mathcal{N}$ is also $\epsilon$-separated with $\mathcal{N}'\subset \mathcal{N}$, it follows $\mathcal{N}=\mathcal{N}'$ is finite.
\end{proof}

For the remainder of this section we assume $(X,d)$ is a compact metric space. Given numbers $\epsilon,\delta>0$, we define a class $G(X,\epsilon,\delta)$ of graphs with vertices in $X$ as follows. If $G$ is a graph with $V(G)\subset X$, we say $G$ is a member of $G(X,\epsilon,\delta)$ if $G$ is connected, $V(G)$ is an $\epsilon$-net, and $V(G)_x\subset B_{\delta}(x)\setminus\{x\}$ for all $x\in V(G).$  We will see that $G(X,\epsilon,\delta)$ is non-empty for all $0<\epsilon\leq \delta/2.$

Let $\epsilon>0$ and $\mathcal{N}$ an $\epsilon$-net. \begin{definition}For $\rho\geq 2,$ we define a ``proximity graph" $G^p_{\rho\epsilon}(\mathcal{N})$ induced by $\mathcal{N}$ as follows. We let $\mathcal{N}$ be the vertex set with edge relation defined by \[(x,y)\in E(G^p_{\rho\epsilon}(\mathcal{N})) \;\mbox{if and only if\;}d(x,y)<\rho\epsilon.\]\end{definition} A similar approach is taken in \cite{burago2013graph}. Note each vertex has a ``loop" edge $(x,x)$. If we wish to remove self edges, we may instead impose the edge admittance condition $0<d(x,y)<\rho\epsilon.$

\begin{definition} For $\eta\geq 1,$ we define a ``covering graph" $G^c_{\eta\epsilon}(\mathcal{N})$ to have vertex set $\mathcal{N}$ and edge relation defined by \[(x,y)\in E(G^c_{\eta\epsilon}(\mathcal{N}))\mbox{\;if and only if\;}B_{\eta\epsilon}(x)\cap B_{\eta\epsilon}(y)\neq \emptyset.\]\end{definition} A similar approach is taken in \cite{sturm}. Note, again that this definition allows loops. For a ``loopless" version, we would add to the edge admittance condition that $x\neq y.$
See the figure below. 
\vspace*{1 in}
\begin{figure}[H]
\centering
\includegraphics[scale=.5]{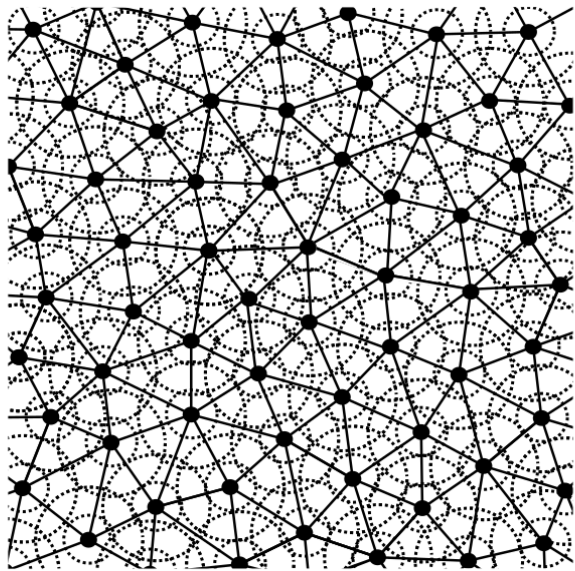}
\caption{A loopless covering graph in the plane.}
\end{figure}

\vspace*{1 in}
We now show that the topological connectedness of $X$ is closely related to the connectedness of approximating graphs. 

But first we need the following lemma.
\begin{lemma} (Lebesgue number lemma) Suppose $(X,d)$ is a compact metric space. Given any open cover of $X,$ there exists a number $\delta>0$ such that any ball of radius $\delta>0$ is contained in some member of the cover.
\end{lemma}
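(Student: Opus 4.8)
The plan is to run the classical argument in which compactness converts pointwise-chosen radii into a single uniform one. Let $\mathscr{U}$ denote the given open cover. First, for each $x\in X$ I would pick some member $U_x\in\mathscr{U}$ containing $x$; since $U_x$ is open there is $r_x>0$ with $B_{2r_x}(x)\subset U_x$. The family $\{B_{r_x}(x)\}_{x\in X}$ is then an open cover of $X$, so by compactness finitely many of these balls already cover $X$, say $X=\bigcup_{i=1}^n B_{r_{x_i}}(x_i)$. I would then set $\delta:=\min_{1\le i\le n} r_{x_i}$, which is strictly positive because it is a minimum of finitely many positive numbers.

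Next I would verify that this $\delta$ works. Given $y\in X$, choose $i$ with $y\in B_{r_{x_i}}(x_i)$. For any $z\in B_\delta(y)$ the triangle inequality gives $d(x_i,z)\le d(x_i,y)+d(y,z)<r_{x_i}+\delta\le 2r_{x_i}$, so $B_\delta(y)\subset B_{2r_{x_i}}(x_i)\subset U_{x_i}$. Since every ball of radius $\delta$ is of the form $B_\delta(y)$ for some centre $y\in X$, this establishes the lemma.

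There is no genuine obstacle here; the only step that actually uses the hypotheses is the extraction of a finite subcover, which is precisely what prevents $\delta$ from degenerating to $0$ (as it would for a non-compact space, e.g. an open interval with the cover by proper open subintervals). An alternative I could present instead is the continuity argument: fix a finite subcover $U_1,\dots,U_n$ of $X$, define $f(x):=\max_{1\le i\le n}\dist(x,X\setminus U_i)$ (with $\dist(x,\emptyset)$ read as $\diam(X)$ in case some $U_i=X$), observe that each $x\mapsto\dist(x,X\setminus U_i)$ is $1$-Lipschitz so $f$ is continuous, and that $f(x)>0$ for every $x$ since $x$ lies in some open $U_i$. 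By compactness $f$ attains a positive minimum $\delta$, and $\dist(x,X\setminus U_i)\ge\delta$ for a suitable $i$ then forces $B_\delta(x)\subset U_i$. I would present the first argument, as it is the shorter of the two and uses only the net/ball notation already in place.
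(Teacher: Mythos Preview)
Your primary argument is correct. It is, however, not the route the paper takes: the paper uses exactly the continuity argument you sketch as your alternative. It first passes to a finite subcover $U_1,\dots,U_n$, sets $\varphi_i:=\dist(\cdot,U_i^c)$ and $\varphi:=\max_i\varphi_i$, notes $\varphi$ is continuous and strictly positive, and lets $2\delta>0$ be its minimum; then any $B_\delta(x)$ must lie in some $U_i$, since otherwise $\varphi(x)\le\delta$. Your chosen route avoids introducing the auxiliary function and works directly with a finite subcover of shrunken balls, trading the extreme-value step for an explicit triangle-inequality computation; both are equally short and standard, and since you already outlined the paper's version as your second option there is nothing to add.
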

\begin{proof} Suppose $\mathscr{U}$ is an open cover. By compactness, let $(U_i)_{i=1}^n$ be a finite subcover. Note each $U_i^c$ is compact. Hence $\varphi_i:=\dist(\cdot, U_i^c)$ is continuous for each $i.$ Let $\varphi:=\max_i \varphi_i.$ Then $\varphi$ is continuous. Note $\varphi(x)>0$ for all $x.$ Since $X$ is compact, $\varphi$ takes on an absolute minimum value $2\delta>0.$ If $B_\delta(x)\cap U_i^c \neq \emptyset$ for all $i,$ then $\varphi(x)\leq \delta,$ a contradiction. Hence $B_\delta(x)\subset U_i$ for some $i.$
\end{proof}

\begin{proposition} If $X$ is connected, then for any $\epsilon>0$, $\epsilon$-net $\mathcal{N}$ and $\eta\geq 1,$ both $G^p_{2\eta\epsilon}(\mathcal{N})$ and $G^c_{\eta\epsilon}(\mathcal{N})$ are connected. Moreover, $X$ is connected if for all $\epsilon>0$ and any $\epsilon$-net $\mathcal{N}$, either $G^p_{2\eta\epsilon}(\mathcal{N})$ or $G^c_{\eta\epsilon}(\mathcal{N})$ are connected.
\end{proposition}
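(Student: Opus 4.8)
The plan is to prove both directions separately. For the forward direction, suppose $X$ is connected, fix $\epsilon>0$, an $\epsilon$-net $\mathcal{N}$, and $\eta\geq 1$. I would argue by contradiction: if $G := G^c_{\eta\epsilon}(\mathcal{N})$ is disconnected, then $\mathcal{N}$ splits into two non-empty subsets $\mathcal{N}_1, \mathcal{N}_2$ with no edge between them, i.e. $B_{\eta\epsilon}(x)\cap B_{\eta\epsilon}(y)=\emptyset$ whenever $x\in\mathcal{N}_1$, $y\in\mathcal{N}_2$. Set $V_i := \cup_{x\in\mathcal{N}_i} B_{\eta\epsilon}(x)$ for $i=1,2$. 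Each $V_i$ is open and non-empty, $V_1\cup V_2 = \cup_{x\in\mathcal{N}}B_\epsilon(x) = X$ by the net covering property (a) together with $\eta\geq 1$, and $V_1\cap V_2=\emptyset$: a point $z$ in the intersection would lie in $B_{\eta\epsilon}(x)\cap B_{\eta\epsilon}(y)$ for some $x\in\mathcal{N}_1, y\in\mathcal{N}_2$, contradicting disjointness. This disconnects $X$, a contradiction. The proximity graph case is essentially the same with the elementary observation that $d(x,y)<2\eta\epsilon$ if and only if $B_{\eta\epsilon}(x)\cap B_{\eta\epsilon}(y)\neq\emptyset$ is \emph{false} in general — rather, $B_{\eta\epsilon}(x)\cap B_{\eta\epsilon}(y)\neq\emptyset$ implies $d(x,y)<2\eta\epsilon$, and conversely $d(x,y)<2\eta\epsilon$ implies the midpoint-type argument gives a common point only in geodesic spaces, so I would instead note directly that $d(x,y)<2\eta\epsilon \Rightarrow$ the ball $B_{\eta\epsilon}(x)$ meets $B_{\eta\epsilon}(y)$ fails, but $E(G^p_{2\eta\epsilon})\supseteq E(G^c_{\eta\epsilon})$ since $B_{\eta\epsilon}(x)\cap B_{\eta\epsilon}(y)\neq\emptyset$ forces $d(x,y)<2\eta\epsilon$. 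Thus $G^p_{2\eta\epsilon}(\mathcal{N})$ has at least as many edges as $G^c_{\eta\epsilon}(\mathcal{N})$ on the same vertex set, so connectedness of the latter implies connectedness of the former, and the disjoint-open-cover argument also applies directly to $G^p_{2\eta\epsilon}$.

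For the converse, I would prove the contrapositive: if $X$ is disconnected, write $X = U_1\sqcup U_2$ with $U_1,U_2$ non-empty, open, and closed. Since $X$ is compact these are compact, so $\dist(U_1,U_2)=:2\delta>0$. Then for any $\epsilon$ with $2\eta\epsilon\leq 2\delta$ (equivalently $\eta\epsilon\leq\delta$) and any $\epsilon$-net $\mathcal{N}$, every net point lies in $U_1$ or in $U_2$ (by (a), with $\epsilon<\delta$ each $B_\epsilon(x)$ lies in one piece, so $x$ does), and both pieces contain net points because each $U_i$ is a non-empty open set covered by the net balls. No edge of $G^p_{2\eta\epsilon}(\mathcal{N})$ joins a point of $U_1$ to one of $U_2$ since such points are at distance $\geq 2\delta\geq 2\eta\epsilon$; likewise no edge of $G^c_{\eta\epsilon}(\mathcal{N})$ joins them since their $\eta\epsilon$-balls stay within $U_1$ and $U_2$ respectively and hence are disjoint. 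So for these small $\epsilon$ both graphs are disconnected, which is the contrapositive of the stated ``for all $\epsilon$'' hypothesis.

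The only mild subtlety — and the step I would be most careful about — is the relationship between the proximity radius $2\eta\epsilon$ and the covering radius $\eta\epsilon$: one must verify that $B_{\eta\epsilon}(x)\cap B_{\eta\epsilon}(y)\neq\emptyset \Rightarrow d(x,y)<2\eta\epsilon$ (immediate from the triangle inequality via a common point), which gives the edge inclusion $E(G^c_{\eta\epsilon})\subseteq E(G^p_{2\eta\epsilon})$ used in the forward direction, while the reverse inclusion is \emph{not} needed and generally fails in non-geodesic spaces. Everything else is a routine application of the definition of $\epsilon$-net (properties (a) and (b)) and the compactness fact that two disjoint closed sets in a compact metric space are positively separated; the Lebesgue number lemma proved just above could alternatively be invoked to produce the separating $\delta$, but the direct $\dist(U_1,U_2)>0$ argument is cleaner here.
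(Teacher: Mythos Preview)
Your proof is correct and follows essentially the same approach as the paper. The forward direction is identical in spirit: split $\mathcal{N}$ into two graph-components and cover $X$ by disjoint open sets (the paper uses radius-$\epsilon$ balls, you use radius-$\eta\epsilon$ balls, which is an inessential difference). For the converse, the paper invokes the Lebesgue number lemma and then derives a contradiction by following a graph path from $U$ to $V$, while you use the cleaner direct argument that $\dist(U_1,U_2)>0$ in a compact space; you even flag the Lebesgue lemma as an alternative, so the two arguments are interchangeable here.
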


\begin{proof}
Suppose $X$ is connected. Let $\mathcal{N}$ be an $\epsilon$-net in $X.$ Let $G=(E(G),V(G))$ be the $\eta\epsilon$-covering graph on $\mathcal{N}.$ Note we only need to show connectedness of $G$ since $2\eta\epsilon$-proximity graph contains all of the edges of the $\eta\epsilon$-covering graph. Let $x_0\in \mathcal{N}.$ Let $A$ be the set of $x\in \mathcal{N}$ that are connected to $x_0$ by a path in the graph $G.$ Let $U=\cup_{x\in A} B_{\epsilon}(x).$ Note $x_0\in U$ and $U$ is open. Suppose $A\neq \mathcal{N}.$ Let $V=\cup_{y\in \mathcal{N}\setminus A} B_{\epsilon}(y).$ Then $V$ is non-empty and open. Moreover, since $\mathcal{N}$ is an $\epsilon$-net, $U\cup V=X.$ Also, if $B_{\epsilon}(x)\cap B_{\epsilon}(y)\neq \emptyset$ for some distinct $x,y\in \mathcal{N},$ then $x\sim y$ since $\eta\geq 1$. Therefore $U$ and $V$ are disjoint, a contradiction. Hence $A=\mathcal{N}$ and $G$ is connected.

Conversely, suppose for any $\epsilon>0$ and any $\epsilon$-net $\mathcal{N}$ that either the $2\epsilon$-proximity graph or the $\epsilon$-covering graph are connected, but $X$ is not connected. We may assume only the $2\epsilon$-proximity graph is connected, since it contains at least the edges of the $\epsilon$-covering graph for any $\epsilon$. Let $U,V$ be open, disjoint, and non-empty in $X$ with $U\cup V=X.$ By the Lebesgue number lemma, there exists a $\delta>0$ such that for any $x\in X$, either $B_\delta(x)\subset U$ or $B_\delta(x)\subset V.$ Let $x_0\in U$ and $y_0\in V.$ Since $B_\delta(x_0)\subset U,$ $d(x_0,y_0)\geq \delta$. Then, by Proposition \ref{netexistance}, we may choose a $\delta/2$-net $\mathcal{N}$ in $X$ containing $x_0$ and $y_0.$ Since the $\delta$-proximity graph on $\mathcal{N}$ is connected, we may choose a path $\gamma=(x_k)_{k=0}^{n-1}$ in the graph with $x_{n-1}=y_0.$ Let $k$ be the largest integer with $x_k\in U.$ Then $x_{k+1}\in V$ and $d(x_k,x_{k+1})<\delta.$ But $B_{\delta}(x_k)\subset U$ by choice of $\delta,$ a contradiction. Therefore $X$ is connected.
\end{proof}

\begin{definition} A metric space $(X,d)$ is called \textit{doubling} if there exists a constant $D>0$ such that for any $x\in X$ and for any $r>0$, $B_{2r}(x)$ can be covered by at most $D$ balls of radius $r.$\end{definition} Recall a Borel measure $\mu$ on  a metric space $(X,d)$ is doubling if there exists a $C>0$ such that for all $r>0$ and $x\in X$, $0<\mu(B_{2r}(x))\leq C\mu(B_r(x))<\infty$. 

If $X$ is compact and has a doubling measure, then $X$ is doubling. Indeed, let $x\in X$ and $r>0.$ Let $(x_i)_{i=1}^n$ be an $r$-net in $B_{2r}(x).$ Then since $B_{2r}(x)\subset B_{4r}(x_i)$ for every $i$ and since $\mu$ is doubling, $\mu(B_{2r}(x))\leq C^3\mu(B_{r/2}(x_i)).$ But since the $B_{r/2}(x_i)$ are 
disjoint, $\frac{n}{C^3}\mu(B_{2r}(x))\leq \mu(\cup_{i=1}^n B_{r/2}(x_i)) \leq \mu(B_{4r}(x)) \leq 
C\mu(B_{2r}(x)).$ Hence $n$ is at most $C^4.$ It follows that $X$ is doubling with $D=C^4.$ 
In fact, it was shown in \cite{volb} that the converse is also true. In more generality, it is known that the equivalence between the existence of doubling measures and the doubling condition holds for complete metric spaces \cite{luuk}\cite{kaenmaki}.

The following proposition shows that a uniform upper bound on the degree for all proximity graphs is equivalent to the doubling condition. Note that since $\deg_{G^c_{\rho\epsilon/2}(\mathcal{N})}\leq \deg_{G^p_{\rho\epsilon}(\mathcal{N})},$ the ``only if" part of the following proposition also holds for the covering graph $G^c_{\eta\epsilon}(\mathcal{N})$ with $\eta=\rho/2.$

\begin{proposition}
Let $\rho\geq 2.$ Then $(X,d)$ is doubling if and only if there exists a constant $M$ such that $\deg_{G^p_{\rho\epsilon}(\mathcal{N})}\leq M$ for all $\epsilon>0$ and any $\epsilon$-net $\mathcal{N}$. 
\end{proposition}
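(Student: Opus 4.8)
The plan is to translate the graph-theoretic statement into a metric packing condition and then invoke (a mild variant of) the standard characterization of the doubling property in terms of covering numbers. The key observation is that for a vertex $y$ of the proximity graph,
\[
\deg_{G^p_{\rho\epsilon}(\mathcal{N})}(y)=\sharp\{z\in\mathcal{N}:d(y,z)<\rho\epsilon\}=\sharp\bigl(\mathcal{N}\cap B_{\rho\epsilon}(y)\bigr),
\]
so the hypothesis ``all proximity-graph degrees are $\leq M$'' says exactly that every ball of radius $\rho\epsilon$ meets any $\epsilon$-net in at most $M$ points. Since an $\epsilon$-net is a maximal $\epsilon$-separated set, this is a uniform packing bound, and I will prove the two implications separately.

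For the forward direction, suppose $X$ is doubling with constant $D$. Given $\epsilon>0$, an $\epsilon$-net $\mathcal{N}$, and $y\in\mathcal{N}$, I iterate the doubling condition $j:=\lceil\log_2(2\rho)\rceil$ times to cover $B_{\rho\epsilon}(y)$ by at most $D^{\,j}$ balls of radius $2^{-j}\rho\epsilon\leq\epsilon/2$. Any two distinct points of $\mathcal{N}$ are at distance $\geq\epsilon$, hence cannot both lie in a single ball of radius $\epsilon/2$; therefore $\sharp\bigl(\mathcal{N}\cap B_{\rho\epsilon}(y)\bigr)\leq D^{\,j}$, and $M:=D^{\lceil\log_2(2\rho)\rceil}$ works uniformly in $\epsilon$, $\mathcal{N}$, and $y$.

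For the converse, suppose such an $M$ exists; I verify the definition of doubling. Fix $x\in X$ and $r>0$ and put $\epsilon:=2r/\rho$, so that $\epsilon\leq r$ (this is the only place $\rho\geq 2$ enters) and $\rho\epsilon=2r$. Let $A$ be a maximal $\epsilon$-separated subset of $B_{2r}(x)$ that contains $x$ (start from $\{x\}$ and enlarge), and use Proposition \ref{netexistance} to extend $A$ to an $\epsilon$-net $\mathcal{N}$ of $X$, which is finite since $X$ is compact. Because $x\in\mathcal{N}$, the hypothesis gives $\sharp A\leq\sharp\bigl(\mathcal{N}\cap B_{\rho\epsilon}(x)\bigr)=\deg_{G^p_{\rho\epsilon}(\mathcal{N})}(x)\leq M$. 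On the other hand, maximality of $A$ inside $B_{2r}(x)$ forces every point of $B_{2r}(x)$ to lie within $\epsilon\leq r$ of some point of $A$, so $B_{2r}(x)\subseteq\bigcup_{a\in A}B_r(a)$ is covered by at most $M$ balls of radius $r$. Hence $X$ is doubling with constant $M$, and the equivalence follows.

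I expect the only genuinely delicate point to be the converse at the endpoint $\rho=2$: the naive argument replaces $x$ by a nearby net point and thereby loses an additive $\epsilon$ in the radius, which is fatal when $\rho=2$. The remedy is to force $x$ itself into the net and to choose $\epsilon$ so that $\rho\epsilon$ equals $2r$ \emph{exactly} rather than merely dominating it, so the estimate closes with no slack. Everything else is routine: the net extension and the finiteness of $\mathcal{N}$ come from Proposition \ref{netexistance}, and iterating the doubling condition is standard.
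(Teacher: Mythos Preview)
Your proof is correct and follows the same strategy as the paper: in both directions the degree at a vertex is identified with $\sharp(\mathcal{N}\cap B_{\rho\epsilon}(\cdot))$ and then translated into a packing/covering statement. For the converse your local-then-extend construction (first take a maximal $\epsilon$-separated set $A$ inside $B_{2r}(x)$, then extend to a net) is a mild refinement of the paper's version, which takes an $r/\rho$-net containing $x$ directly and asserts that the at most $M$ net points in $B_r(x)$ furnish a cover by balls of radius $r/\rho$; your use of maximality of $A$ within $B_{2r}(x)$ makes the coverage of the full ball explicit, which is exactly the delicate point you flagged at $\rho=2$.
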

\begin{proof}

Suppose $X$ is doubling with doubling constant $D.$ Let $\mathcal{N}$ be an $\epsilon$-net. Let $x\in \mathcal{N}.$ Let $B:=B_{\rho\epsilon}(x)$ and $\mathcal{N}_x=\{y\in \mathcal{N}\;|\;y\sim x\}.$ Note that $\mathcal{N}_x\subset B_{\rho\epsilon}(x).$ Let $n$ be a positive integer with $\rho\leq 2^{n-1}.$ Then by the doubling condition we may choose $(B_{\frac{\rho\epsilon}{2^n}}(x_i))_{i=1}^M$ to be a cover of $B_{\rho\epsilon}(x)$ with $M\leq D^{n}.$  If $y,z\in \mathcal{N}_x$ and $y,z\in B_{\rho\epsilon/2^n}(x_i),$ then $d(y,z)<\rho\epsilon/2^{n-1}\leq\epsilon,$ so that $y=z.$ It follows by the pigeonhole principle that $\deg_{G^p_{\rho\epsilon}(\mathcal{N})}(x)\leq M.$

Conversely, suppose $M>0$ such that for any $\epsilon$-net $\mathcal{N},$ $\deg_{G^p_{\rho\epsilon}(\mathcal{N})}(x)\leq M$ for all $x\in \mathcal{N}.$ Let $r>0, x\in X.$ Let $\mathcal{N}$ be an $r/\rho$-net containing $x.$ Then if $z\in B_r(x)\cap \mathcal{N},$  then also $z\sim x.$ Hence $B_r(x)$ may be covered by at most $M$ balls of radius $r/\rho\leq r/2.$ Since $r>0$ and $x\in X$ were arbitrary, the doubling condition holds.\end{proof}

\subsection{Tilings and Dyadic Cubes}
Throughout this section let $(X,d)$ be a compact metric space. 
\begin{definition}By an \textit{$\epsilon$-tiling} of $X$, we mean a finite collection $(T_x)_{x\in \mathcal{N}}$ of Borel subsets of $X$ such that $\mathcal{N}$ is an $\epsilon$-net, $(T(x))_{x\in\mathcal{N}}$ is a partition of $X,$ and $B_{\epsilon/2}(x)\subset T(x)\subset B_{\epsilon}(x)$ for all $x\in \mathcal{N}.$ \end{definition}

\begin{lemma} An $\epsilon$-tiling of $X$ exists.
\end{lemma}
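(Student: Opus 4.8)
The plan is to realize the tiles as tie-broken nearest-point (Voronoi) cells of an $\epsilon$-net. Fix $\epsilon>0$. First I would use Proposition~\ref{netexistance} to choose an $\epsilon$-net $\mathcal{N}=\{x_1,\dots,x_n\}$, which is finite since $X$ is compact. Two consequences of this choice get used repeatedly: because $\mathcal{N}$ is $\epsilon$-separated, the balls $B_{\epsilon/2}(x_i)$ are pairwise disjoint; because $\mathcal{N}$ is an $\epsilon$-net, the balls $B_{\epsilon}(x_i)$ cover $X$.

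Next I would set $f_i(y):=d(y,x_i)$ and define
\[
T(x_i):=\{\,y\in X : f_i(y)\le f_j(y)\text{ for all }j,\ \ f_i(y)<f_j(y)\text{ for all }j<i\,\},
\]
i.e.\ assign each point to its nearest net point, breaking ties toward the smallest index. Each $f_i$ is continuous, so $T(x_i)$ is a finite intersection of the closed sets $\{f_i\le f_j\}$ and the open sets $\{f_i<f_j\}$, hence Borel.

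Then I would verify the three defining properties. That $(T(x_i))_{i=1}^n$ is a partition is pure bookkeeping: for $y\in X$ put $m:=\min_j f_j(y)$ and let $i$ be the least index with $f_i(y)=m$; one checks directly from the definitions that $y\in T(x_i)$ while $y\notin T(x_{i'})$ for every $i'\ne i$. For the sandwich $B_{\epsilon/2}(x_i)\subset T(x_i)\subset B_{\epsilon}(x_i)$: if $d(y,x_i)<\epsilon/2$ then for each $j\ne i$ the reverse triangle inequality together with $\epsilon$-separation gives $f_j(y)\ge d(x_i,x_j)-d(y,x_i)>\epsilon-\tfrac{\epsilon}{2}>d(y,x_i)=f_i(y)$, so $y\in T(x_i)$; conversely, if $y\in T(x_i)$ then choosing $j$ with $y\in B_{\epsilon}(x_j)$ (possible since the $\epsilon$-balls cover $X$) gives $f_i(y)\le f_j(y)<\epsilon$, so $y\in B_{\epsilon}(x_i)$. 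Finally $x_i\in B_{\epsilon/2}(x_i)\subset T(x_i)$, so each tile is nonempty; combined with the above, $(T(x_i))_{i=1}^n$ is an $\epsilon$-tiling of $X$.

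I do not expect a serious obstacle; the only delicate point is securing the lower containment $B_{\epsilon/2}(x_i)\subset T(x_i)$ simultaneously with disjointness. A naive ``carve $X$ by the $B_{\epsilon}(x_i)$ in index order'' produces disjoint tiles inside the $\epsilon$-balls but can let an earlier large ball swallow part of a later small ball, destroying the lower bound; the nearest-point assignment avoids this because the lower containment then follows immediately from $\epsilon$-separation. (An equivalent set-theoretic construction one could instead verify directly is $T(x_i):=\bigl(A_i\cup B_{\epsilon/2}(x_i)\bigr)\setminus\bigcup_{j\ne i}B_{\epsilon/2}(x_j)$ with $A_i:=B_{\epsilon}(x_i)\setminus\bigcup_{j<i}B_{\epsilon}(x_j)$.)
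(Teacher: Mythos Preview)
Your proof is correct and is essentially the same as the paper's: the paper builds open and closed Voronoi cells $U(x_i)\subset V(x_i)$ for an $\epsilon$-net and then sets $T(x_k):=V(x_k)\setminus\bigcup_{j<k}V(x_j)$, which unpacks exactly to your index-tiebroken nearest-point rule $\{y:f_i(y)\le f_j(y)\text{ for all }j,\ f_i(y)<f_j(y)\text{ for all }j<i\}$. Your verification of the sandwich $B_{\epsilon/2}(x_i)\subset T(x_i)\subset B_\epsilon(x_i)$ mirrors the paper's $B_{\epsilon/2}(x_i)\subset U(x_i)\subset T(x_i)\subset V(x_i)\subset B_\epsilon(x_i)$.
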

\begin{proof}
Let $\mathcal{N}$ be an $\epsilon$-net. For $x\in \mathcal{N}$ define the open and closed \textit{Voronoi cells} $U(x)$ and $V(x),$ respectively, by $U(x)=\{y\in X\;|\;d(x,y)< d(x,z)\;\mbox{for all\;} z\in \mathcal{N}\setminus\{x\}\}$ and $V(x)=\{y\in X\;|\;d(x,y)\leq d(x,z)\;\mbox{for all\;} z\in \mathcal{N}\}$. Then clearly the $(U(x))_{x\in \mathcal{N}}$ are disjoint and $\cup_{x\in \mathcal{N}}V(x)=X.$ Note further, that since the $(B_{\epsilon/2}(x))_{x\in\mathcal{N}}$ are disjoint, we have $B_{\epsilon/2}(x)\subset U(x)$ for all $x\in \mathcal{N}.$ Similarly, since $\cup_{x\in \mathcal{N}} B_{\epsilon}(x)=X,$ if $x\in \mathcal{N}$ and $y\in X$ with $d(x,y)\geq \epsilon,$ then $y\in B_{\epsilon}(z)$ for some $z\in\mathcal{N}.$ It follows $y\notin V(x).$ Hence $V(x)\subset B_{\epsilon}(x).$ Since $\mathcal{N}$ is finite, as $X$ is compact, label $\mathcal{N}=\{x_1,x_2,...,x_n\},$ where $n=\sharp \mathcal{N}.$ Then for $x_k\in \mathcal{N},$ let 
\[T(x_k):=\{y\in X\;|\;k=\min\{j\in I\;|\;y\in T(x_j)\}\}.\] Then $T(x_1)=V(x_1), T(x_2)=V(x_2)\setminus V(x_1),...,T(x_n)=V(x_n)\setminus \cup_{j<n} V(x_j).$ Note, we have $U(x_k)\subset T(x_k)\subset V(x_k)$ for all $k\in I.$ It follows that $B_{\epsilon/2}(x)\subset T(x)\subset B_{\epsilon}(x)$ for all $x\in \mathcal{N}.$
\end{proof}
\vspace*{1 in}

\begin{figure}[H]
\centering
\includegraphics[scale=.38]{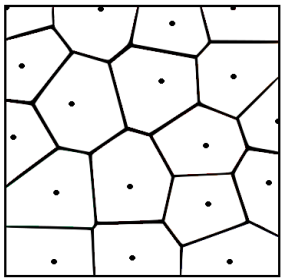}
\caption{A Vornoi tiling of a plane region.}
\end{figure}

Although we will not have much use for it in the sequel, we recall the proof of the main theorem in \cite{kaenmaki} on the existence of a decomposition of a compact\footnote{In \cite{kaenmaki} the proof is valid for an arbitrary doubling space. The proof of the version below in the compact case does not make use of a doubling condition.} metric space by a sequence of nested ``dyadic cubes."\footnote{The name ``dyadic cubes" is meant to reflect the usual nested dyadic cube decomposition of $\mathbb{R}^n$ by sets of the form $\prod_{i=1}^n{[j_i/2^{j_0}, (j_i+1)/2^{j_0})},$ where $(j_i)_{i=0}^\infty\subset \mathbb{Z}^{n+1}.$} A construction of a nested dyadic decomposition, except for a set of measure zero, of a doubling metric measure space by open sets, was given earlier by \cite{christ}. The proof we provide is fairly simple, however the proof in \cite{christ} has the advantage of providing control on the measure of the ``cubes" near the boundaries. The construction may be used to define what is called a ``Michon tree'' in \cite{pearson}. With suitable weights, the original space may be recovered as a type of Gromov hyperbolic boundary of a certain graph defined from the tree. See \cite{Piaggio} for details. 

\begin{theorem} \label{dyadic}(Theorem 2.1 from \cite{kaenmaki}) Suppose $(X,d)$ is a compact metric space and $(\epsilon_k)_{k=0}^\infty$ is a sequence of positive numbers strictly decreasing to zero. Then for each $k\in \mathbb{N}$ there exists an $\epsilon_k$-net $\mathcal{N}_k=(x_{k,i})_{i=1}^{M_k}$ and a  partition $(Q_{k,i})_{i=1}^{M_k}$ of $X$ by Borel sets with $x_{k,i}\in Q_{k,i}$ such that the following properties hold: \begin{equation*}
\begin{split}
\mbox{(1)}&\; \mathcal{N}_{k}\subset \mathcal{N}_{k+1}\mbox{\;for all\;}k;\\
\mbox{(2)}&\; \mbox{For\;}k_1<k_2\mbox{\;and all\;} i_2,\; \sharp\{i_1\;|\;Q_{k_1,i_1}\cap Q_{k_2,i_2}\neq \emptyset\}=1; \mbox{\;and}\\
\mbox{(3)}&\;\mbox{If\;}0<r<1/3\;\mbox{and\;} \epsilon_k=r^k\;\mbox{for all\;}k, B_{(\frac{1-3r}{2-2r})r^k}(x_{k,i})\subset Q_{k,i} \subset B_{(\frac{1}{1-r})r^k}[x_{k,i}]\mbox{\;for all\;} k,i.
\end{split}
\end{equation*}
\end{theorem}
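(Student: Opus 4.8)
The plan is to construct the nets $\mathcal{N}_k$ and the cubes $Q_{k,i}$ by an inductive "greedy" procedure and then verify properties (1)--(3). First I would build the nets so that they are nested: set $\mathcal{N}_0$ to be any $\epsilon_0$-net, and having built $\mathcal{N}_k$, use Proposition \ref{netexistance} to extend $\mathcal{N}_k$ (which is $\epsilon_k$-separated, hence $\epsilon_{k+1}$-separated since $\epsilon_{k+1}<\epsilon_k$) to an $\epsilon_{k+1}$-net $\mathcal{N}_{k+1}\supset\mathcal{N}_k$; compactness gives finiteness at each stage. This secures property (1) immediately.

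Next I would assign each point of $X$ to a "parent chain." The idea is to build, for each $k$, a map $\pi_k\colon\mathcal{N}_{k+1}\to\mathcal{N}_k$ sending each $(k+1)$-net point $y$ to a (chosen) net point $\pi_k(y)\in\mathcal{N}_k$ with $d(y,\pi_k(y))<\epsilon_k$ — such a point exists since $\mathcal{N}_k$ is an $\epsilon_k$-net, with the convention $\pi_k(x)=x$ for $x\in\mathcal{N}_k$. Composing these maps gives a consistent ancestry: for $k_1<k_2$ write $\pi_{k_1}^{k_2}=\pi_{k_1}\circ\cdots\circ\pi_{k_2-1}\colon\mathcal{N}_{k_2}\to\mathcal{N}_{k_1}$. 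Now define the cubes recursively from the top down. At level $0$, take a Voronoi-type partition $(Q_{0,i})$ of $X$ refined exactly as in the preceding lemma so that $B_{\epsilon_0/2}(x_{0,i})\subset Q_{0,i}\subset B_{\epsilon_0}(x_{0,i})$; actually, to get the sharper radii in (3) I would instead directly define, for a point $z\in X$, its level-$k$ label as follows. For $z\in X$ and each $k$ choose (measurably, breaking ties by a fixed enumeration) a nearest point $n_k(z)\in\mathcal{N}_k$, and then declare $z\in Q_{k,i}$ if $x_{k,i}$ is the ancestor at level $k$ of $n_m(z)$ for all sufficiently large $m$ — more carefully, one defines the cubes level by level so that $Q_{k+1,j}\subset Q_{k,\pi_k(x_{k+1,j})}$, i.e. a child cube is contained in its parent cube, and within each parent one distributes its points among children by nearest-net-point-with-tie-breaking. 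Taking the children to partition the parent and to each contain the corresponding net point, property (2) (the nestedness: each lower cube meets exactly one higher cube) follows from $Q_{k_2,i_2}\subset Q_{k_1,\pi_{k_1}^{k_2}(x_{k_2,i_2})}$, which is the transitive closure of the one-step containment.

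The substantive step — and the main obstacle — is property (3): the two-sided ball bound with the specific constants $\frac{1-3r}{2-2r}$ and $\frac{1}{1-r}$ when $\epsilon_k=r^k$, $0<r<1/3$. For the outer bound I would argue by induction on $k$ that $Q_{k,i}\subset B_{(1/(1-r))r^k}[x_{k,i}]$: a point $z\in Q_{k,i}$ lies in some descendant cube at every level $m>k$, so $d(z,x_{m,i_m})\le\epsilon_m$ for its level-$m$ center (by the base containment in a closed $\epsilon_m$-ball, which I arrange in the construction), and the ancestors satisfy $d(x_{m,i_m},x_{m-1,i_{m-1}})<\epsilon_{m-1}=r^{m-1}$ by the choice of $\pi$; summing the geometric series $\sum_{j\ge k} r^j = r^k/(1-r)$ and letting $m\to\infty$ gives $d(z,x_{k,i})\le r^k/(1-r)$. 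For the inner bound I would show $B_{cr^k}(x_{k,i})\subset Q_{k,i}$ with $c=\frac{1-3r}{2-2r}$ by checking that any $z$ with $d(z,x_{k,i})<cr^k$ cannot be closer to a competing net point: if $x_{k,i'}$ is a different level-$k$ point then $d(x_{k,i},x_{k,i'})\ge\epsilon_k=r^k$, and one must ensure the tie-breaking/assignment never steals $z$ away — this requires that $z$ stays strictly within the "Voronoi cell tube" of $x_{k,i}$ at every finer level, which is where the constant is calibrated: the error from descending ancestors ($\le r^{k+1}/(1-r)$ at the next level down contributes to how far a level-$(k+1)$ center can drift from $x_{k,i}$) plus the half-separation slack must leave room, giving the inequality $c + r/(1-r) \le 1/2$, i.e. $c \le \frac{1-3r}{2-2r}$ after simplification, and $r<1/3$ is exactly the condition making $c>0$. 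I would carry out this calibration carefully, as the bookkeeping of which constant controls which inclusion is the delicate part; everything else (finiteness, nestedness, measurability of the cubes as Borel sets) is routine.
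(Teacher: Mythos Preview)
Your outline has the right skeleton—nested nets via Proposition~\ref{netexistance}, parent maps $\pi_k$ to a nearest coarser net point, and geometric series for the radii in (3)—and these pieces match the paper. The gap is in the actual definition of the cubes $Q_{k,i}$. You oscillate between two candidates, a parent-constrained Voronoi partition and an ``eventual ancestor of the nearest net point'' rule, without committing to either, and neither is shown to work. The parent-constrained Voronoi scheme does give nested partitions, but then your outer-bound step ``$d(z,x_{m,i_m})\le\epsilon_m$, which I arrange in the construction'' is not justified: a point $z$ in a parent cube need not lie within $\epsilon_m$ of the \emph{nearest child} of that parent, since the globally nearest $\mathcal N_m$-point to $z$ may have a different parent. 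The ``eventual ancestor'' rule is not obviously well-defined, since the level-$k$ ancestor of $n_m(z)$ can change with $m$ infinitely often. Your inner-bound calibration $c+r/(1-r)\le 1/2$ recovers the right constant heuristically but does not constitute a proof without a concrete cube that the argument applies to.

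The paper's missing idea is to define the cubes from the descendant net points themselves: set $A_{k,i}=\{x_{l,j}:(l,j)\text{ is a descendant of }(k,i)\}$ and then $Q_{k,i}=Q_{k-1,\varphi_{k-1}(i)}\cap\overline{A_{k,i}}\setminus\bigcup_{j<i}Q_{k,j}$. This makes both halves of (3) fall out directly. The outer bound holds because $Q_{k,i}\subset\overline{A_{k,i}}$ and every descendant is within $\sum_{l\ge k}r^l=r^k/(1-r)$ of $x_{k,i}$ by chaining the parent maps. For the inner bound one shows (i) $A_{k,i}\subset Q_{k,i}$ by induction on $k$, (ii) for any competing index $j\ne i$ the set $A_{k,j}$ is dense in $Q_{k,j}$, and (iii) every non-descendant net point $x_{k',i'}$ satisfies $d(x_{k,i},x_{k',i'})\ge r^k\bigl(\tfrac12-\tfrac{r}{1-r}\bigr)$ by comparing it to its level-$k$ ancestor; hence a small ball around $x_{k,i}$ meets no $A_{k,j}$ with $j\ne i$, and therefore no $Q_{k,j}$ with $j\ne i$. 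Once you adopt this definition, your geometric-series computations become rigorous; without it, the argument for (3) does not close.
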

 
\begin{proof} Let $x_0\in X.$ Let $\mathcal{N}_0$ be a $\epsilon_0$-net in $X$ containing $x_0.$ Then having chosen an $\epsilon_j$ net $\mathcal{N}_j$ for each $j\leq k$ with $\mathcal{N}_j\subset \mathcal{N}_{j+1}$ for $j<k,$ extend $\mathcal{N}_k$ to an $\epsilon_{k+1}$-net $\mathcal{N}_{k+1},$ which is possible by Proposition \ref{netexistance}. Note each $\mathcal{N}_k$ has a finite number, say $M_k,$ of elements since $X$ is compact. Let $\mathcal{N}_k=(x_{k,i})_{i\in I_k},$ where $I_k=\{i\in \mathbb{Z}^+\;|\;i\leq M_k\}.$ Let $P:=\coprod_{k\in\mathbb{N}} I_k=\cup_{k\in\mathbb{N}}(\{k\}\times I_k).$ For each $k\in \mathbb{N}$, define $\varphi_{k}:I_{k+1}\rightarrow I_k$ by 
\[\varphi_k(i)=\min\{i'\in I_k\;|\;d(x_{k+1,i},x_{k,i'})=\min_{q\in I_k} d(x_{k+1,i},x_{k,q})\}.\]Then we define a partial order relation $\leq$ on $P$ as follows.  If $k\in \mathbb{N}$ and $i\in I_{k+1},$ set \[(k+1,i)\leq (k,\phi_k(i)).\] We then extend to the smallest partial order including these relations. That is we set $(k,i)\leq (k,i)$ for each $(k,i)\in P.$ Then if $p,q\in P,$ we set $p\leq q$ if there exists an $n\in \mathbb{Z}^+$ and $p_1,p_2,...,p_n\in P$ with $p_1=p,$ $p_n=q,$ and $p_i\leq p_{i+1}$ for $1\leq i <n.$ Then $\leq$ is a partial order on $P.$ Let $I_{-1}:=\{1\}$ and $Q_{-1,1}:=X.$ Then define $\varphi_{-1}:I_0\rightarrow I_{-1}$ to be constant equal to $1.$ 

For $k\in \mathbb{N},$ we define $(Q_{k,i})_{i\in I_k}$ inductively by \[Q_{k,i}=Q_{k-1,\varphi_{k-1}(i)}\cap \overline{\{x_{l,j}|\;(l,j)\leq (k,i)\}}\setminus \cup_{j<i} Q_{k,j}.\] 

We first show for each $k\in \mathbb{N}$ that $(Q_{k,i})_{i\in I_k}$ is a partition of $X$ by Borel sets. Let $A_{k,i}:=\overline{\{x_{l,j}\;|\;(l,j)\leq (k,i)\}}.$ We proceed by induction. For $k=0,$ since $\cup_{i\in I_0} A_{0,i}$ is dense, $X=\overline{\cup_{i\in I_0}A_{0,i}}\subset \cup_{i\in I_0} \overline{A_{0,i}},$ where the last inclusion holds since $I_0$ is finite so that $\cup_{i\in I_0} \overline{A_{0,i}}$ is closed. Then the $(Q_{0,i})_{i\in I_0}$ are the disjointization\footnote{If $(A_k)_{k=1}^\infty$ is a collection of subsets of a set $S,$ then if $B_k=A_k\setminus (\cup_{j<k} A_j)=A_k\setminus (\cup_{j<k} B_j)$ for $k\in\mathbb{Z}^+,$ (where $A_1=B_1$ since $\cup\emptyset=\emptyset$) then the $(Bk)_{k=1}^\infty$ are disjoint with $\cup_{k=1}^n B_k =\cup_{k=1}^n A_k.$} of the Borel sets $(\overline{A_{0,i}})_{i\in I_0};$ hence they form a Borel partition. Now let $k\in \mathbb{Z}^+$, and suppose that $(Q_{k-1,i})_{i\in I_{k-1}}$ is a Borel partition of $X.$ Since by the induction hypothesis $Q_{k-1,i}$ is Borel for each $i\in I_{k-1}$, $\overline{A_{i,k}}$ is Borel for each $i\in I_k,$ and the $(Q_{i,k})_{i\in I_k}$ are the result of a disjointization procedure applied to countably many Borel sets, it follows $(Q_{i,k})_{i\in I_k}$ is a disjoint collection of Borel sets. It remains only to show that their union is $X.$  Since $\mathcal{N}_{k-1}\subset \mathcal{N}_k,$ if $x_{k-1,j}\in \mathcal{N}_{k-1}$ is labeled $x_{k,i}\in \mathcal{N}_k,$ then $j=\varphi_{k-1}(i).$ Hence $\varphi_{k-1}^{-1}(k-1,j)\neq \emptyset$ for all $j\in I_{k-1}.$ It follows that the sets $(\varphi_{k-1}^{-1}(k-1,j))_{j\in I_{k-1}}$ partition $I_k.$ Hence, since $\cup_{j\in I_{k-1}} Q_{k-1,j}=X$ by the induction hypothesis, 
\begin{equation*}
\begin{split}
&\cup_{i\in I_{k}}(Q_{k-1,\varphi_{k-1}(i)}\cap \overline{A_{k,i}})=\cup_{j\in I_{k-1}}\cup_{i\in \varphi_{k-1}^{-1}(j)}(Q_{k-1,j}\cap \overline{A_{k,i}})\\
&=\cup_{j\in I_{k-1}}\left (Q_{k-1,j}\cap (\cup_{i\in \varphi^{-1}(k-1,j)}\overline{A_{k,i}})\right )\supset \cup_{j\in I_{k-1}} Q_{k-1,j}=X.
\end{split}
\end{equation*} 
Therefore the $(Q_{k,i})_{i\in I_k}$ form a Borel partition of $X.$

We now show that the $Q_{i,k}$ satisfy properties (1) and (2). Property (1) holds by construction. Suppose $k_1,k_2\in \mathbb{N}$ with $k_1<k_2$. Define $\varphi_{k_2,k_1}:=\varphi_{k_2-1}\circ\varphi_{k_2-2}\circ...\circ\varphi_{k_1}.$ Let $i_2\in I_{k_2}.$ Then by construction $Q_{k_2,i_2}\subset Q_{k_1,\varphi_{k_2,k_1}(i_2)}.$ Since the $(Q_{k_2,j})_{j\in I_{k_2}}$ are disjoint, $Q_{k_2,i_2}$ non-trivially intersects exactly one $Q_{k_2,j}$ for $j\in I_{k_2}.$ Hence (2) holds. 

Now suppose $0<r<1/3$ and $\epsilon_k=r^k$ for all $k.$ We show $(3)$ holds. Note that if $(k_2,i_2)\leq (k_1,i_1)$ for $k_2\geq k_1$ then $i_1=
\varphi_{k_2,k_1}(i_2).$ It follows that the $(A_{k,i})_{i\in I_k}$ are disjoint. For $k=0,$ 
$A_{k,i}\subset Q_{k,i}$ for all $i\in I_0.$ Suppose $A_{k,i}\subset Q_{k,i}$ for all $i\in I_k$ 
where $k\in \mathbb{N}.$ Then if $i\in I_{k+1},$ $A_{k,\phi_k(i)}\subset Q_{k,\phi_k(i)}.$ Hence 
$A_{k+1,i}\subset Q_{k,\phi_{k(i)}}.$ Since the $(A_{k+1,j})_{j\in I_{k+1}}$ are disjoint, it 
follows that $A_{k+1,i}\subset Q_{k+1,i}.$ Hence, by induction, $A_{k,i}$ subset $Q_{k,i}$ for all 
$(k,i)\in P.$ For $(k,i)\in P$ and $\epsilon>0,$ suppose $B_\epsilon(x_{k,i})\cap \mathcal{N}\subset A_{k,i}.$ Then for any $j\in I_k,$ if $B_\epsilon(x_{k,i})\cap Q_{k,j}\neq \emptyset,$ since $A_{k,j}$ is dense in $Q_{k,j},$ we have $\emptyset \neq B_{\epsilon}(x_{k,i})\cap A_{k,j} \subset B_{\epsilon}(x_{k,i})\cap \mathcal{N}\subset A_{k,i}.$ It follows that $i=j.$ So $B_\epsilon(x_{k,i})\subset Q_{k,i}.$ Now if $(k',i')\notin A_{k,i},$ we may assume $k'>k$ as the $\mathcal{N}_j$ are nested, then \begin{equation*}
\begin{split}
r^k/2&<d(x_{k,i},x_{k,\varphi_{k',k}(i')})\leq d(x_{k,i},x_{k',i'})+\sum_{l=0}^{k'-k-1}d(x_{k+l+1,\varphi_{k',k+l+1}(i')},x_{k+l,\varphi_{k',k+l}(i')})\\
&\leq d(x_{k,i},x_{k',i'})+r^k(\frac{r}{1-r}).
\end{split}
\end{equation*}
Hence $B_{r^k(1/2-r/(1-r))}(x_{k,i})\subset Q_{k,i}.$ Conversely, since $Q_{k,i}\subset \overline{A_{k,i}},$ if $(k',i')\leq (k,i)$ with $k'\geq k,$ then since $i=\varphi_{k',k}(i'),$ $d(x_{k,i},x_{k',i'})\leq \sum_{l=0}^{k'-k-1} d(x_{k+l+1,\varphi_{k',k+l+1}(i')},x_{k+l,\varphi_{k',k+l}(i')})\leq r^k(\frac{1}{1-r}).$ Hence $Q_{k,i}\subset B_{r^k/(1-r)}[x_{k,i}].$
\end{proof}

In fact, if $1/3\leq r<1,$ then as remarked in \cite{kaenmaki}, (3) also holds but with a constants $c_1,c_2>0$ such that 
$B_{c_1r^k}(x_{k,i})\subset Q_{k,i} \subset B_{c_2r^k}[x_{k,i}]\mbox{\;for all\;} k,i.$ Indeed, if $1/3\leq r<1,$ then let $j=\min\{i\in \mathbb{Z}^+\;|\;r^i<1/3\}.$ Note first we still have $c_2=\frac{1}{1-r}$ since only $0<r<1$ was needed for that bound. Let $(\mathscr{N}_k)_{k=0}^\infty$ be an increasing sequence of $r^k$-nets. Then $(\mathscr{N}_{jk})_{k=0}^\infty$ is an increasing sequence or $(r^j)^k$-nets. However, $r^{j\lceil k/j \rceil}\leq r^k,$ where $\lceil \cdot \rceil$ is the ceiling function. Hence, by (3) applied to $\epsilon_k=r^{jk},$ we have, noting that $N_k\subset N_{j\lceil k/j \rceil},$ $B_{(\frac{1-3r^j}{2-2r^j})r^{j\lceil k/j \rceil}}(x_{k,i})\subset Q_{k,i}.$ Then, as $j\lceil k/j \rceil \leq j(k/j+1)=k+j,$ we may take $c_1=(\frac{1-3r^j}{2-2r^j})r^j.$

Assuming the doubling condition, M. Christ in \cite{christ} proves the following version of the dyadic cube decomposition. Notably there is control of the measure of the neighborhoods the boundaries of the cubes. The proof, found in \cite{christ}, is rather lengthy and is omitted here.

\begin{proposition} (M. Christ in \cite{christ}) Suppose $X$ is doubling with doubling measure $\mu.$ Then there exists a $\delta\in (0,1)$, constants $\nu, C_1,C_2,C_3>0,$ and a collection $(U_{k,i})_{k\in\mathbb{Z},i\in I_k}$ of open subsets of $X$ such that 
\begin{equation*}
\begin{split}
\mbox{(1)}&\; \mbox{The\;}(U_{k,i})_{i\in I_k}\;\mbox{are disjoint and\;}\mu(\left(\cup_i U_{k,i}\right)^c)=0\mbox{\;for all\;}k;\\
\mbox{(2)}&\; \mbox{For\;}k_1<k_2\mbox{\;and all\;} i_2,\; \sharp\{i_1\;|\;U_{k_1,i_1}\cap U_{k_2,i_2}\neq \emptyset\}=1; \\
\mbox{(3)}&\;\mbox{For all\;}k\;\mbox{there exists a\;}\delta^k \mbox{-net\;}(x_{k,i})_{i\in I_k}\mbox{\;such that for all\;}i\in I_k\\& B_{C_1\delta^k}(x_{k,i})\subset U_{k,i} \subset B_{C_2\delta^k}(x_{k,i});\mbox{\;and}\\
\mbox{(4)}&\; \mbox{For all \;}k,i,\;\mu(\{x\in U_{k,i}\;|\;\dist(x,U_{k,i}^c)\leq t\delta^k\})\leq C_3t^\eta\mu(U_{k,i}).
\end{split}
\end{equation*}
\end{proposition}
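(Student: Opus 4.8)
The plan is to reconstruct M.~Christ's decomposition from \cite{christ}, using the same skeleton as the proof of Theorem \ref{dyadic} but with the scale index ranging over all of $\mathbb{Z}$ and with the doubling constant of $\mu$ carried along throughout. Fix $\delta\in(0,1)$, small enough for the boundary estimate below, and for each $k\in\mathbb{Z}$ pick a maximal $\delta^k$-separated set $\mathcal{N}_k=(x_{k,i})_{i\in I_k}$; by Proposition \ref{netexistance} these may be taken nested, $\mathcal{N}_k\subset\mathcal{N}_{k+1}$, and since $X$ is compact each $I_k$ is finite and is a singleton once $\delta^k>\diam(X)$, so no limiting argument is needed. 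Define the parent map $\varphi_k\colon I_{k+1}\to I_k$ by sending $i$ to (the least index of) a nearest point of $\mathcal{N}_k$ to $x_{k+1,i}$, and let $\le$ be the induced partial order on $\coprod_k I_k$, exactly as in Theorem \ref{dyadic}. The cubes $U_{k,i}$ are then built from open Voronoi-type cells refined down this tree; however, unlike the plain Voronoi construction used for Theorem \ref{dyadic}, one must insert a \emph{definite buffer}, arranging $B_{c_1\delta^k}(x_{k,i})\subset U_{k,i}\subset B_{c_2\delta^k}(x_{k,i})$ with $c_1<c_2$ and assigning each borderline fine cube to the (unique) parent whose core ball of radius $c_1$ times the parent scale contains its center. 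This buffered assignment is what makes the small-boundary property attainable and is the source of most of the work in \cite{christ}.

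With the construction in hand, properties (1)--(3) come out essentially as the corresponding parts of Theorem \ref{dyadic}: (1) holds by construction, together with the fact --- arranged exactly as in \cite{christ} --- that the open cubes at a fixed level cover $X$ up to a $\mu$-null set (this needs care, since for a general doubling measure the Voronoi walls may carry $\mu$-mass, and is again handled by the buffer mechanism); (2) is immediate from the partial order, which forces each $U_{k_2,i_2}$ into exactly one $U_{k_1,\cdot}$; and (3) follows from the same telescoping estimate as before, bounding chains of parent displacements by $\sum_{l\ge 1}c_2\delta^l=c_2\delta/(1-\delta)$, which in turn fixes admissible $c_1,c_2$.

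The crux is property (4), which I would prove in two stages. The base lemma is that there exist $\tau_0>0$ and $\theta\in(0,1)$, depending only on $\delta$ and the doubling constant, with $\mu(\{x\in U_{k,i}\colon \dist(x,U_{k,i}^c)\le\tau_0\delta^k\})\le\theta\,\mu(U_{k,i})$ for every cube: here the buffered construction is essential, since $B_{c_1\delta^k}(x_{k,i})$ lies a definite distance from $U_{k,i}^c$ and, by doubling, carries a fixed fraction of $\mu(U_{k,i})\le\mu(B_{c_2\delta^k}(x_{k,i}))$, while the assignment rule keeps the mass within $\tau_0\delta^k$ of $U_{k,i}^c$ below the complementary fraction. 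Granting the base lemma, one iterates across scales: for a subcube $R\subset U_{k,i}$ one has $R^c\supset U_{k,i}^c$, hence $\dist(x,R^c)\le\dist(x,U_{k,i}^c)$, so the $\delta^{k+m}$-collar of $U_{k,i}$ is covered by the corresponding collars of those generation-$(k+1)$ subcubes that meet a slightly wider collar of $U_{k,i}$; once $\delta$ is small enough that wider collar sits inside $\{\dist(\cdot,U_{k,i}^c)\le\tau_0\delta^k\}$, so the base lemma bounds the total measure of those subcubes by $\theta\,\mu(U_{k,i})$, and induction on $m$ gives $\mu(\{x\in U_{k,i}\colon\dist(x,U_{k,i}^c)\le\delta^{k+m}\})\le C\theta^m\mu(U_{k,i})$. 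Interpolating in $t$ between consecutive powers of $\delta$ yields (4) with $\eta=\log\theta/\log\delta>0$. The main obstacle throughout is precisely this base lemma: for a general doubling measure the Voronoi boundaries can carry substantial mass, so the cube construction must be engineered so that a \emph{fixed}-width collar of \emph{every} cube captures only a fixed fraction of its mass, uniformly in $k$ and $i$ --- supplying this self-improving input is the lengthy part of Christ's argument, after which the combinatorial bookkeeping above finishes the proof.
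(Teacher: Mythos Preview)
The paper does not prove this proposition at all: immediately after stating it, the author writes that the proof ``is rather lengthy and is omitted here'' and refers to \cite{christ}. So there is nothing in the paper to compare your attempt against.

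That said, your sketch is a faithful outline of Christ's original argument: the nested $\delta^k$-nets, the parent map and tree order (as in Theorem~\ref{dyadic}), the buffered Voronoi-type assignment that gives each cube a core ball a definite distance from its complement, and the two-stage proof of (4) --- a base ``fixed collar captures at most a fixed fraction'' lemma followed by iteration across scales to produce the power law --- are exactly the ingredients Christ uses. You are right to flag the base lemma as the crux and the place where the buffered construction is indispensable; the remaining steps are bookkeeping. As a sketch it is accurate, though of course the details you defer (making the buffered assignment well-defined while keeping the cubes open and nested, and verifying the null-complement in (1)) are where the length lies.
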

Note that (4) implies that $\mu(\partial U_{k,i})=0$ for all $k,i.$ Indeed, let $N_i=\{j\in I_k\;|\;\overline{U_{j,k}}\cap \overline{U_{i,k}}\neq\emptyset\}.$ The doubling condition and (3) ensure that the set $N_i$ is finite. Then $\mu(\partial U_{k,i})\leq \inf_{t>0} \mu(\cup_{j\in N_i}\{x\in U_{k,i}\;|\;\dist(x,U_{k,i}^c)\leq t\delta^k\})\leq \inf_{t>0}C_3\sharp(N_i)t^{\eta}\mu(X)=0.$  

Additionally, we note in passing that if $X$ is Ahlfors regular of dimension $\alpha$, then it was proven in \cite{Gigante} that there exists constants $C_1,C_2>0$ such that for any positive integer $N,$ $X$ may be partitioned into $N$ sets of equal measure, each set containing a ball of radius $C_1N^{-1/\alpha}$ and contained in a ball 
or radius $C_2N^{-1/\alpha}.$  The proof uses the dyadic cube decomposition of $X.$ The constants $C_1,C_2$ depend on the constants $c_1,c_2$ in the paragraph following Theorem \ref{dyadic} and the constants in the definition of Ahlfors regularity. The proof may be found in \cite{Gigante}.
\section{Probability}

In this section we summarize aspects of probability theory that will be used in the sequel, emphasizing rudiments of the theory of random walks on a metric space. 

\subsection{General Theory}

A measurable space is a pair $(\Omega,\mathscr{E})$ consisting of a non-empty set $\Omega$ and a $\sigma$-algebra $\mathscr{E}$ of subsets of $\Omega.$ Given a measurable space $(\Omega,\mathscr{E}),$ a positive measure $\mathbb{P}$ on $\mathscr{E}$ with $\mathbb{P}(E)=1$ is called a probability measure. The triple $(\Omega, \mathscr{E},\mathbb{P})$ is called a probability space. The set $\Omega$ is often called a sample space, with $\mathscr{E}$ called the collection of events. 

If $\Omega$ is a topological space, by $\mathscr{B}_\Omega$ we mean the Borel $\sigma$-algebra generated by the topology of $\Omega.$ If $(\Omega_i,\mathscr{M}_i)_{i\in I}$ is a collection 
of measurable spaces, the product $\sigma$-algebra $\otimes_{i\in I}
\mathscr{M}_i$ of the $(\mathscr{M}_i)_{i \in I}$ is the smallest $\sigma$-algebra of subsets of $\prod_{i\in I} \Omega_i$ making each projection 
$\pi_j :\prod_{i\in I} \Omega_i\rightarrow \Omega_j,$ defined by $x\mapsto x(j)=x_j$ for $x\in \prod_{i\in I}\Omega_i,$ $
(\otimes_{i\in I}\mathscr{M}_i, \mathscr{M}_j)$-measurable for $j\in I.$

Given another measurable space $(\Omega',\mathscr{E}')$, a $\Omega'$-valued random variable $\mathcal{X}$ on $\Omega$ is an ($(\mathscr{E},\mathscr{E}')$-) measurable function $\mathcal{X}:\Omega\rightarrow \Omega'.$ 
If $\mathbb{P}$ is a probability measure on $\Omega,$ the distribution of ${\mathcal{X}}:\Omega\rightarrow \Omega'$ is the probability measure $\mathbb{P}_{{\mathcal{X}}}$ on $\Omega'$ defined by $\mathbb{P}_{{\mathcal{X}}}(A)=\mathbb{P}({\mathcal{X}}^{-1}(A))=\mathbb{P}({\mathcal{X}}\in A)$ for $A\in\mathscr{E}'.$  In the case that $(\Omega',\mathscr{E}',\mu)$ is a $\sigma$-finite measure space and $\mathbb{P}_{\mathcal{X}} \ll \mu,$ the Radon-Nikodym derivative $d\mathbb{P}_{\mathcal{X}}/{d\mu}$ is called a ($\mu$-) density of $X.$ Part of the probabilistic point of view is to define random variables by their distribution. One example, for which we will have particular use, is an exponential random variable of mean $\delta>0.$ It takes values in $[0,\infty)$ and its density with respect to Lebesgue measure $dt$ is $t\mapsto \frac{1}{\delta}e^{-t/\delta}$. Hence its distribution is defined by $A\mapsto \int_A \frac{e^{-t/\delta}}{\delta}dt,$ where $A\subset [0,\infty)$ is Borel measurable. 

Given a probability measure $\mathbb{P}$ on $\Omega$ and a $\mathbb{P}$-integrable $\overline{\mathbb{R}}$-valued random variable ${\mathcal{X}},$ its ($\mathbb{P}$-) expectation, $\mathbb{E}({\mathcal{X}}),$ is the integral $\int_\Omega {\mathcal{X}}
(\omega)d\mathbb{P}(\omega).$ For example, if $\mathcal{X}$ is exponentially distributed with mean $\delta,$ then, changing variables, $
\mathbb{E}({\mathcal{X}})=\int_\Omega \mathcal{X}(\omega)d\mathbb{P}(\omega)=
\int_0^\infty t\frac{d \mathbb{P}_{\mathcal{X}}}{dt}dt=\int_0^\infty \frac{t}{\delta}e^{-t/ \delta}dt=\delta$. If $\mathcal{X}$ is a random variable on $\Omega$ and 
$A\in\mathscr{E},$ then by $\mathbb{E}({\mathcal{X}},A)$ we mean $\int_A {\mathcal{X}}
(\omega)d\mathbb{P}(\omega).$ 

For ${\mathcal{X}}$ a random variable on $(\Omega,\mathbb{P})$ with $\mathbb{E}(|{\mathcal{X}}|)<\infty$, let ${\mathcal{X}}\mathbb{P}$ be the signed measure $A\mapsto \mathbb{E}({\mathcal{X}},A).$ Note if $\mathscr{A}$ is a sigma-algebra contained in $\mathbb{P},$ we 
have ${\mathcal{X}}\mathbb{P}|_{\mathscr{A}}\ll \mathbb{P}|_{\mathscr{A}}.$  The conditional expectation $\mathbb{E}
({\mathcal{X}}|\mathscr{A})$ is the Radon-Nikodym derivative $d(\mathcal{X}
\mathbb{P})/d(\mathbb{P}|_\mathscr{A}).$ 

\begin{definition}
A Polish space is a separable, completely metrizable topological space; which means that it possesses a countable basis, and there exists at least one metric that is complete and with metric topology equal to the original topology.\end{definition}

If $\Omega$ is a Polish space, let $C_b(\Omega)$ be the Banach space of bounded (real-valued) continuous functions on $E.$  Let $P(\Omega)$ be the space of Borel probability measures on $\Omega.$ Note $P(\Omega)$ is a subset of the dual of $C_b(\Omega).$ We endow $P(\Omega)$ with the the subspace weak-* topology. It can be shown, see \cite{stroock} for example, that $P(\Omega)$ is also Polish. If in addition, $\Omega$ is compact, it can also be shown that in that case $P(\Omega)$ is also compact. Convergence with respect to the topology of $P(\Omega)$ is called weak convergence of probability measures.

Before continuing, we state a few results on the regularity of Borel measures on metric spaces.

\begin{definition} A Borel measure $\nu$ is called outer regular if for every Borel set $A$, 
\[\nu(A)={\inf}_{\overline{\mathbb{R}}_+}\{\nu(U)\;|\;A\subset U,\; U \mbox{ open}\}.\] It is called inner regular if for any Borel set $A$, 
\[\nu(A)={\sup}_{\overline{\mathbb{R}}_+}\{\nu(F)\;|\;A\supset F,\;F \mbox{ compact}\}.\] The Borel measure $\nu$ is called regular if it is both outer and inner regular.\end{definition}
We state the following classical results on regularity of Borel measures. We recall the proofs in Appendix A for completeness, following \cite{van} and \cite{billingsley}.
\begin{proposition} Suppose $\nu$ is a finite Borel measure on a metric space $X$. Then $\nu$ is outer regular, and \[\nu(A)={\sup}_{\overline{\mathbb{R}}_+}\{\nu(F)\;|\;A\supset F,\;F \mbox{ closed}\}.\] If $X$ is $\sigma$-compact or if $X$ is separable and complete, then $\nu$ is also inner regular. \label{reg}
\end{proposition}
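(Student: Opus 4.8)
The plan is to establish the statement in three stages, in increasing order of strength: first approximation of arbitrary Borel sets by closed sets from inside and open sets from outside, then the compact inner regularity under the extra hypotheses of $\sigma$-compactness or completeness-plus-separability.

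First I would prove the closed/open approximation by the standard ``good sets'' argument. Let $\mathscr{A}$ be the collection of Borel sets $A$ such that for every $\varepsilon>0$ there exist a closed set $F$ and an open set $U$ with $F\subset A\subset U$ and $\nu(U\setminus F)<\varepsilon$. The key first step is to show every closed set lies in $\mathscr{A}$: given closed $C$, take $F=C$, and for the outer approximation write $U_n=\{x : \dist(x,C)<1/n\}$, which are open, decreasing, with $\bigcap_n U_n=C$ since $C$ is closed; finiteness of $\nu$ gives $\nu(U_n)\downarrow\nu(C)$, so some $U_n$ works. Then I would check $\mathscr{A}$ is a $\sigma$-algebra: it is clearly closed under complementation (swap the roles of $F$ and $U$ and pass to complements), and for a countable union $A=\bigcup_k A_k$ with $F_k\subset A_k\subset U_k$ and $\nu(U_k\setminus F_k)<\varepsilon 2^{-k}$, the set $U=\bigcup_k U_k$ is open and contains $A$, while $\bigcup_k F_k\subset A$ need not be closed — so truncate: by finiteness of $\nu$, $\nu(\bigcup_{k\le N}F_k)\to\nu(\bigcup_k F_k)$, pick $N$ large, set $F=\bigcup_{k\le N}F_k$ (a finite union of closed sets, hence closed), and estimate $\nu(U\setminus F)\le\sum_k\nu(U_k\setminus F_k)+\nu(\bigcup_k F_k\setminus\bigcup_{k\le N}F_k)<2\varepsilon$. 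Since $\mathscr{A}$ is a $\sigma$-algebra containing the closed sets, it contains all Borel sets. This yields outer regularity (via the $U$) and the closed-set inner approximation (via the $F$) simultaneously.

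Next, to upgrade ``closed'' to ``compact'' in the inner regularity under the stated hypotheses: if $X$ is $\sigma$-compact, write $X=\bigcup_n K_n$ with $K_n$ compact and increasing; given a Borel set $A$ and the closed $F\subset A$ from the previous step, the sets $F\cap K_n$ are compact, increasing, with union $F$, so $\nu(F\cap K_n)\to\nu(F)$ and some $F\cap K_n$ gives the required compact approximation. If instead $X$ is separable and complete, I would use the classical fact that a finite Borel measure on a Polish space is tight: for each $\varepsilon>0$ there is a compact $K$ with $\nu(X\setminus K)<\varepsilon$ — proved by covering $X$ for each $m$ with countably many balls of radius $1/m$ (separability), choosing finitely many whose union $E_m$ has $\nu(X\setminus E_m)<\varepsilon 2^{-m}$, and taking $K=\overline{\bigcap_m E_m}$, which is closed, totally bounded, hence compact by completeness, with $\nu(X\setminus K)<\varepsilon$. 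Intersecting the closed set $F\subset A$ with such a $K$ again produces the desired compact subset.

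The main obstacle is the truncation step in showing $\mathscr{A}$ is closed under countable unions — one must not forget that an infinite union of closed sets is generally not closed, and the fix genuinely uses continuity from below of the finite measure $\nu$. A secondary subtlety is the tightness argument in the Polish case, where one must be careful that $\bigcap_m E_m$ is totally bounded (each $E_m$ being a finite union of $1/m$-balls forces this) and that its closure remains within the measure-$\varepsilon$ error bound; completeness then converts total boundedness into compactness. Everything else is routine bookkeeping with $\varepsilon 2^{-k}$ estimates, so I would not belabor it. The detailed write-up is deferred to Appendix~A, following \cite{van} and \cite{billingsley}.
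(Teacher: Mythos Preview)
Your proposal is correct and follows essentially the same approach as the paper's proof in Appendix~A: the same good-sets argument for closed/open approximation (with the same truncation fix for countable unions), and the same tightness construction in the Polish case. The only cosmetic difference is in the $\sigma$-compact case, where you intersect a single closed approximant $F$ with the compact exhaustion $K_n$, whereas the paper approximates each $A\cap X_n$ by compacts separately and takes a limit---both work and the idea is the same.
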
  In particular, if $\mathbb{P}$ is a Borel probability measure on a Polish space $\Omega,$ then $\mathbb{P}$ is regular.

Suppose again $A,B\in\mathscr{E}$ with $\mathbb{P}(A)\neq 0.$ Then let $\sigma(A)=\{\emptyset,A,A^c,E\},$ that is $\sigma(A)$ is the smallest $\sigma$-algebra containing $A.$ Then $\mathbb{E}({1}_B|\sigma(A))=\mathbb{P}(B|A){1}_A+\mathbb{P}(B|A^c){1}_{A^c}.$ In particular, if $\omega\in A,$ $\mathbb{E}({1}_B|\sigma(A))(\omega)=\mathbb{P}(B|A).$ Moreover, for $C\in\sigma(A),$ \[\int_C\mathbb{E}({1}_B|\sigma(A))(\omega)d\mathbb{P}(\omega)=\mathbb{P}(C\cap B).\] Hence, we might generalize conditional probability as follows. Suppose $\mathscr{F}$ is a sub-$\sigma$-algebra of $\mathscr{E}.$ Then for $B\in\mathscr{E},\omega\in \Omega,$ we might interpret $\mathbb{E}({1}_B|\mathscr{F})(\omega),$ where we have chosen a particular element in the equivalence class, as the conditional probability of $B$ given $\omega$ with respect only to the information contained in $\mathscr{F}.$ Then if ${\phi}:\Omega\rightarrow S$ is a random variable, let $\mathscr{F}$ be the $\sigma$-algebra generated by ${\phi}.$ Then, since the measure $A\mapsto \int_{{\phi}^{-1}(A)}\mathbb{E}({1}_B|\mathscr{F})d\mathbb{P}(\omega)$ is absolutely continuous with respect to the distribution $\mathbb{P}_{{\phi}}$ of ${\phi},$ there is a measurable ${\psi}_B$ on $S$ with $\int_A{\psi}_B(s)d\mathbb{P}_{\phi}(s)=\mathbb{P}({{\phi}}^{-1}(A)\cap B).$ Informally, we would like to interpret ${\psi}_B(s)$ as a conditional probability of $B$ given ${\phi}=s.$  However, the problem is that we would like for each $s$ the map $B\mapsto {\psi}_B(s)$ to be a Borel probability measure. Since members of the equivalence classes for the given densities are only unique $\mathbb{P}$ a.e., it is not immediately clear how to construct such conditional probabilities. The following result of \cite{cond} shows that for any Hausdorff space supporting a regular Borel probability measure, conditional probabilities always exist. See Theorem 5.3 of \cite{kallenberg}.

\begin{proposition} \label{cond} Suppose $\Omega$ is a Hausdorff space with regular Borel probability measure $\mathbb{P}$. Suppose $(S,\Sigma)$ is a measurable space and ${\phi}:\Omega\rightarrow S$ is measurable. Then there is a function $\mathscr{B}_{\Omega}\times S\ni (A,s)\mapsto \mathbb{P}^s(A)\in [0,1]$ such that 
\begin{equation*}
\begin{split}
\mbox{\bf{(a)}}&\;A\mapsto \mathbb{P}^s(A)\;\mbox{is a regular probability measure for all\;}s\in S,\\
\mbox{\bf{(b)}}&\;s\mapsto \mathbb{P}^s(A) \mbox{\;is measurable for all\;} A\in \mathscr{B}_\Omega,\;\mbox{and}\\
\mbox{\bf{(c)}}&\int_{A}\mathbb{P}^s(B)d\mathbb{P}_{{\phi}}(s)=\mathbb{P}({\phi}^{-1}(A)\cap B)\;\mbox{for all\;} A \in \Sigma, B \in \mathscr{B}_\Omega.\end{split}
\end{equation*}

\end{proposition}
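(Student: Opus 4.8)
The plan is to manufacture $\mathbb{P}^s$ out of conditional expectations and then lean on the regularity of $\mathbb{P}$ to turn a merely finitely additive, null-set-ambiguous family into a genuine family of regular Borel probability measures. Write $\mathscr{F}:=\phi^{-1}(\Sigma)\subset\mathscr{B}_\Omega$. For each fixed $B\in\mathscr{B}_\Omega$, since $\mathscr{F}$ is generated by $\phi$, the conditional expectation $\mathbb{E}(1_B\mid\mathscr{F})$ has the form $h_B\circ\phi$ for a $\Sigma$-measurable $h_B\colon S\to[0,1]$, unique $\mathbb{P}_\phi$-a.e., and the defining property of conditional expectation reads $\int_A h_B\,d\mathbb{P}_\phi=\mathbb{P}(\phi^{-1}(A)\cap B)$ for all $A\in\Sigma$ --- which is already property (c) for the candidate $\mathbb{P}^s(B):=h_B(s)$. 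What must be arranged is a coherent choice of the $h_B$ so that $B\mapsto\mathbb{P}^s(B)$ is, for every $s$, a countably additive regular Borel measure, and $s\mapsto\mathbb{P}^s(B)$ is measurable for every Borel $B$.

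Step 1: the real line. I would first prove the statement for $(\Omega,\mathscr{B}_\Omega)=(\RM,\mathscr{B}_\RM)$ by the classical conditional-distribution-function construction. For each $q\in\QM$ fix a version $g_q$ of $h_{(-\infty,q]}$; by monotonicity of conditional expectation and countability of $\QM$ there is one $\mathbb{P}_\phi$-null set $N\in\Sigma$ off which $q\mapsto g_q(s)$ is nondecreasing with limits $0$ and $1$ at $\mp\infty$. For $s\notin N$ put $F_s(t):=\inf_{q>t}g_q(s)$, a genuine distribution function, and let $\mathbb{P}^s$ be the corresponding Lebesgue--Stieltjes probability measure, which is regular by Proposition \ref{reg} (as $\RM$ is $\sigma$-compact); for $s\in N$ set $\mathbb{P}^s:=\delta_0$. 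Property (a) then holds by construction, (b) and (c) hold for $B=(-\infty,t]$ by right-continuity and dominated convergence, and a monotone-class argument extends (b), (c) to all Borel $B$ since the class of good $B$ is a $\lambda$-system containing the $\pi$-system of left half-lines.

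Step 2: transfer, then the general Hausdorff case. Whenever $(\Omega,\mathscr{B}_\Omega)$ is Borel isomorphic to a Borel subset of $\RM$ --- e.g. for $\Omega$ Polish, in particular compact metric --- one transports Step 1 along the isomorphism. For general Hausdorff $\Omega$ the regularity of $\mathbb{P}$ enters decisively: being inner regular by compacta, $\mathbb{P}$ is concentrated up to a null set on a $\sigma$-compact set $\bigcup_m K_m$ with $K_m$ compact increasing, so it is carried by a compact approximating class. I would then work with one countably generated sub-$\sigma$-algebra $\mathscr{A}\subset\mathscr{B}_\Omega$ at a time: $(\Omega,\mathscr{A})$ modulo $\mathbb{P}$-null sets is, after intersecting with $\bigcup_m K_m$ and using the compact approximation, isomorphic to a measure on a Borel subset of $[0,1]$, so Steps 1--2 give a regular conditional probability $\mathbb{P}^s_{\mathscr{A}}$; taking an increasing sequence $\mathscr{A}_n$ and passing to the projective limit yields $\mathbb{P}^s$. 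Countable additivity of the limit, and inner regularity by compacta of each $\mathbb{P}^s$ --- hence its full regularity, outer regularity being automatic on a Hausdorff space for a measure inner regular by compacta --- both come out of the consistency of the compact classes $(K_m)$ via a Prokhorov-type argument; and measurability of $s\mapsto\mathbb{P}^s(B)$ for arbitrary Borel $B$ is recovered by approximating $B$ from within by members of the $\mathscr{A}_n$ together with the $K_m$.

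The main obstacle is exactly Step 2 in the non-metrizable generality: the substantive point is not the bookkeeping that reconciles the $B$-dependent null sets, but showing that the resulting set function $B\mapsto\mathbb{P}^s(B)$ is countably additive and inner regular rather than only finitely additive. This is precisely where the hypothesis that $\mathbb{P}$ is regular is indispensable --- it is what makes $\mathbb{P}$ a compact (equivalently, perfect) measure --- and it is the reason the proposition is phrased for regular Borel probability measures on arbitrary Hausdorff spaces. By contrast the real-line construction of Step 1 and the monotone-class arguments are routine.
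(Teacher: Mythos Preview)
The paper does not prove this proposition. It is stated with a citation---``The following result of \cite{cond} shows that for any Hausdorff space supporting a regular Borel probability measure, conditional probabilities always exist. See Theorem 5.3 of \cite{kallenberg}''---and then used without further argument. So there is no proof in the paper to compare your proposal against.

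As for the proposal itself: Step~1 (the distribution-function construction on $\RM$) and the transfer to Polish spaces via Borel isomorphism are standard and correct. Your identification of the crucial point---that regularity of $\mathbb{P}$ makes it a compact (perfect) measure, and that this is what forces countable additivity of the conditional measures rather than mere finite additivity---is exactly right and is indeed the content of the cited result. However, the execution you sketch for the general Hausdorff case is too vague to count as a proof: the ``projective limit over an increasing sequence of countably generated sub-$\sigma$-algebras $\mathscr{A}_n$'' does not obviously make sense when $\mathscr{B}_\Omega$ is not itself countably generated (which it need not be for a general Hausdorff space), and the ``Prokhorov-type argument'' for inner regularity of each $\mathbb{P}^s$ is asserted rather than carried out. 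The actual argument in the literature (e.g.\ Faden, or the treatment via perfect measures) is more direct: one shows that for a perfect measure every countably generated sub-$\sigma$-algebra admits a regular conditional probability, and then one lifts this to the full $\sigma$-algebra using the approximating compact class; the projective-limit picture you describe is not the usual route. Also note that outer regularity is \emph{not} automatic for an arbitrary finite Borel measure on a Hausdorff space, contrary to your parenthetical remark---that requires metrizability or some substitute.
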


Let $\sigma(\phi)$ be the sub-$\sigma$-algebra of $\mathscr{B}_{\Omega}$ generated by $\phi.$ Then properties $(b)$ and $(c)$ imply that $s\mapsto \mathbb{P}^s(A)$ is equal to $\mathbb{E}(\chi_A|\sigma(\phi))$ almost everywhere.  By Lemma \ref{reg}, the above proposition applies, in particular, to a Borel probability measure on a Polish space. The function $\mathscr{B}_{\Omega}\times S\ni (A,s)\mapsto \mathbb{P}^s(A)\in [0,1]$ appearing in the proposition is called a regular conditional probability induced by ${\phi}.$

\subsection{Discrete Time Random Walks}

Much of the material in this section may be found in the paper \cite{Dever1} of the author. 

Recall that a metric measure space is a triple $(X,d,\mu)$ consisting of a metric space $(X,d)$ together with a non-negative Borel measure $\mu$ on $X$ of full support. 
For the remainder of the section, we assume $(X,d,\mu)$ is a metric measure space where $(X,d)$ is a compact metric space containing more than a single point. 

Suppose $p$ is a non-negative function in $L^2(X\times X,\mu\otimes\mu)$ with $\sup_{(x,y)\in X\times X}p(x,y)<\infty$, such that for all $x\in X,$ $\int p(x,y) d\mu(y)=1$ and $p(x,x)=0.$ Then for $f\in L^2(X,\mu),$ let \[Pf(x):=\int p(x,y)f(y)d\mu(y).\] Note that $P$ is a compact operator, as it is a Hilbert-Schmidt integral operator. We wish to use $p$ to define a discrete time walk. But first we need a way to create a measure on the space of discrete time paths $X^{\mathbb{N}}$ in $X$ using $p.$ 
We now state the following version of the Kolmogorov Extension Theorem, found on p.523 in \cite{Hitch}. We provide a proof of this classical result in the Appendix.

\begin{proposition} \label{ext} (Kolmogorov Extension Theorem)
Let $(W_t,\Sigma_t)_{t\in T}$ be a family of Polish spaces, and for each finite subset $F$ of $T,$ let $\mu_F$ be a probability measure on $\Omega_F=\prod_{t\in F} W_t$ with its product Borel $\sigma$-algebra $\Sigma_F.$ Assume the family $(\mu_F)$ satisfies the consistency condition that if $F'\subset F$ then $\mu_F\circ \pi_{F,F'}^{-1}=\mu_{F'},$ where $\pi_{F,F'}$ is the coordinate projection of $\Omega_{F}$ onto $\Omega_{F'}.$  Then there is a unique probability on the infinite product $\sigma$-algebra $\bigotimes_{t\in T}\Sigma_t$ that extends each $\mu_F.$
\end{proposition}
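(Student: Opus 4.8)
The plan is to construct the extension measure by the standard route: first build a finitely additive premeasure on the algebra of cylinder sets, then invoke the Carath\'eodory extension theorem (Proposition \ref{Car}, via the associated outer measure) to obtain a countably additive extension, and finally verify uniqueness via a $\pi$-$\lambda$ argument. The one genuinely nontrivial point — the reason Polishness is a hypothesis — is the verification of countable additivity on the cylinder algebra, which reduces to a regularity/compactness argument.

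\textbf{Step 1: The cylinder algebra and the premeasure.} Let $\Omega = \prod_{t\in T} W_t$ and let $\mathscr{A}$ be the algebra of cylinder sets, i.e. sets of the form $\pi_F^{-1}(B)$ with $F\subset T$ finite and $B\in\Sigma_F$, where $\pi_F:\Omega\to\Omega_F$ is the coordinate projection. One checks $\mathscr{A}$ is an algebra generating $\bigotimes_{t\in T}\Sigma_t$. Define $\mu_0(\pi_F^{-1}(B)):=\mu_F(B)$. The consistency condition $\mu_F\circ\pi_{F,F'}^{-1}=\mu_{F'}$ is exactly what is needed to see this is well defined (if $\pi_F^{-1}(B)=\pi_{F'}^{-1}(B')$, pass to $F\cup F'$ and compare), and finite additivity of $\mu_0$ follows by pulling any finite disjoint family back to a common finite index set and using additivity of the corresponding $\mu_F$.

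\textbf{Step 2: Countable additivity on $\mathscr{A}$.} This is the heart of the matter. It suffices to show: if $(A_n)\subset\mathscr{A}$ decreases with $\mu_0(A_n)\geq\varepsilon>0$ for all $n$, then $\bigcap_n A_n\neq\emptyset$. Write $A_n=\pi_{F_n}^{-1}(B_n)$; by adding coordinates we may assume $F_1\subset F_2\subset\cdots$ and, re-indexing, that $F_n$ adds one coordinate at a time. Since each $\Omega_{F_n}$ is Polish, the finite Borel measure $\mu_{F_n}$ is inner regular by Proposition \ref{reg} (the $\sigma$-compact / complete separable case), so choose a compact $K_n\subset B_n$ with $\mu_{F_n}(B_n\setminus K_n)$ small enough that the cylinders $C_n:=\pi_{F_n}^{-1}(K_n)$ satisfy $\mu_0(A_n\setminus C_n)<\varepsilon/2^{n+1}$. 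Replacing $C_n$ by $C_1\cap\cdots\cap C_n$ (still a cylinder over a compact base, by continuity of the projections $\Omega_{F_n}\to\Omega_{F_m}$) keeps $\mu_0$ of the tail positive, hence each $C_n\neq\emptyset$. Pick $\omega^{(n)}\in C_n$; by a diagonal argument using compactness of the successive compact bases $K_n$ one extracts coordinates converging coordinatewise to some $\omega\in\Omega$ with $\pi_{F_m}(\omega)\in K_m$ for every $m$, so $\omega\in\bigcap_n C_n\subset\bigcap_n A_n$. Hence $\mu_0$ is a premeasure on $\mathscr{A}$.

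\textbf{Step 3: Extension and uniqueness.} Apply the outer measure construction and Carath\'eodory's theorem (Proposition \ref{Car}): $\mu^*(E):=\inf\{\sum_n\mu_0(A_n): A_n\in\mathscr{A},\ E\subset\bigcup_n A_n\}$ is an outer measure whose measurable sets form a $\sigma$-algebra on which $\mu^*$ is a complete measure; since $\mu_0$ is a countably additive premeasure on the algebra $\mathscr{A}$, standard arguments show every element of $\mathscr{A}$ is $\mu^*$-measurable and $\mu^*|_{\mathscr{A}}=\mu_0$. Restricting to $\bigotimes_{t\in T}\Sigma_t=\sigma(\mathscr{A})$ gives a probability measure $\mu$ extending every $\mu_F$ (take $F'=F$ in the consistency relation to see $\mu\circ\pi_F^{-1}=\mu_F$). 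For uniqueness, if $\mu,\nu$ are two such extensions they agree on the algebra $\mathscr{A}$, which is closed under finite intersection, so by Dynkin's $\pi$-$\lambda$ theorem (the collection $\{E:\mu(E)=\nu(E)\}$ is a $\lambda$-system containing the $\pi$-system $\mathscr{A}$) they agree on $\sigma(\mathscr{A})$. The main obstacle is precisely Step 2: without inner regularity of the finite-dimensional measures — guaranteed here by the Polish hypothesis via Proposition \ref{reg} — the nested cylinders could have empty intersection and countable additivity would fail.
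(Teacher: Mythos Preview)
Your proof is correct and follows essentially the same route as the paper: define the premeasure on the cylinder algebra via consistency, establish countable additivity by the ``decreasing cylinders with positive mass have nonempty intersection'' argument using inner regularity on Polish spaces (Proposition~\ref{reg}) and compactness, then apply Carath\'eodory. The only cosmetic differences are that the paper extracts the point in $\bigcap_n C_n$ via the finite intersection property on projected compact sets rather than a diagonal subsequence, and it gets uniqueness directly from its statement of the Carath\'eodory extension theorem rather than a separate $\pi$--$\lambda$ argument.
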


We apply the theorem as follows. Let $\nu$ be a Borel probability measure on $X.$ Let $T=\mathbb{N}$ and $(W_t,\Sigma_t)=(X,\mathscr{B}_{X})$ for each $t.$ For $F$ a finite subset of $T,$ let $t_1<t_2<...<t_n$ be a list of the elements of $F.$ Let $A\in\Sigma_F$. If $t_1>0,$ let \[\mu_F(A)=\int_X\int_A\prod_{i=1}^np(x_{i-1},x_i)d\mu^{\otimes n}(x_n,...,x_1)d\nu(x_0),\] where $\mu^{\otimes m}=\otimes_{i=1}^m\mu$ for $m\in \mathbb{Z}^+.$ If $t_1=0,$ let \[\mu_F(A)=\int_A\left(\prod_{i=2}^{n}p(x_{i-1},x_i)\right)d(\mu^{\otimes (n-1)}\otimes \nu)(x_n,...x_1).\]
Since $\int p(x,y)d\mu(y)=1$ and $\nu$ is a probability measure on $X,$ the family $(\mu_F)$ satisfies the consistency condition of Proposition \ref{ext}. Hence there exists a measure $\mathbb{P}^\nu$ on $\Omega:=X^{\mathbb{N}}$ extending each $\mu_F.$ Let $\mathbb{E}^\nu$ denote the $\mathbb{P}^\nu$ expectation. For $x\in X,$ let $\delta_x$ be the Borel measure $\delta_x(A)=\chi_A(x)$ for $A\in \mathscr{B}_X.$ Then let $\mathbb{P}^x:=\mathbb{P}^{\delta_x}$ and $\mathbb{E}^x:=\mathbb{P}^{\delta_x}$ for $x\in X.$ 
The following lemma will allow us to easily show the maps $x\mapsto \mathbb{P}^x(A)$ for $A\in \otimes_{t\in \mathbb{N}}\mathscr{B}_X$ are measurable. A monotone class of subsets of $S$ is a collection $\mathscr{C}$ of sets that is closed under monotone unions or intersections, i.e. if $(A_i)_{i=1}^\infty \subset \mathscr{C}$ with either $A_i\subset A_{i+1}$ for all $i$ or $A_i\supset A_{i+1}$ for all $i$, then $\cup_{i=1}^\infty A_i, \cap_{i=1}^\infty A_i \in \mathscr{C}.$ 
A proof of the following lemma may be found in the Appendix.
\begin{lemma} (Monotone class lemma) \label{monot} Suppose $\mathscr{C}$ is a monotone class on $S$ containing an algebra $\mathscr{A}$ of subsets of $S.$ Then $\sigma(\mathscr{A})\subset \mathscr{C}.$
\end{lemma}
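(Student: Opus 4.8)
The plan is to reduce at once to the case where $\mathscr{C}$ is the \emph{smallest} monotone class containing $\mathscr{A}$, to prove that this minimal class is an algebra, and then to invoke the elementary fact that an algebra which is also a monotone class is automatically a $\sigma$-algebra. So first I would let $\mathscr{M}$ be the intersection of all monotone classes on $S$ that contain $\mathscr{A}$. An arbitrary intersection of monotone classes is again a monotone class, and $\mathscr{P}(S)$ is one such class containing $\mathscr{A}$, so $\mathscr{M}$ is well defined; it is a monotone class, it contains $\mathscr{A}$, and it is contained in every monotone class containing $\mathscr{A}$, in particular $\mathscr{M}\subset\mathscr{C}$. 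Hence it suffices to show $\sigma(\mathscr{A})\subset\mathscr{M}$.

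The core step is to show $\mathscr{M}$ is an algebra. For $A\subset S$ put
\[\mathscr{M}_A:=\{\,B\subset S\;|\;B\in\mathscr{M},\;A\setminus B\in\mathscr{M},\;B\setminus A\in\mathscr{M},\;A\cup B\in\mathscr{M}\,\}.\]
Using the closure of $\mathscr{M}$ under increasing unions and decreasing intersections together with the distributive identities $A\setminus\bigcup_i B_i=\bigcap_i(A\setminus B_i)$, $(\bigcup_i B_i)\setminus A=\bigcup_i(B_i\setminus A)$, and $A\cup\bigcup_i B_i=\bigcup_i(A\cup B_i)$ (and their decreasing analogues), one checks directly that $\mathscr{M}_A$ is a monotone class whenever $A\in\mathscr{M}$. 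The defining conditions are symmetric, so $B\in\mathscr{M}_A$ if and only if $A\in\mathscr{M}_B$. Now if $A\in\mathscr{A}$, then since $\mathscr{A}$ is an algebra every $B\in\mathscr{A}$ satisfies $A\setminus B,\,B\setminus A,\,A\cup B\in\mathscr{A}\subset\mathscr{M}$, i.e. $\mathscr{A}\subset\mathscr{M}_A$; since $\mathscr{M}_A$ is then a monotone class containing $\mathscr{A}$, minimality gives $\mathscr{M}\subset\mathscr{M}_A$. By the symmetry, this says that for every $B\in\mathscr{M}$ and every $A\in\mathscr{A}$ we have $A\in\mathscr{M}_B$, so $\mathscr{A}\subset\mathscr{M}_B$; thus $\mathscr{M}_B$ is again a monotone class containing $\mathscr{A}$, and minimality gives $\mathscr{M}\subset\mathscr{M}_B$ for \emph{every} $B\in\mathscr{M}$. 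In other words, for all $A,B\in\mathscr{M}$ we have $A\setminus B,\,B\setminus A,\,A\cup B\in\mathscr{M}$. Since $S\in\mathscr{A}\subset\mathscr{M}$, complementation $A^c=S\setminus A$ stays in $\mathscr{M}$, and $\mathscr{M}$ is closed under finite unions, so $\mathscr{M}$ is an algebra.

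Finally, an algebra that is a monotone class is a $\sigma$-algebra: given $(A_i)_{i=1}^\infty\subset\mathscr{M}$, the finite unions $B_n:=\bigcup_{i=1}^n A_i$ lie in $\mathscr{M}$ because $\mathscr{M}$ is an algebra, they increase to $\bigcup_i A_i$, so $\bigcup_i A_i\in\mathscr{M}$ because $\mathscr{M}$ is a monotone class; closure under complementation is already in hand. Hence $\mathscr{M}$ is a $\sigma$-algebra containing $\mathscr{A}$, whence $\sigma(\mathscr{A})\subset\mathscr{M}\subset\mathscr{C}$, which is the claim.

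I expect the main obstacle to be the bookkeeping in showing $\mathscr{M}_A$ is a monotone class (increasing and decreasing sequences must be treated separately, matching each set operation to the right distributive identity) and the two-stage bootstrap that upgrades the relation $\mathscr{M}\subset\mathscr{M}_A$ from $A\in\mathscr{A}$ to $A\in\mathscr{M}$ via the symmetry $B\in\mathscr{M}_A\Leftrightarrow A\in\mathscr{M}_B$; everything else is routine.
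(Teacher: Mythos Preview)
Your proof is correct and follows essentially the same approach as the paper: reduce to the smallest monotone class $\mathscr{M}$ containing $\mathscr{A}$, use an auxiliary ``good set'' family to show $\mathscr{M}$ is an algebra, and conclude that an algebra which is a monotone class is a $\sigma$-algebra. The only cosmetic difference is that the paper's auxiliary family is $\{A\in\mathscr{C}'\;|\;A\cap B,\,A^c\cap B\in\mathscr{C}'\}$ whereas yours is the symmetric $\mathscr{M}_A=\{B\in\mathscr{M}\;|\;A\setminus B,\,B\setminus A,\,A\cup B\in\mathscr{M}\}$; your explicit two-stage bootstrap via the symmetry $B\in\mathscr{M}_A\Leftrightarrow A\in\mathscr{M}_B$ is in fact a bit cleaner than the paper's write-up.
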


We show the map $x\mapsto \mathbb{P}^x(A)$ is measurable for all $A\in \otimes_{t\in \mathbb{N}} \mathscr{B}_X.$ . Let \[\mathscr{C}=\{A \in \otimes_{t\in \mathbb{N}} \mathscr{B}_X\;|\;x\mapsto \mathbb{P}^x(A)\mbox{\;measurable}\}.\] Then let $\mathscr{A}$ be the algebra of cylinder sets $\pi_F^{-1}(\prod_{i=0}^n A_i)$ where $A_i\in \mathscr{B}_X$ for $i=0,...,n,$ $F\subset \mathbb{N}$ with $\sharp F=n+1,$ $n\in \mathbb{N}.$ By the consistency condition, we may assume $0\in F.$ Then $\mathbb{P}^{x}(A)=\chi_{A_0}(x)\int_{A_1\times...\times A_{n}}p(x,x_1)\prod_{i=2}^np(x_{i-1},x_i)d\mu^{\otimes n}(x_n,...,x_1),$ which clearly is measurable. Hence $\mathscr{C}$ contains the algebra $\mathscr{A}.$ Suppose $(A_k)_{k=1}^\infty \subset \mathscr{C}$ with $A_k\subset A_{k+1}$ for all $k.$ Then let $\phi_n(x)=\mathbb{P}^x(A_n).$ Then since $A_n \uparrow \cup_k A_k,$ by continuity of measure, $\phi_n(x)\rightarrow \mathbb{P}^x(\cup_k A_k)$ for all $x.$ Since each $\phi_n$ is measurable, $\cup_k A_k \in \mathscr{C}.$ Similarly, if $A_k\supset A_{k+1}$ for all $k,$ $\phi_n(x)\rightarrow \mathbb{P}^x(\cap_k A_k)$ by continuity of measure. Hence $\mathscr{C}$ is a monotone class containing $\mathscr{A}.$ Since $\mathscr{A}$ generates $\otimes_{t\in \mathbb{N}}\mathscr{B}_X,$ we have that the map $x\mapsto \mathbb{P}^x(A)$ is measurable for all $A\in \otimes_{t\in \mathbb{N}}\mathscr{B}_X,$ as desired.

Then for $k\in\mathbb{N},$ let $\bm{x}_k:=\pi_k:X^{\mathbb{N}}\rightarrow X$ be the projection onto the $k$th coordinate. Then, as the product $\sigma$-algebra makes all of the projections measurable, $\bm{x}_k$ is a random variable for each $k.$ The process $\bm{x}:=({\bm{x}}_k)_{k=1}^\infty$ is a discrete time random walk on $X.$

Then $\bm{x}$ is a time-homogeneous Markov process\footnote{The Markov property is often defined as $\mathbb{P}^x(\bm{x}_k\in A|(\bm{x}_i)_{i\leq j})=\mathbb{P}^x(\bm{x}_k\in A|\bm{x}_j)$. This has the disadvantages of requiring the time-homogenity property to be defined separately. Instead, we follow the definition of a homogeneous Markov process via transition probabilities given in \cite{rogers}.}, meaning it satisfies the (homogeneous) \textit{Chapman-Kolmogorov} equation, that is if for each $j\in \mathbb{N},$ if $p_k$ is the $k$-step transition probability $p_k^x(A)=\mathbb{P}^x(\bm{x}_k\in A)$ for $x\in X,$ $A\in \mathscr{B}_X,$ then for $j,k\in \mathbb{N}$ with $j<k,$ we have
\[\int_A p^y_{k-j}(A)dp^x_j(y).\] Indeed, this may easily be seen to hold from the equation \[p^{x_0}_n(A)=\left(\prod_{j=1}^{n-1}\int_X p(x_{j-1},x_j)\right) \int_A p(x_{n-1},x_n)d\mu(x_n)...d\mu(x_1),\] for $x_0\in X, n\in \mathbb{N}, A\in \mathscr{B}_X.$ Note each $p_k^x$ is a probability measure and for each $A\in \mathscr{B}_X,$ the map $x\mapsto p^x_k(A)$ is measurable. The $p^y_{j-k}(A)$ appearing in the Chapman-Kolmogorov equation may be interpreted as the ``transition probability from $y$ at time $j$ to an element of $A$ at time $k.$" The process is called time-homogeneous since this ``transition probability" depends only on $k-j.$

\begin{definition} A Borel measure $\nu$ on $X$ is called an equilibrium measure if for all $f\in C(X),$ \[\nu\circ P(f)=\nu(Pf)=\int Pf(x)d\nu(x)=\int \int p(x,y)f(y)d\mu(y)d\nu(x)=\int f(x)d\nu(x)=\nu(f).\]\end{definition} A particularly important example is as follows. Suppose that 
there exists a bounded measurable function $\phi:X\rightarrow [0,\infty)$ with $\int \phi d\mu =1$ such that \[\phi(x)p(x,y)=\phi(y)p(y,x).\] Then let a measure $\nu$ be defined by $d\nu = \phi d\mu.$ In this case, $\nu$ is an equilibrium measure, and the process is said to be $\nu$-symmetric.

For what follows let us fix a point $a\in X$ and an open or closed set $A$ containing $a.$ Define ${\tau}_A:=\inf_{\mathbb{N}} \{k\geq 0\;| \bm{x}_k\notin A\}.$ Then ${\tau}_A$ is called the first exit time from $A$ (or the first hitting time for $A^c).$ Let ${E}_A(x):=\mathbb{E}^x{\tau}_A$ denote the \textit{mean exit time} of $\bm{x}$ from $A$ starting at $x.$ For $\omega\in \Omega,$ we adopt the notation $\omega(k)$ for $\pi_k(\omega)$ when $k\in \mathbb{N}.$

\begin{proposition} \label{markov} For $x\in U,$ we have \[E_A(x)=1+\int p(x,y)E_A(y)d\mu(y).\] If $x\notin U,$ we have $E_A(x)=0.$
\end{proposition}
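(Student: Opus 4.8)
The plan is to carry out a first-step (first-passage) decomposition directly from the explicit formula for $\mathbb{P}^x$ produced by the Kolmogorov Extension Theorem (Proposition \ref{ext}), so that no shift operators or abstract Markov-property machinery are needed. I would dispose of the second assertion first, since it is used at the end of the proof of the first: if $x \notin A$ then $\bm{x}_0 = x \notin A$ holds $\mathbb{P}^x$-almost surely, hence $\tau_A = 0$ and $E_A(x) = \mathbb{E}^x \tau_A = 0$. (Here $A$ denotes the set written $U$ in the statement.)

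For $x \in A$, note that $\{\tau_A > k\} = \bigcap_{j=0}^k \{\bm{x}_j \in A\}$ lies in the product $\sigma$-algebra, so $\tau_A$ is measurable and $\tau_A = \sum_{k \ge 0} \mathbf{1}_{\{\tau_A > k\}}$; taking $\mathbb{P}^x$-expectations and using Tonelli (all terms nonnegative),
\[E_A(x) = \sum_{k=0}^{\infty} \mathbb{P}^x\big(\bm{x}_j \in A \text{ for } 0 \le j \le k\big).\]
Applying the formula that defines $\mu_F$ for $F = \{0,1,\dots,k\}$ with $\nu = \delta_x$, the $k=0$ term equals $\chi_A(x) = 1$, while for $k \ge 1$ one factors out the first transition to get
\[\mathbb{P}^x\big(\bm{x}_j \in A,\ 0 \le j \le k\big) = \int_A p(x,y)\, \mathbb{P}^y\big(\bm{x}_j \in A,\ 0 \le j \le k-1\big)\, d\mu(y).\]
Summing this over $k \ge 1$, interchanging sum and integral (Tonelli again), re-indexing $m = k-1$, and recognizing $\sum_{m \ge 0} \mathbb{P}^y(\bm{x}_j \in A,\ 0 \le j \le m) = E_A(y)$, I obtain $E_A(x) = 1 + \int_A p(x,y) E_A(y)\, d\mu(y)$. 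Finally, since $E_A(y) = 0$ for $y \notin A$ by the first step, the integral over $A$ may be replaced by the integral over $X$, which is the claimed identity. Measurability of $y \mapsto E_A(y)$, needed for these integrals, holds because it is a countable sum of the measurable functions $y \mapsto \mathbb{P}^y(\bm{x}_j \in A,\ 0 \le j \le k)$, measurability of $y \mapsto \mathbb{P}^y(B)$ on the product $\sigma$-algebra having been established just above via the monotone class lemma.

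There is no genuine obstacle here — the content is essentially bookkeeping — but three points deserve care: (i) the factorization identity above is exactly the $\nu = \delta_x$ instance of the definition of $\mu_F$ together with $\chi_A(x) = 1$, and one must match the indices of the iterated integral carefully; (ii) both interchanges of $\sum$ and $\int$ are legitimate by Tonelli because every integrand is nonnegative, so the identity in fact holds in $[0,\infty]$ with no a priori finiteness of $E_A$; and (iii) one must remember to invoke the $x \notin A$ case to pass from $\int_A$ to $\int_X$. Alternatively, the same conclusion follows more softly from the Markov property $\mathbb{P}^x(\,\cdot \mid \sigma(\bm{x}_1)) = \mathbb{P}^{\bm{x}_1}$ applied to the identity $\tau_A = 1 + \tau_A \circ \theta$ valid on $\{\bm{x}_0 \in A\}$, where $\theta$ is the time shift; but the direct computation is preferable since it uses only what has already been set up.
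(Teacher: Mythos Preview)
Your proof is correct and follows essentially the same first-step decomposition as the paper, working directly from the explicit cylinder-set formula for $\mathbb{P}^x$. The only difference is cosmetic: you use the tail-sum representation $E_A(x)=\sum_{k\ge 0}\mathbb{P}^x(\tau_A>k)$, whereas the paper writes $\mathbb{E}^x(\tau_A-1)=\sum_{k\ge 1}(k-1)\,\mathbb{P}^x(\tau_A=k)$ and then factors out the first step; your version avoids carrying the $(k-1)$ weight through the reindexing and is marginally tidier, but the substance is identical.
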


\begin{proof} The second claim is clear. If $x\notin A$ and $\omega$ is a path starting at $x,$ then $\omega(0)=x\notin U.$ Hence ${\tau}_A(\omega)=0.$ So $E_A(x)=\EM^x{\tau}_A =0.$

First, if $A\in\mathscr{B}_X,$ the map $x\mapsto \mathbb{P}^x(A)$ is measurable by the argument following Lemma \ref{monot}. Thus the map $E_A:x\mapsto \mathbb{E}^x({\tau}_A)$ is measurable, since $\tau_A$ is non-negative and measurable, and thus is a pointwise limit of simple functions. Now suppose $x\in A.$ Then $\mathbb{E}^x {\tau}_A = \mathbb{E}^x(1+({\tau}_A-1))=1+\mathbb{E}^x({\tau}_A-1).$ Hence we concentrate on the $\mathbb{E}^x({\tau}_A-1)$ term. Note that if $\omega$ is a path starting at $x$ and $\omega(1)\notin U,$ then $({\tau}_A-1)(\omega) =0.$ So when taking the expectation we may assume the path takes its first step in $A.$ For $k\geq 1$, $y\in A,$  let \[A_k(y):=\{\omega\;|\;\omega(0)=y,\;\omega(j)\in A\;\mbox{for $1\leq j\leq k-1$ and } \omega(k)\notin A\}.\] Since we assume the first step is in $A,$ $A_1=\emptyset.$ Set $x:=x_0.$ Then for $k\geq 2,$
\[\mathbb{P}^x(A_k(x))=\int_A p(x_0,x_1)\prod_{j=2}^{k-1} (\int_A p(x_{j-1},x_j))\int_{U^c} p(x_{k-1},x_k)d\mu(x_k)...d\mu(x_1).\] However, since $\bm{x}$ is time-homogeneous, for each $x_1\in A,$ \[\prod_{j=2}^{k-1} (\int_A p(x_{j-1},x_j))\int_{U^c} p(x_{k-1},x_k)d\mu(x_k)...d\mu(x_2)= \mathbb{P}^{x_1}(A_{k-1}).\] Therefore 
\begin{equation}
\begin{split}
\mathbb{E}^x({\tau}_A-1)&=\sum_{k=1}^\infty (k-1)\mathbb{P}^x(A_k(x))\\
&= \sum_{k=2}^\infty \int_A p(x_0,x_1)\mathbb{P}^{x_1}(A_{k-1}(x_1))d\mu(x_1) \\
&= \int_A p(x,y)(\sum_{k=1}^\infty \mathbb{P}^y (A_k(y))) d\mu(y)\\
&= \int_A p(x,y)E_A(y)d\mu(y).\\
\end{split}
\end{equation} The result then follows.
\end{proof}

\subsection{Epsilon-Approximate Random Walks}

\begin{definition}If $(X,d)$ is a compact, metric space, we will call a discrete time Markov process $\bm{x}=(\bm{x}_k)_{k=0}^\infty$ on a state space $S\subset X$ \textit{$\epsilon$-approximate} if 
\begin{equation*}
\begin{split}
\mbox{(a)\;}&S \mbox{\;is\;}\epsilon\mbox{-dense in\;}X,\;\mbox{that is\;}\cup_{x\in S} B_{\epsilon}(x)=X;\\
\mbox{(b)\;}&\mbox{The step size is at most\;}2\epsilon,\mbox{\;that is\;}\mathbb{P}^x(\bm{x}_1\in B_{2\epsilon}(x))=1\mbox{\;for all\;}x\in X; \mbox{\;and} \\
\mbox{(c)\;}&\mbox{The expected step size is at least\;}\epsilon/2, \mbox{that is\;}\mathbb{E}^x(d(x,\bm{x}_1))\geq \epsilon/2 \mbox{\;for all\;}x\in X.
\end{split}
\end{equation*}
\end{definition}
\begin{example} Suppose $\mathcal{N}$ is an $\epsilon$-net in $X.$ Let $G_{2\epsilon}$ be the proximity graph on $\mathcal{N}$ defined in section 1.2. Then for $x,y\in \mathcal{N},$ define a transition probability $p(x,y)=\frac{1}{\deg(x)}\chi_{B_{2\epsilon}(x)\setminus\{x\}}(y).$ Then the resulting discrete time process is $\epsilon$-approximate.
\end{example}

\begin{example} Suppose $X$ is a connected doubling space with doubling measure $\mu.$ For $0<r<r'$ and $x\in X,$ let  $A_{r,r'}(x)=B_{r'}(x)\setminus B_r[x].$ Note, since $X$ is connected, if $r'<\diam(X)$ then $A_{r,r'}(x)$ is non-empty and open. Suppose $\epsilon<\diam(X).$ Then define $p:X\times X\rightarrow \mathbb{R}^+$ by \[p(x,y)=\frac{1}{\mu(A_{\epsilon/2,\epsilon}(x))}\chi_{A_{\epsilon/2,\epsilon}(x)}(y).\] The $p$ is a $\mu$-transition kernel for an $\epsilon$-approximate walk.
\end{example}

\begin{example} Suppose $X$ is a doubling space with doubling measure $\mu$ such that for some $r>0$ and all $x\in X,$ the function $t\mapsto \mu(B_t(x))$ is convex on $[0,r).$ Then for $\epsilon<r,$ define $p(x,y)=\frac{1}{\mu(B_r(x))}\chi_{B_r(x)}(y).$ Consider the discrete time process $\bm{x}$ defined by the $\mu$-transition kernel $p.$ We show $\mathbb{E}^x(d(x,\bm{x}_1))\geq \epsilon/2 \mbox{\;for all\;}x\in X.$ By convexity, we have the bound $\int_0^\epsilon \mu(B_t(x))dt\leq \mu(B_\epsilon(x))\epsilon/2.$ Hence, integrating by parts 
\[\mathbb{E}^x(d(x,\bm{x}_1))=\int_0^\epsilon t\frac{d}{dt} \mathbb{P}^x(d(x,\bm{x}_1)\leq t)dt = \epsilon-\frac{1}{\mu(B_\epsilon(x))}\int_0^\epsilon \mu(B_t(x))dt\geq \epsilon-\epsilon/2=\epsilon/2.\]
It then follows that $\bm{x}$ is $\epsilon$-approximate. 
\end{example}
\subsection{Continuous Time Random Walks}

We will show how we may use the discrete time process $\bm{x}$ together with a suitable assignment $x\mapsto \tau(x)>0$ of local waiting times to induce a continuous time process. Intuitively, this process waits on average a time $\tau(x)$ at site $x\in X$ before jumping to a neighboring site according to the discrete time process $\bm{x}.$  We now make this precise.

For $\omega=(t_i,x_i)_{i=0}^\infty\in (\mathbb{R}_+\times X)^{\mathbb{N}},$ let $\omega(k)_1=t_k$ and $\omega(k)_2=x_k$ for $k\in \mathbb{N}.$ Then let $\Omega^*:=\{\omega\in (\mathbb{R}_+\times X)^{\mathbb{N}}\;|\;\omega(0)_1=0\;\}.$  Let $p_0$ be the map $\Omega^*\ni \omega \mapsto \omega(0)_2\in X.$ 

\begin{proposition} \label{kol}
Suppose $\tau:X\rightarrow (0,\infty)$ is measurable with both $\tau$ and $\frac{1}{\tau}$ bounded.  Then for each $x\in X,$ there exists a measure $\mathbb{P}^x$ defined on the product Borel $\sigma$-algebra $\Sigma$ of $\Omega^*$ such that if $A\in \Sigma$ then the map $x\mapsto \mathbb{P}^x(A)$ is measurable and such that for measurable cylinder sets of the form $A=\{\omega \in \Omega^*\;|\;\omega(j)\in A_j\times U_j, j=1,...,n\},$ we have that 
\begin{equation*}
\mathbb{P}^{x_0}(A)=\left(\prod_{j=1}^n \int_{A_j}\int_{U_j} \frac{e^{-t_{j}/ \tau(x_{j-1})}}{\tau(x_{j-1})}p(x_{j-1},x_{j}) \right ) dt_nd\mu(x_{n})...dt_1d\mu(x_{1}).
\end{equation*}
 
\end{proposition}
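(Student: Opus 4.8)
The plan is to invoke the Kolmogorov Extension Theorem (Proposition \ref{ext}) with index set $T=\mathbb{N}$ and, for every $t$, factor space $W_t:=\mathbb{R}_+\times X$ carrying its product Borel $\sigma$-algebra $\Sigma_t$. Each $W_t$ is Polish, since $X$ is compact metric (hence Polish), $\mathbb{R}_+$ is a closed subset of $\mathbb{R}$ (hence Polish), and a finite product of Polish spaces is Polish. Fix $x_0=x\in X$. For $n\in\mathbb{N}$ and $N_n:=\{0,1,\dots,n\}$, let $\mu^x_{N_n}$ be the unique Borel probability measure on $\Omega_{N_n}=\prod_{i=0}^n(\mathbb{R}_+\times X)$ whose value on a measurable rectangle $\prod_{i=0}^n(A_i\times U_i)$ is
\[
\chi_{A_0}(0)\,\chi_{U_0}(x)\int_{A_1}\!\!\int_{U_1}\!\!\cdots\!\int_{A_n}\!\!\int_{U_n}\ \prod_{j=1}^{n}\frac{e^{-t_j/\tau(x_{j-1})}}{\tau(x_{j-1})}\,p(x_{j-1},x_j)\ d\mu(x_n)\,dt_n\cdots d\mu(x_1)\,dt_1 .
\]
Since $\int_0^\infty \tau(y)^{-1}e^{-t/\tau(y)}\,dt=1$ for every $y$ and $\int_X p(y,z)\,d\mu(z)=1$ for every $y$, iterating Tonelli shows $\mu^x_{N_n}$ is indeed a probability measure; it is the law of the process that starts at $(0,x)$ and at step $j$ waits an exponential time $t_j$ of mean $\tau(x_{j-1})$ and then jumps to $x_j$ with $\mu$-density $p(x_{j-1},\cdot)$. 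For an arbitrary finite $F\subset\mathbb{N}$ put $n_F:=\max F$ (with $\max\emptyset:=0$) and define $\mu^x_F:=\mu^x_{N_{n_F}}\circ\pi_{N_{n_F},F}^{-1}$; note $\mu^x_{\{0\}}=\delta_0\otimes\delta_x$.

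For the consistency condition, let $F'\subset F$ and write $m:=n_{F'}\le n:=n_F$. Because marginals compose, $\mu^x_F\circ\pi_{F,F'}^{-1}=\mu^x_{N_n}\circ\pi_{N_n,F'}^{-1}=\bigl(\mu^x_{N_n}\circ\pi_{N_n,N_m}^{-1}\bigr)\circ\pi_{N_m,F'}^{-1}$, so it suffices to prove $\mu^x_{N_n}\circ\pi_{N_n,N_m}^{-1}=\mu^x_{N_m}$. But the left side is obtained from the iterated integral above by integrating out the coordinates $(t_n,x_n),(t_{n-1},x_{n-1}),\dots,(t_{m+1},x_{m+1})$ one at a time, each integration producing a factor $1$ by the two normalization identities just quoted; crucially, only terminal coordinates are ever removed, so no multi-step transition kernels arise and nothing further is needed. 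Kolmogorov's theorem then yields a unique probability measure $\mathbb{P}^x$ on $\bigotimes_{t\in\mathbb{N}}\Sigma_t$ extending every $\mu^x_F$; since $\mu^x_{\{0\}}=\delta_0\otimes\delta_x$ is carried by $\{(0,x)\}$, the measure $\mathbb{P}^x$ is carried by $\Omega^*=\{\omega:\omega(0)_1=0\}$, and its restriction to $\Omega^*$ (equipped with the trace $\sigma$-algebra $\Sigma$) is the asserted measure; evaluating on a cylinder $\{\omega:\omega(j)\in A_j\times U_j,\ j=1,\dots,n\}$ recovers the displayed formula in the statement.

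It remains to check that $x\mapsto\mathbb{P}^x(A)$ is measurable for every $A\in\Sigma$, and this runs exactly as the analogous argument for the discrete-time walk after Lemma \ref{monot}. Let $\mathscr{C}:=\{A\in\Sigma:\ x\mapsto\mathbb{P}^x(A)\ \text{is measurable}\}$. On a cylinder $A$ as above, $\mathbb{P}^x(A)$ equals the iterated integral in the statement, which is measurable in $x$ by Tonelli once one observes that the integrand $(x,t_1,x_1,\dots,t_n,x_n)\mapsto\prod_{j}\tau(x_{j-1})^{-1}e^{-t_j/\tau(x_{j-1})}p(x_{j-1},x_j)$ is jointly Borel measurable; here we use that $\tau$ is measurable and strictly positive (indeed $\tau$ and $1/\tau$ are bounded), so $(t,y)\mapsto\tau(y)^{-1}e^{-t/\tau(y)}$ is measurable, and that $p$ may be taken Borel measurable. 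Thus $\mathscr{C}$ contains the algebra of finite cylinders, and since $\mathbb{P}^x$ is a finite measure for each $x$, continuity of measure along increasing and decreasing sequences together with measurability of pointwise limits shows $\mathscr{C}$ is a monotone class; by Lemma \ref{monot}, $\mathscr{C}\supseteq\sigma(\text{cylinders})=\Sigma$.

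The only genuinely substantive points are, first, the reduction of the consistency condition to integrating out \emph{only terminal} coordinates, which is what makes the single-step Markov normalization $\int_X p(y,z)\,d\mu(z)=1$ and the exponential normalization suffice and avoids any bookkeeping with composed kernels; and second, securing the joint Borel measurability of the integrand so that both the finite-dimensional laws and the maps $x\mapsto\mathbb{P}^x(A)$ are legitimately measurable. Beyond these, the argument is a direct transcription of the discrete-time Kolmogorov construction carried out earlier in this section.
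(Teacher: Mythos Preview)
Your proof is correct and follows essentially the same approach as the paper: apply the Kolmogorov Extension Theorem with Polish factors $\mathbb{R}_+\times X$, verify consistency by integrating out terminal coordinates using the normalizations $\int_0^\infty \tau(y)^{-1}e^{-t/\tau(y)}\,dt=1$ and $\int_X p(y,z)\,d\mu(z)=1$, and then establish measurability of $x\mapsto\mathbb{P}^x(A)$ by the monotone class lemma. Your handling of arbitrary finite $F\subset\mathbb{N}$ by defining $\mu^x_F$ as a marginal of the initial-segment law $\mu^x_{N_{n_F}}$ is in fact cleaner than the paper's version, which writes down an explicit formula for $\mu_F$ and then asserts without full justification that one ``may assume'' $F$ and $F'$ are initial segments when checking consistency.
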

\begin{proof}
Let $W_0=X.$ For $k\in \mathbb{Z}^+,$ let $W_k=[0,\infty)\times X.$ Then since $[0,\infty)$ and $X$ are complete metric spaces, so is $[0,\infty)\times X.$ Let $\Sigma_k$ be the Borel $\sigma\mhyphen$ algebra on $W_k.$ Let $d\eta=dt\otimes d\mu$ be the product Borel measure of Lebesgue measure on $\mathbb{R}_+$ with $\mu$ on $X$. Let $\nu$ be a Borel probability measure on $X.$ For $F$ a finite subset of $\mathbb{N},$ let $k_1<k_2<...<k_n$ be a list of the elements of $F.$ Let $A\in\Sigma_F$. If $k_1>0,$ let \[\mu_F(A)=\int_X\int_A\prod_{i=1}^n \frac{e^{-{t_i}/\tau(x_{i-1})}}{\tau(x_{i-1})}p(x_{i-1},x_i)d\eta^{\otimes n}((t_n,x_n),...,(t_1,x_1))d\nu(x_0),\] where $\eta^{\otimes m}=\otimes_{i=1}^m\eta$ for $m\in \mathbb{Z}^+.$ If $k_1=0,$ let \[\mu_F(A)=\int_A\left(\prod_{i=2}^{n}\frac{e^{-{t_i}/\tau(x_{i-1})}}{\tau(x_{i-1})}p(x_{i-1},x_i)\right)d\eta^{\otimes n}((t_n,x_n),...,(t_1,x_1)).\]

Since $X\times [0,\infty)$ is a Polish space, we need only check the consistency condition. Let $F,F' \subset \mathbb{N}$ with $F\subset F'$. We may assume $0\notin F,$ $F=\{1,...,n\}\subset F'=\{1,...,m\}$ and $n<m.$  Then since 
\[\int_X\int_{[0,\infty)}\frac{e^{-t/ \tau(a})}{\tau(a)}p(a,x)dtd\mu(x)=1,\] for any $a\in X,$ if $A\subset \Sigma_{F},$ the consistency conditions hold. Hence by the Kolmogorov Extension Theorem there exists a measure on $\mathbb{P}^\nu$ on $\Omega^*$ extending the above definition. For $x\in X,$ let $\mathbb{P}^x=\mathbb{P}^{\delta_x}.$ Then by a completely analogous argument as the one following Lemma \ref{monot}, since $\Sigma$ is generated by the algebra of cylinder sets, the map $x\mapsto \mathbb{P}^x(A)$ is measurable for any $A\in \Sigma.$
\end{proof}

For $t\geq 0$, $x\in X,$ let $\hat{t}:\Omega^*\rightarrow \mathbb{N}$ be defined by $\hat{t}(\omega):=\sup\{k\;|\;\sum_{j=0}^k \omega(j)_1\leq t\}.$ Then define $(\bm{x}^*_t)_{t\in T}$ by $\bm{x}^*_t(\omega):=\omega(\hat{t}(\omega))_2$ for $t\in T,\;\omega\in\Omega^*.$ 

The process $\bm{x}^*=(\bm{x}^*(t))_{t\in T}$ is known as a pure jump Markov process. See   \cite{gihman},\cite{kallenberg} for more information on jump processes. Also, see Theorem 10.20 of \cite{kallenberg} for a proof that the process satisfies the Markov property.

The process $\bm{x}^*$ behaves like the process $\bm{x}$ at discrete ``jump times." that is if $\tau_k:=\inf\{t\geq 0\;|\;\hat{t}=k\}$  for $k\in \mathbb{N}$, then $\bm{x}^*_{\tau_k}$ has the same distribution as $\bm{x}_k$ for each $k\in \mathbb{N}.$  Indeed, $\hat{t}(\omega)=k$ if and only if $\sum_{j=0}^k \omega(j)_1\leq t<\sum_{j=0}^{k+1}\omega(j)_1.$ Hence $\tau_k(\omega)=\sum_{j=0}^k \omega(j)_1.$ So $\widehat{\tau_k(\omega)}=k.$ Thus $\mathbb{P}^x(\bm{x}^*_{\tau_k}\in A)=\mathbb{P}^x(\{\omega\in\Omega^*\;|\; \omega(k)_2\in A\})=\mathbb{P}(\bm{x}_k\in A).$ It follows that $\bm{x}^*_{\tau_k}$ has the same distribution as $\bm{x}_k$ for each $k\in \mathbb{N}.$ 

Let $\tau_{B}:=\inf\{t\geq 0\;|\;\bm{x}^*_t\notin B\}.$ For $x\in X,$ let $\mathbb{E}^x$ be the $\mathbb{P}^x$ expectation, and let $E_{B}(x):=\mathbb{E}^x \tau_{B}.$ 

\begin{proposition} \label{eq1} For $x\in B,$ we have $E_{B}(x)=\tau(x)+\int p(x,y)E_{B}(y)d\mu(y).$ If $x\notin B,$ we have $E_{B}(x)=0.$
\end{proposition}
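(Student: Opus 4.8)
The proof parallels that of Proposition \ref{markov}, with the single discrete step replaced by the first holding period of the jump process $\bm{x}^*$, and the deterministic mean waiting time $\tau(x)$ appearing where the constant $1$ did. First, the case $x\notin B$ is immediate: under $\mathbb{P}^x$ the first holding time $\omega(1)_1$ is exponential with mean $\tau(x)>0$, hence a.s.\ positive, so $\hat{0}(\omega)=0$ a.s.\ and $\bm{x}^*_0(\omega)=\omega(0)_2=x\notin B$ a.s.; thus $\tau_B=0$ $\mathbb{P}^x$-a.s.\ and $E_B(x)=0$. Moreover $E_B$ is a measurable function of the starting point: $\tau_B$ is a non-negative $\Sigma$-measurable random variable (it is $\mathbb{P}^x$-a.s.\ finite, since the holding times are conditionally exponential with means bounded below by $1/M$, where $M$ bounds $1/\tau$, so $\sum_j\omega(j)_1=\infty$ a.s.\ by conditional Borel--Cantelli), hence an increasing limit of simple functions, and $x\mapsto\mathbb{E}^x$ of a simple function is measurable by Proposition \ref{kol}; so $\int p(x,y)E_B(y)\,d\mu(y)$ is well defined in $[0,\infty]$.

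Now fix $x\in B$ and introduce the shift $\theta:\Omega^*\to\Omega^*$, $(\theta\omega)(0):=(0,\omega(1)_2)$ and $(\theta\omega)(k):=\omega(k+1)$ for $k\ge1$; it is measurable, maps $\Omega^*$ into $\Omega^*$, and $(\theta\omega)(k)_2=\omega(k+1)_2$ for all $k\ge0$. Writing $\tau_1(\omega):=\omega(1)_1$, I claim
\[ \tau_B(\omega)=\tau_1(\omega)+\tau_B(\theta\omega)\qquad\text{on }\{\omega(0)_2\in B\}. \]
For $k\ge1$ one has $\sum_{j=0}^k\omega(j)_1=\tau_1(\omega)+\sum_{j=0}^{k-1}(\theta\omega)(j)_1$, so for $t\ge\tau_1(\omega)$, with $s:=t-\tau_1(\omega)$, the inequality $\sum_{j=0}^k\omega(j)_1\le t$ is equivalent to $\sum_{j=0}^{k-1}(\theta\omega)(j)_1\le s$; since $k=0$ always qualifies, $\hat{t}(\omega)=1+\hat{s}(\theta\omega)$ and hence $\bm{x}^*_t(\omega)=\omega(1+\hat{s}(\theta\omega))_2=(\theta\omega)(\hat{s}(\theta\omega))_2=\bm{x}^*_{s}(\theta\omega)$. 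For $0\le t<\tau_1(\omega)$, instead $\hat{t}(\omega)=0$ and $\bm{x}^*_t(\omega)=x\in B$. Thus $\bm{x}^*_{\cdot}(\omega)$ does not leave $B$ before $\tau_1(\omega)$, and beyond $\tau_1(\omega)$ it coincides with the path $\bm{x}^*_{\cdot}(\theta\omega)$; this is precisely the displayed identity. (It is the Markov property of the pure jump process at its first jump, verified directly from the definitions since only the ordinary Markov property has been set up.)

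Take $\mathbb{E}^x$ of the displayed identity. From the cylinder formula of Proposition \ref{kol} with the second coordinate left unconstrained and $\int p(x,y)\,d\mu(y)=1$, the law of $\tau_1=\omega(1)_1$ under $\mathbb{P}^x$ is exponential with mean $\tau(x)$, so $\mathbb{E}^x\tau_1=\tau(x)$. For the second term I use
\[ \mathbb{E}^x(g\circ\theta)=\int p(x,y)\,\mathbb{E}^y(g)\,d\mu(y) \]
for non-negative measurable $g$ on $\Omega^*$. When $g=\mathbf 1_A$ with $A=\{\omega(j)\in A_j\times U_j,\ j=1,\dots,n\}$, then $g\circ\theta=\mathbf 1_{\{\omega(j)\in A_{j-1}\times U_{j-1},\,j=2,\dots,n+1\}}$; evaluating the formula of Proposition \ref{kol} on this $(n+1)$-fold cylinder (first coordinate unconstrained), the $t_1$-integral contributes $1$ and the rest is $\int_X p(x,x_1)\,\mathbb{P}^{x_1}\!\big(\omega(j)\in A_j\times U_j,\ j=1,\dots,n\big)\,d\mu(x_1)=\int p(x,y)\mathbb{E}^y(g)\,d\mu(y)$. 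The extension to all of $\Sigma$ is the monotone-class argument used after Lemma \ref{monot}, and then to non-negative measurable $g$ by monotone convergence. Applying this to $g=\tau_B\wedge n$ and letting $n\to\infty$ gives $\mathbb{E}^x(\tau_B\circ\theta)=\int p(x,y)E_B(y)\,d\mu(y)$; combining, $E_B(x)=\tau(x)+\int p(x,y)E_B(y)\,d\mu(y)$.

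The main obstacle is the bookkeeping in the decomposition $\tau_B=\tau_1+\tau_B\circ\theta$: since only the ordinary (not strong) Markov property of $\bm{x}^*$ is available, the time-shift $\bm{x}^*_{\tau_1+s}(\omega)=\bm{x}^*_s(\theta\omega)$ must be read off by hand from the definitions of $\hat{t}$ and $\bm{x}^*$, and one must separately check that the first holding time is a.s.\ positive and that the process does not explode, so that $\tau_B$ is a genuine $[0,\infty]$-valued random variable. Once the shift identity $\mathbb{E}^x(g\circ\theta)=\int p(x,y)\mathbb{E}^y(g)\,d\mu(y)$ is in hand, the remainder is routine, mirroring Proposition \ref{markov}.
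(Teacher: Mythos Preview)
Your proof is correct and follows essentially the same approach as the paper: decompose $\tau_B=\tau_1+(\tau_B-\tau_1)$, use that $\tau_1$ is exponential with mean $\tau(x)$, and apply the Markov property at the first jump to identify the law of the second term with $\int p(x,y)\mathbb{P}^y(\tau_B\in\cdot)\,d\mu(y)$. The paper compresses the second step into a direct computation of $\mathbb{P}^x(\tau_B-\tau_1\ge t)$, whereas you make the shift $\theta$ explicit and prove the general transfer identity $\mathbb{E}^x(g\circ\theta)=\int p(x,y)\mathbb{E}^y(g)\,d\mu(y)$ via a monotone-class argument; your version is more detailed (measurability of $E_B$, non-explosion, the pathwise identity $\bm{x}^*_{\tau_1+s}(\omega)=\bm{x}^*_s(\theta\omega)$) but the underlying idea is identical.
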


\begin{proof} The second claim is clear. If $x\notin B$ and $\omega\in \Omega^*$ with $\omega(0)_2=x,$  then since $\hat{0}(\omega)=0,$ $\tau_{B}(\omega)=0.$ So $E_{B}(x)=\mathbb{E}^x\tau_{B} =0.$

Now suppose $x\in B.$ Let $\tau_1(\omega):=\inf\{t\geq 0\;|\;\hat{t}(\omega)=1\}=\omega(1)_1.$ Then $\tau_1$ has exponential distribution with mean $\tau(x).$ Hence $\mathbb{E}^x \tau_{B} = \mathbb{E}^x(\tau_1+(\tau_{B}-\tau_1))=\tau(x)+\mathbb{E}^x(\tau_{B}-\tau_1).$ We then concentrate on the $\mathbb{E}^x(\tau_{B}-\tau_1)$ term.   
\begin{equation*}
\begin{split}
\mathbb{P}^x(\tau_B-\tau_1\geq t)&=\mathbb{P}^x(x^*_{\tau_1+s}\in B \mbox{\;for\;} 0\leq s< t)\\
&= \int p(x,y)\mathbb{P}^y(x^*_s\in B \mbox{\;for\;} 0\leq s< t)d\mu(y) \\
&= \int p(x,y)\mathbb{P}^y(\tau_B\geq t)d\mu(y).\\
\end{split}
\end{equation*} Hence the distribution of $\tau_B-\tau_1$ is defined by $\mathbb{P}^x(\tau_B-\tau_1\in A)=\int p(x,y)\mathbb{P}^y(\tau_B\in A)d\mu(y).$ Therefore 
\[\mathbb{E}^x(\tau_B-\tau_1)=\int p(x,y)\mathbb{E}^y(\tau_B) d\mu(y).\] The result then follows.
\end{proof}

Note by construction, for each $\omega,$ the path $t\mapsto \bm{x}^*_t(\omega)$ is right continuous with left limits at each $t\in \mathbb{R}_+.$ Such paths are called \textit{c\`{a}dl\`{a}g}.\footnote{The term c\`{a}dl\`{a}g comes from the French phrase ``continue \`{a} droite, limite \`{a} gauche," which means ``continuous from the right, limit from the left."}  Let $\mathcal{D}$ denote the set of all c\`{a}dl\`{a}g paths $\gamma:[0,\infty)\rightarrow X.$ Note that $\mathcal{D}\subset X^{\mathbb{R}_+}.$ Let $\mathscr{F}$ be the subspace $\sigma$-algebra on $\mathcal{D}$ of the product Borel $\sigma$-algebra $\otimes_{t\in [0,\infty)} \mathscr{B}_X$ on $X^{\mathbb{R}_+},$ i.e. 
\[\mathscr{F}:=\{B\cap \mathcal{D}\;|\;B\in\otimes_{t\in \mathbb{R}_+} \mathscr{B}_X\}.\] 
Let $\iota:\Omega^*\rightarrow \mathcal{D}$ denote the map $\omega \mapsto \iota(\omega):t\mapsto \bm{x}^*_t(\omega).$ The map $\iota$ is $(\mathscr{F}^*, \mathscr{F})$ measurable, where $\mathscr{F}^*$ is the product Borel $\sigma$-algebra on $\Omega^*.$ Indeed, as the map $\omega\mapsto \bm{x}^*_t(\omega)$ is measurable for each $t,$ if $A\in \mathscr{B}_X,$ then $\iota^{-1}(\pi_t^{-1}(A)\cap\mathcal{D}) =(\bm{x}_t^*)^{-1}(A)\in\mathscr{F}^*.$ Then since the projections generate $\otimes_{t\in \mathbb{R}_+} \mathscr{B}_X,$ measurability follows. Hence, by taking the distribution of $\iota,$ we may consider the measure $\mathbb{P}^\nu$ for any initial distribution $
\nu$ as a measure on $\mathcal{D}.$ Thus we may think of the process $\bm{x}^*$ as being defined on $\mathcal{D}.$\footnote{With the product Borel $\sigma$-algebra, which coincides with the Borel $\sigma$-algebra of the Skorokhod J1 topology.}

For $t\geq 0$ and $x\in X,$ the transition measure $p_t^x$ is a Borel measure on $X$ defined by 
\[p_t^x(A)=\mathbb{P}^x(\bm{x}^*_t\in A).\] Then if $s\leq t,$ the Chapman-Kolmogorov equation
\[p_t^x(A)=\int p_{t-s}^y(A)dp_s(y)\] holds. Then define $P_t$ on $L^2(X,\mu)$ by \[P_tf(x)=\mathbb{E}^xf(\bm{x}^*_t)\] for $f\in L^2(X,\mu).$ By the Chapman-Kolmogorov equation, first on simple functions, then extending to $f\in L^2(X,\mu),$ we have $P_{t}=P_{t-s}\circ P_s$ for all $0\leq s\leq t<\infty.$

\subsection{Generator} Let $(\mathcal{B}, \|\cdot\|)$ be a Banach space. A strongly continuous (one parameter) semigroup in $\mathcal{B}$ is a collection $(P_t)_{t\in \mathbb{R}_+}$ of bounded linear operators on $\mathcal{B}$ such that 
\begin{equation*}
\begin{split}
\mbox{(a)}&\;P_0=I\;\mbox{and\;} P_t\circ P_s=P_{t+s}\;\mbox{for all\;}t,s\in\mathbb{R}_+\mbox{, and}\\
\mbox{(b)}&\;\lim_{t\rightarrow 0^+}\|P_tf-f\|=0\;\mbox{for all\;} f\in \mathcal{B}.
\end{split}
\end{equation*}

If $(P_t)_{t\geq 0}$ is a strongly continuous semigroup, the generator, $A$, of $(P_t)_{t\geq 0}$ is defined by $Af=\lim_{t\rightarrow 0^+} t^{-1}(Ptf-f),$ with domain $\mathscr{D}(A)$ consisting of $f\in \mathcal{B}$ such that the above limit exists in $\mathcal{B}.$ Then the generator $A$ is closed, $\mathscr{D}(A)$ is dense in $\mathcal{B}$, and $e^{tA}=\sum_{k=0}^{\infty}t^kA^k/k! = P_t.$ See \cite{engel}, \cite{fukushima}. 

A strongly continuous semigroup $(P_t)_{t\geq 0}$ is called contractive if $\|P_t\|\leq 1$ for all $t,$ that is $\|P_tf\|\leq \|f\|$ for all $f\in \mathcal{B}, t\geq 0.$  

Suppose $(X,d)$ is a compact metric space with a finite Borel measure $\mu.$ A strongly continuous semigroup $(P_t)_{t\geq 0}$ on $L^{\infty}(X,\mu)$ is called a \textit{Feller semigroup} if it is contractive, $P_t(C(X))\subset C(X)$ for all $t\geq 0,$ and $P_tf\geq 0$ for all $f\in C(X)$ with $f\geq 0.$ Feller semigroups satisfying $P_t1= 1$ for all $t$ may be used to generate time homogeneous Markov processes with c\`{a}dl\`{a}g sample paths. A sketch of this construction is as follows. See \cite{rogers} for details. For $F$ a finite subset of $\mathbb{R}_+,$ with $0\leq t_1<t_2<...<t_n,$ a list of the elements of $F,$ a product Borel measure $p^x_{F}$ on $X^F$ may be defined by Carath\'{e}odory extension form the premeasure defined on cylinder sets $A=\prod_{i=1}^n A_i$ for $A_i\in \mathscr{B}_X$ for each $i$, by 
\[p_{F}^x(A)=P_{t_1}(\chi_{A_1}(P_{t_2-t_1}(\chi_{A_2}(...P_{t_{n-1}-\sum_{i=1}^{n-2}t_i}(\chi_{A_{n-1}}(P_{t_n-\sum_{i=1}^n t_i}(\chi_{A_n})))...))))(x).\]
Since each $P_t$ is a positivity preserving contraction on $C(X)$ with $P_t1=1,$ each $p_F$ is a probability measure. The semigroup property then ensures that the Kolmogorov consistency conditions hold. Hence probability measures $\mathbb{P}^x$ may be defined on $X^{\mathbb{R}_+}$ with its product Borel $\sigma$-algebra. The definition of the cylinder measures ensures the Chapman-Kolmogorov equations hold. Lastly, the strong continuity ensures that the process may be modified to have c\`{a}dl\`{a}g paths. See \cite{rogers}.

A semigroup $(P_t)_{t\geq 0}$ on $L^2(X,\mu)$ is called \textit{Markovian} if $0\leq P_t|f|\leq 1$ $\mu$-a.e. for all $f\in L^\infty(X,\mu)$ with $\|f\|_{L^\infty(X,\mu)}\leq 1.$ Strongly continuous contractive Markovian semigroups are in one-to-one correspondence with Dirichlet forms. See \cite{fukushima}.

We end the section by calculating the generator for the process $\bm{x}^*.$ Recall, for $t\geq 0,$ $P_t$ is defined by $P_tf(x)=\mathbb{E}^xf(\bm{x}^*_t)$ for $f\in L^2(X,\mu), x\in X.$

\begin{proposition} \label{gen} Define $\mathcal{L}$ on $L^2(X,\mu)$ by \[\mathcal{L}f(x):=\int_X \frac{1}{\tau(x)}p(x,y)(f(x)-f(y))d\mu(y).\] Then for $f\in L^2(X,\mu),$ $\|\frac{1}{t}(P_tf-f)+{\mathcal{L}f\|_{L^2(X,\mu)}}_{\overrightarrow{t\rightarrow 0^+}} 0.$ In particular, $-\mathcal{L}$ is the generator for the $L^2(X,\mu)$ semigroup defined by $(P_t)_{t\geq 0}$ and $\mathscr{D}(\mathcal{L})=L^2(X,\mu).$
\end{proposition}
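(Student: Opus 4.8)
The plan is to identify $(P_t)_{t\ge 0}$ with $(e^{-t\mathcal{L}})_{t\ge 0}$; once this is established the statement is immediate, because $\mathcal{L}$ is a bounded operator. Indeed, $P$ is Hilbert--Schmidt, hence bounded on $L^2(X,\mu)$, and $1/\tau\in L^\infty$, so $\mathcal{L}f=\frac{1}{\tau}(f-Pf)$ is bounded on $L^2(X,\mu)$ with $\|\mathcal{L}f\|_{L^2}\le\|1/\tau\|_\infty(1+\|P\|)\|f\|_{L^2}$. Therefore $e^{-t\mathcal{L}}=\sum_{n\ge 0}\tfrac{(-t)^n}{n!}\mathcal{L}^n$ is a uniformly continuous semigroup on $L^2(X,\mu)$ whose generator is $-\mathcal{L}$ with domain all of $L^2(X,\mu)$, and $\tfrac1t(e^{-t\mathcal{L}}f-f)\to-\mathcal{L}f$ in $L^2$ (uniformly for $f$ in bounded sets).

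To identify $P_t$, first I would derive a renewal equation. Fix a bounded measurable $f$, so that $P_tf(x)=\mathbb{E}^xf(\bm{x}^*_t)$ is defined pointwise. Conditioning on the first jump time $\tau_1$, which is $\mathrm{Exp}(1/\tau(x))$-distributed under $\mathbb{P}^x$, and using the temporal Markov structure visible in the cylinder measures of Proposition \ref{kol} (the factors of the density with index $\ge 2$ have exactly the form of a process restarted at the first jump location), one gets for $x\in X$
\[
P_tf(x)=e^{-t/\tau(x)}f(x)+\int_0^t\frac{e^{-s/\tau(x)}}{\tau(x)}\,(PP_{t-s}f)(x)\,ds.\qquad(\star)
\]
Here the first term is the contribution of $\{\tau_1>t\}$, on which $\bm{x}^*_t=x$; the second is the contribution of $\{\tau_1=s\le t\}$, on which one jumps with law $p(x,\cdot)\,d\mu$ and then evolves for the remaining time $t-s$.

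Next I would show $e^{-t\mathcal{L}}$ solves $(\star)$ and that $(\star)$ has at most one solution. Writing $q=1/\tau$ and $M$ for multiplication by $q$, we have $-\mathcal{L}=MP-M$, so $S_t:=e^{-t\mathcal{L}}$ satisfies $\frac{d}{dt}S_t=MPS_t-MS_t$ in $\mathscr{B}(L^2(X,\mu))$; multiplying by $e^{tq(\cdot)}$, integrating in $t$, and substituting $s=t-r$ recovers $(\star)$ with $P_\bullet$ replaced by $S_\bullet$. For uniqueness, if $S_t,\tilde S_t$ both satisfy $(\star)$ with $t\mapsto S_tf,\tilde S_tf$ continuous into $L^2$, then $D_t:=S_t-\tilde S_t$ satisfies $D_tf(x)=\int_0^t e^{-sq(x)}q(x)(PD_{t-s}f)(x)\,ds$, so $\|D_tf\|_{L^2}\le\|q\|_\infty\|P\|\int_0^t\|D_rf\|_{L^2}\,dr$, and Gronwall forces $D_tf\equiv 0$. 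Applying this with $\tilde S_t=e^{-t\mathcal{L}}$ gives $P_t=e^{-t\mathcal{L}}$ first on $C(X)$ — where $t\mapsto P_tf$ is $L^2$-continuous by right-continuity of the paths $t\mapsto\bm{x}^*_t$ and dominated convergence — and then, via $\|P_tf\|_{L^2}=\|e^{-t\mathcal{L}}f\|_{L^2}\le e^{t\|\mathcal{L}\|}\|f\|_{L^2}$ on $C(X)$ together with monotone approximation of $f\ge 0$ by truncations $f\wedge k\in L^\infty$, on all of $L^2(X,\mu)$; the extension still represents $x\mapsto\mathbb{E}^xf(\bm{x}^*_t)$ $\mu$-a.e., since $p_t^x$ splits as the atom $e^{-t/\tau(x)}\delta_x$ plus an absolutely continuous measure. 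Then $\tfrac1t(P_tf-f)=\tfrac1t(e^{-t\mathcal{L}}f-f)\to-\mathcal{L}f$ in $L^2$, and $\mathscr{D}(\mathcal{L})=L^2(X,\mu)$ because $\mathcal{L}$ is bounded. (A more computational but equivalent route: expand both sides as a sum over the number of jumps before time $t$, matching the Duhamel expansion $e^{t(MP-M)}=\sum_n\int_{0\le s_1\le\cdots\le s_n\le t}e^{-(t-s_n)M}(MP)\cdots(MP)e^{-s_1M}\,ds$ against $\sum_n\mathbb{E}^x[f(\bm{x}^*_t);\hat{t}=n]$ term by term, after integrating out the $(n{+}1)$-st waiting time and jump site and changing variables $u_1=s_1,\ u_j=s_j-s_{j-1},\ u_{n+1}=t-s_n$ on the time simplex.)

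The main obstacle is the bookkeeping in whichever route is taken: extracting the renewal identity $(\star)$ — equivalently, the jump-number decomposition — rigorously from the Kolmogorov construction of Proposition \ref{kol}, in particular handling the final "no further jump" survival factor $e^{-(t-s_n)/\tau(x_n)}$ correctly and, in the series version, lining up the change of variables on the time simplex, together with the routine-but-not-automatic passage from bounded $f$ to general $f\in L^2(X,\mu)$, since $(P_t)$ is not a priori known to be bounded on $L^2(X,\mu)$.
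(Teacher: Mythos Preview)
Your proposal is correct. The paper, however, does not take your primary renewal/Gronwall route; it takes exactly the ``more computational'' alternative you sketch at the end. It decomposes $P_tf(x)=\sum_{k\ge 0}A_{t,k}f(x)$ according to the number of jumps before time $t$, writes each $A_{t,k}$ explicitly from the cylinder measures, bounds the tail by $\sum_{k\ge 2}\tfrac{1}{t}\|A_{t,k}f\|_{L^2}\le \tfrac{C^2ct}{1-ct}\,\|f\|_{L^2}\to 0$, and then handles $k=0$ and $k=1$ by hand with dominated convergence: $A_{0,t}f(x)=e^{-t/\tau(x)}f(x)$ gives $\tfrac1t(A_{0,t}f-f)\to -\tfrac{1}{\tau}f$, and $A_{1,t}f(x)=\int_0^t\!\int_X \tfrac{e^{-s/\tau(x)}}{\tau(x)}p(x,y)e^{-(t-s)/\tau(y)}f(y)\,d\mu\,ds$ gives $\tfrac1t A_{1,t}f\to \tfrac{1}{\tau}Pf$. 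The paper never forms $e^{-t\mathcal{L}}$ and never invokes Gronwall. Your route yields the clean global identity $P_t=e^{-t\mathcal{L}}$ directly, at the price of the $L^2$-continuity and extension bookkeeping you yourself flag as the main obstacle; the paper's route sidesteps that bookkeeping entirely, since each $A_{t,k}f$ is defined and estimated pointwise for arbitrary $f\in L^2(X,\mu)$, so no a~priori $L^2$-boundedness or strong continuity of $P_t$ is needed.
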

\begin{proof} 

Since $\sup_{x\in X, y\in X}p(x,y)<\infty$, $\tau$ and $\frac{1}{\tau}$ are bounded above uniformly, and $\mu$ is a finite measure, let $C>0$ such that $p(x,y)\leq C$ for $x,y\in X,$ $\mu(X)<C,$ and $\frac{1}{C}\leq \frac{1}{\tau(x)}\leq C$ for all $x.$ Let $f\in L^2(X,\mu),$ $t>0.$ Set $x_0=x.$ By the Cauchy-Schwartz inequality, $\|f\|_{L^1(X,\mu)}\leq C\|f\|_{L^2(X,\mu)}.$ Note that $\mathbb{E}^{x} f(\bm{x}^*_t)$ consists of an infinite sum from $k=0$ to $\infty$ of terms of the form \[\left( \prod_{i=1}^k \int_0^{t-\sum_{j=1}^{i-1} t_j} \int_X \frac{e^{\frac{-t_{i}}{\tau(x_{i-1})}}p(x_{i-1},x_i)}{\tau(x_{i-1})}\right)\int_{t-\sum_{j=1}^k t_j}^\infty  \frac{e^{\frac{-t_{k+1}}{\tau(x_k)}}f(x_k)}{\tau(x_k)}\left(\prod_{j=0}^{k-1}dt_{k+1-j}d\mu(x_{k-j})\right)dt_1.\] Define an operator $A_{t,k}$ such that $A_{t,k}f(x)$ gives the above $k$-th term evaluated at $x$ for any $f\in L^2(X,\mu).$ For $k\geq 2,$ the absolute value of the $k\mhyphen$th term is bounded above by $(ct)^k\|f\|_{1,\mu}\leq C(ct)^k\|f\|_{L^2(X,\mu)},$ for some $c>0$ depending only on $C$. Hence, dividing by $t$ and taking the $L^2$ norm using the uniform bound, we have $\frac{1}{t}\|A_{t,k}f\|_{L^2(X,\mu)}\leq C^2(ct)^{k-1}\|f\|_{L^2(X,\mu)}.$ Thus, for $0<t<\frac{1}{c},$ \[\sum_{k=2}^\infty  \frac{1}{t}\|A_{t,k}f\|_{L^2(X,\mu)}\leq \frac{C^2ct}{1-ct}\|f\|_{L^2(X,\mu)}\;\overrightarrow{_{_{t\rightarrow 0^+}}}\; 0.\] Hence we need only consider $\|\frac{1}{t}(A_{0,t}f+A_{1,t}f-f)+\mathcal{L}f\|_{L^2(X,\mu)}.$

Let us then consider the $k=0$ and $k=1$ terms separately. The term for $k=0$ is, recalling the convention that the empty product is $1$ and the empty sum is $0,$ given by
\[A_{0,t}f(x)=\int_t^\infty \frac{e^{-t_1/\tau(x)}f(x)}{\tau(x)}dt_1=e^{-t/\tau(x)}f(x).\] Define 
$\frac{1}{\tau}$ to be the multiplication operator $(\frac{1}{\tau})f(x)=\frac{1}{\tau(x)}f(x).$ 
Then \[\|\frac{1}{t}(A_{0,t}f-f)+\frac{1}{\tau}f\|^2_{L^2(X,\mu)}=\int |\frac{1}{t}(e^{-t/\tau(x)}-1)+\frac{1}{\tau(x)}|^2|f(x)|^2d\mu(x)_{\;\overrightarrow{t\rightarrow 0^+}}\; 0\]
by the dominated convergence theorem, since $|\frac{1}{t}(e^{-t/\tau(x)}-1)+\frac{1}{\tau(x)}|^2|
f(x)|^2\rightarrow 0$ as $t\rightarrow 0^+$ and $|\frac{1}{t}(e^{-t/\tau(x)}-1)+\frac{1}{\tau(x)}|\leq 2C$ for all $t>0, 
x\in X$ by the mean value theorem and the bound $\frac{1}{\tau(x)}\leq C.$

Now the term for $k=1$ is \begin{equation*}
\begin{split}
&A_{1,t}f(x)=\int_0^{t}\int_X\int_{t-t_1}^\infty \frac{e^{-t_1/\tau(x)}p(x,x_1)e^{-t_2/\tau(x_1)}f(x_1)}{\tau(x)\tau(x_1)}dt_2d\mu(x_1)dt_1 \\
&=\int_X\frac{p(x,x_1)}{\tau(x)}\int_0^{t} e^{-t_1/\tau(x)}e^{-(t-t_1)/\tau(x_1)}f(x_1)dt_1d\mu(x_1).
\end{split}
\end{equation*}
However, since \[\frac{e^{-t/\tau(x_1)}}{t}\int_0^t e^{-t_1(\tau(x)^{-1}-\tau(x_1)^{-1})}dt_1\; 
\overrightarrow{_{t\rightarrow 0^+}}\; 1\] for all $x_1\in X$ and since \[\left |\frac{p(x,x_1)}
{\tau(x)}\frac{1}{t}\int_0^t (e^{-{t}/\tau(x_1)}e^{-t_1(1/\tau(x)-1/\tau(x_1))}-1)f(x_1)dt_1\right |\leq C^2|f(x_1)|\] for all $t>0,$ $x,x_1\in X,$ by the dominated convergence theorem,
\[\int \frac{p(x,x_1)}{\tau(x)}\frac{1}{t}\int_0^t (e^{-{t}/\tau(x_1)}e^{-t_1(1/\tau(x)-1/
\tau(x_1))}-1)f(x_1)dt_1d\mu(x_1)_{\;\overrightarrow{t\rightarrow 0^+}}\; 0\] for all $x.$ Hence, by 
another application of the dominated convergence theorem, \begin{equation*}
\begin{split}
\|&\frac{1}{t}A_{1,t}f-\frac{1}{\tau}Pf\|^2_{L^2(X,
\mu)}=\\&\int \left |\int \frac{p(x,x_1)}{\tau(x)}\frac{1}{t}\int_0^t (e^{-{t}/\tau(x_1)}e^{-t_1(1/
\tau(x)-1/\tau(x_1))}-1)f(x_1)dt_1d\mu(x_1)\right|^2d\mu(x)_{\;\overrightarrow{t\rightarrow 0^+}}\; 0.\end{split}
\end{equation*}
Therefore, putting it all together, we have 
\begin{equation*}
\begin{split}
&\|\frac{1}{t}(A_{0,t}f+A_{1,t}f-f)+\mathcal{L}f\|_{L^2(X,\mu)}= \|\frac{1}{t}(A_{0,t}f+A_{1,t}f-f)+\frac{1}{\tau}f-\frac{1}{\tau}Pf\|_{L^2(X,\mu)}\\
&\leq \|\frac{1}{t}(A_{0,t}f-f)+\frac{1}{\tau}f\|^2_{L^2(X,\mu)}+{\|\frac{1}{t}A_{1,t}f-\frac{1}{\tau}Pf\|^2_{L^2(X,\mu)}}\;\overrightarrow{_{_{t\rightarrow 0^+}}}\;0.
\end{split}
\end{equation*}
Therefore $-\mathcal{L}=\frac{1}{\tau}(P-I)$ is the generator of $(P_t)_{t\geq 0}$ on $L^2(X,\mu).$
\end{proof}

\section{$\Gamma\mhyphen$ Convergence and Dirichlet Forms}
The origins of the theory of $\Gamma\mhyphen$ Convergence may be found in DeGiorgi \cite{giorgi} and DeGiorgi and Franzoni \cite{giorgi1}. The inspiration for our later use of the sequential compactness properties of $\Gamma\mhyphen$ convergence for Markovian forms came from its use by Sturm in \cite{sturm1} and by Kumagai and Sturm in \cite{sturm}. 

A Dirichlet form may be thought of as a generalization of the Dirichlet energy form $u\mapsto \frac{1}{2}\int_\Omega |\nabla u|^2dx$, for $u\in H^1_0(\Omega),$ $\Omega\subset \mathbb{R}^n$ a domain. An excellent reference on the theory of Dirichlet forms is \cite{fukushima}. The theory of regular Dirichlet forms is particularly rich, with theoretical underpinnings in the landmark papers \cite{beurling1}, \cite{beurling2} of Beurling and Deny, which led in part to a complete classification of regular Dirichlet forms known as the Beurling-Deny formula. See, e.g., \cite{mosco}, \cite{fukushima} for more information.   

We first develop the general theory of $\Gamma$-convergence before discussing Dirichlet forms and form convergence.

\subsection{$\Gamma$-Convergence}
The results given in this section are well known. We present them here for the convenience of the reader. In our exposition we follow primarily \cite{maso} and \cite{mosco}. 

Let $X$ be a topological space. For $x\in X$ let $\mathscr{N}(x)$ be the collection of open neighborhoods of $x.$ If $\mathscr{B}$ is a base for the topology of $X$ let $\mathscr{B}(x)$ denote the basic neighborhoods of $x$, that is the neighborhoods of $x$ in $\mathscr{B}.$

\begin{definition}
Let $(A_n)_{n=1}^\infty$ be a sequence of subsets of $X$. We define the Kuratowski lower and upper limits \cite{kuratowski} as follows. For the lower limit, let \[K\mhyphen\liminf_{n\rightarrow \infty} A_n :=\{x\in X\;|\;\forall U\in \mathscr{N}(x)\exists N\in \mathbb{Z}^+ \forall n\geq N\;[A_n\cap U\neq \emptyset]\}.\]
For the upper limit, let \[K\mhyphen\limsup_{n\rightarrow \infty} A_n :=\{x\in X\;|\;\forall U\in \mathscr{N}(x)\forall N\in \mathbb{Z}^+ \exists n\geq N\;[A_n\cap U\neq 
\emptyset]\}.\]
Then we have $K\mhyphen\liminf_{n\rightarrow \infty} A_n \subset K\mhyphen\limsup_{n \rightarrow 
\infty} A_n,$ and if they are equal we denote the common value by $K\mhyphen\lim_{n\rightarrow \infty} A_n.$
\end{definition}

\begin{rem}
It is clear that the values of $K\mhyphen\liminf_{n\rightarrow \infty}$ and $K\mhyphen\limsup_{n\rightarrow \infty}$ are unchanged if we replace everywhere $\mathscr{N}(x)$ with $\mathscr{B}(x).$
\end{rem}

\begin{lemma} If $(A_n)_{n=1}^\infty$ is a sequence of sets in $X$ then both $K\mhyphen\limsup_{n\rightarrow \infty}A_n$ and $K\mhyphen\limsup_{n\rightarrow \infty}A_n$ are closed sets. 
\end{lemma}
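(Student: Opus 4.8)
The plan is to show that the complement of each set is open; equivalently, to exhibit each set as an intersection of closed sets, which is cleanest for the upper limit. For $K\mhyphen\limsup_{n\rightarrow \infty} A_n$ I would first record the identity
\[K\mhyphen\limsup_{n\rightarrow \infty} A_n = \bigcap_{N=1}^\infty \overline{\bigcup_{n\geq N} A_n}.\]
This is immediate from unwinding definitions: a point $x$ lies in the closure $\overline{\bigcup_{n\geq N} A_n}$ precisely when every $U\in\mathscr{N}(x)$ meets $\bigcup_{n\geq N} A_n$, i.e. when for every $U\in\mathscr{N}(x)$ there is some $n\geq N$ with $A_n\cap U\neq\emptyset$; intersecting over $N$ and observing that the two universal quantifiers ``$\forall N$'' and ``$\forall U$'' commute yields exactly the defining condition for $K\mhyphen\limsup_{n\rightarrow\infty}A_n$. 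Since the right-hand side is an intersection of closed sets, $K\mhyphen\limsup_{n\rightarrow\infty} A_n$ is closed.

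For $K\mhyphen\liminf_{n\rightarrow\infty} A_n$ I would argue directly on the complement. Suppose $x\notin K\mhyphen\liminf_{n\rightarrow\infty} A_n$. By definition there is an open $U\in\mathscr{N}(x)$ such that for every $N\in\mathbb{Z}^+$ there exists $n\geq N$ with $A_n\cap U=\emptyset$. The key observation is that $U$ is a neighborhood not only of $x$ but of each of its points: for any $y\in U$ we have $U\in\mathscr{N}(y)$, and the same property (with the same $U$) witnesses $y\notin K\mhyphen\liminf_{n\rightarrow\infty} A_n$. Hence $U$ is an open neighborhood of $x$ contained in the complement of $K\mhyphen\liminf_{n\rightarrow\infty} A_n$, so that complement is open and the set is closed. (Alternatively, invoking the Remark one may write $K\mhyphen\liminf_{n\rightarrow\infty} A_n=\bigcap_{B\in\mathscr{B}}\big((X\setminus B)\cup C_B\big)$, where $C_B=X$ if $A_n\cap B\neq\emptyset$ for all sufficiently large $n$ and $C_B=\emptyset$ otherwise; each term is closed, so the intersection is closed.)

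There is no real obstacle here: the proof is a quantifier-bookkeeping exercise. The only point needing a little care is the asymmetry between the two cases — the $\limsup$ is visibly an intersection of closures, whereas for the $\liminf$ one must either pass to a base (as in the Remark) or run the short open-complement argument above. Both routes are elementary, and I would present the $\limsup$ identity and the direct complement argument for the $\liminf$.
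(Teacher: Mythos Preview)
Your proof is correct. For the $K\mhyphen\liminf$ case you give exactly the paper's argument: the open neighborhood $U$ witnessing failure at $x$ witnesses failure at every one of its points, so the complement is open. For the $K\mhyphen\limsup$ case the paper repeats this same complement argument, whereas you instead establish the structural identity
\[
K\mhyphen\limsup_{n\rightarrow\infty} A_n \;=\; \bigcap_{N\geq 1}\overline{\bigcup_{n\geq N} A_n}
\]
and read off closedness from it. Your route has the advantage of producing a useful formula (this identity is standard and often invoked later when working with Kuratowski limits), while the paper's approach has the advantage of uniformity: one argument handles both cases without any case distinction. Either is perfectly adequate here.
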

\begin{proof} Suppose $x\in (K\mhyphen\limsup_{n\rightarrow \infty}A_n)^c.$ Then there exists a $U\in \mathscr{N}(x)$ such that for some $N\in \mathbb{Z}^+$ and every $n\geq N$ we have $A_n\cap U=\emptyset.$ Since this condition holds for every $y\in U$ we have $U\subset (K\mhyphen\limsup_{n\rightarrow \infty}A_n)^c$. Similarly, if $x\in (K\mhyphen\liminf_{n\rightarrow \infty}A_n)^c$ then there exists a $U\in \mathscr{N}(x)$ such that for every $N\in \mathbb{Z}^+$ and some $n\geq N$ we have $A_n\cap U=\emptyset.$ Since this condition holds for every $y\in U$ we have $U\subset (K\mhyphen\liminf_{n\rightarrow \infty}A_n)^c$. The result follows.
\end{proof}
\begin{lemma} \label{subsequence}
Suppose $A=K\mhyphen\lim_{n\rightarrow \infty} A_n.$ Then for any subsequence $(A_{n_k})_{k=1}^\infty$ we have $A=K\mhyphen\lim_{k\rightarrow \infty} A_{n_k}.$
\end{lemma}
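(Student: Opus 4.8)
The plan is to prove the two inclusions
\[K\mhyphen\liminf_{n\rightarrow\infty} A_n \subset K\mhyphen\liminf_{k\rightarrow\infty} A_{n_k}, \qquad K\mhyphen\limsup_{k\rightarrow\infty} A_{n_k} \subset K\mhyphen\limsup_{n\rightarrow\infty} A_n,\]
and then chain them with the hypothesis and the universally valid inequality between the lower and upper Kuratowski limits recorded just after the definition. Once these two inclusions are in hand, one writes
\[A = K\mhyphen\liminf_{n\rightarrow\infty} A_n \subset K\mhyphen\liminf_{k\rightarrow\infty} A_{n_k} \subset K\mhyphen\limsup_{k\rightarrow\infty} A_{n_k} \subset K\mhyphen\limsup_{n\rightarrow\infty} A_n = A,\]
where the first and last equalities are the assumption $A = K\mhyphen\lim_{n\rightarrow\infty} A_n$. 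This forces all four sets to coincide, which is exactly the assertion $A = K\mhyphen\lim_{k\rightarrow\infty} A_{n_k}$.

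First I would establish the $\liminf$ inclusion. Let $x \in K\mhyphen\liminf_{n\rightarrow\infty} A_n$ and let $U \in \mathscr{N}(x)$ be arbitrary. By definition there is an $N \in \mathbb{Z}^+$ with $A_n \cap U \neq \emptyset$ for all $n \geq N$. Since $(n_k)_{k=1}^\infty$ is a strictly increasing sequence of positive integers, an easy induction gives $n_k \geq k$, so for every $k \geq N$ we have $n_k \geq N$ and hence $A_{n_k} \cap U \neq \emptyset$. As $U$ was arbitrary, $x \in K\mhyphen\liminf_{k\rightarrow\infty} A_{n_k}$.

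Next the $\limsup$ inclusion. Let $x \in K\mhyphen\limsup_{k\rightarrow\infty} A_{n_k}$, and let $U \in \mathscr{N}(x)$ and $N \in \mathbb{Z}^+$ be arbitrary; the task is to produce some $n \geq N$ with $A_n \cap U \neq \emptyset$. Applying the definition of $K\mhyphen\limsup_{k\rightarrow\infty} A_{n_k}$ with this neighborhood $U$ and with $N$ itself as the index threshold yields a $k \geq N$ with $A_{n_k} \cap U \neq \emptyset$; setting $n := n_k$ and using $n_k \geq k \geq N$ completes the verification, so $x \in K\mhyphen\limsup_{n\rightarrow\infty} A_n$.

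There is no genuine obstacle here: the entire argument rests on the single elementary fact that a subsequence is cofinal, i.e. $n_k \geq k$, so ``$A_n \cap U \neq \emptyset$ for all sufficiently large $n$'' descends to the subsequence while ``$A_n \cap U \neq \emptyset$ for arbitrarily large $n$'' lifts from it. The only place demanding a moment's care is the correct handling of quantifiers in the $\limsup$ step — one must feed the threshold $N$, rather than some fixed index, into the subsequence's defining property — but this is routine.
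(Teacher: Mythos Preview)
Your proof is correct and follows essentially the same approach as the paper: both establish the chain $K\mhyphen\liminf_{n} A_n \subset K\mhyphen\liminf_{k} A_{n_k} \subset K\mhyphen\limsup_{k} A_{n_k} \subset K\mhyphen\limsup_{n} A_n$ using the cofinality fact $n_k \geq k$, and then conclude from the hypothesis that the endpoints coincide.
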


\begin{proof} Let $x\in K\mhyphen\liminf_{n\rightarrow \infty} A_n$ and $U\in \mathscr{N}(x).$ Then there exists an $N\in \mathbb{Z}^+$ such that for $n\geq N$ we have $U\cap 
A_n\neq \emptyset.$ But $n_k\geq k$ for all $k.$ Hence if $k\geq n$ then $n_k\geq N$ so $A_{n_k}\cap U\neq \emptyset.$ So $x  \in K\mhyphen\liminf_{k\rightarrow \infty} A_{n_k}.$ 
If $x\in K\mhyphen\limsup_{k\rightarrow \infty} A_{n_k}$ and $U\in \mathscr{N}(x)$ then if 
$N\in \mathbb{Z}^+$ then there exists a $k\geq N$ with $A_{n_k}\cap U\neq \emptyset.$ 
But again $n_k\geq k \geq N$. Hence $x\in K\mhyphen\limsup_{n\rightarrow \infty} A_n.$  
Hence we have the observation
\[K\mhyphen\liminf_{n\rightarrow \infty} A_n  \subset K\mhyphen\liminf_{k\rightarrow \infty} A_{n_k} 
\subset K\mhyphen\limsup_{k\rightarrow \infty} A_{n_k}\subset K\mhyphen\limsup_{n\rightarrow \infty} 
A_n,\] from which the result follows immediately.
\end{proof}

\begin{definition}
Let $f:X\rightarrow \overline{\mathbb{R}}.$ We say $f$ is lower semicontinuous if for every $c\in \mathbb{R},$
we have $f^{-1}((c,\infty])$ is open. 
\end{definition}

\begin{definition} If $f:X\rightarrow \overline{\mathbb{R}},$ the epigraph of $f,$ written $\epi(f),$ is the set of all $(x,t)\in X\times \overline{\mathbb{R}}$ with $f(x)\leq t.$
\end{definition}
\begin{proposition} If $f:X\rightarrow \overline{\mathbb{R}}$ then $f$ is lower semicontinuous if and only if $\epi(f)$ is closed.\end{proposition}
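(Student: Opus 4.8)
The plan is to prove the two implications separately, working from the definition that $f$ is lower semicontinuous iff $f^{-1}((c,\infty])$ is open for every $c\in\mathbb R$, and the definition that $\epi(f)=\{(x,t)\in X\times\overline{\mathbb R}\,:\,f(x)\le t\}$. Recall that the topology on $\overline{\mathbb R}$ has as a subbasis the sets $(c,\infty]$ and $[-\infty,c)$ for $c\in\mathbb R$, so the product $X\times\overline{\mathbb R}$ carries the product topology and a point is interior to a set as soon as it has a box neighbourhood $U\times V$ inside that set. To show a set is closed I will instead show its complement is open by producing, around each point of the complement, such a basic box neighbourhood.

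First I would show: if $f$ is lower semicontinuous then $\epi(f)$ is closed. Take $(x_0,t_0)\notin\epi(f)$, so $f(x_0)>t_0$. Pick a real $c$ with $t_0<c<f(x_0)$ (possible since $t_0<f(x_0)$ in $\overline{\mathbb R}$, using that if $t_0=-\infty$ any real $c<f(x_0)$ works and if $f(x_0)=\infty$ any real $c>t_0$ works). Then $U:=f^{-1}((c,\infty])$ is open by lower semicontinuity and contains $x_0$, and $V:=[-\infty,c)$ is an open neighbourhood of $t_0$ in $\overline{\mathbb R}$. For any $(x,t)\in U\times V$ we have $f(x)>c>t$, so $(x,t)\notin\epi(f)$; hence $U\times V$ is a neighbourhood of $(x_0,t_0)$ disjoint from $\epi(f)$, and $\epi(f)^c$ is open.

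Conversely, suppose $\epi(f)$ is closed; I would show $f^{-1}((c,\infty])$ is open for each real $c$. Let $x_0$ with $f(x_0)>c$. Then $(x_0,c)\notin\epi(f)$, so by openness of the complement there is a basic neighbourhood $U\times V$ of $(x_0,c)$ with $(U\times V)\cap\epi(f)=\emptyset$; in particular $c\in V$, so $V$ contains a subbasic set of the form $(a,\infty]$ or $[-\infty,b)$ or an intersection thereof, and in any case $V$ contains some real number $c'$ with $c'\le c$ — more simply, since $(x,c)\notin\epi(f)$ for every $x\in U$, we get $f(x)>c$ for all $x\in U$, i.e. $U\subset f^{-1}((c,\infty])$. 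Thus $x_0$ is interior, and $f^{-1}((c,\infty])$ is open, so $f$ is lower semicontinuous.

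I expect no serious obstacle here; the only mild subtlety is the careful handling of the extended-real-valued cases ($f(x_0)=\infty$ or $t_0=-\infty$) when choosing the intermediate real $c$, and making sure the argument uses only the order topology on $\overline{\mathbb R}$ described in the notation section. Both directions are short once the right box neighbourhood is exhibited.
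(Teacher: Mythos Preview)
Your proof is correct and follows essentially the same approach as the paper: both directions exhibit box neighbourhoods $U\times V$ with $U=f^{-1}((c,\infty])$ and $V=[-\infty,c)$ to describe the complement of $\epi(f)$. The only cosmetic difference is that for the converse the paper argues by contradiction via a net $((x_U,c))_{U\in\mathscr{N}(x)}$ in $\epi(f)$ converging to $(x,c)$, whereas you argue directly from openness of $\epi(f)^c$; these are equivalent formulations of the same idea.
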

\begin{proof} Suppose $f$ is lower semicontinuous. Then for $t\in \mathbb{R}$ let $B_t=f^{-1}((t,\infty])\times [-\infty,t).$ Then $B_t$ is open in $X\times \overline{\mathbb{R}}.$ Let $B=\cup_{t\in \mathbb{R}} B_t.$ Again, $B$ is open. If $(x,y)\in B$ then $(x,y)\in B_t$ for some $t.$ So $f(x)>t>y.$ So $(x,y)\in (\epi(f))^c.$ Conversely, if $(x,y)\in (\epi(f))^c$ then $f(x)>y.$ Let $t\in \mathbb{R}$ with $f(x)>t>y.$ Then $(x,y)\in B_t.$ Hence $(\epi(f))^c=B$ is open. So $\epi(f)$ is closed.

Now suppose $\epi(f)$ is closed. Let $c\in \mathbb{R}.$ If $f^{-1}((c,\infty])$ is not open then there exists an $x\in X$ with $f(x)>c$ such that for all $U\in \mathscr{N}(x)$ there exists an $x_U\in U$ with $f(x_U)\leq c.$ Then $((x_U,c))_{U\in \mathscr{N}(x)}$ defines a net in $\epi(f)$ converging to $(x,c)$. By the closure of $\epi(f)$ we have $f(x)\leq c,$ a contradiction. Hence $f$ is lower semicontinuous.
\end{proof}

\begin{lemma} Suppose for each $n,$ $f_n:X\rightarrow \overline{\mathbb{R}}.$ Let $A_n=\epi(f_n)$, $A=K\mhyphen\liminf_{n\rightarrow \infty} A_n$ and $B=K\mhyphen\limsup_{n\rightarrow \infty}A_n.$ Then there exist unique lower semicontinuous functions $f,g:X\rightarrow \overline{\mathbb{R}}$ such that $f=\epi(A),g=\epi(B).$ Moreover $g\leq f.$
\end{lemma}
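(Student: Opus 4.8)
The plan is to exhibit $A$ and $B$ as the epigraphs of (necessarily unique) lower semicontinuous functions $f,g$, by checking that their vertical fibres have the right shape, and then to read off $g\le f$ from the inclusion $A\subset B$. Throughout fix $x\in X$ and write $A_x:=\{t\in\overline{\mathbb{R}}:(x,t)\in A\}$ and $B_x:=\{t\in\overline{\mathbb{R}}:(x,t)\in B\}$ for the fibres over $x$.

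First I would record the easy structural facts. Since $f_n(x)\le+\infty$, every $A_n=\epi(f_n)$ contains $(x,+\infty)$, and as $(x,+\infty)$ belongs to each of its own neighbourhoods this forces $(x,+\infty)\in A\subset B$; hence $A_x$ and $B_x$ are nonempty. By the preceding lemma $A$ and $B$ are closed, so since $t\mapsto(x,t)$ is continuous, $A_x$ and $B_x$ are closed — hence compact — subsets of the compact space $\overline{\mathbb{R}}$, and in particular each attains its infimum.

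The one step requiring care is upward closedness of the fibres: if $(x,t)\in A$ and $t<t'\le+\infty$, then $(x,t')\in A$, and likewise for $B$. To see this, choose $t''$ with $t<t''<t'$ (such $t''$ exists in $\overline{\mathbb{R}}$ and is automatically finite). Given any neighbourhood of $(x,t')$, shrink it to a basic one $W\times J$ with $W\ni x$ open in $X$ and $J\ni t'$ open in $\overline{\mathbb{R}}$ (permissible by the Remark following the Kuratowski definitions). Apply the defining property of $K\mhyphen\liminf$ (resp. $K\mhyphen\limsup$) at $(x,t)$ to the neighbourhood $W\times[-\infty,t'')$: it yields, for all large $n$ (resp. for arbitrarily large $n$), a point $(y_n,s_n)\in A_n\cap\bigl(W\times[-\infty,t'')\bigr)$. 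Then $f_n(y_n)\le s_n<t''<t'$, so $(y_n,t')\in\epi(f_n)=A_n$, while also $(y_n,t')\in W\times J$. Thus $A_n\cap(W\times J)\neq\emptyset$ for the relevant $n$, and since $W\times J$ was an arbitrary basic neighbourhood of $(x,t')$ we conclude $(x,t')\in A$ (resp. $\in B$); the two cases run verbatim in parallel.

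Combining these facts, $A_x=[f(x),+\infty]$ with $f(x):=\min A_x$, and $B_x=[g(x),+\infty]$ with $g(x):=\min B_x$, so that $(x,t)\in A\iff f(x)\le t$, i.e. $A=\epi(f)$, and similarly $B=\epi(g)$; the Proposition above then gives that $f$ and $g$ are lower semicontinuous, their epigraphs being closed. Uniqueness is immediate: any $h:X\to\overline{\mathbb{R}}$ with $\epi(h)=A$ satisfies $h(x)=\inf\{t:h(x)\le t\}=\inf A_x=f(x)$ for every $x$. Finally, $K\mhyphen\liminf_{n\rightarrow\infty}A_n\subset K\mhyphen\limsup_{n\rightarrow\infty}A_n$ gives $A\subset B$, i.e. $\epi(f)\subset\epi(g)$, and comparing fibres over each $x$ yields $[f(x),+\infty]\subset[g(x),+\infty]$, hence $g(x)\le f(x)$. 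I expect the only genuine obstacle to be the upward-closedness step — essentially the product-space neighbourhood bookkeeping — where one must pick the auxiliary level $t''$ and use that each $A_n$, being an epigraph, is stable under raising the second coordinate, in order to push a witness for $(x,t)$ up to a witness for $(x,t')$; everything else is formal manipulation of infima together with the already-established closedness of the Kuratowski limits.
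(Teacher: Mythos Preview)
Your proof is correct and follows essentially the same route as the paper's: define $f$ and $g$ as the infima of the vertical fibres of $A$ and $B$, verify that these fibres are upward-closed using the epigraph structure of each $A_n$, and deduce lower semicontinuity from the closedness of the Kuratowski limits. The only cosmetic difference is organizational: the paper defines $f(x)=\inf\{t:(x,t)\in A\}$ first and then checks $(x,s)\in A$ for all $s>f(x)$, whereas you establish upward-closedness of the fibres before naming $f$; the neighbourhood bookkeeping in the key step (your $W\times[-\infty,t'')$ versus the paper's $U\times[-\infty,f(x)+\eta)$) is identical in substance.
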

\begin{proof}  Note $(x,\infty)\in A_n$ for each $x\in X.$ It follows $(x,\infty)\in A,(x,\infty)\in B$ for each $x\in X.$ Hence $\{t\in \overline{\mathbb{R}}\;|\;(x,t)\in 
A\}\neq \emptyset$ and $\{t\in \overline{\mathbb{R}}\;|\;(x,t)\in B\}\neq \emptyset.$
We define $f(x)=\inf\{t\in \overline{\mathbb{R}}\;|\;(x,t)\in A\}$ and $g(x)=\inf\{t\in 
\overline{\mathbb{R}}\;|\;(x,t)\in B\}$ for $x\in X.$ Since $A,B$ are closed sets it 
follows $(x,f(x))\in A$ and $(x,g(x))\in B$ for $x\in X.$ So suppose $x,x'\in X,$ 
$f(x)<s<\infty,\; g(x')<t<\infty$. Then let $U\in\mathscr{N}(x),V\in \mathscr{N}(x'),$ 
and $\epsilon>0.$ Let $\eta>0$ so that $f(x)+\eta<s$ and $g(x')+\eta<t.$ Then since 
$[-\infty,f(x)+\eta)$ is a neighborhood of $f(x)$ there exists an $N$ such that for 
all $n\geq N$ we have $f_n(y)\leq f(x)+\eta\leq s$ for some $y\in U.$ So $A_n \cap 
(U\times (s-\epsilon,s+\epsilon))\neq \emptyset$ for $n\geq N.$ It follows that 
$(x,s)\in A.$ Similarly since $[-\infty,g(x')+\eta)$ is a neighborhood of $g(x'),$  if 
$N$ is a non-negative integer then there exists an $n\geq N$ such that $f_n(y)\leq 
g(x')+\eta\leq t$ for some $y\in V.$ So for all $N$ there exists an $n\geq N$ such 
that $A_n\cap (V\times (t-\epsilon,t+\epsilon))\neq \emptyset$. Hence $(x',t)\in B.$ 
It follows that $A=\epi(f)$ and $B=\epi(g).$ The uniqueness is immediate. The lower semicontinuity follows from the closure of $\epi(f)$ and $\epi(g).$ Since $\epi(f)\subset \epi(g)$ and $\epi(f)$ is closed, for any $x\in X$ we have $(x,f(x))\in \epi(g)$ so that $g(x)\leq f(x).$
\end{proof}
\begin{definition}
Let $(f_n)_{n=1}^\infty$, $f$ be functions from $X$ to $\overline{\mathbb{R}}.$ We define $\Gamma\mhyphen$ limits as follow.
\[\Gamma\mhyphen\limsup_{n\rightarrow \infty} f_n=f\; \mbox{if and only if\;} K\mhyphen\liminf_{n\rightarrow \infty} \epi(f_n)=\epi(f).\]
\[\Gamma\mhyphen\liminf_{n\rightarrow \infty} f_n=f\; \mbox{if and only if\;} K\mhyphen\limsup_{n\rightarrow \infty} \epi(f_n)=\epi(f).\]
\[\Gamma\mhyphen\lim_{n\rightarrow \infty} f_n=f\; \mbox{if and only if\;} K\mhyphen\lim_{n\rightarrow \infty} \epi(f_n)=\epi(f).\]
\end{definition}

\begin{proposition}  If $(f_n)_{n=1}^\infty$ is a sequence of functions from $X$ to $\overline{\mathbb{R}}$ then for $x\in X,$
\[(\Gamma\mhyphen\liminf_{n\rightarrow \infty} f_n)(x)=\sup_{U\in\mathscr{B}(x)}\liminf_{n\rightarrow \infty} \inf_{y\in U} f_n(y),\] and
\[(\Gamma\mhyphen\limsup_{n\rightarrow \infty} f_n)(x)=\sup_{U\in\mathscr{B}(x)}\limsup_{n\rightarrow \infty} \inf_{y\in U} f_n(y).\] In particular, $(\Gamma\mhyphen\liminf_{n\rightarrow \infty} f_n)(x)\leq (\Gamma\mhyphen\limsup_{n\rightarrow \infty} f_n)(x)$, and if $f=\Gamma\mhyphen\lim_{n\rightarrow \infty} f_n$ exists, then $(\Gamma\mhyphen\liminf_{n\rightarrow \infty} f_n)(x)= f(x)= (\Gamma\mhyphen\limsup_{n\rightarrow \infty} f_n)(x)$ for all $x\in X.$

\end{proposition}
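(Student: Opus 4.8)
The plan is to reduce the statement to the description of the Kuratowski limits obtained just before the definition of $\Gamma$-limits. By that lemma, $K\mhyphen\liminf_n\epi(f_n)$ and $K\mhyphen\limsup_n\epi(f_n)$ are the epigraphs of lower semicontinuous functions, and by the definition of $\Gamma$-limits these functions are $\Gamma\mhyphen\limsup_n f_n$ and $\Gamma\mhyphen\liminf_n f_n$ respectively; moreover for a set of the form $\epi(h)$ one recovers $h(x)=\inf\{t\in\overline{\mathbb{R}}\;|\;(x,t)\in\epi(h)\}$. Hence it suffices to prove, for each fixed $x\in X$,
\[\inf\{t\in\overline{\mathbb{R}}\;|\;(x,t)\in K\mhyphen\limsup_n\epi(f_n)\}=\sup_{U\in\mathscr{B}(x)}\liminf_n\inf_{y\in U}f_n(y),\]
together with the analogous identity having $K\mhyphen\liminf$ on the left and $\limsup_n$ on the right. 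By the remark following the definition of the Kuratowski limits we may work with basic neighborhoods throughout, and in $X\times\overline{\mathbb{R}}$ these may be taken of product form $U\times V$ with $U\in\mathscr{B}(x)$ and $V$ a basic neighborhood of the second coordinate.

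The core is an elementary unwinding of the membership condition. Write $a_n(U):=\inf_{y\in U}f_n(y)$. For finite $t$ and $\epsilon>0$ one has $\epi(f_n)\cap\big(U\times(t-\epsilon,t+\epsilon)\big)\neq\emptyset$ if and only if $a_n(U)<t+\epsilon$ — the lower endpoint is irrelevant because epigraphs are upward closed in the $\overline{\mathbb{R}}$-factor — and the cases $t=\pm\infty$ are handled identically using the neighborhoods $(c,\infty]$ and $[-\infty,c)$. Substituting this into the definition of $K\mhyphen\limsup$: $(x,t)\in K\mhyphen\limsup_n\epi(f_n)$ iff for every $U\in\mathscr{B}(x)$ and every $\epsilon>0$ there are infinitely many $n$ with $a_n(U)<t+\epsilon$. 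Quantifying over all $\epsilon>0$, this is equivalent to $\liminf_n a_n(U)\leq t$ for every $U\in\mathscr{B}(x)$, i.e.\ to $\sup_{U\in\mathscr{B}(x)}\liminf_n a_n(U)\leq t$; taking the infimum over $t$ with this property yields the first identity. The same argument for $K\mhyphen\liminf$ replaces "infinitely many $n$" by "all sufficiently large $n$", which after quantifying over $\epsilon$ becomes $\limsup_n a_n(U)\leq t$ for every $U$, giving the second identity.

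From the two formulas the remaining assertions are immediate: $\liminf_n a_n(U)\leq\limsup_n a_n(U)$ for each $U\in\mathscr{B}(x)$, so taking suprema gives $(\Gamma\mhyphen\liminf_n f_n)(x)\leq(\Gamma\mhyphen\limsup_n f_n)(x)$; and if $\Gamma\mhyphen\lim_n f_n=f$ exists, then by definition $K\mhyphen\liminf_n\epi(f_n)=K\mhyphen\limsup_n\epi(f_n)=\epi(f)$, whence both one-sided $\Gamma$-limits coincide with $f$.

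The step I expect to require the most care is the equivalence
\[\big(\forall\epsilon>0:\ \text{infinitely many }n\text{ have }a_n(U)<t+\epsilon\big)\iff\liminf_n a_n(U)\leq t,\]
and its "eventually all $n$" counterpart for $\limsup$: the one-$\epsilon$ versions comparing strict and non-strict inequalities genuinely fail (take $a_n\equiv t$, or $a_n=t-1/n$), so it is essential that $\epsilon$ range over all positive reals. Beyond that, the only work is to confirm that product basic neighborhoods suffice in the Kuratowski definitions and to dispatch the $t=\pm\infty$ cases, both of which are routine.
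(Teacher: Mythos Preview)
Your proof is correct and follows essentially the same route as the paper's: both arguments unwind the Kuratowski membership condition $(x,t)\in K\mhyphen\limsup_n\epi(f_n)$ (resp.\ $K\mhyphen\liminf$) using product basic neighborhoods in $X\times\overline{\mathbb{R}}$ and the upward-closedness of epigraphs, arriving at the same chain of equivalences between $\sup_U\liminf_n\inf_{y\in U}f_n(y)\leq t$ and the Kuratowski condition. The paper phrases the computation as verifying directly that the epigraph of the candidate formula equals the Kuratowski limit set, while you instead recover the function value as $\inf\{t:(x,t)\in K\mhyphen\limsup_n\epi(f_n)\}$ via the preceding lemma and then identify that infimum --- but this is only a cosmetic difference in framing, and the substantive logical steps are identical.
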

\begin{proof} Let $f(x)=\sup_{U\in\mathscr{B}(x)}\liminf_{n\rightarrow \infty} \inf_{y\in U} f_n(y).$ Then we have
\begin{align*}
(x,t)\in \epi(f) &\iff \forall U \in \mathscr{B}(x)\forall N\;[\inf_{n\geq N}\inf_{y\in U} f_n(y)\leq t\;] \\
&\iff \forall U \in \mathscr{B}(x)\forall V\in \mathscr{N}(t)\forall N \exists n\geq 
N\exists y\in U\exists s\in V\;[f_n(y)\leq s]\\
&\iff \forall U\in \mathscr{N}(x,t)\forall N\exists n\geq N\;[\epi(f_n)\cap U\neq 
\emptyset]\\
&\iff (x,t)\in K\limsup_{n\rightarrow \infty} 
\epi(f_n)=\epi(\Gamma\mhyphen\liminf_{n\rightarrow \infty} f_n).
\end{align*}
Hence $f=\Gamma\mhyphen\liminf_{n\rightarrow \infty} f_n.$ 

Now let $g(x)=\sup_{U\in\mathscr{B}(x)}\limsup_{n\rightarrow \infty} \inf_{y\in U} f_n(y).$ Then we have
\begin{align*}
(x,t) \notin \epi(g) &\iff \exists U \in \mathscr{B}(x)\;\inf_N \sup_{n\geq N} \inf_{y\in U} f_n(y)>t \\
&\iff \exists U \in \mathscr{B}(x)\exists V\in \mathscr{N}(t)\forall N \exists n\geq 
N\forall y\in U\forall s\in V\;[f_n(y)> s]\\
&\iff \exists U\in \mathscr{N}(x,t)\forall N\exists n\geq N\;[\epi(f_n)\cap U= 
\emptyset]\\
&\iff (x,t)\notin K\mhyphen\liminf_{n\rightarrow \infty} 
\epi(f_n)=\epi(\Gamma\mhyphen\limsup_{n\rightarrow \infty} f_n).
\end{align*}
Hence $g=\Gamma\mhyphen\limsup_{n\rightarrow \infty} f_n.$ \end{proof}

\begin{proposition}  \label{gammachar} If $(f_n)_{n=1}^\infty$ is a sequence of extended real valued functions in $X$ with $f=\Glim f_n$ then for every $x\in X$ and for any sequence $x_n \convn x$ in $X$ we have 
\[f(x)\leq \liminf_{n\rightarrow \infty} f_n(x_n).\] Moreover, if in addition $X$ is first countable then for every $x\in X$ there exists a sequence $x_n\convn x$ in $X$ with 
\[f(x)=\lim_{n\rightarrow \infty}f_n(x_n).\]
\end{proposition}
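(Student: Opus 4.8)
The plan is to obtain both assertions from the pointwise formula for the $\Gamma$-upper and lower limits proved in the preceding proposition, together with the remark that the basic neighborhoods $\mathscr{B}(x)$ there may be replaced by the full neighborhood filter $\mathscr{N}(x)$. Since $f = \Glim f_n$, that proposition gives, for every $x\in X$,
\[ f(x) = \sup_{U\in\mathscr{N}(x)}\liminf_{n\rightarrow\infty}\inf_{y\in U} f_n(y) = \sup_{U\in\mathscr{N}(x)}\limsup_{n\rightarrow\infty}\inf_{y\in U} f_n(y). \]

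For the first (liminf) inequality, fix $x$, a sequence $x_n\rightarrow x$, and an arbitrary $U\in\mathscr{N}(x)$. Convergence yields an $N$ with $x_n\in U$ for all $n\geq N$, hence $\inf_{y\in U} f_n(y)\leq f_n(x_n)$ for $n\geq N$, and therefore $\liminf_n\inf_{y\in U} f_n(y)\leq \liminf_n f_n(x_n)$. Taking the supremum over $U\in\mathscr{N}(x)$ and using the first displayed equality gives $f(x)\leq\liminf_n f_n(x_n)$; no countability hypothesis is needed here.

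For the recovery sequence, assume $X$ is first countable, fix $x$, and choose a neighborhood base $(U_k)_{k\geq 1}$ at $x$, which we may take decreasing by replacing $U_k$ with $\bigcap_{j\leq k}U_j$. Put $c_k:=\limsup_n\inf_{y\in U_k}f_n(y)$. Since the $U_k$ decrease, $c_k$ is non-decreasing; and since every $U\in\mathscr{N}(x)$ contains some $U_k$, the second displayed equality shows $\sup_k c_k=f(x)$, i.e.\ $c_k\uparrow f(x)$. If $f(x)=+\infty$, the first part already forces $\lim_n f_n(x_n)=+\infty$ for every $x_n\rightarrow x$, so the constant sequence $x_n=x$ works. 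Otherwise pick real numbers $r_k\downarrow f(x)$ with $r_k>c_k$ for all $k$ (for instance $r_k=f(x)+1/k$ when $f(x)$ is finite, $r_k=-k$ when $f(x)=-\infty$); since $c_k<r_k$ there is $N_k$ with $\inf_{y\in U_k}f_n(y)<r_k$ for all $n\geq N_k$, and we may take $N_1<N_2<\cdots$. Set $x_n=x$ for $n<N_1$ and, for $N_k\leq n<N_{k+1}$, choose $x_n\in U_k$ with $f_n(x_n)<r_k$. Then $x_n\rightarrow x$ (given $U\in\mathscr{N}(x)$, pick $k_0$ with $U_{k_0}\subseteq U$; for $n\geq N_{k_0}$ one has $x_n\in U_k\subseteq U_{k_0}$ for some $k\geq k_0$), and $\sup_{n\geq N_k}f_n(x_n)\leq r_k$ by monotonicity of $(r_k)$, so $\limsup_n f_n(x_n)\leq\inf_k r_k=f(x)$. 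Combined with the first part, $f(x)\leq\liminf_n f_n(x_n)\leq\limsup_n f_n(x_n)\leq f(x)$, i.e.\ $\lim_n f_n(x_n)=f(x)$.

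The routine ingredient is the first inequality; the main bookkeeping obstacle is the diagonal construction of the recovery sequence — verifying $c_k\uparrow f(x)$ and simultaneously arranging $x_n\rightarrow x$ while keeping $\limsup_n f_n(x_n)\leq f(x)$, with the cases $f(x)=\pm\infty$ handled carefully.
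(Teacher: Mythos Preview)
Your proof is correct and follows essentially the same route as the paper: the liminf inequality is obtained exactly as in the paper from the formula $f(x)=\sup_U\liminf_n\inf_{y\in U}f_n(y)$, and the recovery sequence is built by the same diagonal construction using the $\limsup$ formula on a countable decreasing neighborhood base. Your version is slightly more careful in handling the case $f(x)=-\infty$ (the paper's choice of thresholds $f(x)+1/j$ tacitly assumes $f(x)>-\infty$), but the argument is otherwise the same.
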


\begin{proof} Let $x_n \convn x$. Let $U\in \mathscr{B}(x).$ Then there exists an $N$ such that for all $n\geq N$ we have $x_n\in U.$ So if $M\geq N,$ \[\inf_{n\geq M}\inf_{y\in U} f_n(y) \leq \inf_{n\geq M}f_n(x_n).\] Hence letting $M\rightarrow \infty$ yields
\[\liminf_{n\rightarrow \infty} \inf_{y\in U}f_n(y)\leq \liminf_{n\rightarrow \infty} f_n(x_n).\]
Since this holds for all $U\in \mathscr{B}(x),$ 
\[f(x)\leq \liminf_{n\rightarrow \infty} f_n(x_n).\]

Now suppose $X$ is first countable. Let $x\in X$. We may assume $f(x)<\infty.$ 

Since we may choose $\mathscr{B}(x)$ countable, let $\mathscr{B}(x)=(B_k)_{k=1}^\infty.$ Moreover, we may assume $B_{k+1}\subset B_k$ for each $k.$ 

Let $n_0=1.$ Suppose for $j\in \mathbb{Z}^+$, $(n_k)_{k=0}^{j-1}$ have been chosen with $n_{k-1}<n_l$ for $0<k\leq j-1$ and for each $0<k\leq j-1$ and each $l\geq n_k$ we have a $y^l_k \in B_k$ with $f_l(y_k^l)<f(x)+\frac{1}{k}.$ Then let $n_j>n_{j-1}$ such that 
\[\sup_{l\geq n_j} \inf_{y\in B_j} f_l(y) < f(x)+\frac{1}{j}.\]
For $l\geq n_j$ let $y_l^j \in B_j$ with 
\[f_l(y_j^l)<f(x)+\frac{1}{j}.\]

Then set 
\begin{displaymath}
   x_k = \left\{
     \begin{array}{lr}
       x\;,\;\;k<n_1\\
       y^k_j \;,\;n_j\leq k<n_{j+1}.\;
     \end{array}
   \right.
\end{displaymath} 
Then $x_k \convk x$ and \[f_k(x_k)<f(x)+\frac{1}{j}\;\;\mbox{for}\;\;k\geq n_j.\]
Hence $\sup_{k\geq n_j} f_k(x_k) \leq f(x)+\frac{1}{j}.$ So 
\[\limsup_{n\rightarrow \infty} f_n(x_n)\leq f(x).\]
However, we have just  shown that $f(x)\leq \liminf_{n\rightarrow \infty} f_n(x_n)$ so the result follows.

\end{proof}

The following proposition may be found in \cite{maso} as Theorem 8.5. 

\begin{proposition} \label{gamma}Suppose $X$ is second countable. Then every sequence $(f_n)_{n=1}^\infty$ of extended real valued functions on $X$ has a $\Gamma \mhyphen$convergent subsequence. 
\end{proposition}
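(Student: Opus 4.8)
The plan is to reduce the statement to a compactness property for Kuratowski convergence of subsets of a second countable space, applied to the epigraphs. Since $\overline{\mathbb{R}}$ is homeomorphic to $[0,1]$ it is second countable, so $X\times\overline{\mathbb{R}}$ is second countable. By the definition of $\Gamma$-convergence via $K\mhyphen\lim$ of epigraphs, together with the lemma identifying $K\mhyphen\liminf_{n\rightarrow\infty}\epi(f_n)$ and $K\mhyphen\limsup_{n\rightarrow\infty}\epi(f_n)$ as epigraphs of (unique) lower semicontinuous functions, it suffices to prove the following set-theoretic claim: every sequence $(A_n)_{n=1}^\infty$ of subsets of a second countable space $Y$ admits a subsequence $(A_{n_j})$ with $K\mhyphen\liminf_{j\rightarrow\infty}A_{n_j}=K\mhyphen\limsup_{j\rightarrow\infty}A_{n_j}$. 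Granting this, I would apply it with $Y=X\times\overline{\mathbb{R}}$ and $A_n=\epi(f_n)$: the common Kuratowski limit of the $\epi(f_{n_j})$ is then $\epi(f)$ for a unique lower semicontinuous $f:X\to\overline{\mathbb{R}}$, which by definition means $f=\Glim f_{n_j}$.

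To prove the claim, I would fix a countable base $(B_k)_{k=1}^\infty$ for $Y$ and extract the subsequence by a diagonal procedure. Starting from $(A_n)$, and having produced a subsequence, I pass to a further subsequence along which the binary predicate ``$A_n\cap B_k\neq\emptyset$'' is constant (a $\{0,1\}$-valued sequence always has a constant subsequence). Carrying this out successively for $k=1,2,3,\dots$, keeping indices strictly increasing, and then taking the diagonal sequence $(A_{n_j})_j$ (the $j$-th term of the $j$-th subsequence), I obtain a subsequence of $(A_n)$ such that for every fixed $k$ the tail $(A_{n_j})_{j\geq k}$ lies in the $k$-th subsequence, hence ``$A_{n_j}\cap B_k\neq\emptyset$'' is eventually constant in $j$.

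It remains to check $K\mhyphen\limsup_{j\rightarrow\infty}A_{n_j}\subset K\mhyphen\liminf_{j\rightarrow\infty}A_{n_j}$, the reverse inclusion being automatic. Let $x$ lie in the upper limit and let $U\in\mathscr{N}(x)$; choose $k$ with $x\in B_k\subset U$. By definition of the upper limit (using the neighborhood $B_k$), $A_{n_j}\cap B_k\neq\emptyset$ for infinitely many $j$; since this predicate is eventually constant, it is therefore true for all large $j$, so $A_{n_j}\cap U\neq\emptyset$ for all large $j$. As $U\in\mathscr{N}(x)$ was arbitrary, $x\in K\mhyphen\liminf_{j\rightarrow\infty}A_{n_j}$, proving the claim.

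The main point requiring care is the organization of the diagonal extraction so that the ``eventually constant'' property holds simultaneously for every basic open set $B_k$ while the extracted indices stay strictly increasing; once that is set up, the passage from the upper to the lower Kuratowski limit is immediate, and the reduction to epigraphs only uses the definitions and the earlier lemma on epigraphs of $K\mhyphen\liminf$ and $K\mhyphen\limsup$. Everything else is routine bookkeeping.
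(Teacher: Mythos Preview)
Your proof is correct, but it takes a genuinely different route from the paper's. The paper works directly in $X$ with the characterization
\[
(\Gamma\mhyphen\liminf_{n\to\infty} f_n)(x)=\sup_{U\in\mathscr{B}(x)}\liminf_{n\to\infty}\inf_{y\in U}f_n(y),\qquad
(\Gamma\mhyphen\limsup_{n\to\infty} f_n)(x)=\sup_{U\in\mathscr{B}(x)}\limsup_{n\to\infty}\inf_{y\in U}f_n(y),
\]
and diagonalizes using the compactness of $\overline{\mathbb{R}}$ so that $\lim_{n}\inf_{y\in B_k}f_{\alpha(n)}(y)$ exists for every basic $B_k\subset X$; once the $\liminf$ and $\limsup$ of these infima agree, the two formulas coincide and the $\Gamma$-limit is read off explicitly as $f(x)=\sup_{U\in\mathscr{B}(x)}\lim_n\inf_{y\in U}f_{\alpha(n)}(y)$. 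You instead lift to $Y=X\times\overline{\mathbb{R}}$ and prove the stand-alone fact that Kuratowski convergence of sets is sequentially compact in any second countable space, diagonalizing over the binary predicates ``$A_n\cap B_k\neq\emptyset$'' rather than over the real-valued quantities $\inf_{y\in B_k}f_n(y)$. Your approach is slightly more general (it yields a reusable set-theoretic compactness lemma and does not invoke compactness of $\overline{\mathbb{R}}$), while the paper's argument is a bit more direct and produces the limiting function by an explicit formula without passing through epigraphs.
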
 

\begin{proof} Since $X$ is second countable we may find a countable neighborhood base $\mathscr{B}=(B_k)_{k=1}^\infty.$ Since $\overline{\mathbb{R}}$ is compact, for each $k$ there exists a strictly increasing $\eta_k:\mathbb{Z}^+\rightarrow \mathbb{Z}^+$ such that $\lim_{n\rightarrow \infty} \inf_{y\in U_k} f_{\eta_k(n)}(y)$ exists in $\overline{\mathbb{R}}$. Then let $\alpha_1=\eta_1$ and for $k> 1,$ $\alpha_k=\eta_k\circ\alpha_{k-1}.$ Then define $\alpha:\mathbb{Z}^+\rightarrow \mathbb{Z}^+$ by $\alpha(k)=\alpha_k(k).$ Then $\alpha$ is strictly increasing and $\lim_{n\rightarrow \infty} \inf_{y\in B_k} f_{\alpha(n)}(y)$ exists for every $k\in \mathbb{Z}^+.$ Then set \[f(x)=\sup_{U\in \mathscr{B}(x)} \lim_{n\rightarrow \infty} \inf_{y\in U} f_{\alpha(n)}(y).\] Then $\Gamma\mhyphen \lim_{n\rightarrow \infty} f_{\alpha(n)} = f.$
\end{proof}

Suppose $X$ is a (real) Banach space with dual $X^*.$ Recall the weak-* topology on $X^*$ is the smallest topology on $X^*$ making all of the evaluation maps ev$_x:X^*\rightarrow \mathbb{R},$ $\phi\mapsto \phi(x),$ continuous for $x\in X.$ Since $X$ is a Banach space, by the Hahn-Banach Theorem, it embeds isometrically into its double dual $X^{**}$ by the map $x\mapsto $ev$_x.$  The weak topology on $X$ is defined to be the weak-* topology on $X^{**}$ restricted to $X,$ identified with its image of under the evaluation embedding. We denote the dual pairing map $X^*\times X\rightarrow \mathbb{R}$ defined by $(\phi,x)\mapsto \phi(x)$ for $\phi\in X^*, x\in X,$ by $\langle \phi, x\rangle.$ In particular, if $(X, \langle \cdot, \cdot \rangle)$ is a Hilbert space, by the Riesz Representation Theorem, the map $x\mapsto \langle x, \cdot\rangle$ is an isometric isomorphism of $X$ with $X^*.$ A less detailed version of the following Lemma may be found in Proposition 8.7 of \cite{maso}.

\begin{lemma}\label{weak} Suppose $X$ is a Banach space with separable dual $X^*.$  Let $(\phi_i)_{i=1}^\infty$ be dense in $\{\phi\in X^*\;|\;\|\phi\|=1\}.$ Define a function $d$ on $X\times X$ by \[d(x,y)=\sum_{i=1}^\infty 2^{-i}|\langle \phi_i, x-y\rangle|.\] Then $d$ is a metric and $(X,d)$ is separable. Moreover, if $B$ is a bounded subset of $X$ then the weak subspace topology on $B$ corresponds with the $d$-metric topology on $B$. 
\end{lemma}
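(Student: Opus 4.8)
The plan is to verify the three assertions in turn, the substance lying entirely in the last one. \emph{That $d$ is a metric.} It is well defined and finite because $\|\phi_i\|=1$ gives $|\langle\phi_i,x-y\rangle|\le\|x-y\|$, so the series is dominated by $\sum_i 2^{-i}\|x-y\|=\|x-y\|$; in particular $d(x,y)\le\|x-y\|$. Symmetry is immediate, and the triangle inequality follows termwise from $|\langle\phi_i,x-z\rangle|\le|\langle\phi_i,x-y\rangle|+|\langle\phi_i,y-z\rangle|$ after summing against the weights $2^{-i}$. For the separation axiom, if $d(x,y)=0$ then $\langle\phi_i,x-y\rangle=0$ for every $i$; since $\phi\mapsto\langle\phi,x-y\rangle$ is norm-continuous on $X^*$ and $(\phi_i)$ is dense in the unit sphere of $X^*$, we get $\langle\phi,x-y\rangle=0$ for all $\phi\in X^*$, and the Hahn--Banach theorem forces $x-y=0$.

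\emph{That $(X,d)$ is separable.} I would first recall the standard fact that separability of $X^*$ forces norm-separability of $X$: for each $i$ choose $z_i\in X$ with $\|z_i\|\le 1$ and $|\langle\phi_i,z_i\rangle|>1/2$; the rational linear span $D$ of $\{z_i\}$ is countable, and if its norm-closure were a proper subspace then Hahn--Banach would produce a unit functional $\phi$ vanishing on $D$, contradicting $|\langle\phi_i,z_i\rangle|>1/2$ as soon as $\|\phi-\phi_i\|<1/2$. Since the identity $(X,\|\cdot\|)\to(X,d)$ is continuous ($d\le\|\cdot\|$), this norm-dense set $D$ is $d$-dense as well, so $(X,d)$ is separable.

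\emph{Coincidence of the two topologies on a bounded set} $B\subset X$, say $B\subset\{x:\|x\|\le M\}$ so that $\diam B\le 2M$: I would show the identity between $B$ with the weak subspace topology and $B$ with the $d$-metric subspace topology is a homeomorphism by checking continuity both ways on neighborhood bases. For weak $\to$ $d$: given $x\in B$ and $\epsilon>0$, pick $N$ with $2M\sum_{i>N}2^{-i}<\epsilon/2$; then $V:=\{y:|\langle\phi_i,x-y\rangle|<\epsilon/2,\ i=1,\dots,N\}$ is a weak-open neighborhood of $x$, and for $y\in V\cap B$ the first $N$ terms of $d(x,y)$ sum to less than $\epsilon/2$ (as $\sum_i 2^{-i}<1$) while the tail is less than $\epsilon/2$, so $d(x,y)<\epsilon$. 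For $d\to$ weak: it suffices to dominate a subbasic weak neighborhood $\{y:|\langle\psi,x-y\rangle|<\delta\}$ with $\psi\ne 0$, and after rescaling we may take $\|\psi\|=1$; choose $\phi_i$ with $\|\psi-\phi_i\|<\delta/(2\diam B)$, so that on $B$ one has $|\langle\psi,x-y\rangle|\le|\langle\phi_i,x-y\rangle|+\delta/2$, and then $d(x,y)<2^{-i}\delta/2$ forces $|\langle\phi_i,x-y\rangle|<\delta/2$, hence $|\langle\psi,x-y\rangle|<\delta$. Intersecting finitely many such subbasic neighborhoods and taking the minimum of the associated radii shows every basic weak neighborhood of $x$ in $B$ contains a $d$-ball about $x$, finishing the proof.

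The main obstacle is the $d\to$ weak direction of this last step: one cannot reduce to sequences, since the weak topology is not a priori first countable — metrizability of the weak topology on bounded sets is essentially the statement being proved — so the comparison must be carried out at the level of (sub)basic neighborhoods. It is exactly there that the boundedness of $B$ is combined with the density of $(\phi_i)$ in the unit sphere of $X^*$ to replace an arbitrary functional $\psi$ by one of the $\phi_i$ with an error controlled uniformly over $B$.
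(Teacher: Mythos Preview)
Your proof is correct and follows essentially the same line as the paper's: the metric axioms via Hahn--Banach, and the equivalence of topologies on bounded sets by splitting the $d$-series into a finite head and a tail controlled by the diameter bound. The only stylistic differences are that the paper argues with nets rather than neighborhood bases (your remark that sequences are insufficient is well taken, but nets would have served just as well), and that for separability the paper proceeds by first establishing the topology coincidence on balls $B_M$ and then asserting each $(B_M,d)$ is separable, whereas you invoke directly that $X^\ast$ separable forces $X$ norm-separable and then use $d\le\|\cdot\|$ --- your route is slightly more self-contained here.
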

\begin{proof} Clearly $d$ is finite, non-negative valued, symmetric, and satisfies the triangle inequality. Since by the Hahn-Banach Theorem $X^*$ separates the points of $X,$ it follows that $d(x,y)=0$ if and only if $x=y.$ Hence $d$ is a metric. 

Let $B_M=\{y\in X\;|\;\|y-x\|\leq M\}$ for $M\in \mathbb{Z}^+.$ Let $x\in X.$ Suppose $(x_\alpha)$ is a net in $B_M$ with $x_\alpha\rightarrow x$ weakly. Let $\epsilon>0.$ Choose $N$ so large that $\sum_{j=N+1}^\infty 2^{-j}<\frac{\epsilon}{4M}.$ Then we may choose $\alpha_0$ so that for $\alpha\geq \alpha_0$ and $j\leq N,$ $|\phi_j(x_\alpha)-\phi_j(x)|<\frac{\epsilon}{2}.$ Then for $\alpha\geq \alpha_0$, $d(x_\alpha,x)\leq \sum_{j=1}^N \epsilon2^{-j}+\sum_{j=N+1}^\infty 2^{-j}\|x_\alpha-x\|<\epsilon.$ Conversely, suppose $(x_\alpha)$ is a net in $B_M$ with $d(x_\alpha,x)\rightarrow 0$. Then $\phi_j(x_\alpha)\rightarrow \phi_j(x)$ for all $j\in\mathbb{Z}^+.$ Since the multiples $c\phi_i$ for $c>0, i\in \mathbb{Z}^+,$ are dense in $X^*$, it follows $\phi(x_\alpha)\rightarrow \phi(x)$ for all $\phi\in X^*.$ Therefore the $d$-metric topology on $B_M$ and the weak subspace topology on $B_M$ are the same. Then since $(B_M,d)$ is separable and $X=\cup_{M=1}^\infty B_M,$ it follows that $(X,d)$ is separable.\end{proof}

Part of the following Proposition may be found in \cite{maso} as Corollary 8.12.

\begin{proposition} \label{weakcomp} Suppose $X$ is a separable Banach space with separable dual, $(f_n)_{n=1}^\infty$ is a collection of $\overline{\mathbb{R}}$ valued functions on $X$ with $f_n\geq \Psi$ for all $n,$ where $\Psi:X\rightarrow \overline{\mathbb{R}}$ satisfies $\lim_{\|x\|\rightarrow \infty} \psi(x)=\infty$ for all $x\in X.$ Then there exists a subsequence $(f_{n_k})$ such that $f_{n_k}$ $\Gamma$-converges in both the norm topology and weak topology of $X.$ 
\end{proposition}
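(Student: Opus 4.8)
The plan is to obtain the subsequence by two successive extractions and then to upgrade one of the resulting convergences. First I would arrange $\Gamma\mhyphen$convergence in the norm topology, then in the separable metric $d$ supplied by Lemma \ref{weak}, and finally convert the $d$-convergence into weak $\Gamma\mhyphen$convergence, this last step being the one that uses the coercivity of $\Psi$.

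Since $X$ is a separable Banach space, $(X,\|\cdot\|)$ is a separable metric space, hence second countable, so Proposition \ref{gamma} yields a subsequence of $(f_n)$ that $\Gamma\mhyphen$converges in the norm topology. Let $d$ be the metric of Lemma \ref{weak}, built from a countable dense set $(\phi_i)$ in the unit sphere of $X^*$; by that lemma $(X,d)$ is separable, hence again second countable, so a further application of Proposition \ref{gamma} extracts a subsequence $(f_{n_k})$ that $\Gamma\mhyphen$converges in the $d$-topology, say to $f$. Because $\Gamma\mhyphen$convergence is by definition Kuratowski convergence of the epigraphs, Lemma \ref{subsequence} guarantees that $(f_{n_k})$ still $\Gamma\mhyphen$converges in the norm topology. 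Thus only the weak topology remains to be handled, and for the single sequence $(f_{n_k})$.

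For the weak topology I claim the weak $\Gamma\mhyphen$limit of $(f_{n_k})$ is again $f$ (write $\Gamma_w$ and $\Gamma_d$ for the $\Gamma\mhyphen$operations relative to the weak and $d$-topologies). Since each $\phi_i$ is weakly continuous and the series defining $d$ converges uniformly, every $d$-ball is weakly open, so the $d$-topology is coarser than the weak topology; as the $\Gamma\mhyphen\liminf$ and $\Gamma\mhyphen\limsup$ at a point are suprema over neighborhoods, passing to a finer topology can only increase them, so $\Gamma_w\mhyphen\liminf f_{n_k}\ge \Gamma_d\mhyphen\liminf f_{n_k}=f$ and $\Gamma_w\mhyphen\limsup f_{n_k}\ge \Gamma_d\mhyphen\limsup f_{n_k}=f$. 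Hence it suffices to prove $\Gamma_w\mhyphen\limsup f_{n_k}(x)\le f(x)$ whenever $f(x)<\infty$ (the case $f(x)=\infty$ being automatic). Suppose not: then for some such $x$ there are $\epsilon>0$ and a weak neighborhood $U\ni x$ with $\inf_{y\in U}f_{n_k}(y)>f(x)+3\epsilon$ for infinitely many $k$. Put $M=f(x)+2\epsilon+1$; by coercivity $\{\Psi\le M\}$ is norm-bounded, so it and $x$ lie in a fixed closed ball $\bar B$. Since $\Gamma_d\mhyphen\limsup f_{n_k}(x)=f(x)$, for every $d$-ball $V\ni x$ we have $\limsup_k\inf_{y\in V}f_{n_k}(y)\le f(x)$, so along the bad indices we may choose $z_k\in V$ with $f_{n_k}(z_k)<f(x)+2\epsilon$; then $\Psi(z_k)\le f_{n_k}(z_k)<M$ puts $z_k\in\bar B$, while $f_{n_k}(z_k)<f(x)+3\epsilon<\inf_{y\in U}f_{n_k}(y)$ forces $z_k\notin U$. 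But on the bounded set $\bar B$ the weak and $d$-topologies coincide (Lemma \ref{weak}), so $U\cap\bar B$ contains $V\cap\bar B$ for a small enough $d$-ball $V$; choosing $V$ that small contradicts $z_k\notin U$. Hence $\Gamma_w\mhyphen\limsup f_{n_k}\le f\le\Gamma_w\mhyphen\liminf f_{n_k}$, so $(f_{n_k})$ $\Gamma\mhyphen$converges weakly (to $f$) as well.

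The first two extractions are routine uses of Proposition \ref{gamma} and Lemma \ref{subsequence}. The genuine difficulty is the weak-topology step: in infinite dimensions the weak topology is neither metrizable nor second countable, so Proposition \ref{gamma} does not apply to it directly, and the only structural link available is that it agrees with the metrizable $d$-topology on bounded sets. Making that link do useful work is exactly where the hypothesis $\lim_{\|x\|\to\infty}\Psi(x)=\infty$ is indispensable, since it forces the approximate infimizers of the $f_{n_k}$ into a fixed bounded set on which the two topologies, and hence the two notions of $\Gamma\mhyphen$limit, can be compared.
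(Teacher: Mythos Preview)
Your step 4 (the contradiction argument showing $\Gamma_w\mhyphen\limsup f_{n_k}\le f$) is fine and is essentially the same idea as the paper's direct proof of $f_w^+\le f_d^+$. The problem is step 3.

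You claim that every $d$-ball is weakly open because ``the series defining $d$ converges uniformly''. It does not. The tail estimate is $\sum_{i>N}2^{-i}|\phi_i(x-y)|\le 2^{-N}\|x-y\|$, which is \emph{not} uniformly small on all of $X$. In fact, in infinite dimensions a $d$-ball about $0$ contains no line: for $v\neq 0$ some $\phi_i(v)\neq 0$ (the $\phi_i$ separate points), so $d(0,tv)=|t|\sum 2^{-i}|\phi_i(v)|\to\infty$. But every nonempty weakly open set contains a translate of a finite-codimensional subspace, hence a line. Thus $d$-balls are not weakly open, the $d$-topology is \emph{not} coarser than the weak topology, and your inequalities $\Gamma_w\mhyphen\liminf f_{n_k}\ge f$ and $\Gamma_w\mhyphen\limsup f_{n_k}\ge f$ are unjustified. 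Without the first of these you cannot conclude weak $\Gamma$-convergence: from step 4 you only get $\Gamma_w\mhyphen\liminf\le\Gamma_w\mhyphen\limsup\le f$.

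The fix is that the missing direction \emph{also} uses coercivity, by an argument symmetric to your step 4 (swap the roles of $U$ and $V$). Given a $d$-neighborhood $V$ of $x$ witnessing a value $>t$, restrict to the bounded sublevel set $B=\{\Psi\le r\}$; on $B$ the two topologies agree, so there is a weak neighborhood $U$ with $U\cap B\subset V$; then $\inf_U f_{n_k}\ge\min(\inf_{V} f_{n_k},\,r)$ since on $B^c$ one has $f_{n_k}\ge\Psi>r$, and letting $r\to\infty$ gives $f_w^+(x)\ge t$. This is exactly what the paper does: it proves $f_w^\pm=f_d^\pm$ directly (one direction written out, the other stated as ``completely analogous'', the $\liminf$ case cited to Dal~Maso, Prop.~8.10), and only then invokes second countability of $(X,d)$ to extract a $d$-$\Gamma$-convergent subsequence.
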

\begin{proof} By Proposition \ref{gamma}, we may assume $f_n$ $\Gamma$-converges in the norm topology of $X.$ Let $f^{-}_d,f^{+}_d$ be the lower and upper $\Gamma$-limits under the $d-$topology, respectively, and let $f^{+}_w, f^{-}_w$ be the lower and upper $\Gamma$-limits under the weak topology, respectively. We show $f^+_d=f_w^+.$ For the proof that $f^-_d=f_w^{-}$, see Proposition 8.10 in cite \cite{maso}. Recall \[(\Gamma\mhyphen\limsup_{n\rightarrow \infty} f_n)(x)=\sup_{U\in\mathscr{B}(x)}\limsup_{n\rightarrow \infty} \inf_{y\in U} f_n(y).\] Let $x\in X.$ We prove $f^+_w(x)\leq f^+_d(x).$ The proof that $f^+_d(x)\leq f^+_w(x)$ is completely analogous. We may assume $f^+_d(x)<\infty.$ Suppose $t<f^+_w(x).$ Then there exists a weak open neighborhood $U$ of $x$ such that $t<  \limsup_{n\rightarrow \infty} \inf_{y\in U} f_n(y).$ Let $r\in \mathbb{R}$ such that $r>f^+_d(x).$ Let $B=\{y\in X\;|\;\Psi(y)\leq r\}.$ Then $B$ is norm bounded, by our assumptions on $\Psi.$ However, Lemma \ref{weak} shows that the $d$-topology on $B$ and the weak subspace topology on $B$ are the same. So there exists a $d$-open neighborhood $V$ of $x$ such that $V\cap B\subset U.$ Note $\inf_{y\in V\cap B}f_n(y)\geq \inf_{y\in U}f_n(y)$ for all $n.$ Hence $t<\limsup_{n\rightarrow \infty} \inf_{y\in V\cap B} f_n(y).$ Note that if $y\in B^c,$ then $r<\psi(y)\leq f_n(y)$ for all $n.$ Therefore  \[\inf_{y\in V}f_n(y)=\min(\inf_{y\in V\cap B}f_n(y),\inf_{y\in V\cap B^c}f_n(y))\geq \min(\inf_{y\in V\cap B}f_n(y), r).\] Hence \[f^+_d(x)\geq \limsup_{n\rightarrow \infty}\inf_{y\in V}f_n(y)\geq \min(\limsup_{n\rightarrow \infty} \inf_{y\in V\cap B} f_n(y),r)>\min(t,r).\] Letting $r\rightarrow \infty$ yields $f^+_d(x)>t.$ Since $t<f^+_w(x)$ was arbitrary, $f^+_w(x)\leq f^+_d(x),$ as desired. 

However, $(X,d)$ is separable. Therefore, by Proposition \ref{gamma},  there exists a subsequence $f_{n_k}$ such that $f_{n_k}$ $\Gamma$-converges in the $d$-topology. However, we have just show that this implies $\Gamma$-convergence in the weak topology as well. Since $f_n$ $\Gamma$-converges in the norm topology, by Lemma \ref{subsequence} any subsequence does as well. This proves the result.
\end{proof}

\subsection{Dirichlet Forms}
Our presentation in this section is inspired by \cite{mosco}. Let $\mathscr{H}$ be a real Hilbert space. 

\begin{definition} By a form on $\mathscr{H}$ we mean a subspace\footnote{Following \cite{mosco}, we do not require forms or Dirichlet forms to have dense domain.} $\mathscr{D}(Q)$ of $\mathscr{H}$ together with a positive definite symmetric bilinear mapping $Q:\mathscr{D}(Q)\times \mathscr{D}(Q)\rightarrow \mathbb{R}.$ 
\end{definition} 

Given a form $Q$ we define $Q_\triangle:\mathscr{H}\rightarrow \overline{\mathbb{R}}$ by  \begin{displaymath}
   Q_\triangle(f) = \left\{
     \begin{array}{lr}
       Q(f,f) & , f \in \mathscr{D}(Q)\\
       \infty & , f \notin \mathscr{D}(Q).
     \end{array}
   \right.
\end{displaymath} 
Then $\mathscr{D}(Q)=\{f\in \mathscr{H}\;|\;Q_\triangle(f)<\infty\}.$ Moreover, for $f,g\in \mathscr{D}(Q)$ we have the polarization identity 
\[Q(f,g)=\frac{1}{4}(Q_\triangle(f+g)-Q_\triangle(f-g)).\]

Note also that $\sqrt{Q_\triangle}$ is a seminorm on $\mathscr{D}(Q)$ and for $f,g\in \mathscr{D}(Q),$ \[|Q(f,g)|\leq \sqrt{Q_\triangle(f)}\sqrt{Q_\triangle(g)}.\]

\begin{lemma} A functional $F:\mathscr{H}\rightarrow \overline{\mathbb{R}}$ is of the form $Q_\triangle$ for some uniquely defined form $Q$ if and only if the following three properties hold:
\begin{align*}
1.& \;F(0)=0\;\mbox{and\;} F(f)\geq 0\;\mbox{for all\;} f\in \mathscr{H},\\ 2.&\;F(tf)=t^2F(f)\;\mbox{for all\;} f\;\mbox{with\;} F(f)<\infty\;\mbox{and all\;} t\in \mathbb{R}, \;\mbox{and} \\ 3.& \;F(f+g)+F(f-g)=2F(f)+2F(g)\; \mbox{for all\;} f,g\;\mbox{with\;} F(f)<\infty, F(g)<\infty.
\end{align*} 
\end{lemma}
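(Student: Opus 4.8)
The plan is to prove both implications, with essentially all of the substance in the reverse direction; uniqueness will fall out at the end via polarization. For the forward direction, suppose $F=Q_\triangle$ for a form $Q$ with domain $\mathscr{D}(Q)$. Then property~1 is immediate from $Q(0,0)=0$ and positivity of $Q$; property~2 follows from $Q(tf,tf)=t^{2}Q(f,f)$ for $f\in\mathscr{D}(Q)$, and note it is only asserted where $F<\infty$, so no arithmetic with $\infty$ is needed; and property~3 is the parallelogram identity, obtained by expanding $Q(f\pm g,\,f\pm g)$ bilinearly for $f,g\in\mathscr{D}(Q)$ (again only asserted where $F<\infty$).

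For the reverse direction, assume $F$ satisfies 1--3. First set $\mathscr{D}(Q):=\{f\in\mathscr{H}\mid F(f)<\infty\}$ and check it is a linear subspace: closure under scalar multiplication is property~2, and closure under addition follows from property~3 together with $F\geq 0$, since $F(f+g)\leq F(f+g)+F(f-g)=2F(f)+2F(g)<\infty$. Define $Q(f,g):=\tfrac14\big(F(f+g)-F(f-g)\big)$ for $f,g\in\mathscr{D}(Q)$; this is finite because $f\pm g\in\mathscr{D}(Q)$. Using property~2 with $t=-1$, so that $F(-h)=F(h)$, gives symmetry of $Q$ and $Q(0,g)=\tfrac14(F(g)-F(-g))=0$; and setting $g=f$ gives $Q(f,f)=\tfrac14\big(F(2f)-F(0)\big)=F(f)$, using properties 1--2. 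Hence $Q_\triangle=F$ on $\mathscr{D}(Q)$ and, by definition of the domain, off it as well.

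The core step is bilinearity of $Q$. Additivity $Q(f_{1}+f_{2},g)=Q(f_{1},g)+Q(f_{2},g)$ is the standard Jordan--von Neumann computation: applying the parallelogram law (property~3) to the pairs $(f_{1}+g,\,f_{2}+g)$ and $(f_{1}-g,\,f_{2}-g)$ and subtracting yields $4Q(f_{1},g)+4Q(f_{2},g)=2Q(f_{1}+f_{2},\,2g)$; specializing $f_{2}=0$ and using $Q(0,g)=0$ gives $Q(f_{1},2g)=2Q(f_{1},g)$, and combining the two identities gives additivity. Homogeneity $Q(tf,g)=tQ(f,g)$ then holds for $t\in\mathbb{Q}$ by iterating additivity, and is promoted to all $t\in\mathbb{R}$ by observing that $t\mapsto Q(tf,g)$ is additive in $t$ and locally bounded --- indeed $|Q(tf,g)|\leq\tfrac14\big(F(tf+g)+F(tf-g)\big)=\tfrac12\big(t^{2}F(f)+F(g)\big)$ by the parallelogram law and properties 1--2 --- and an additive, locally bounded function on $\mathbb{R}$ is linear. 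With $Q$ symmetric, bilinear, and $Q(f,f)=F(f)\geq 0$, $Q$ is a form with $Q_\triangle=F$. For uniqueness, any form $\widetilde{Q}$ with $\widetilde{Q}_\triangle=F$ has domain $\{F<\infty\}=\mathscr{D}(Q)$ and, by the polarization identity recorded just before the lemma, $\widetilde{Q}(f,g)=\tfrac14\big(F(f+g)-F(f-g)\big)=Q(f,g)$. The only genuinely delicate point is the passage from $\mathbb{Q}$-homogeneity to $\mathbb{R}$-homogeneity, which the local-boundedness argument above handles; everything else is routine bookkeeping.
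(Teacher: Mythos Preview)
Your proof is correct and follows essentially the same route as the paper: define the domain as $\{F<\infty\}$, set $Q(f,g)=\tfrac14(F(f+g)-F(f-g))$, derive additivity from parallelogram manipulations, pass to $\mathbb{Q}$-homogeneity, then extend to $\mathbb{R}$. The only noteworthy variation is in the last step: the paper argues directly via rational approximation using the bound $|Q(f,g)|\le\tfrac12(F(f)+F(g))$, whereas you invoke the cleaner observation that $t\mapsto Q(tf,g)$ is additive and locally bounded (via $|Q(tf,g)|\le\tfrac12(t^{2}F(f)+F(g))$), hence linear.
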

\begin{proof} If $F=Q_\triangle$ for some form $Q$ then the conditions stated clearly hold for $F.$ Conversely suppose $F$ satisfies the three properties listed in the statement of the lemma. Then let $\mathscr{D}(Q)=\{f\in \mathscr{H}\;|\;F(f)<\infty\}.$ Then $0\in\mathscr{D}(Q).$ Let $f,g\in \mathscr{D}(Q),t\in \mathbb{R}.$ Then $F(tf+g)+F(tf-g)=2F(tf)+2F(g)=2t^2F(f)+2F(g)<\infty.$ Hence $f+tg\in\mathscr{D}(Q).$ So $\mathscr{D}(Q)$ is a subspace. Then for $f,g\in \mathscr{D}(Q)$ let \[Q(f,g)=\frac{1}{4}(F(f+g)-F(f-g)).\] Then since $F(-f)=F(f),$ $Q$ is symmetric and non-negative definite. Note $Q(-f,g)=\frac{1}{4}(F(-(f-g))-F(-(f+g)))=-\frac{1}{4}(F(f+g)-F(f-g))=-Q(f,g).$ Let $f_1,f_2,g\in \mathscr{D}(Q)$. Then 
\begin{align*}
&Q(f_1+f_2,g)=\frac{1}{4}(F(f_1+(f_2+g))-F(f_1+(f_2-g))\\
&= \frac{1}{4}(2F(f_1)+2F(f_2+g)-F(f_1-f_2-g)-(2F(f_1)+2F(f_2-g)-F(f_1-f_2+g))\\
&=Q(f_1-f_2,g)+2Q(f_2,g).
\end{align*}
Interchanging the roles of $f_1$ and $f_2$ yields $Q(f_1+f_2,g)=Q(f_2-f_1,g)+2Q(f_1,g)=-Q(f_1-f_2,g)+2Q(f_1,g).$ Adding this to the previous equation yields $Q(f_1+f_2,g)=Q(f_1,g)+Q(f_2,g).$ Then by induction we have $Q(nf,g)=nQ(f,g)$ for all $n\in \mathbb{N}$ and $f,g\in\mathscr{D}(Q).$ Let $f,g\in\mathscr{D}(Q)$ and $n\in  \mathbb{Z}^+.$ Then $\frac{f}{n}\in \mathscr{D}(Q)$ and $Q(f,g)=nQ(\frac{f}{n},g).$ Hence $Q(\frac{f}{n},g)=\frac{1}{n}Q(f,g).$ Since $Q(-f,g)=-Q(f,g)$ it follows that $Q(qf,g)=qQ(f,g)$ for all $q\in \mathbb{Q}.$ Now note $F(f \pm g)=F(f)+F(g)\pm 2Q(f,g)\geq 0$. Hence \[|Q(f,g)|\leq \frac{1}{2}(F(f)+F(g)).\] Then let $t\in \mathbb{R}.$ Let $q\in\mathbb{Q}$ so that $|t-q|< \max\{1,\frac{\epsilon}{F(f)+F(g)+1}\}.$ Then 
\begin{align*} |tQ(f,g)-Q(tf,g)|&= |tQ(f,g)-Q((t-q)f,g)-qQ(f,g)|\\
&=|(t-q)Q(f,g)-Q((t-q)f,g)| 
\leq |t-q||Q(f,g)|+|Q(t-q)f,g)| \\
&\leq \frac{|t-q|}{2}(F(f)+F(g))+\frac{1}{2}(|t-q|^2F(f)+F(g))\\ 
&\leq |t-q|(F(f)+F(g))<\epsilon.
\end{align*} Hence $Q(tf,g)=tQ(f,g).$ Therefore $Q$ is a form with $Q_\triangle = F.$
\end{proof}

\begin{rem}
If $Q$ is a form then $\|\cdot\|_Q:\mathscr{D}(Q)\rightarrow \mathbb{R}$ defined by $\|f\|_Q=\sqrt{Q_\triangle(f)+\|f\|^2_\mathscr{H}}$ is a norm on $\mathscr{D}(Q).$ Moreover it is generated by an inner product $\langle f,g \rangle_Q =Q(f,g)+\langle f,g \rangle$ for $f,g\in \mathscr{D}(Q).$
\end{rem}
\begin{definition} A form $Q$ is called closed if $(\mathscr{D}(Q),\|\cdot\|_Q)$ is complete.
\end{definition}
By the above remark $Q$ is closed if and only if $\mathscr{D}(Q)$ is a Hilbert space with inner product $\langle \cdot,\cdot\rangle_Q.$

The following result may be found in \cite{mosco}.
\begin{proposition} A form $Q$ is closed if and only if $Q_\triangle$ is lower semicontinuous.
\end{proposition}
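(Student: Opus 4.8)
The plan is to prove the two directions separately, using the characterization $\mathscr{D}(Q)=\{f\in\mathscr{H}\mid Q_\triangle(f)<\infty\}$ together with the parallelogram law from the previous lemma, and the equivalence ``$Q_\triangle$ lower semicontinuous $\iff$ $\epi(Q_\triangle)$ closed'' from the epigraph proposition. First I would assume $Q$ is closed, i.e.\ $(\mathscr{D}(Q),\|\cdot\|_Q)$ is complete, and show $Q_\triangle$ is lower semicontinuous, equivalently that whenever $f_n\to f$ in $\mathscr{H}$ one has $Q_\triangle(f)\le\liminf_n Q_\triangle(f_n)$. I would pass to a subsequence realizing the $\liminf$; if that liminf is $+\infty$ there is nothing to prove, so I may assume $\sup_n Q_\triangle(f_n)=:M<\infty$, hence $(f_n)$ is bounded in the Hilbert space $(\mathscr{D}(Q),\langle\cdot,\cdot\rangle_Q)$. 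Since a closed form makes $\mathscr{D}(Q)$ a Hilbert space, bounded sequences have weakly convergent subsequences; the weak limit in $\mathscr{D}(Q)$ must agree with the $\mathscr{H}$-limit $f$ (because evaluation against elements of $\mathscr{H}\subset\mathscr{D}(Q)^*$ via the ambient inner product is weakly continuous on $\mathscr{D}(Q)$), so $f\in\mathscr{D}(Q)$ and, by weak lower semicontinuity of the norm $\|\cdot\|_Q$ together with $\|f_n\|_{\mathscr H}\to\|f\|_{\mathscr H}$, we get $Q_\triangle(f)=\|f\|_Q^2-\|f\|_{\mathscr H}^2\le\liminf_n(\|f_n\|_Q^2-\|f_n\|_{\mathscr H}^2)=\liminf_n Q_\triangle(f_n)$.

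For the converse, assume $Q_\triangle$ is lower semicontinuous and let $(f_n)$ be a Cauchy sequence in $(\mathscr{D}(Q),\|\cdot\|_Q)$. Then $(f_n)$ is Cauchy in $\mathscr{H}$, so it converges in $\mathscr{H}$ to some $f$; I must show $f\in\mathscr{D}(Q)$ and $\|f_n-f\|_Q\to 0$. Lower semicontinuity of $Q_\triangle$ applied to $f_n\to f$ gives $Q_\triangle(f)\le\liminf_n Q_\triangle(f_n)<\infty$ (the sup is finite since Cauchy sequences are bounded), so $f\in\mathscr{D}(Q)$. For convergence in the form norm, fix $m$ and apply lower semicontinuity to the sequence $n\mapsto f_n-f_m$, which converges in $\mathscr{H}$ to $f-f_m$: this yields $Q_\triangle(f-f_m)\le\liminf_n Q_\triangle(f_n-f_m)$. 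Combining with $\|f-f_m\|_{\mathscr H}^2=\lim_n\|f_n-f_m\|_{\mathscr H}^2$, I get $\|f-f_m\|_Q^2\le\liminf_n\|f_n-f_m\|_Q^2$, which is less than $\varepsilon$ for $m$ large by the Cauchy property; hence $f_n\to f$ in $\mathscr{D}(Q)$ and the form is closed.

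The main obstacle is the first direction: extracting convergence from boundedness in $\mathscr{D}(Q)$ requires weak sequential compactness of bounded sets in the Hilbert space $(\mathscr{D}(Q),\langle\cdot,\cdot\rangle_Q)$ and the identification of the weak $\mathscr{D}(Q)$-limit with the strong $\mathscr{H}$-limit. The latter needs a small argument that the inclusion $\mathscr{D}(Q)\hookrightarrow\mathscr{H}$ is continuous (immediate, since $\|f\|_{\mathscr H}\le\|f\|_Q$) hence weak-to-weak continuous, so the $\mathscr{H}$-weak limit of the subsequence is the $\mathscr{D}(Q)$-weak limit, and it coincides with $f$ because the subsequence already converges $\mathscr{H}$-strongly to $f$. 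Everything else — the algebraic identity $Q_\triangle(f)=\|f\|_Q^2-\|f\|_{\mathscr H}^2$, boundedness of Cauchy sequences, and monotonicity $\|\cdot\|_{\mathscr H}\le\|\cdot\|_Q$ — is routine. An alternative that sidesteps weak compactness entirely, and which I would mention as a remark, is to prove lower semicontinuity directly from the parallelogram law: if $f_n\to f$ with $Q_\triangle(f_n)$ bounded, then for each fixed $g\in\mathscr{D}(Q)$ one controls $Q(f_n,g)$ via Cauchy--Schwarz and the uniform bound, but pushing this to control $Q_\triangle(f)$ itself still essentially reconstructs the weak-limit argument, so the Hilbert-space route above is cleanest.
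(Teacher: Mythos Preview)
Your proof is correct and follows essentially the same route as the paper's. For the direction ``closed $\Rightarrow$ lower semicontinuous'' the paper argues by contradiction while you argue directly, but the substance is identical: bounded sequences in the Hilbert space $(\mathscr{D}(Q),\langle\cdot,\cdot\rangle_Q)$ have weakly convergent subsequences, the weak $\mathscr{D}(Q)$-limit is identified with the strong $\mathscr{H}$-limit via continuity of the inclusion, and weak lower semicontinuity of $\|\cdot\|_Q$ together with $\|f_n\|_{\mathscr H}\to\|f\|_{\mathscr H}$ gives the inequality for $Q_\triangle$; for the converse, your application of sequential lower semicontinuity to $f_n-f_m\to f-f_m$ is exactly the paper's use of the closedness of the sublevel set $Q_\triangle^{-1}([-\infty,\epsilon])$.
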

\begin{proof} Suppose $Q_\triangle:\mathscr{H}\rightarrow \overline{\mathbb{R}}$ is lower semicontinuous. Then let $(f_n)_{n=1}^\infty$ be a $\|\cdot\|_Q \mhyphen$ Cauchy sequence in $\mathscr{D}(Q).$ Then $(f_n)_{n=1}^\infty$ is also $\|\cdot\|_{\mathscr{H}}$ Cauchy. Hence there exists an $f\in \mathscr{H}$ with $f_n\;\overrightarrow{_{_{n\rightarrow \infty}}} \;f$ in $\mathscr{H}$. Let $\epsilon>0.$ Then there exists an $N>0$ such that \[Q_\triangle(f_n-f_m)\leq \epsilon\; \mbox{\;for\;}\;n,m\geq N.\] However since $Q_\triangle$ is lower semicontinuous, $Q_\triangle^{-1}([-\infty,\epsilon])$ is closed and the sequence tail $(f_n-f_m)_{n=N}^\infty$ lies in $Q_\triangle^{-1}([-\infty,\epsilon])$ for each $m\geq N$. Hence $f-f_m\in Q_\triangle^{-1}([-\infty,\epsilon])$ for $m\geq N.$
Therefore \[Q_{\triangle}(f-f_m)\leq \epsilon \;\;\mbox{for\;\;} m\geq N.\] It follows that $Q_\triangle(f-f_m)\;\overrightarrow{_{_{m\rightarrow \infty}}}\; 0$. Then let $m$ so large that $Q_\triangle(f-f_m)\leq 1.$ Then $Q_\triangle(f)\leq 1+Q_\triangle(f_m)<\infty.$ Hence $f\in \mathscr{D}(Q)$. So $(\mathscr{D}(Q),\|\cdot\|_Q)$ is complete.

Conversely, suppose $(\mathscr{D}(Q),\|\cdot\|_Q)$ is complete but $Q_\triangle$ is not lower semicontinuous. Then there exist a $c\geq 0,$ an $f\in Q_\triangle^{-1}((c,\infty]),$ and a sequence $(f_n)_{n=1}^\infty \subset Q_\triangle^{-1}([0,c])$ with $f_n\;\overrightarrow{_{_{n\rightarrow \infty}}}\;f$ in $\mathscr{H}.$  Since $(f_n)_{n=1}^\infty$ converges in $\mathscr{H}$ it is bounded in $\mathscr{H}$. Since $(Q_\triangle(f_n))_{n=1}^\infty$ is also bounded, there exists an $M>0$ so that $\|f_n\|_Q\leq M$ for all $n\in \mathbb{Z}^+.$ Since $(\mathscr{D}(Q),\langle \cdot,\cdot\rangle_{Q})$ is a Hilbert space, by Alaoglu's Theorem the sequence $(f_n)_{n=1}^\infty$ has a weakly convergent subsequence $(f_{n_k})_{k=1}^\infty$ with $f_{n_k} \rightharpoonup h$ as $k\rightarrow \infty$ for some $h$ with $\|h\|_Q\leq M.$ For $g\in \mathscr{H}$ let $g^*$ denote the functional $h\mapsto \langle g,h \rangle_\mathscr{H} \in \mathscr{H}^*.$ Then the map $g\mapsto g^*$ is an isometric isomorphism. Let $g\in \mathscr{H}.$ Then for $h\in \mathscr{D}(Q),$  we have $|g^*(h)|\leq \|h\|_{\mathscr{H}}\leq \|h\|_Q$. Hence $g^*=\langle g,\cdot\rangle_\mathscr{H}\in \mathscr{D}(Q)^*.$ So \[g^*(f_{n_k})=\langle g, f_{n_k}\rangle\; \overrightarrow{_{_{k\rightarrow \infty}}}\;g^*(h)=\langle g, h\rangle_{\mathscr{H}}.\]
Therefore $(f_{n_k})_{k=1}^\infty$ converges weakly in $\mathscr{H}$ to $h.$ However, since $(f_{n_k})_{k=1}^\infty$ converges strongly in $\mathscr{H}$ it also converges 
weakly in $\mathscr{H}$ to $f.$  Therefore $h=f.$ However for each $k,$ $|\langle f, f_{n_k}\rangle_Q|\leq \|f\|_Q\|f_{n_k}\|_Q$ so that taking lower limits (noting $\|f\|_Q> c\geq 0$ by assumption) yields \[\|f\|
_Q\leq \liminf_{k\rightarrow \infty}\|f_{n_k}\|_Q.\] So that also $\|f\|^2_Q\leq 
\liminf_{k\rightarrow \infty} \|f_{n_k}\|^2_Q.$ Then since $\|f_{n_k}\|_{\mathscr{H}}\;
\overrightarrow{_{_{k\rightarrow \infty}}}\;\|f\|_{\mathscr{H}}$, using the definition 
of $\|\cdot\|^2_Q=Q_\triangle+\|\cdot\|^2_\mathscr{H}$ we have 
\[Q_\triangle(f)\leq \liminf_{k\rightarrow \infty} Q_\triangle(f_{n_k})\leq c,\]
a contradiction. Hence $Q_\triangle$ is lower semicontinuous.
\end{proof}

We now further specialize to the case that $\mathscr{H}=L^2(X,\mu)$ is a separable Hilbert space, where $X$ is a topological space and $\mu$ is a positive Borel measure on $X.$ Note in particular that $\mathscr{H}$ is second countable. 

\begin{definition} A form $Q$ on $\mathscr{H}=L^2(X,\mu)$ is called \textit{strongly Markovian} 
if for every $f\in \mathscr{D}(Q)$ the function $\underline{\overline{f}}=\min\{\max\{f,0\},1\}\in\mathscr{D}(Q)$ and $Q_\triangle(\underline{\overline{f}})\leq Q_\triangle(f).$ 
\end{definition}

\begin{definition} A form $Q$ on $\mathscr{H}=L^2(X,\mu)$ is called a \textit{Dirichlet form}\footnote{Note our definition differs from the one in \cite{fukushima} in that we do not require the domain to be dense. What we here call a Dirichlet form is often called in the literature a Dirichlet form in the wide sense.}   if it is closed and strongly Markovian. 
\end{definition}

\begin{rem} The definition of a \textit{Markovian} form is more complicated than the definition of what we call here a strongly Markovian form.  However, in the case that a form is closed the two agree. See \cite{fukushima} for details. Moreover, all forms we consider here will be strongly Markovian so we have no need to introduce the full Markovian condition.
\end{rem}

Now suppose $(X,d)$ is a compact metric space with a finite Borel measure $\mu$. Let $C(X)$ denote the continuous functions on $X.$ 

\begin{definition} A Dirchlet form $Q$ on $L^2(X,\mu)$ with domain $\mathscr{D}(Q)$ is called \textit{regular} if $\mathscr{D}(Q)\cap C(X)$ is dense both in $C(X),$ with respect to the uniform norm, and in $\mathscr{D}(Q)$ with respect to the norm $f\mapsto \|f\|_Q=(Q(f,f)+\|f\|^2_{L^2(X,\mu)})^{1/2}.$ \end{definition}
\begin{definition} A form $Q$ is called \textit{local} if for every $f,g\in \mathscr{D}(Q)$ with disjoint supports\footnote{The support of a function on $X$ is the closure of the inverse image under the function of $\overline{\mathbb{R}}\setminus\{0\}$.  Hence the condition is $\overline{\{x\in X\;|\;f(x)\neq 0\}}\cap \overline{\{x\in X\;|\;g(x)\neq 0\}}=\emptyset$.},  $Q(f,g)=0.$ \end{definition}

Any regular Dirichlet form may be used to define a symmetric Markov process with c\`{a}dl\`{a}g paths\footnote{In fact, it corresponds to a symmetric Hunt process, which has paths that are continuous from the right and quasi-continuous from the left. See \cite{fukushima}.}. Since $Q$ is a closed form (with a dense domain by the regularity condition, as $C(X)$ is dense in $L^2(X,\mu))$, there exists a positive self-adjoint operator $\mathcal{L}$ with domain contained in $\mathscr{D}(Q)$ such that $Q(f,f)=\langle \mathcal{L}f, f \rangle$ for $f\in \mathscr{D}(\mathcal{L}).$ The operator $-\mathcal{L}$ is then the generator for the process corresponding to $Q$. The process corresponding to a local, regular, Dirichlet form has sample paths that are continuous with probability one. For more details and proofs of these statements, see \cite{fukushima}. 

For processes that are not necessarily symmetric, such as non-symmetric jump processes, there are generalizations of the theory of symmetric Dirichlet forms to what are called non-symmetric Dirichlet forms and semi-Dirichlet forms. See \cite{oshima} for more information. 
\subsection{$\Gamma$-Convergence of Strongly Markovian Forms}

Again, assume that $\mathscr{H}=L^2(X,\mu)$ is a separable Hilbert space, where $X$ is a topological space and $\mu$ is a positive Borel measure on $X.$

\begin{theorem} \label{gammacomp} Let $(Q_n)_{n=1}^\infty$ be a sequence of forms on $L^2(X,\mu)$. Then there exists a closed form $Q$ on $L^2(X,\mu)$ and a subsequence $(Q_{n_k})_{k=1}^\infty$ such that $Q_\triangle = \Gamma \mhyphen \lim_{k\rightarrow \infty} (Q_{n_k})_\triangle.$ If in addition each $Q_n$ is strongly Markovian, then $Q$ is a Dirichlet form.\end{theorem}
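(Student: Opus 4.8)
The plan is to combine the $\Gamma$-compactness result for sequences of functions on a second countable space (Proposition~\ref{gamma}) with the characterization of closed forms via lower semicontinuity and the structural lemma characterizing functionals of the form $Q_\triangle$. First I would apply Proposition~\ref{gamma} to the sequence $\left((Q_n)_\triangle\right)_{n=1}^\infty$ of $\overline{\mathbb{R}}$-valued functions on $\mathscr{H}=L^2(X,\mu)$, which is second countable (indeed separable metrizable) in its norm topology. This yields a subsequence $(Q_{n_k})_{k=1}^\infty$ and a function $F:\mathscr{H}\to\overline{\mathbb{R}}$ with $F=\Gamma\mhyphen\lim_{k\to\infty}(Q_{n_k})_\triangle$. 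By the lemma relating $\Gamma$-limits to epigraphs and the fact that a $\Gamma$-limit is always lower semicontinuous (its epigraph is a Kuratowski limit, hence closed), $F$ is lower semicontinuous. The task is then to show $F=Q_\triangle$ for a (necessarily unique) form $Q$, and that $Q$ is closed, and that strong Markovianity of the $Q_n$ passes to $Q$.

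Next I would verify the three conditions of the structural lemma for $F$, so that $F=Q_\triangle$ for a unique form $Q$. Condition $1$ ($F(0)=0$, $F\geq 0$): nonnegativity is immediate since each $(Q_{n_k})_\triangle\geq 0$ and $\Gamma$-liminf inequalities preserve this; $F(0)=0$ follows because $(Q_{n_k})_\triangle(0)=0$ for all $k$, giving $F(0)\leq 0$ via the constant recovery sequence, combined with $F(0)\geq 0$. Condition $2$ (homogeneity $F(tf)=t^2F(f)$ when $F(f)<\infty$): for $t\neq 0$, use that $g\mapsto tg$ is a homeomorphism of $\mathscr{H}$ and that $(Q_{n_k})_\triangle(tg)=t^2(Q_{n_k})_\triangle(g)$, so scaling commutes with the $\Gamma$-limit; handle $t=0$ via Condition $1$. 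Condition $3$ (parallelogram law): take $f,g$ with $F(f),F(g)<\infty$; by Proposition~\ref{gammachar} (valid since $\mathscr{H}$ is metrizable, hence first countable) choose recovery sequences $f_k\to f$, $g_k\to g$ with $(Q_{n_k})_\triangle(f_k)\to F(f)$ and $(Q_{n_k})_\triangle(g_k)\to F(g)$; then $f_k\pm g_k\to f\pm g$, and the liminf inequality from Proposition~\ref{gammachar} gives $F(f+g)+F(f-g)\leq\liminf_k\left[(Q_{n_k})_\triangle(f_k+g_k)+(Q_{n_k})_\triangle(f_k-g_k)\right]=\liminf_k\left[2(Q_{n_k})_\triangle(f_k)+2(Q_{n_k})_\triangle(g_k)\right]=2F(f)+2F(g)$; the reverse inequality comes from applying the already-established inequality to the pairs $(f+g,f-g)$ in place of $(f,g)$, together with Condition~$2$ to absorb the factor of $2$, exactly as in the proof of the structural lemma. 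This gives $F=Q_\triangle$, and since $Q_\triangle=F$ is lower semicontinuous, the earlier proposition on closedness shows $Q$ is closed.

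For the Markovian part, I would assume each $Q_n$ is strongly Markovian and show $Q$ is too. Fix $f\in\mathscr{D}(Q)$, i.e. $F(f)<\infty$, and set $\underline{\overline{f}}=\min\{\max\{f,0\},1\}$. Using Proposition~\ref{gammachar} pick a recovery sequence $f_k\to f$ in $\mathscr{H}$ with $(Q_{n_k})_\triangle(f_k)\to F(f)$. The truncation map $g\mapsto\underline{\overline{g}}$ is continuous (indeed $1$-Lipschitz) on $L^2(X,\mu)$, so $\underline{\overline{f_k}}\to\underline{\overline{f}}$ in $\mathscr{H}$. By strong Markovianity of $Q_{n_k}$, $\underline{\overline{f_k}}\in\mathscr{D}(Q_{n_k})$ and $(Q_{n_k})_\triangle(\underline{\overline{f_k}})\leq (Q_{n_k})_\triangle(f_k)$. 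Applying the $\Gamma$-liminf inequality of Proposition~\ref{gammachar} along the sequence $\underline{\overline{f_k}}\to\underline{\overline{f}}$ gives $F(\underline{\overline{f}})=Q_\triangle(\underline{\overline{f}})\leq\liminf_k (Q_{n_k})_\triangle(\underline{\overline{f_k}})\leq\liminf_k (Q_{n_k})_\triangle(f_k)=F(f)=Q_\triangle(f)<\infty$, so $\underline{\overline{f}}\in\mathscr{D}(Q)$ and $Q_\triangle(\underline{\overline{f}})\leq Q_\triangle(f)$. Hence $Q$ is strongly Markovian, and being closed it is a Dirichlet form.

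The main obstacle I anticipate is the careful bookkeeping in Condition~$3$ of the structural lemma: the parallelogram identity is an equality, but $\Gamma$-convergence only gives inequalities in each direction, so one must run the liminf inequality on recovery sequences for one direction and then cleverly re-apply that same inequality to the transformed pairs $(f+g, f-g)$ (using homogeneity to rescale) to recover the opposite inequality — mirroring precisely the argument already used to prove the structural lemma from the weaker-looking hypotheses. The second most delicate point is confirming that the norm topology on $L^2(X,\mu)$ is second countable and first countable so that Propositions~\ref{gamma} and~\ref{gammachar} apply; this is immediate from separability of $\mathscr{H}$, which is assumed. Everything else — nonnegativity, $F(0)=0$, homogeneity, lower semicontinuity of $\Gamma$-limits, and continuity of the truncation operator on $L^2$ — is routine.
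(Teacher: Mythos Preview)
Your proposal is correct and follows essentially the same approach as the paper's proof: apply Proposition~\ref{gamma} to extract a $\Gamma$-convergent subsequence, verify the three conditions of the structural lemma (nonnegativity and $F(0)=0$, quadratic homogeneity, and the parallelogram law via recovery sequences in one direction and then applied to the pair $(f+g,f-g)$ with homogeneity for the reverse), deduce closedness from lower semicontinuity of the $\Gamma$-limit, and pass the strong Markovian property through using that truncation is $1$-Lipschitz on $L^2$ together with the $\liminf$ inequality. The only cosmetic differences are that the paper writes out the homogeneity step with explicit recovery sequences for $f$ and for $tf$ rather than invoking that $g\mapsto tg$ is a homeomorphism, and it displays the pointwise inequality $\int|\underline{\overline{f_n}}-\underline{\overline{f}}|^2\,d\mu\leq\int|f_n-f|^2\,d\mu$ rather than citing Lipschitz continuity of the truncation map.
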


\begin{proof}

Let $\mathscr{H}=L^2(X)$. Since $\mathscr{H}$ is separable there exists a function $F:\mathscr{H}\rightarrow \overline{\mathbb{R}}$ and a subsequence $(Q_{n_k})_{k=1}^\infty$ such that $F = \Gamma \mhyphen \lim_{k\rightarrow \infty} (Q_{n_k})_\triangle.$  For simplicity we assume $F = \Gamma \mhyphen \lim_{n\rightarrow \infty} (Q_{n})_\triangle.$ 

Let $f\in \mathscr{H}.$ Then there exists and $f_n\convn f$ with $0\leq 
(Q_n)_{\triangle}(f_n) \convn F(f).$ Hence $F(f)\geq 0.$ Then $0\leq F(0)\leq \liminfn 
(Q_n)_\triangle(0)=0.$ So $F(0)=0.$ For $f\in \mathscr{H}$ with $F(f)<\infty,t\in \mathbb{R},t\neq 0$ let 
$f_n\convn f$ with $(Q_n)_{\triangle}(f_n)\convn F(f).$ Then $tf_n \convn tf$ so 
\[F(tf)\leq \liminfn (Q_n)_{\triangle}(tf_n) = t^2F(f).\]
Now let $g_n\convn tf$ with $(Q_n)_{\triangle}(g_n)\convn F(tf).$ Then $\frac{g_n}{t}\convn f$ and so \[F(f)\leq \liminfn (Q_n)_{\triangle}(\frac{g_n}{t})=\frac{1}{t^2}F(ft).\] Hence $F(tf)=t^2F(f).$ 
Now let $f,g\in \mathscr{H}$ with $F(f)<\infty, F(g)<\infty.$ Then let $f_n\convn f,g_n\convn g$ with $(Q_n)_{\triangle}(f_n)\convn F(f),(Q_n)_{\triangle}(g_n)\convn F(g).$ Then $f_n+g_n\convn f+g, f_n-g_n\convn f-g,$ and so 
\begin{align*}
F(f+g)+F(f-g)&\leq \liminfn (Q_n)_{\triangle}(f_n+g_n) + \liminfn (Q_n)_{\triangle}(f_n-g_n)\\
&\leq \liminfn ((Q_n)_{\triangle}(f_n+g_n)+(Q_n)_{\triangle}(f_n-g_n))\\
&= \liminfn (2(Q_n)_{\triangle}(f_n)+2(Q_n)_{\triangle}(g_n))=2F(f)+2F(g).
\end{align*}

Now let $u_n\convn f+g, v_n\convn f-g$ with $(Q_n)_{\triangle}(u_n)\convn F(f+g)$ and 
$(Q_n)_{\triangle}(v_n)\convn F(f-g).$ Then $u_n+v_n\convn 2f$, $u_n-v_n\convn 2g,$ and 
so 
\begin{align*}
4F(f)+4F(g)&=F(2f)+F(2g)\leq \liminfn (Q_n)_{\triangle}(u_n+v_n) + \liminfn (Q_n)_{\triangle}(u_n-v_n)\\
&\leq \liminfn ((Q_n)_{\triangle}(u_n+v_n)+(Q_n)_{\triangle}(u_n-v_n))\\
&= \liminfn (2(Q_n)_{\triangle}(u_n)+2(Q_n)_{\triangle}(v_n))=2F(f+g)+2F(f-g).
\end{align*}
Hence $F(f+g)+F(f-g)=2F(f)+2F(g).$ It follows that $F=Q_\triangle$ for some form $Q.$ 

Since $F$ is a $\Gamma \mhyphen$ limit it follows that $\epi(F)$ is closed. Therefore $Q_\triangle$ is lower semicontinuos. Hence $Q$ is a closed form. 

We complete the proof by showing that if each $Q_n$ is strongly Markovian then $Q$ is strongly Markovian. 
Let $f\in \mathscr{D}(Q).$ Let $f_n\convn f$ with $(Q_n)_{\triangle}(f_n) \convn Q_\triangle (f).$ Then since
\[\int |\overline{\underline{f_n}}-\overline{\underline{f}}|^2 d\mu \leq \int |f_n-f|^2d\mu,\] it follows that $\overline{\underline{f_n}} \convn \overline{\underline{f}}.$

Hence \[Q_\triangle(\overline{\underline{f}})\leq \liminfn (Q_n)_{\triangle}(\overline{\underline{f_n}})\leq \liminfn (Q_n)_{\triangle}(f_n)=Q_{\triangle}(f).\]
Therefore $Q$ is a Dirichlet form.
\end{proof}

Suppose $(Q_n)_{n=1}^\infty$ is a sequence of forms and $Q$ is another form, defined on $L^2(X,\mu)$. Then, following \cite{mosco}, a sequence $Q_n$ is said to \textit{Mosco converge} to $Q$ if \begin{equation*}
\begin{split} \mbox{(a)}\;& \mbox{for every\;} f\in L^2(X,\mu)\; \mbox{and every sequence\;} (f_n)_{n=1}^\infty \mbox{\;in\;} L^2(X,\mu) \mbox{\;converging weakly to\;} f, \\&Q(f,f)\leq \liminf_{n\rightarrow \infty} Q(f_n,f_n), \mbox{\;and}\\
\mbox{(b)}\;&\mbox{for every\;} f\in L^2(X,\mu), \mbox{\;there exists a sequence\;} (f_n)_{n=1}^\infty \mbox{\;converging (strongly) to\;} f \mbox{\;with\;} \\&\limsup_{n\rightarrow \infty} Q(f_n,f_n)\leq Q(f,f).
\end{split}
\end{equation*}

In particular, by the characterization from Proposition \ref{gammachar}, a sequence $(Q_n)_{n=1}^\infty$ on the separable space $L^2(X,\mu)$ $\Gamma$-converges with respect to the strong and weak topologies on $L^2(X,\mu)$ if and only if it Mosco converges to the same limit.  We then have the following result. 
\begin{proposition} \label{mosco} Suppose $(Q_n)_{n=1}^\infty$ is a sequence of forms on $L^2(X,\mu)$ that are uniformly coercive, that is there exists a $c>0$ with $Q_m(f,f)\geq c\|f\|^2$ for all $f\in L^2(X,\mu).$ Then there exists a closed form $Q$ on $L^2(X,\mu)$ and a subsequence $Q_{n_k}$ such that $Q$ is the Mosco limit of the $Q_{n_k}$ on $L^2(X,\mu)$. Moreover, $Q(f,f)\geq c\|f\|^2_{L^2(X,\mu)}$ for all $f\in L^2(X,\mu).$ If in addition each $Q_n$ is strongly Markovian, then $Q$ is a Dirichlet form.
\end{proposition}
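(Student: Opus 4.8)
The plan is to obtain the proposition by combining three facts already in hand: Proposition \ref{weakcomp} (simultaneous $\Gamma$-compactness in the norm and weak topologies, under a uniform coercive lower bound), Theorem \ref{gammacomp} (a $\Gamma$-limit of forms is a closed form, and a Dirichlet form if the forms are strongly Markovian), and the characterization recorded just before the statement, that joint strong/weak $\Gamma$-convergence on a separable $L^2$ space is the same as Mosco convergence to that limit.

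First I would set $\Psi(f):=c\|f\|^2_{L^2(X,\mu)}$ and observe that $(Q_n)_\triangle(f)=Q_n(f,f)\geq \Psi(f)$ for every $n$ and every $f\in L^2(X,\mu)$ (trivially so when $f\notin\mathscr{D}(Q_n)$, where $(Q_n)_\triangle(f)=\infty$), using the uniform coercivity hypothesis. Since $\Psi(f)\to\infty$ as $\|f\|_{L^2(X,\mu)}\to\infty$, and $L^2(X,\mu)$ is a separable Hilbert space, hence a separable Banach space with separable dual, Proposition \ref{weakcomp} applies to the sequence $(Q_n)_\triangle$ and yields a subsequence $(Q_{n_k})_\triangle$ that $\Gamma$-converges to one and the same functional $F$ in both the norm topology and the weak topology of $L^2(X,\mu)$.

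Next I would identify $F$. Regarded as a norm-topology $\Gamma$-limit of a sequence of forms, the argument in the proof of Theorem \ref{gammacomp} shows $F=Q_\triangle$ for a uniquely determined closed form $Q$, and that $Q$ is a Dirichlet form whenever each $Q_n$ (hence each $Q_{n_k}$) is strongly Markovian. For the coercivity of the limit, fix $f\in L^2(X,\mu)$; by the sequential description of $\Gamma$-limits in Proposition \ref{gammachar} there is $f_k\to f$ in $L^2(X,\mu)$ with $(Q_{n_k})_\triangle(f_k)\to F(f)$, so $F(f)=\lim_k (Q_{n_k})_\triangle(f_k)\geq \liminf_k c\|f_k\|^2_{L^2(X,\mu)}=c\|f\|^2_{L^2(X,\mu)}$ since $\|f_k\|_{L^2(X,\mu)}\to\|f\|_{L^2(X,\mu)}$; thus $Q(f,f)=F(f)\geq c\|f\|^2_{L^2(X,\mu)}$. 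Finally, since $(Q_{n_k})_\triangle$ $\Gamma$-converges to $Q_\triangle$ with respect to both the strong and the weak topology of $L^2(X,\mu)$, the cited characterization gives that $Q$ is the Mosco limit of the $Q_{n_k}$, completing the proof.

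The only substantive point is the matching of the norm-topology and weak-topology $\Gamma$-limits: $\Gamma$-limits are a priori topology-dependent and the weak topology on $L^2(X,\mu)$ is not metrizable, so this step is exactly what the uniform coercive lower bound purchases via Proposition \ref{weakcomp} (which itself relies on Lemma \ref{weak}, metrizability of bounded sets in the weak topology). Everything else — identifying the limit as a closed, strongly Markovian form and transferring the coercivity estimate — is a routine invocation of Theorem \ref{gammacomp} and of Proposition \ref{gammachar}.
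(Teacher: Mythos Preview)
Your proposal is correct and follows essentially the same approach as the paper: set $\Psi(f)=c\|f\|^2$, invoke Proposition \ref{weakcomp} (using that $L^2(X,\mu)$ is a separable Hilbert space with separable dual) together with Theorem \ref{gammacomp}, and then read off the coercivity of the limit from a recovery sequence via Proposition \ref{gammachar}. The paper's proof is a two-line sketch of exactly this argument; you have simply filled in the details.
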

\begin{proof} Since $L^2(X,\mu)$ is a separable Hilbert space, its dual, being isomorphic to $L^2(X,\mu)$, is also separable. Let $\Psi(f)=c\|f\|^2.$ Then apply Proposition \ref{weakcomp} and Theorem \ref{gammacomp}. Let $f_n\rightarrow f$ in $L^2(X,\mu)$ such that $Q_n(f_n,f_n)\rightarrow Q(f,f).$ Since $\|f_n\|^2_{L^2(X,\mu)}\rightarrow \|f\|^2_{L^2(X,\mu)},$ we have $Q(f,f)\geq c\|f\|^2_{L^2(X,\mu)}.$ \end{proof}

\chapter{Local Dimension}
In this chapter we define a local Hausdorff dimension $\alpha.$ We then define a corresponding  local Haudorff measure and study variable Ahlfors regularity. We show that any variable Ahlfors regular measure of variable dimension $Q$ is strongly equivalent to the local Haudorff measure and that $Q=\alpha,$ in analogy with the constant dimensional theory. Finally, at the end we construct various examples, including a variable dimensional Sierpi\'nski carpet. This chapter may be also found in the work \cite{Dever1} of the author.

\section{Local Hausdorff Dimension and Measure}
Let $(X,d)$ be a metric space. Let $\mathscr{O}(X)$ be the collection of open subsets of $X.$ For $x\in X,$ let $\mathscr{N}(x)$ be the open neighborhoods of $x.$ 
\begin{definition}
Define the \textit{local (Hausdorff) dimension} $\alpha:X\rightarrow [0,\infty]$ by \[\alpha(x):=\inf \{\dim(U)\;|\; U\in 
\mathscr{N}(x)\}.\]   Also by \ref{mon}, \[\alpha(x)=\inf\{\dim(B_\epsilon(x))\; | \;\epsilon>0\}.\] 
\end{definition}
\begin{proposition} The local dimension $\alpha$ is upper semicontinuous. In particular, it is Borel measurable and bounded above. \label{semicont}
\end{proposition}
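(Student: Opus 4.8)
By the paper's conventions, $\alpha$ is upper semicontinuous precisely when the sublevel set $\alpha^{-1}\bigl([-\infty,c)\bigr)=\{x\in X:\alpha(x)<c\}$ is open for every $c\in\mathbb{R}$ (equivalently, $x\mapsto-\alpha(x)$ is lower semicontinuous in the sense defined earlier). The plan is to prove this directly: around each point of the sublevel set I would exhibit a witnessing open neighborhood, and that neighborhood is handed to us for free by the definition of $\alpha$ as an infimum of dimensions of open neighborhoods.

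First I would fix $c\in\mathbb{R}$ and a point $x_0$ with $\alpha(x_0)<c$. Since $\alpha(x_0)=\inf\{\dim(U):U\in\mathscr{N}(x_0)\}<c$, there exists some $U\in\mathscr{N}(x_0)$ with $\dim(U)<c$. Now for any $y\in U$, the open set $U$ is itself a neighborhood of $y$, so $\alpha(y)\le\dim(U)<c$. Hence $U\subseteq\{x\in X:\alpha(x)<c\}$, which exhibits this sublevel set as open. As $c\in\mathbb{R}$ was arbitrary, $\alpha$ is upper semicontinuous. (If one prefers the ball formulation, the identity $\alpha(x)=\inf\{\dim(B_\epsilon(x)):\epsilon>0\}$ noted after Lemma \ref{mon} gives the same witness with $U=B_\epsilon(x_0)$.)

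For Borel measurability, upper semicontinuity yields that each sublevel set $\{\alpha<c\}$ is open and hence each superlevel set $\{\alpha\ge c\}$ is closed; consequently $\{\alpha>c\}=\bigcup_{n\in\mathbb{Z}^+}\{\alpha\ge c+\tfrac1n\}$ is an $F_\sigma$ set, in particular Borel. Since the sets $[-\infty,c)$ and $(c,\infty]$ with $c\in\mathbb{R}$ generate the Borel $\sigma$-algebra of $\overline{\mathbb{R}}$ and their $\alpha$-preimages are Borel, $\alpha$ is Borel measurable. For boundedness, taking $U=X$ in the defining infimum gives $\alpha(x)\le\dim(X)$ for every $x\in X$, so $\alpha$ is bounded above (by $\dim(X)$); and when $X$ is compact the upper semicontinuous function $\alpha$ attains its supremum on $X$.

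The argument is elementary, so there is no substantial obstacle. The only point requiring a little care is the bookkeeping around the infimum — the strict inequality $\alpha(x_0)<c$ must be realized by an actual neighborhood $U$ with $\dim(U)<c$ — together with keeping the paper's sign convention for semicontinuity straight. Monotonicity of Hausdorff dimension under inclusion (Lemma \ref{mon}) is what makes the inference $\alpha(y)\le\dim(U)$ legitimate and is the only external fact invoked.
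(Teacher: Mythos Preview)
Your argument for upper semicontinuity is correct and essentially identical to the paper's: both select $U\in\mathscr{N}(x_0)$ with $\dim(U)<c$ and note that every $y\in U$ then has $\alpha(y)\le\dim(U)<c$ (the paper interposes a smaller $V\in\mathscr{N}(y)$ with $V\subset U$ and appeals to Lemma~\ref{mon}, but your observation that $U$ itself lies in $\mathscr{N}(y)$ is just as good). The one substantive difference is the boundedness claim: you take $U=X$ to get $\alpha(x)\le\dim(X)$, which yields a finite bound only when $\dim(X)<\infty$; the paper instead uses compactness of $X$, noting that the open sets $\alpha^{-1}([0,n))$, $n\in\mathbb{Z}^+$, cover $X$ and extracting a finite subcover to produce a uniform finite bound $N$. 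Since compactness is the ambient hypothesis of the thesis, the paper's route is the intended one here.
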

\begin{proof} Let $c\geq 0.$ If $c=0$ then $\alpha^{-1}([0,c))=\emptyset\in\mathscr{O}
(X)$. So let $c>0.$ Then suppose $x \in \alpha^{-1}([0,c)).$ Then there exists a $U\in 
\mathscr{N}(x)$ such that $\dim(U)<c.$ Then for $y\in U$, since $U$ is open there exists a $V\in \mathscr{N}(y)$ with $V\subset U.$ So $\alpha(y)\leq \dim(V)\leq \dim(U)<c.$ Hence $x\in U \subset \alpha^{-1}([0,c)).$ Therefore $\alpha^{-1}([0,c))$ is open. Note the sets $\alpha^{-1}([0,n))$ for $n\in \mathbb{Z}^+$ form an open cover of $X$. Since $X$ is compact there exists an $N\in \mathbb{Z}^+$ such that $X=\alpha^{-1}([0,N)).$ Hence $\alpha$ is bounded above.
 
\end{proof}

\begin{lemma} If $A$ is a Borel set and $0\leq s_1\leq s_2$ then $\lambda^{s_2}(A)\leq \lambda^{s_1}(A).$ 
\end{lemma}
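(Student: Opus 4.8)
The plan is to reduce the inequality to the analogous statement about the $\delta$-level pre-measures $\lambda^{s}_{\delta}$, which is where the actual work (such as it is) happens. Recall from the construction preceding the definition of $\lambda^s$ that for any Borel set $A$ and any $\delta>0$, $\lambda^{s,*}_\delta(A)=\inf\{\sum_{B\in\mathscr{U}}|B|^s\;|\;\mathscr{U}\in\mathscr{B}_\delta(A)\}$, where the cover $\mathscr{U}$ ranges over at most countable families of open balls of diameter at most $\delta$ covering $A$, and $\lambda^s(A)=\sup_{\delta>0}\lambda^{s,*}_\delta(A)$. So it suffices to show $\lambda^{s_2,*}_\delta(A)\le \lambda^{s_1,*}_\delta(A)$ for every $\delta>0$ (with the convention, from the definition of $H^s$ and $\lambda^s$, that $|B|^0=1$ for non-empty $B$), and then take the supremum over $\delta$.

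First I would fix $\delta$, which I will take to be at most $1$ (the case $\delta\ge 1$ requires a separate remark — see below), and let $\mathscr{U}\in\mathscr{B}_\delta(A)$ be any admissible cover by open balls. For each non-empty $B\in\mathscr{U}$ we have $|B|\le\delta\le 1$, hence $|B|^{s_2}\le|B|^{s_1}$: indeed if $|B|>0$ this is the elementary fact that $t\mapsto t^s$ is non-increasing in $s$ for $0<t\le 1$, applied with $t=|B|$ and $s_1\le s_2$, while if $|B|=0$ both sides equal $1$ by the stated convention. Empty balls contribute $0=\tau(\emptyset)$ on both sides. Summing over $B\in\mathscr{U}$ gives $\sum_{B\in\mathscr{U}}|B|^{s_2}\le\sum_{B\in\mathscr{U}}|B|^{s_1}$, and taking the infimum over all such $\mathscr{U}$ yields $\lambda^{s_2,*}_\delta(A)\le\lambda^{s_1,*}_\delta(A)$. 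Taking $\sup_{0<\delta\le 1}$ on both sides and using that $\lambda^s(A)=\sup_{0<\delta\le 1}\lambda^{s,*}_\delta(A)$ (the pre-measures $\lambda^{s,*}_\delta$ are non-increasing in $\delta$, so the supremum over $0<\delta\le 1$ equals the supremum over all $\delta>0$) gives $\lambda^{s_2}(A)\le\lambda^{s_1}(A)$.

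The one genuinely delicate point is the convention $|B|^0=1$ for non-empty $B$ of diameter $0$, i.e. singletons: this is exactly why the monotonicity in $s$ holds only for diameters $\le 1$ and why one must pass through the restricted supremum $\sup_{0<\delta\le 1}$ rather than arguing at a single fixed $\delta>1$. Since $\lambda^{s,*}_\delta$ is monotone non-increasing in $\delta$, restricting to $\delta\le 1$ loses nothing in the definition of $\lambda^s$, so this causes no real trouble; but it is the step I would be careful to write out rather than gloss over. Everything else is routine: it is just the observation that a term-by-term inequality between the summands of the covering sums survives passing to infima and then to suprema.
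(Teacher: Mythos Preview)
Your proposal is correct and follows essentially the same route as the paper: restrict to $0<\delta<1$, use $|B|\le 1$ to get $|B|^{s_2}\le |B|^{s_1}$ termwise, then pass to the infimum and supremum. One small slip: when $|B|=0$ you write that ``both sides equal $1$,'' but this is only true when $s_1=s_2=0$; for $s>0$ one has $0^s=0$, and the convention $|B|^0=1$ applies only at exponent zero. The inequality $|B|^{s_2}\le |B|^{s_1}$ still holds in every case, so this does not affect the argument.
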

\begin{proof} If $\mathscr{U}\in \mathscr{B}_\delta(A)$ then $|U|^{s_2}\leq |U|^{s_1}$ for $0<\delta<1.$ The result then follows from the definition of $\lambda^s.$
\end{proof}

For $U\subset X, U\neq \emptyset,$ let $\tau(U)=|U|^{\dim(U)}.$ Set $\tau(\emptyset)=0.$ Then $H_{\loc}:=\mu_\tau$ is called the \textit{local Hausdorff measure}. If we restrict $\tau$ to $\mathscr{B}$ then the measure $\lambda_{\loc}:=\mu_\tau$ is called the \textit{local open spherical measure}. 

\begin{lemma} \label{asym} If $\dim(X)<\infty$ then $H_{\loc} \asymp \lambda_{\loc}.$\end{lemma}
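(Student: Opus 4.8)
The plan is to compare the two measures $H_{\loc}$ and $\lambda_{\loc}$ directly from their definitions via the Carath\'eodory construction, exactly in the spirit of the earlier proof that $\lambda^s \asymp H^s$ for constant $s$. Since $\lambda_{\loc}$ is built from the strictly smaller covering class $\mathscr{B}$ of balls (with $\tau(U) = \infty$ forced on non-balls), one inequality is immediate: any ball-cover is in particular an arbitrary cover and the covering function $\tau$ agrees on balls, so $H_{\loc}(A) \le \lambda_{\loc}(A)$ for every Borel set $A$. The content is in the reverse inequality $\lambda_{\loc} \le C H_{\loc}$ for some constant $C$ depending only on $X$ (here is where $\dim(X) < \infty$ enters).

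First I would fix a Borel set $A$, which we may assume has $H_{\loc}(A) < \infty$ (otherwise nothing to prove), and let $N := \dim(X) < \infty$, noting that $\dim(U) \le N$ for every $U \subset X$ by Lemma \ref{mon}. Given $\epsilon > 0$ and $\delta > 0$, take an arbitrary countable cover $\mathscr{U} \in \mathscr{C}_\delta(A)$ with $\sum_{U \in \mathscr{U}} \tau(U) = \sum_U |U|^{\dim(U)} < \infty$. As in the constant-$s$ proof, for each non-empty $U$ pick $x_U \in U$, split $\mathscr{U}$ into $\mathscr{U}_0 = \{U : |U| = 0\}$ and $\mathscr{U}_1 = \{U : |U| > 0\}$, replace each $U \in \mathscr{U}_1$ by the ball $B_U := B_{2|U|}(x_U)$, and cover each $U \in \mathscr{U}_0$ by a tiny ball $B_U := B_{r_U}(x_U)$ with $\sum_{U \in \mathscr{U}_0} r_U^{\dim(B_U)} < \epsilon$ — which is possible since $\dim(B_U) \le N$ and $r \mapsto r^{\dim(B_U)} \to 0$ as $r \to 0^+$ uniformly over exponents in $[0,N]$. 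Then $(B_U)_U \in \mathscr{B}_{4\delta}(A)$, and the key estimate is $|B_U| \le 4|U|$ for $U \in \mathscr{U}_1$, together with $\dim(B_U) \ge \dim(U)$? — no: I need to be careful, because $B_U \supset U$ gives $\dim(B_U) \ge \dim(U)$ by monotonicity, which makes $|B_U|^{\dim(B_U)}$ harder to control when $|B_U|$ could exceed $1$.

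The main obstacle, and the reason $\dim(X) < \infty$ is hypothesized, is precisely this: unlike the constant case, the exponent attached to a set varies, and a larger set carries a larger exponent. I would handle it by first shrinking to scales below $1$: choose $\delta$ small enough that $4\delta < 1$, so that all covering sets $B_U$ and $U$ involved have diameter $< 1$. For diameters in $[0,1)$ the map $t \mapsto t^s$ is \emph{decreasing} in $s$, so from $\dim(B_U) \ge \dim(U)$ and $|B_U| < 1$ we get $|B_U|^{\dim(B_U)} \le |B_U|^{\dim(U)} \le (4|U|)^{\dim(U)} \le 4^N |U|^{\dim(U)}$ for $U \in \mathscr{U}_1$ (using Lemma \ref{mon} for the exponent bound and $\dim(U) \le N$ for the constant), and similarly the $\mathscr{U}_0$ terms contribute at most $\epsilon$. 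Hence $\lambda^*_{\loc, 4\delta}(A) \le \epsilon + 4^N \sum_{U \in \mathscr{U}} |U|^{\dim(U)}$; taking the infimum over $\mathscr{U}$, then $\delta \to 0^+$, then $\epsilon \to 0^+$ yields $\lambda_{\loc}(A) \le 4^N H_{\loc}(A)$. Combined with $H_{\loc} \le \lambda_{\loc}$ this gives $H_{\loc} \asymp \lambda_{\loc}$ with constant $C = 4^{\dim(X)}$. I should also dispatch the degenerate case $A = \emptyset$ and note that $\dim(U) = 0$ is allowed, where the convention $|U|^0 = 1$ from the definition of $H^s$ is in force; these are routine.
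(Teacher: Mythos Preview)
Your approach is essentially the paper's: the inequality $H_{\loc}\le\lambda_{\loc}$ is immediate, and for the reverse you replace each $U\in\mathscr{U}_1$ by $B_U=B_{2|U|}(x_U)$, restrict to $4\delta<1$, and use $\dim(U)\le\dim(B_U)\le\dim(X)$ together with $|B_U|<1$ to get $|B_U|^{\dim(B_U)}\le(4|U|)^{\dim(U)}\le 4^{\dim(X)}|U|^{\dim(U)}$. This matches the paper's proof and constant $4^{\dim(X)}$ exactly.

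There is one slip in your handling of $\mathscr{U}_0$. You claim $r\mapsto r^{\dim(B_U)}\to 0$ uniformly over exponents in $[0,N]$, but this is false at exponent $0$: if $x_U$ has $\alpha(x_U)=0$ (for instance an isolated point), then small balls $B_{r_U}(x_U)$ can have $\dim(B_U)=0$, giving $r_U^{\dim(B_U)}=1$ for every $r_U>0$, so you cannot force $\sum_{\mathscr{U}_0}r_U^{\dim(B_U)}<\epsilon$. The paper sidesteps this by observing that each singleton already contributes $|U|^{\dim(U)}=0^0=1$ to the original sum (so $\mathscr{U}_0$ is finite), and then simply bounds the corresponding ball term by $1$ as well: with any $0<r<\delta/2$ and $m=\min_{\mathscr{U}_0}\alpha(x_U)$ one has $|B_r(x_U)|^{\dim(B_r(x_U))}\le(2r)^m\le 1=|U|^{\dim(U)}$. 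Thus $\sum_{\mathscr{U}_0}|B_U|^{\dim(B_U)}\le\sum_{\mathscr{U}_0}|U|^{\dim(U)}$, and no auxiliary $\epsilon$ is needed. With this correction your argument is the paper's.
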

\begin{proof} Clearly $H_{\loc} \leq \lambda_{\loc}.$ Let $A$ be a Borel set. We may assume $H_{\loc}(A)<\infty$ and $A\neq \emptyset.$ Let $\epsilon>0,0<\delta<\frac{1}{4},$ $\mathscr{U}\in \mathscr{C}_\delta(A)$ with $U\neq \emptyset$ for $U\in \mathscr{U}$ and 
$\sum_{U\in \mathscr{U}}|U|^{\dim(U)}<\infty.$ Let $x_U\in U$ for each $U\in \mathscr{U}.
$ If $|U|=0$ then $U=\{x_U\}$ is a singleton and so $\dim(U)=0.$ Then, by our convention, $|U|^{\dim(U)}=1.$ Hence there are at most finitely 
many $U\in \mathscr{U}$ with $|U|=0.$ Let $\mathscr{U}_0$ be the collection of such 
$U\in\mathscr{U}.$ Let $m:=\min_{U\in \mathscr{U}_0}\alpha(x_U).$ let $0<r<\delta/2$ 
such that $(2r)^m\leq 1.$ Then, for $U\in \mathscr{U}_0,$ $U\subset B_r(x_U)$ and, since 
$0<2r<\delta<1$ and $m\leq \alpha(x_U)\leq \dim(B_r(x_U)),$ $|B_r(x_U)|
^{\dim(B_r(x_U))}\leq (2r)^m \leq 1.$ For $U\in \mathscr{U}_0$ set $r_U:=r$. Let 
$\mathscr{U}_1$ be the collection of $U\in \mathscr{U}$ with $|U|>0.$ For $U\in 
\mathscr{U}_1$ let $r_U:=2|U|.$ Then for $U\in \mathscr{U}$ let $B_U:=B_{r_U}(x_U).$ Then 
$U\subset B_U$, $(B_U)_{U\in \mathscr{U}} \in \mathscr{B}_{4\delta}(A),$ and, since $|B_U|\leq 4|U|\leq 4\delta<1$ 
and $\dim(U)\leq \dim(B_U) \leq \dim(X)<\infty$ for $U \in \mathscr{U},$ $\sum_{U\in 
\mathscr{U}} |B_U|^{\dim(B_U)} \leq \sum_{U\in \mathscr{U}_0} 1 + 
\sum_{U \in \mathscr{U}_1} (4|U|)^{\dim(U)} \leq 4^{\dim(X)} \sum_{U \in \mathscr{U}}|U|^{\dim(U)}.$ 
Hence $\lambda_{\loc} \leq 4^{\dim(X)}H_{\loc}.$
\end{proof}

\begin{proposition}
If $d_0$ is the dimension of $X,$ then $H^{d_0}\ll  H_{\loc}.$ 
\end{proposition}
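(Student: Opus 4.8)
The plan is to prove the slightly stronger statement that $H^{d_0}(A)\le H_{\loc}(A)$ for every Borel set $A\subset X$, which immediately gives $H^{d_0}\ll H_{\loc}$. (Since $H^s$ was only defined for $s<\infty$, the statement tacitly assumes $d_0=\dim(X)<\infty$, which I will too.) Recall that $H^{d_0}$ and $H_{\loc}$ arise from the same Carath\'eodory construction on the same covering class $\mathscr{C}=\mathscr{P}(X)$, differing only in the gauge: $\tau(U)=|U|^{d_0}$ for $H^{d_0}$ and $\tau_{\loc}(U)=|U|^{\dim(U)}$ for $H_{\loc}$ (both with $\tau(\emptyset)=0$ and the convention $0^0=1$). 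So the whole matter reduces to comparing these two gauges on small sets.

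First I would prove the pointwise bound $|U|^{d_0}\le |U|^{\dim(U)}$ for every non-empty $U\subset X$ with $|U|<1$. When $|U|>0$ this holds because $0<|U|<1$ makes $t\mapsto |U|^{t}$ non-increasing while $\dim(U)\le\dim(X)=d_0$ by Lemma~\ref{mon}. When $|U|=0$, the set $U$ is a single point, so $\dim(U)=0$ and $\tau_{\loc}(U)=0^{0}=1$, whereas $|U|^{d_0}$ equals $0$ if $d_0>0$ and $1$ if $d_0=0$; in either case $|U|^{d_0}\le\tau_{\loc}(U)$. In passing this also disposes of the degenerate case $d_0=0$, where $H^{d_0}$ and $H_{\loc}$ both reduce to counting measure.

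With this in hand, fix $\delta\in(0,1)$ and a Borel set $A$. Any cover $\mathscr{U}\in\mathscr{C}_\delta(A)$ may, after discarding its empty members, be assumed to consist of non-empty sets of diameter at most $\delta<1$, so the pointwise bound yields $\sum_{U\in\mathscr{U}}|U|^{d_0}\le\sum_{U\in\mathscr{U}}|U|^{\dim(U)}$. Taking the infimum over $\mathscr{U}\in\mathscr{C}_\delta(A)$ gives $H^{d_0}_\delta(A)\le H_{\loc,\delta}(A)$ for the size-$\delta$ pre-measures. Since these pre-measures are non-increasing in $\delta$, the supremum over $\delta\in(0,1)$ agrees with the supremum over all $\delta>0$, i.e.\ with the full measure; passing to this supremum on both sides gives $H^{d_0}(A)\le H_{\loc}(A)$, and hence $H^{d_0}\ll H_{\loc}$.

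I do not expect a substantial obstacle: the core of the argument is just the cover-by-cover monotonicity of the Carath\'eodory construction once the two gauges are compared. The only points that need a moment's care are the degenerate gauge value at singletons (the $0^0=1$ convention) and the boundary case $d_0=0$, both of which are absorbed into the pointwise comparison of the second step.
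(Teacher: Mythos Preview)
Your proof is correct and follows essentially the same approach as the paper: both compare the gauges $|U|^{d_0}$ and $|U|^{\dim(U)}$ for sets of diameter less than $1$, using $\dim(U)\le d_0$, and push this through the Carath\'eodory construction at scale $\delta<1$. The only difference is that you state and prove the cleaner inequality $H^{d_0}(A)\le H_{\loc}(A)$ directly, whereas the paper writes down the pre-measure inequality $\mu^*_{\phi,\delta}\le\mu^*_{\tau,\delta}$ and then separately argues that $H_{\loc}(N)=0$ forces $H^{d_0}(N)=0$; your route subsumes that and is a bit more efficient.
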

\begin{proof} 
Let $\phi(U)=|U|^{d_0}$, $\tau(U)=|U|^{\dim(U)}$ where $\phi(\emptyset)=0, \tau(\emptyset)=0.$ By definition, since $\dim(U)\leq d_0$ for all $U,$ if $0<\delta<1$ then for any set $A,$ 
\[\mu^*_{\phi,\delta}(A)\leq \mu^*_{\tau,\delta}(A).\] 

Suppose $N\subset X$ is Borel measurable with $H_{\loc}(N)=0.$ Let $\epsilon>0.$ Then for all $\delta>0$ there exists a $\mathscr{U}_\delta \in \mathscr{C}_\delta(N)$ such that $\sum_{U\in \mathscr{U}_\delta}|U|^{\dim(U)}<\epsilon.$ But since for  $U\in \mathscr{U}_\delta,$ $\{U\}\in \mathscr{C}_\delta(U),$ and since $\mu^*_{\phi,\delta}$ is an outer measure, for $0<\delta<1$ we have
\[\mu^*_{\phi,\delta}(N)\leq \sum_{U\in \mathscr{U}_\delta}\mu^*_{\phi,\delta}(U)\leq 
\sum_{U\in \mathscr{U}_\delta}\mu^*_{\tau,\delta}(U)\leq \sum_{U\in \mathscr{U}_\delta}|U|^{\dim(U)}<\epsilon.\] Hence $H^{d_0}(N)\leq \epsilon.$  Since $\epsilon>0$ was arbitrary, $H^{d_0}(N)=0.$
\end{proof}

The following two propositions relate the local dimension to the global dimension.

\begin{proposition} \label{3.6} Suppose $X$ is separable with Hausdorff dimension $d_0.$ Let $A:=\alpha^{-1}([0,d_0)).$ Then $H^{d_0}(A)=0.$ In particular, $H^{d_0}(X)=H^{d_0}(\alpha^{-1}(\{d_0\}))$.
\end{proposition}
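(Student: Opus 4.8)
The plan is to realize $A$ as a subset of a \emph{countable} union of open balls, each of Hausdorff dimension strictly below $d_0$; the separability hypothesis is precisely what lets one pass from an a priori uncountable cover to a countable one.

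First I would fix $x\in A$. Since $\alpha(x)<d_0$ and $\alpha(x)=\inf\{\dim(B_\epsilon(x))\;|\;\epsilon>0\}$ (the second description of $\alpha$ in its definition, obtained via Lemma~\ref{mon}), the infimum lying strictly below $d_0$ forces some $\epsilon_x>0$ with $\dim\,B_{\epsilon_x}(x)<d_0$. Put $U_x:=B_{\epsilon_x}(x)$. By the definition of Hausdorff dimension there is then an $s$ with $\dim(U_x)<s<d_0$ and $H^s(U_x)=0$, so $H^s(U_x)<\infty$ and $d_0>s$; the monotonicity computation recalled just before the definition of $\dim$ gives $H^{d_0}(U_x)=0$.

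Next, $\{U_x : x\in A\}$ is an open cover of $A$ in the metric space $X$, which is separable and hence Lindel\"of, so there is a countable subfamily $(U_{x_i})_{i=1}^\infty$ with $A\subset\bigcup_{i=1}^\infty U_{x_i}$. Monotonicity and countable subadditivity of the outer measure $H^{d_0}$ then yield
\[ H^{d_0}(A)\le\sum_{i=1}^\infty H^{d_0}(U_{x_i})=0, \]
which is the main assertion. For the final sentence I would observe that Lemma~\ref{mon} also gives $\alpha(x)=\inf\{\dim(U):U\in\mathscr{N}(x)\}\le\dim(X)=d_0$ for every $x\in X$ (taking $U=X$), so $\alpha$ is $[0,d_0]$-valued and $X=A\cup\alpha^{-1}(\{d_0\})$; then monotonicity and finite subadditivity give
\[ H^{d_0}(\alpha^{-1}(\{d_0\}))\le H^{d_0}(X)\le H^{d_0}(A)+H^{d_0}(\alpha^{-1}(\{d_0\}))=H^{d_0}(\alpha^{-1}(\{d_0\})), \]
forcing equality throughout.

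There is no serious obstacle in this argument; the only points that deserve a word of justification are the reduction to a countable subcover (which can fail for non-separable metric spaces, and is exactly where the hypothesis is used) and the elementary fact that $H^{d_0}$ annihilates any set of Hausdorff dimension strictly less than $d_0$, both of which are available from the preceding material.
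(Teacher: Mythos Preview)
Your proof is correct and follows essentially the same route as the paper: cover $A$ by open sets of Hausdorff dimension strictly below $d_0$, use separability to extract a countable subcover, and conclude via the fact that $H^{d_0}$ vanishes on any set of dimension $<d_0$. The only cosmetic differences are that the paper invokes a countable basis (second countability) rather than the Lindel\"of property, and finishes with continuity of measure on the increasing unions $A_j=\bigcup_{k\le j}U_k$ rather than a single appeal to countable subadditivity; your version is if anything slightly more direct.
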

\begin{proof}
$A$ is open since $\alpha$ is upper semicontinuous. We may assume $A$ is non-empty. For $x\in A$ let $U_x\in \mathscr{N}(x)$ with $\dim(U_x)<d_0$ and $U_x\subset A.$ Then the $U_x$ form an open cover of $A.$ Since $X$ has a countable basis, there exists a countable open cover $(U_k)$ of $A$ with the property that for all $x$ there exists a $k$ with $x\in U_k \subset U_x.$ In particular $\dim(U_k)\leq \dim(U_x)<d_0.$ Let $A_j:=\cup_{k\leq j} U_k.$ Then, since $H^s(A_j)\leq \sum_{k\leq j} H^s(U_k)$ for any $s\geq 0,$ $\dim(A_j)\leq \max_{k\leq j} \dim(U_k)<d_0.$  So $H^{d_0}(A_j)=0$ for all $j.$ But by continuity of measure, $H^{d_0}(A)=\sup_j H^{d_0}(A_j)=0.$ Since $\alpha\leq d_0$ the other result follow immediately. 
\end{proof}

\begin{proposition} \label{dimmax}
Let $X$ be a separable metric space. Then $\dim(X)=\sup_{x\in X}\alpha(x).$ Moreover, if $X$ is compact then the supremum is attained. 
\end{proposition}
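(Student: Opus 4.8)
The plan is to establish the two inequalities $\sup_{x\in X}\alpha(x)\le\dim(X)$ and $\dim(X)\le\sup_{x\in X}\alpha(x)$ separately, and then to deduce attainment in the compact case from upper semicontinuity. We may assume $X\neq\emptyset$. The first inequality is immediate: for every $x\in X$, the whole space $X$ belongs to $\mathscr{N}(x)$, so $\alpha(x)=\inf\{\dim(U)\mid U\in\mathscr{N}(x)\}\le\dim(X)$, and taking the supremum over $x$ gives $\sup_x\alpha(x)\le\dim(X)$. In particular, if $\sup_x\alpha(x)=\infty$ the reverse inequality is automatic, so we may assume $M:=\sup_x\alpha(x)<\infty$.

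For the reverse inequality, fix an arbitrary $s>M$. For each $x\in X$ we have $\alpha(x)<s$, so by definition of the infimum there is an open neighborhood $U_x\in\mathscr{N}(x)$ with $\dim(U_x)<s$. Using separability, let $\mathscr{B}$ be a countable basis for the topology of $X$, and for each $x$ choose $B_x\in\mathscr{B}$ with $x\in B_x\subseteq U_x$. Then $\{B_x\mid x\in X\}$ is a countable subcollection of $\mathscr{B}$ that covers $X$, and by Lemma \ref{mon} each of its members satisfies $\dim(B_x)\le\dim(U_x)<s$. Enumerate this collection as $(B_k)_{k\ge1}$. For any $t>s$ and any $k$ we have $t>\dim(B_k)$, hence $H^t(B_k)=0$; by countable subadditivity of the outer measure $H^t$ it follows that $H^t(X)=H^t\!\left(\bigcup_k B_k\right)\le\sum_k H^t(B_k)=0$. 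Therefore $\dim(X)\le s$, and since $s>M$ was arbitrary, $\dim(X)\le M=\sup_x\alpha(x)$. Combined with the first paragraph, $\dim(X)=\sup_x\alpha(x)$. (Note that the weaker fact $H^{\dim X}(X)=0$, which would follow from Proposition \ref{3.6} if the supremum were strictly below $\dim(X)$, does not by itself yield a contradiction, which is why the covering argument at exponents $t>s$ is needed.)

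Finally, suppose $X$ is compact. By Proposition \ref{semicont}, $\alpha$ is upper semicontinuous and bounded above, so $M=\sup_x\alpha(x)<\infty$. If $M$ were not attained, then for each $x\in X$ there is some $c_x<M$ with $\alpha(x)<c_x$, i.e. $x\in\alpha^{-1}([0,c_x))$; these sets are open by upper semicontinuity and cover $X$, so compactness yields finitely many $\alpha^{-1}([0,c_1)),\dots,\alpha^{-1}([0,c_n))$ covering $X$. Setting $c=\max_i c_i<M$ gives $\alpha(x)<c$ for every $x\in X$, contradicting $M=\sup_x\alpha(x)$. Hence there exists $x^\ast\in X$ with $\alpha(x^\ast)=M=\dim(X)$, so the supremum is attained. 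The only delicate point in the whole argument is the one flagged above—extracting from separability a countable cover whose pieces all have dimension strictly below $s$ and then exploiting countable subadditivity of $H^t$ at exponents $t>s$—while the attainment half is the standard compactness argument for upper semicontinuous functions.
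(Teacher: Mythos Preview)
Your proof is correct and follows essentially the same approach as the paper: both establish $\sup_x\alpha(x)\le\dim(X)$ trivially, then for the reverse use separability to extract a countable open cover by sets of small dimension and invoke countable subadditivity of $H^t$, and finally deduce attainment in the compact case from upper semicontinuity of $\alpha$ via an open-cover argument. The only cosmetic differences are that the paper phrases the countable extraction via the Lindel\"of property and an $\epsilon/2$ trick, whereas you use a countable basis refinement and a direct threshold $s>M$; also, your detour through $t>s$ is unnecessary since $\dim(B_k)<s$ already gives $H^s(B_k)=0$, but this does no harm.
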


\begin{proof}
Clearly $\sup_{x\in X}\alpha(x) \leq \dim(X).$ Conversely, let $\epsilon>0.$ For $x\in X$ let $U_x\in \mathscr{N}(x)$ such that $\dim(U_x)\leq \alpha(x)+\frac{\epsilon}{2}.$ Then the $(U_x)_{x\in X}$ form an open cover of $X.$ Since $X$ is separable it is Lindel\"{o}f. So let $(U_{x_i})_{i\in \mathbb{Z}_{+}}$ be a countable subcover. Then if $\sup_{i\in \mathbb{Z}^+}\dim(U_{x_i})=\infty$ then also $\dim(X)=\infty.$ Else if $\sup_{i\in \mathbb{Z}^+}\dim(U_{x_i})<t<\infty$ then $H^t(U_{x_i})=0$ for all $i$ and so $H^t(X)\leq \sum_{i=1}^\infty H^t(U_{x_i})=0.$ So $\dim(X)\leq t.$ Hence $\dim(X)=\sup_{i\in \mathbb{Z}^+}\dim(U_{x_i}).$ Then choose $j\in \mathbb{Z}^+$ such that $\dim(X)\leq \dim(U_{x_j})+\frac{\epsilon}{2}.$ Then $\dim(X)\leq \alpha(x_j)+\epsilon\leq \sup_{x\in X}\alpha(x)+\epsilon.$ Hence $\dim(X)=\sup_{x\in X}\alpha(x).$

Now suppose $X$ is compact. Let $d_0:=\dim(X).$ It remains to show that there exists some $x\in X$ with $\alpha(x)=d_0.$ Suppose not. Then clearly $d_0>0.$ Let $m\geq 1$ be an integer such that $\frac{1}{m}<d_0.$ Then the sets $U_n:=\alpha^{-1}[0,d_0-\frac{1}{n})$ for $n\geq m$ form an open cover of $X.$ By compactness there exists a finite subcover. So there exists an $N>0$ such that $X=\alpha^{-1}[0,d_0-\frac{1}{N}).$ Hence $\sup_{x\in X} \alpha(x)\leq d_0-\frac{1}{N}<d_0,$ a contradiction. \end{proof}

\section{Variable Ahlfors Regularity}

Recall that a metric space $(X,d)$ is Ahlfors regular of exponent $s>0$ if there exists a 
Borel measure $\mu$ on $X$ and a $C>1 $ such that for all $x\in X$ and all $0<r\leq 
\diam(X),$ \[\frac{1}{C}r^{s}\leq \mu(B_r(x))\leq Cr^{s}.\] It is well known that if $X$ 
supports such a measure $\mu$  then $s$ is the Hausdorff dimension of $X$ and $\mu$ is 
strongly equivalent to the Hausdorff measure $H^{s}$ \cite{Hein},\cite{Fed}.

In this section we generalize this result on a compact metric space. 

\begin{definition} If $Q:X\rightarrow (0,\infty)$ is a bounded function, then a measure $\nu$ is called \textit{(variable) Ahlfors $Q\mhyphen$regular} if there exists a constant $C>1$ so that \[\frac{1}{C} \nu(B_r(x))\leq  r^{Q(x)}\leq C\nu(B_r(x))\] for all $0<r\leq \frac{\diam(X)}{2}$ and $x\in X$\cite{Sob}.\end{definition}

 We show that if $X$ is compact and supports such a measure $\nu$ then $Q$ is the local Hausdorff dimension and $\nu$ is strongly equivalent to the local Hausdorff measure $H_{\loc}.$
Our presentation in this section is strongly influenced by \cite{Sob}.

Suppose $(X,d)$ is compact. For $Q:X\rightarrow [0,\infty)$, define $Q^-,Q^+, Q^c:\mathscr{B}\rightarrow [0,\infty)$ by $Q^-(U)=\inf_{x\in U} Q(x), Q^+(U)=\sup_{x\in U} Q(x).$ For arbitrary $Q:X\rightarrow [0,\infty)$ define $Q_c:\mathscr{B}\rightarrow 
[0,\infty)$ by $Q_c(B_r(x))=Q(x).$ Then for $\tilde{Q}:\mathscr{B}\rightarrow [0,\infty)$ with $Q^- \leq \tilde{Q}\leq Q^+,$ let $\lambda^{\tilde{Q}}:=\mu_\tau$, where $\tau$ is restricted to $\mathscr{B}$ and defined by $\tau(B):=|B|^{\tilde{Q}(B)}, \tau(\emptyset)=0.$

\begin{definition}For $Q:X\rightarrow [0,\infty)$, we call $X$ \textit{$Q\mhyphen$amenable} if $0<\lambda^{Q_c}(B)<\infty$ for every non-empty open ball $B$ of finite radius in $X.$\footnote{In the case of $X$ compact this is equivalent to $\lambda^{Q_c}$ being finite with full support.} \end{definition} 

\begin{proposition} \label{amen} If $X$ is $Q\mhyphen$amenable with $Q$ continuous then $\alpha(x)=Q(x)$ for all $x\in X.$ 
\end{proposition}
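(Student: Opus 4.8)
The plan is to establish the two inequalities $\alpha(x)\le Q(x)$ and $\alpha(x)\ge Q(x)$ separately. In each case the idea is to compare, on a small ball $B_0=B_r(x)$, the spherical measure $\lambda^{s}$ for an exponent $s$ slightly larger (resp.\ slightly smaller) than $Q(x)$ with the measure $\lambda^{Q_c}$, and then to read off $\dim(B_0)$ using that $\dim(A)=\sup\{s:\lambda^{s}(A)=\infty\}=\inf\{s:\lambda^{s}(A)=0\}$ together with $\alpha(x)=\inf_{\delta>0}\dim(B_\delta(x))$ (recall also that $\dim(B_\delta(x))$ is nondecreasing in $\delta$, so this infimum is a limit as $\delta\to 0^{+}$, and that $\lambda^{s}_\delta$ increases as $\delta$ decreases). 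The common geometric input is: if $\mathscr{U}\in\mathscr{B}_\delta(B_0)$ with $\delta\le r$, then discarding the balls of $\mathscr{U}$ that miss $B_0$ leaves a subfamily $\mathscr{U}'\in\mathscr{B}_\delta(B_0)$ that still covers $B_0$, and each $U\in\mathscr{U}'$ satisfies $U\subseteq B_{r+\delta}(x)\subseteq B_{2r}(x)$; in particular every center of every $U\in\mathscr{U}'$ lies in $B_{2r}(x)$, so by continuity of $Q$ the value $Q_c(U)$ is within a prescribed tolerance of $Q(x)$ once $r$ is small.

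For $\alpha(x)\le Q(x)$: fix $\epsilon>0$, put $s=Q(x)+\epsilon$, and use continuity to pick $r_0>0$ with $Q<s$ on $B_{r_0}(x)$. For $r<r_0/2$ and $B_0=B_r(x)$, and any $\delta\le\min(r,1)$, every $U\in\mathscr{U}'$ has $|U|\le 1$ and $Q_c(U)<s$, hence $|U|^{s}\le |U|^{Q_c(U)}$, which gives $\lambda^{s}_\delta(B_0)\le\lambda^{Q_c}_\delta(B_0)\le\lambda^{Q_c}(B_0)<\infty$ by $Q$-amenability. Letting $\delta\to 0^{+}$ yields $\lambda^{s}(B_0)<\infty$, so by the estimate $\lambda^{t}_\delta\le\delta^{t-s}\lambda^{s}_\delta$ for $t>s$ we get $\lambda^{t}(B_0)=0$ for all $t>s$, i.e.\ $\dim(B_0)\le s$. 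Thus $\alpha(x)\le\dim(B_0)\le Q(x)+\epsilon$, and $\epsilon>0$ was arbitrary. (This argument also covers the case $Q(x)=0$.)

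For $\alpha(x)\ge Q(x)$ we may assume $Q(x)>0$. Fix $0<\epsilon<Q(x)$, put $s=Q(x)-\epsilon>0$, and pick $r_0>0$ with $Q>s$ on $B_{r_0}(x)$. Fix an arbitrary $\delta_1>0$; for $r<\min(\delta_1,r_0/2)$, $B_0=B_r(x)$, and $\delta\le\min(r,1)$, every $U\in\mathscr{U}'$ now has $Q_c(U)>s>0$, hence $|U|^{Q_c(U)}\le|U|^{s}$ (also when $|U|=0$, since both sides vanish), so for every $\mathscr{U}\in\mathscr{B}_\delta(B_0)$,
\[
\sum_{U\in\mathscr{U}}|U|^{s}\ \ge\ \sum_{U\in\mathscr{U}'}|U|^{Q_c(U)}\ \ge\ \lambda^{Q_c}_\delta(B_0).
\]
Taking the infimum over $\mathscr{U}$ and then $\delta\to 0^{+}$ gives $\lambda^{s}(B_0)\ge\lambda^{Q_c}(B_0)>0$, so $\dim(B_0)\ge s$; hence $\dim(B_{\delta_1}(x))\ge\dim(B_0)\ge s=Q(x)-\epsilon$. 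Letting $\epsilon\downarrow 0$ gives $\dim(B_{\delta_1}(x))\ge Q(x)$, and since $\delta_1>0$ was arbitrary, $\alpha(x)=\inf_{\delta_1>0}\dim(B_{\delta_1}(x))\ge Q(x)$. Combining the two inequalities yields $\alpha(x)=Q(x)$.

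The only real subtlety, and where care is needed, is the bookkeeping around the degenerate conventions: a singleton ball $U$ has $|U|=0$ with $|U|^{0}=1$, and an open ball may have several centers, so $Q_c(U)$ is a priori ambiguous. Both issues dissolve once one observes that in each direction we only ever compare $|U|^{s}$ with $|U|^{Q_c(U)}$ with both exponents strictly positive (we have $s>0$ automatically in the first direction, and $s>0$ together with $Q$ strictly positive near $x$ in the second), and that \emph{every} center of a covering ball meeting $B_0$ lies in $B_{r_0}(x)$, so the inequality $Q_c(U)<s$ (resp.\ $Q_c(U)>s$) holds no matter which center is chosen. Everything else — the monotonicity of $\lambda^{s}_\delta$ and of $\dim(B_\delta(x))$, the $\lambda^{t}_\delta\le\delta^{t-s}\lambda^{s}_\delta$ bound, and the dimension/measure equivalences — is already recorded in the preliminary sections.
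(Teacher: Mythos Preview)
Your proof is correct and follows essentially the same approach as the paper: compare $\lambda^{s}$ with $\lambda^{Q_c}$ on a small ball by noting that every covering ball that meets $B_0$ has center where $Q$ is close to $Q(x)$, then read off $\dim(B_0)$ from $Q$-amenability. The paper phrases the comparison via $Q^{-}(B)\le\dim(B)\le Q^{+}(B)$ and argues by contradiction, while you work directly with $s=Q(x)\pm\epsilon$; these are cosmetic differences.
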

\begin{proof} Let $B=B_r(x)$ be a non-empty open ball, $q^-:=Q^-(B), q^+:=Q^+(B), d_0:=\dim(B).$ Let $B':=B_{\frac{r}{2}}(x)$ Then if $d_0<q^-, \dim(B')\leq d_0< q^-$ so $\lambda^{q^-}(B')=0.$ Let $0<\delta<\min\{\frac{r}{8}, 1\}.$ Let $\mathscr{U}\in \mathscr{B}_\delta(B')$ and $U\in \mathscr{U}.$ We may assume $U\cap B' \neq \emptyset$, since otherwise $\mathscr{U}$ may be improved by removing such a $U.$ Say $U=B_{r_U}(x_U).$
Moreover, we may assumue $r_U\leq 2\delta.$ Indeed, if $|U|=0$ and $r_U>\delta$ then $U=B_{\delta}(x_U)$ so we may take $r_U=\delta$ in that case. If $|U|>0$ then if $r_U>2|U|$ then $B_{r_U}(x_U) = B_{2|U|}(x_U)$ so we may take $r_U = 2|U|\leq 2\delta.$ Say $w\in U\cap B'.$ Then if $z \in U,$ 
$d(z,x)\leq d(z,x_U)+d(x_U,w)+d(w,x)\leq 2r_U+\frac{r}{2} \leq 4\delta+\frac{r}{2}<r.$ So $U\subset B.$ Hence $q^- \leq Q^c(U)$ and since $|U|\leq 1,$ $|U|^{q^-}\geq |U|^{Q^c(U)}$. So $\lambda^{Q_c}(B')=0,$ a contradiction. If $d_0>q^+$ then since $B_r(x)=\cup_{n=1}^\infty B_{r-\frac{1}{n}}(x),$ it is straightforward, using countable subadditivity of the measures $H^s$ and the definition of Hausdorff dimension, to verify that $\dim(B_r(x)) = \sup_{n\geq 1} \dim(B_{r-\frac{1}{n}}(x))$. Since $d_0>q^+,$ let $N$ so that $\dim(B_{r-\frac{1}{N}}(x))>q^+.$ Let $r'=r-\frac{1}{N}$ and $B'=B_{r'}(x).$ Then $\lambda^{q^+}(B') = \infty.$ Let $0<\delta<\frac{1}{4N}.$ Let $\mathscr{U}\in \mathscr{B}_\delta(B')$ and $U\in \mathscr{U}.$ We may assume $U\cap B' \neq \emptyset,$ say $w\in U\cap B'$. As before we may assume $r_U\leq 2\delta$ and so if $z\in U$ then $\rho(z,x)\leq 2r_u+r'\leq 4\delta+r'<r.$ So $U\subset B.$ Hence $Q_c(U)\leq q^+$. Since $|U|<1,$ $|U|^{q^+}\leq |U|^{Q_c(U)}.$  Hence $\lambda^{Q_c}(B')=\infty,$ a contradiction. Hence $Q^-(B)\leq \dim(B) \leq Q^+(B)$ for every non-empty open ball $B.$ 
The result then follows since $Q$ is continuous.  
\end{proof}

A Borel measure $\nu$ on a metric space $X$ is said to have local dimension $d_\nu(x)$ at $x$ if $\lim_{r\rightarrow 0^+} \frac{\log(\nu(B_r(x)))}{\log(r)}=d_\nu(x)$. Since the limit may not exist, we may also consider upper and lower local dimensions at $x$ by replacing the limit with an upper or lower limit, respectively. 

It can be immediately observed that if $\nu$ is Ahlfors $Q\mhyphen$regular then $d_\nu(x)=Q(x)$ for all $x$.

\begin{definition}
A function $p$ on a metric space $(X,d)$ is \textit{log-H{\"o}lder continuous} if 
there exists a $C>0$ such that $|p(x)-p(y)|\leq \frac{-C}{\log(d(x,y))}$ for all $x,y$ with $0<d(x,y)<\frac{1}{2}.$
\end{definition}
The following is may be found in \cite{Sob} (Proposition 3.1).

\begin{lemma} For $X$ compact, if $Q:X\rightarrow (0,\infty)$ log-H{\"o}lder continuous, $\tilde{Q}:\mathscr{B}\rightarrow [0,\infty)$ with $Q^-\leq \tilde{Q} \leq Q^+,$ then $\lambda^{Q^+}\asymp \lambda^{Q^-} \asymp \lambda^{\tilde{Q}}.$
\end{lemma}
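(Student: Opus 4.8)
The plan is to show the three spherical measures $\lambda^{Q^+},\lambda^{Q^-},\lambda^{\tilde Q}$ are pairwise strongly equivalent by exploiting the log-H\"older modulus of continuity of $Q$, which precisely controls the discrepancy between $|U|^{s}$ for two exponents $s$ that differ by $O(1/\log(1/|U|))$. Since all covers used to compute $\lambda^s$-measures may be taken to consist of balls of small diameter, the key observation is that for a ball $B$ of diameter $\delta<1/2$, the three candidate exponents $Q^+(B)$, $Q^-(B)$, $\tilde Q(B)$ all lie in the interval $[Q^-(B),Q^+(B)]$, which has length $Q^+(B)-Q^-(B)\le -2C/\log\delta$ by log-H\"older continuity (two points of $B$ are within distance $|B|\le\delta$). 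Hence for any such ball, $|B|^{Q^+(B)} \le |B|^{Q^-(B)} = |B|^{Q^+(B)}\cdot |B|^{-(Q^+(B)-Q^-(B))} \le |B|^{Q^+(B)}\cdot \delta^{2C/\log\delta} = e^{2C}\,|B|^{Q^+(B)}$, using $\delta^{1/\log\delta}=e$. So for balls of diameter $<1/2$, the three weights $\tau(B)=|B|^{\tilde Q(B)}$ agree up to the universal multiplicative constant $e^{2C}$.

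First I would recall from the construction in Section 2.1 that $\lambda^s = \mu_\tau$ for $\tau(B)=|B|^s$ restricted to the ball covering class $\mathscr B$, and that $\mu^*_{\tau}(A)=\sup_{\delta>0}\mu^*_{\tau,\delta}(A)$, where $\mu^*_{\tau,\delta}$ is computed using only covers by balls of diameter $\le\delta$. So it suffices to compare the $\delta$-premeasures for $\delta<1/2$. Fix such a $\delta$ and a Borel set $A$. Given any admissible ball cover $\mathscr U\in\mathscr B_\delta(A)$, every $U\in\mathscr U$ is a ball with $|U|\le\delta<1/2$, so by the above estimate $|U|^{Q^-(U)} \le e^{2C}|U|^{\tilde Q(U)}$ and likewise $|U|^{\tilde Q(U)}\le e^{2C}|U|^{Q^+(U)}$ and $|U|^{Q^-(U)}\le e^{2C}|U|^{Q^+(U)}$; here I should be slightly careful that $Q^\pm$ and $\tilde Q$ are evaluated on the same ball $U$, which is exactly how all three weights are defined. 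Summing over $U\in\mathscr U$ gives $\sum_U|U|^{Q^-(U)}\le e^{2C}\sum_U|U|^{\tilde Q(U)}$, and taking the infimum over $\mathscr U\in\mathscr B_\delta(A)$ yields $\mu^*_{\tau_{Q^-},\delta}(A)\le e^{2C}\mu^*_{\tau_{\tilde Q},\delta}(A)$, and symmetrically for the other pairs. Taking $\sup_{\delta\to 0^+}$ (equivalently $\sup_{0<\delta<1/2}$, since the premeasures are monotone decreasing in $\delta$) gives $\lambda^{Q^-}(A)\le e^{2C}\lambda^{\tilde Q}(A)$, $\lambda^{\tilde Q}(A)\le e^{2C}\lambda^{Q^+}(A)$, and $\lambda^{Q^-}(A)\le e^{2C}\lambda^{Q^+}(A)$.

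For the reverse inequalities: since $Q^-\le\tilde Q\le Q^+$ pointwise on balls and $|U|\le\delta<1$, we have $|U|^{Q^+(U)}\le|U|^{\tilde Q(U)}\le|U|^{Q^-(U)}$ for each ball $U$ in an admissible cover, hence trivially $\lambda^{Q^+}\le\lambda^{\tilde Q}\le\lambda^{Q^-}$ as measures (same monotonicity argument, infimum over the same cover class, sup over $\delta$). Combining the two directions gives $\frac1{e^{2C}}\lambda^{Q^-}\le\lambda^{\tilde Q}\le\lambda^{Q^-}$ and similarly relating $\lambda^{\tilde Q}$ and $\lambda^{Q^+}$, so all three are strongly equivalent with constant $e^{2C}$; in particular taking $\tilde Q=Q^+$ and $\tilde Q=Q^-$ gives $\lambda^{Q^+}\asymp\lambda^{Q^-}$ directly, completing the chain.

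The only genuine subtlety — the part I would be most careful about — is the justification that reducing to ball covers of diameter $<1/2$ loses nothing: this is built into the definition $\mu^*_\tau=\sup_\delta\mu^*_{\tau,\delta}$, so it is automatic, but one must make sure that the bound $Q^+(U)-Q^-(U)\le -2C/\log|U|$ is applied only when $U$ is nonempty with $0<|U|<1/2$, and handle the degenerate cases $|U|=0$ (singletons, where all exponents give weight $|U|^{\cdot}=0$ under the relevant convention, or contribute nothing) and $|U|$ close to $1/2$ (absorbed into the constant, since $\delta^{1/\log\delta}=e$ exactly). There is no deep obstacle here; the whole content is the elementary inequality $\delta^{(Q^+-Q^-)}\ge\delta^{-2C/\log\delta}=e^{-2C}$ coming from log-H\"older continuity, and the rest is bookkeeping with the Carath\'eodory construction already set up in Section 2.1.
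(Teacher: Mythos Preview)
Your approach is essentially identical to the paper's (which is very terse): log-H{\"o}lder continuity gives $Q^+(U)-Q^-(U)\le -C/\log|U|$ for any ball $U$ with $0<|U|<\tfrac12$, whence $|U|^{Q^+(U)}\le|U|^{\tilde Q(U)}\le|U|^{Q^-(U)}\le e^C|U|^{Q^+(U)}$, and strong equivalence of the three measures follows immediately from the Carath\'eodory construction. One small cleanup: there is no need to route through $\delta$ or carry the factor $2$ --- the bound $|U|^{-(Q^+(U)-Q^-(U))}\le |U|^{C/\log|U|}=e^C$ is direct, whereas in your displayed step the replacement of $|B|$ by $\delta$ (with a negative exponent and $|B|\le\delta$) actually points the wrong way, so just keep $|B|$ throughout.
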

\begin{proof}
Let $U$ open with $0<|U|<\frac{1}{2}.$ 
Then for $x,y\in U,$ $|Q(x)-Q(y)|\leq \frac{-
C}{\log(|U|)}.$ Hence $0\leq 
\log(|U|)(Q^-(U)-Q^+(U))\leq C.$ Then $|U|^{Q^+(U)}\leq |U|^{Q^-(U)} \leq e^C|U|^{Q^+(U)}.$ The result follows.
\end{proof}
The following lemma may be found in \cite{Sob} (Lemma 2.1).
\begin{lemma} If $\nu$ is Ahlfors $Q\mhyphen$regular then $Q$ is log-H\"older continuous. \label{log} \end{lemma}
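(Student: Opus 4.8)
Lemma \ref{log} states that if $\nu$ is Ahlfors $Q$-regular then $Q$ is log-Hölder continuous. Here is how I would prove it.

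\textbf{Setup.} Fix the constant $C>1$ from the definition of Ahlfors $Q$-regularity, so $\frac{1}{C}\nu(B_r(x))\leq r^{Q(x)}\leq C\nu(B_r(x))$ for all $x\in X$ and $0<r\leq \diam(X)/2$. Fix two points $x,y\in X$ with $0<d(x,y)<1/2$, and set $r=d(x,y)$. Assume without loss of generality that $Q(x)\geq Q(y)$; we must bound $Q(x)-Q(y)$ by $-C'/\log r$ for a constant $C'$ independent of $x,y$. The plan is to compare the measures of two nested balls centred at $x$ (or at $x$ and $y$) whose radii differ by a bounded factor, and extract an inequality on the exponents by taking logarithms.

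\textbf{Main estimate.} Since $d(x,y)=r$ we have $B_r(y)\subset B_{2r}(x)$, and both radii are $\leq \diam(X)$ up to a harmless constant (if $2r>\diam(X)/2$ one handles this by replacing $2r$ with $\min\{2r,\diam(X)/2\}$, or by noting $Q$ is bounded so the inequality is trivial when $r$ is bounded below — I would deal with small $r$ only). By monotonicity of $\nu$ and Ahlfors regularity applied at $y$ with radius $r$ and at $x$ with radius $2r$:
\[
\frac{1}{C}\, r^{Q(y)} \leq \nu(B_r(y)) \leq \nu(B_{2r}(x)) \leq C\,(2r)^{Q(x)}.
\]
Taking logarithms (and using $r<1/2<1$, so $\log r<0$) gives
\[
-\log C + Q(y)\log r \;\leq\; \log C + Q(x)\log 2 + Q(x)\log r,
\]
hence
\[
(Q(y)-Q(x))\log r \;\leq\; 2\log C + Q(x)\log 2.
\]
Dividing by $\log r<0$ reverses the inequality:
\[
Q(x)-Q(y) \;\leq\; \frac{2\log C + Q(x)\log 2}{-\log r}.
\]
Since $Q$ is bounded (say $Q\leq M$ on $X$, which is part of the hypothesis that $Q$ is a bounded function, and is in any case forced by Ahlfors regularity on the compact space $X$), the numerator is at most the constant $C':=2\log C + M\log 2$, giving $|Q(x)-Q(y)|\leq C'/(-\log r) = -C'/\log d(x,y)$, which is exactly log-Hölder continuity.

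\textbf{Expected obstacle.} The only real subtlety is the boundary case where $2r$ exceeds $\diam(X)/2$, so that Ahlfors regularity cannot be applied directly at radius $2r$. This is genuinely minor: log-Hölder continuity is a statement about points at distance $<1/2$, and the awkward case only arises when $\diam(X)$ is small, in which case $-\log d(x,y)$ is bounded below and the bound $|Q(x)-Q(y)|\leq M$ already suffices after adjusting $C'$; alternatively one replaces $2r$ by $\min\{2r,\diam(X)/2\}$ and uses $B_{\min\{2r,\diam(X)/2\}}(x)\supset B_r(y)$ whenever $r\leq \diam(X)/4$, which covers all small $r$. I would state the argument for $0<d(x,y)<\diam(X)/4$ and absorb the remaining range into the constant. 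The rest is just the logarithm manipulation above.
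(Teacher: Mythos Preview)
Your proof is correct and follows essentially the same route as the paper: both use the inclusion $B_r(y)\subset B_{2r}(x)$ with $r=d(x,y)$, apply Ahlfors regularity at each center, and combine the resulting inequality $r^{Q(y)}\leq C^2\,2^{Q(x)}\,r^{Q(x)}$ with the boundedness of $Q$ to extract the log-H\"older estimate. Your explicit treatment of the boundary case $2r>\diam(X)/2$ is a nice addition that the paper glosses over.
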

\begin{proof} By Ahlfors regularity, there exists a constant $D>1$ such that $\nu(B_r(x))\leq Dr^{Q(x)}$ and $r^{Q(x)}\leq D\nu(B_r(x))$ for all $0<r\leq\diam(X)/2,\;x\in X.$ Suppose $x,y\in X$ with $0<r:=d(x,y)<\frac{1}{2}.$ Say $Q(x)\geq Q(y).$ Since $Q$ is bounded, let $R<\infty$ be an upper bound for $Q,$ and let $C$ be defined by $e^C:=2^R D^2.$ Then since $B_r(y)\subset B_{2r}(x),$ $r^{Q(y)}\leq D\nu(B_r(y))\leq D\nu(B_{2r}(x))\leq D^2 2^R r^{Q(x)} = e^Cr^{Q(x)}.$ Hence $d(x,y)^{|Q(y)-Q(x)|}\geq e^{-C}.$ So $|Q(x)-Q(y)|\leq \frac{-C}{\log(d(x,y)}.$
\end{proof}

Hence, in particular, if $\nu$ is Ahlfors $Q\mhyphen$regular then $Q$ is continuous.

\begin{proposition} \label{equiv} If $\nu$ is a finite Ahlfors $Q\mhyphen$regular Borel measure on a separable metric space then $\nu\asymp \lambda^{Q_c}.$
\end{proposition}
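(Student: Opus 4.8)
The plan is to compare $\nu$ and $\lambda^{Q_c}$ directly on balls, since a strong equivalence on balls plus a covering argument (using that both measures are Borel regular outer measures built from the Carath\'{e}odory construction) propagates to all Borel sets. First I would record what is already available: by Lemma~\ref{log}, Ahlfors $Q$-regularity forces $Q$ to be log-H\"older continuous (and in particular continuous and bounded), so the lemma preceding this proposition gives $\lambda^{Q^+}\asymp\lambda^{Q^-}\asymp\lambda^{\tilde Q}$ for any $\tilde Q$ with $Q^-\le\tilde Q\le Q^+$ on balls; in particular $\lambda^{Q_c}$ is comparable to $\lambda^{Q^+}$ and to $\lambda^{Q^-}$. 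So it suffices to prove $\nu\asymp\lambda^{Q^-}$ (or equivalently $\nu\asymp\lambda^{Q^+}$).

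The key estimate is the following: there is a constant $C>1$, independent of the ball, so that for every non-empty open ball $B=B_r(x)$ with $0<r\le\diam(X)/2$ one has $\tfrac1C\,r^{Q^+(B)}\le \nu(B)\le C\,r^{Q^-(B)}$. The right-hand inequality is immediate from $\nu(B_r(x))\le D r^{Q(x)}\le D r^{Q^-(B)}$ (using $r<1$ eventually, after shrinking $\diam(X)/2$ if necessary, and log-H\"older continuity to absorb the gap between $Q(x)$ and $Q^-(B)$ into the constant, exactly as in the proof of Lemma~\ref{log}). The left-hand inequality is the reverse, $r^{Q^+(B)}\le D\, r^{Q(x)} \le D^2\nu(B_r(x))$, again with log-H\"older absorbing the exponent gap. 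Then I would feed this into the definitions of $\lambda^{Q^\pm}$ as $\mu_\tau$ with $\tau(B)=|B|^{Q^\pm(B)}$ and $|B|\le 2r$: given a ball cover $\mathscr U\in\mathscr B_\delta(A)$ of a Borel set $A$, the inequality $\nu(U)\le C'\tau_{Q^-}(U)$ for each $U\in\mathscr U$ together with countable subadditivity of $\nu$ gives $\nu(A)\le C'\sum_{U\in\mathscr U}\tau_{Q^-}(U)$, hence $\nu(A)\le C'\lambda^{Q^-}_\delta(A)$ and, letting $\delta\to 0$, $\nu\le C'\lambda^{Q^-}$. For the reverse inequality $\lambda^{Q^+}\le C''\nu$ I would use a Vitali-type covering argument: cover $A$ efficiently by balls of radius $\le\delta$ whose concentric $5$-times enlargements are still controlled, use the left-hand estimate $\tau_{Q^+}(B_r)\asymp r^{Q^+(B_r)}\le C\nu(B_r)$ together with a bounded-overlap / $5r$-covering lemma to sum the $\nu$-measures, bounding $\sum\tau_{Q^+}(B_i)\le C\sum\nu(B_i)\le C\nu(A_\delta)$ for a $\delta$-neighbourhood $A_\delta$ of $A$, and then let $\delta\to0$ using outer regularity of $\nu$ (Proposition~\ref{reg}).

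I expect the main obstacle to be the reverse inequality $\lambda^{Q_c}\lesssim\nu$: passing from a pointwise ball estimate to a measure bound requires a covering lemma that controls overlaps, and one must be careful that the enlargement of balls does not escape the region where the uniform constant in the ball estimate is valid, and that the doubling behaviour needed for the $5r$-covering argument is in fact supplied for free by Ahlfors regularity (it is, since an Ahlfors regular measure is doubling, as noted in the preliminaries). The direction $\nu\lesssim\lambda^{Q_c}$, by contrast, is essentially a one-line consequence of countable subadditivity of $\nu$ and needs no covering lemma. Once both inequalities on balls are in hand, combining them with the already-established $\lambda^{Q^-}\asymp\lambda^{Q_c}\asymp\lambda^{Q^+}$ yields $\nu\asymp\lambda^{Q_c}$, completing the proof. (If one prefers to avoid the Vitali argument entirely, an alternative is to invoke a general comparison theorem for measures built by the Carath\'{e}odory construction from comparable gauges on balls; but the covering argument above is self-contained and I would present that.)
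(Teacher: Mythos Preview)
Your proposal is correct and structurally identical to the paper's: the easy direction $\nu\le C\lambda^{Q_c}$ by countable subadditivity over a ball cover, and the harder direction $\lambda^{Q_c}\le C'\nu$ via the $5r$-Vitali lemma together with outer regularity of the finite measure $\nu$.

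The one genuine difference is your detour through $\lambda^{Q^{\pm}}$. The paper never invokes the log-H\"older equivalence lemma here; it works directly with $\lambda^{Q_c}$, exploiting the trivial observation that $Q_c(B_r(x))=Q(x)$ is \emph{exactly} the exponent appearing in the Ahlfors regularity inequality. Concretely, for the easy direction the paper writes $\nu(B_i)\le C(r_i')^{Q(x_i)}\le C|B_i|^{Q_c(B_i)}$ with $r_i'=\sup_{y\in B_i}d(x_i,y)\le|B_i|$, and for the Vitali direction $|5B_j|^{Q_c(5B_j)}=|5B_j|^{Q(x_j)}\le 10^R r_j^{Q(x_j)}\le 10^R C\,\nu(B_j)$. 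Your route is not wrong, but your appeals to log-H\"older continuity for the ball estimates are unnecessary even on your own terms: the inequalities $r^{Q(x)}\le r^{Q^-(B)}$ and $r^{Q^+(B)}\le r^{Q(x)}$ for $r<1$ follow immediately from $Q^-(B)\le Q(x)\le Q^+(B)$; log-H\"older is only needed to glue $\lambda^{Q^{\pm}}$ back to $\lambda^{Q_c}$ at the end. A second small difference: the paper proves the Vitali bound first for \emph{open} $A$ (so the disjoint balls sit inside $A$ and $\sum_j\nu(B_j)\le\nu(A)$ directly), then passes to Borel sets by outer regularity; you instead bound by $\nu(A_\delta)$ for a $\delta$-neighbourhood and let $\delta\to0$. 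Both work, but the paper's version avoids tracking the neighbourhood.
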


\begin{proof} Since $Q$ is bounded, let $R>0$ be an upper bound for $Q$. Let $C>0$ be a constant such that $\nu(B_r(x))\leq Cr^{Q(x)}$ and $r^{Q(x)}\leq C\nu(B_r(x))$ for all $x\in X$ and $0<r\leq \diam(X).$ 
Then let $A\subset X$ be Borel measurable. For $\delta>0$ let $(B_i)_{i\in I} \in 
\mathscr{B}_\delta(A).$ Say $B_i=B_{r_i}(x_i).$ Let 
\[r_i^\prime=\sup\{d(x_i,y)\;|\;y\in B_i\}.\] Then $B_i\subset \{y\;|\;
d(x_i,y)\leq r_i^\prime\}=B_{r_i'}[x].$ Note, by Ahlfors regularity, $\nu(B_{r_i'+1}(x_i))<\infty$. So by continuity of measure \[\nu(B_{r_i'}[x_i])=\inf_{n\geq 1} \nu(B_{r_i'+\frac{1}{n}}(x_i))\leq C\inf_{n\geq 1} (r_i'+\frac{1}{n})^{Q(x_i)}=Cr_i'^{Q(x_i)}.\] It follows that \[\nu(A)\leq \sum_{i\in I}\nu(B_i)\leq 
C\sum_i r_i'^{Q(x_i)} \leq C\sum_i |B_i|^{Q(x_i)}.\] Hence \[\nu(A)\leq C\lambda_\delta^{Q_c}
(A)\leq C \lambda^{Q_c}(A).\]

Let $A$ be open. Let $\delta>0.$  Since $X$ is separable, let $(B_i)_{i\in I}\in \mathscr{B}_{\frac{\delta}
{10}}(A)$ be such that $\cup B_i = A$ and $I$ is at most countable. Then by the 
Vitali Covering Lemma, there exists a disjoint sub-collection $(B_j)_{j\in J}, J\subset I,$ of the 
$(B_i)_{i\in I}$ such that $A\subset \cup_{j\in J} 5B_j.$ Let $Q_j=Q(x_j),$ where $x_j$ is 
the center of $B_j.$ Let $r_j$ be the radius of $B_j.$ Then \[\lambda_\delta^{Q_c}(A)\leq 
\sum_j |5B_j|^{Q_j}\leq 10^R\sum_jr_j^{Q_j} \leq 10^R C\sum_j 
\nu(B_j)\leq 10^R C\nu(A).\] Since $\delta>0$ was arbitrary, \[\lambda^{Q_c}(A)\leq C 10^R \nu(A).\]
Let $B\subset X$ Borel measurable. Since $\nu$ is regular, for $\epsilon>0,$ let $A\subset X$ 
open with $B\subset A$ such that $\nu(B)\geq \nu(A)-\epsilon.$ Then \[\lambda^{Q_c}(B)\leq \lambda^{Q_c}(A)\leq 10^RC(\nu(B)+\epsilon).\] Since $\epsilon>0$ is arbitrary, \[\lambda^{Q_c}(B)\leq 10^RC\nu(B).\]
\end{proof}
\begin{theorem} \label{main} Let $X$ be a compact metric space. If $\nu$ is an Ahlfors $Q\mhyphen$regular Borel measure on $X$ then $Q=\alpha$ and $\nu \asymp H_{\loc}.$ 
\end{theorem}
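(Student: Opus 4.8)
The plan is to assemble results already established. First I would record that $\nu$, being Ahlfors $Q$-regular on the compact space $X$, is a finite Borel measure of full support: full support follows from $\nu(B_r(x))\ge \tfrac1C r^{Q(x)}>0$, and finiteness since $X$ is covered by finitely many balls of radius $\diam(X)/2$, each of finite $\nu$-mass because $Q$ is bounded. By Lemma \ref{log}, $Q$ is log-Hölder continuous, hence continuous; and $\dim(X)=\sup_x\alpha(x)<\infty$ by Propositions \ref{dimmax} and \ref{semicont}. Since $X$ is compact, hence separable, Proposition \ref{equiv} gives $\nu\asymp\lambda^{Q_c}$.

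Next I would use this to obtain $Q$-amenability of $X$: for any non-empty open ball $B$ one has $\nu(B)>0$ (full support) and $\nu(B)\le\nu(X)<\infty$, so $0<\lambda^{Q_c}(B)<\infty$. With $Q$ continuous, Proposition \ref{amen} then yields $\alpha(x)=Q(x)$ for all $x\in X$, and its proof simultaneously delivers the sandwich $Q^-(B)\le\dim(B)\le Q^+(B)$ for every non-empty open ball $B$.

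To finish with $\nu\asymp H_{\loc}$, I would set $\tilde Q(B):=\dim(B)$ on balls. The sandwich above says $Q^-\le\tilde Q\le Q^+$, and likewise $Q^-\le Q_c\le Q^+$ because the centre lies in the ball; since $Q$ is log-Hölder, the comparison lemma above (the one stating $\lambda^{Q^+}\asymp\lambda^{Q^-}\asymp\lambda^{\tilde Q}$) gives $\lambda^{Q_c}\asymp\lambda^{Q^-}\asymp\lambda^{\tilde Q}$. But $\lambda^{\tilde Q}$ is precisely the local open spherical measure $\lambda_{\loc}$, and $\lambda_{\loc}\asymp H_{\loc}$ by Lemma \ref{asym} (valid since $\dim(X)<\infty$). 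Chaining these strong equivalences with $\nu\asymp\lambda^{Q_c}$ proves $\nu\asymp H_{\loc}$.

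I do not expect a genuine obstacle here — the statement is a synthesis of Lemma \ref{log}, Proposition \ref{equiv}, Proposition \ref{amen}, the log-Hölder comparison lemma, and Lemma \ref{asym}. The only point demanding care is the logical order: one must extract $\nu\asymp\lambda^{Q_c}$ first (which Proposition \ref{equiv} supplies unconditionally), then deduce $Q$-amenability from the finiteness and full support of $\nu$, and only afterwards invoke Proposition \ref{amen} to identify $Q$ with $\alpha$; trying to use $\alpha=Q$ before amenability is available would be circular.
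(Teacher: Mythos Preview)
Your argument is correct and follows essentially the same route as the paper's proof: both obtain $\nu\asymp\lambda^{Q_c}$ from Proposition~\ref{equiv}, deduce $Q$-amenability, apply Proposition~\ref{amen} to get $Q=\alpha$, and then pass to $H_{\loc}$ via the log-H\"older comparison lemma and Lemma~\ref{asym}. The only cosmetic difference is that the paper applies Proposition~\ref{dimmax} to each open ball $B$ to obtain the exact identity $\dim(B)=\sup_{x\in B}\alpha(x)=Q^+(B)$, giving $\lambda_{\loc}=\lambda^{Q^+}$ outright, whereas you use only the sandwich $Q^-\le\dim\le Q^+$ extracted from the proof of Proposition~\ref{amen} and then invoke the comparison lemma once more; both paths arrive at the same chain of strong equivalences.
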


\begin{proof} Since $X$ is compact, $\nu$ is finite. Hence by the previous proposition \ref{equiv}, $\nu\asymp \lambda^{Q_c}.$ Hence $X$ is $Q$ amenable, and $Q$ is continuous, as it is log-H\"older continuous. By \ref{amen}, $Q=\alpha.$ 

Let $B$ be a non-empty open ball. Since $B$ is open and the definition of $\alpha$ is local, we may apply \ref{dimmax} to $B$. Hence \[\dim(B)=\sup_{x\in B} \alpha(x) = Q^+(B).\] Therefore $\lambda^{Q^+}= \lambda_{\loc}.$ Finally, since $\lambda_{\loc}\asymp H_{\loc}$ by \ref{asym} and since $\asymp$ is transitive, we have  $\nu\asymp H_{\loc}.$
\end{proof}

Hence if a compact space admits an Ahlfors $Q\mhyphen$regular Borel measure then $\alpha=Q$ and that measure is strongly equivalent to the local measure.

\section{Constructions}

We now apply the preceding  mathematical developments to several examples. The first, a Koch curve with variable local dimension, may be found in \cite{Sob}. The remaining examples, a Sierpi\'nski gasket of variable local dimension, a Sierpi\'nski carpet of variable local dimension, and a Vicsek tree of variable local dimension, have not been considered, at least to our knowledge, in the literature before. 

\begin{subsection}{A variable dimensional Koch curve}
 Note that this first example is not new. It is a particular case of Koch curve constructed and analyzed in \cite{Sob}. Moreover, the idea for a Koch curve of variable dimension may be found in \cite{nottale}, although the notion of dimension used there is not precise. 

 Given $0<\theta_1<\theta_2<\frac{\pi}{2},$ we construct a compact metric space $K$ and a continuous bijective map $\phi:[0,1]\rightarrow K$ such that if $x\in K$ and $\phi(t)=x$ then \[\alpha(x)=\frac{2\log(2)}{\log(2+2\cos(\theta_1+t(\theta_2-\theta_1)))}.\] Then, for example (see figure 2), with $\theta_1=\frac{\pi}{36}=5^\circ,\theta_2=\frac{4\pi}{9}=80^\circ,$ the local dimension $\alpha$ variously continuously from approximately $1.001$ to approximately $1.625$. 
 
It also holds that the local Hausdorff measure $H_{\loc}$ is Ahlfors $\alpha\mhyphen$regular on $K$ and
$0<H_{\loc}(K)<\infty.$ Moreover, if $c\geq 0$ is any constant then $H^c(K)$ is either $0$ or $\infty.$ Hence the local Hausdorff measure is in a sense the ``correct" measure for $K$ \cite{Sob}.

The construction is a particular case of the construction given in \cite{Sob}. A reader interested in further details and proofs should consult \cite{Sob}. 

By a generator of parameters $L$ and $\theta$ we mean the following. Label the segments $1$ through $4.$ Put segment $1$ along the positive $x\mhyphen$axis with one end at the origin. Then connect segment $2$ at an angle of $\theta$ with the positive $x\mhyphen$ axis to the endpoint of segment $1$ at $(L,0)$. Then the other endpoint of segment $2$ will lie at $(L+L\cos(\theta),L\sin(\theta)).$ Then connect one endpoint of segment $3$ to the point $(L+L\cos(\theta),L\sin(\theta))$ in such a way that its second endpoint lies at $(L+2L\cos(\theta),0)$ on the positive $x\mhyphen$axis.  Then place segment $4$ on the positive $x\mhyphen$axis with its endpoints at $(L+2L\cos(\theta),0)$ and $(2L+2L\cos(\theta),0)$. We use coordinates only to specify the construction. The element may be translated and rotated freely. Note that if the overall length $2L+2L\cos(\theta)$ is set to $1, $ then $L$ and $\theta$ are related by 
$L=\frac{1}{2+2\cos(\theta)}.$ 
\vspace*{1 in}
\begin{figure}[H]
\centering
\includegraphics[scale=.25]{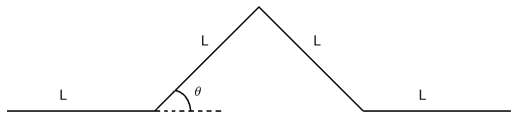}
\caption{A generator of parameters $L$ and $\theta.$}
\end{figure}
\vspace*{1 in}

We then construct the curve recursively as follows. Let $M>0$ and $\theta_1,\theta_2$ lower and upper angles with $0<\theta_1< \theta_2<\frac{\pi}{2}.$ For stage $0$ let $K_0$ be the segment connecting $(0,0)$ and $(M,0).$ At stage $1$, form $K_1$ by replacing $K_0$ with the generator of parameters $L$ and $\frac{\theta_1+\theta_2}{2}$ where $L=\frac{M}{2+2\cos((\theta_1+\theta_2)/{2})}.$ Then suppose we have constructed $K_0,K_1,...,K_n$ for any $M>0$ and $0<\theta_1<\theta_2<\frac{\pi}{2}$ where $K_n$ is made of 4 versions of $K_{n-1}$ each of overall length $L$, where the $i\mhyphen$th, for $i=1,2,3,4$ labeled from left to right, has lower and upper angles $\theta_1+(i-1)\frac{(\theta_2-\theta_1)}{4}$ and $\theta_1+i\frac{(\theta_2-\theta_1)}{4}$, respectively. Then, for $M>0,0<\theta_1<\theta_2<\frac{\pi}{2},$ form $K_{n+1}$ by replacing the $i\mhyphen$th version of $K_{n-1}$ in $K_n$ with a $K_n$ of overall length $L$ with lower and upper angles $\theta_1+(i-1)\frac{(\theta_2-\theta_1)}{4}$ and $\theta_1+i\frac{(\theta_2-\theta_1)}{4}$, respectively. Hence by induction we may construct $K_n$ for any $n$ and any overall length $M>0$ and angles of interpolation $0<\theta_1<\theta_2<\frac{\pi}{2}.$ 

We will now fix $M=1$ and the endpoints of $K_n$ at $(0,0)$ and $(1,0).$ 
Then note $K_n$ is made up of $4^n$ segments and $\phi_n$ is the map that is the bijective, piecewise continuous map that is constant speed from $\frac{i-1}{4^k}$ to $\frac{i+1}{4^k}$ connecting the endpoints of the $i\mhyphen$th segment making up stage $n,$ with $\phi_n(0)=(0,0)$ and $\phi_n(1)=(1,0).$ Then, as the sequence of $\phi_n$ is Cauchy in $C([0,1],\mathbb{R}^2),$ it converges to a continuous $\phi.$ We let $K=\phi[0,1].$ Moreover, the sequence of $K_n$ is Cauchy in the Hausdorff metric on compact subsets of $[0,1]\times [0,1],$ which is known to be complete. Hence we may also define $K$ as the limit of the $K_n$ in the sense of Hausdorff convergence. 
\vspace*{1 in}

\begin{figure}[H] \label{fig2}
\centering
\includegraphics[scale=.25]{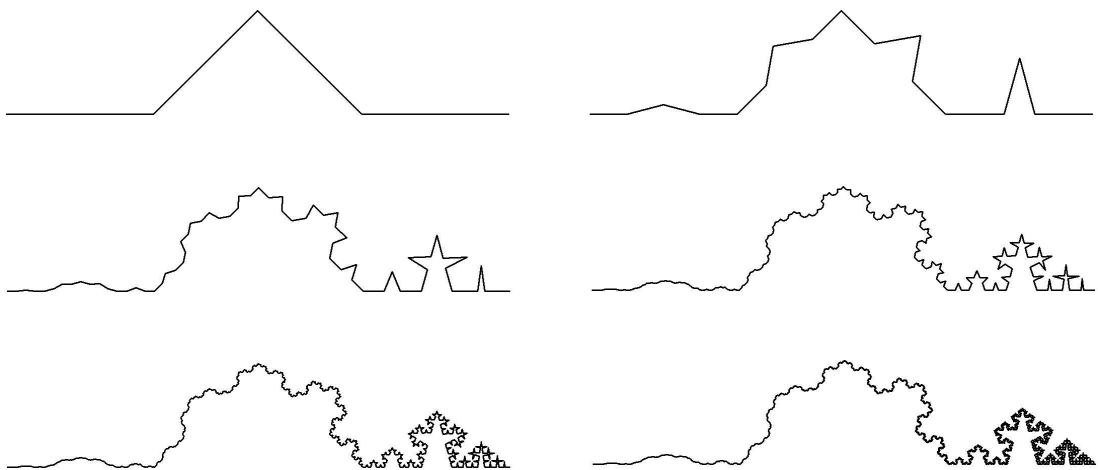}
\caption{Stages 1 through 6 of a variable dimensional Koch curve where $\theta_1=5^\circ$ and $\theta_2=80^\circ.$}
\end{figure}
\newpage
We state the following proposition without proof since it follows from Theorem 3.4 of \cite{Sob}, as the function $s:[0,1]\rightarrow (1/4,1/2)$ defined by $s(t)=\frac{1}{2+2\cos(\theta_1+t(\theta_2-\theta_1))}$ is Lipschitz.

\begin{proposition} Let $Q(x)=\frac{2\log(2)}{\log(2+2\cos(\theta_1+\phi^{-1}(x)(\theta_2-\theta_1)))}$. Then $\lambda^{Q_c}$ is an Ahlfors $Q\mhyphen$regular measure for $K.$ 
\end{proposition}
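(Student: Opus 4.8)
The plan is to invoke Theorem 3.4 of \cite{Sob} directly, once we verify that the variable Koch curve $K$ we have constructed is precisely an instance of the family of curves treated there, with contraction-ratio function $s(t) = \frac{1}{2+2\cos(\theta_1 + t(\theta_2-\theta_1))}$. The hypothesis of that theorem requires $s:[0,1]\to(1/4,1/2)$ to be Lipschitz, so first I would record that $t\mapsto \theta_1 + t(\theta_2-\theta_1)$ is affine, that $\cos$ is Lipschitz on the compact interval $[\theta_1,\theta_2]\subset(0,\pi/2)$, and that $u\mapsto \frac{1}{2+2u}$ is Lipschitz (indeed smooth) on the compact range of $\cos$ over $[\theta_1,\theta_2]$, which stays bounded away from the bad point $u=-1$; composing Lipschitz maps gives that $s$ is Lipschitz. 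Since $0<\theta_1<\theta_2<\pi/2$ forces $0<\cos\theta<1$ on the relevant range, we have $2<2+2\cos\theta<4$, hence $s$ genuinely takes values in $(1/4,1/2)$ as required.

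Next I would make the identification between the measure-theoretic object in the statement and the Hausdorff-type object in \cite{Sob}. The function $Q$ in the proposition is exactly $Q(x) = \frac{2\log 2}{\log(2+2\cos(\theta_1 + \phi^{-1}(x)(\theta_2-\theta_1)))}$, which is the pointwise ``self-similar dimension'' determined by the local contraction ratio: if near the point $\phi(t)$ the curve is locally a union of $4$ pieces each scaled by $s(t)$, then the dimension solves $4 \, s(t)^{Q} = 1$, i.e. $Q = \frac{\log 4}{-\log s(t)} = \frac{2\log 2}{\log(2+2\cos(\theta_1 + t(\theta_2-\theta_1)))}$. So $Q$ is continuous (being a composition of continuous maps, using that $\phi$ is a homeomorphism $[0,1]\to K$, which follows from the uniform-Cauchy construction of $\phi_n$ and injectivity), bounded, and bounded away from $0$. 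The measure $\lambda^{Q_c}$ is the one built in Section 2.1 of the present paper from $\tau(B) = |B|^{Q_c(B)} = |B|^{Q(\text{center of }B)}$, restricted to the covering class $\mathscr{B}$ of balls — this is the same construction used in \cite{Sob} to produce the ``local Hausdorff-type'' measure on such curves.

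Then the conclusion is obtained by quoting Theorem 3.4 of \cite{Sob}: for a variable Koch-type curve with Lipschitz contraction ratio $s$, the measure $\lambda^{Q_c}$ is finite with full support and satisfies the two-sided bound $\frac{1}{C} r^{Q(x)} \le \lambda^{Q_c}(B_r(x)) \le C r^{Q(x)}$ for all $x\in K$ and $0<r\le \diam(K)/2$, i.e. it is Ahlfors $Q$-regular. I would state that the verification amounts to checking the open set condition / bounded overlap for the generating similitudes at each stage (which is built into the geometry of the generator, since the four sub-pieces of each stage meet only at endpoints) together with the uniform comparability of diameters of level-$n$ cells that the Lipschitz hypothesis on $s$ guarantees; these are carried out in detail in \cite{Sob} and I would not reproduce them.

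The main obstacle, and the only genuinely non-mechanical point, is confirming that our recursively defined $K$ really falls under the scope of \cite{Sob}'s framework — in particular that the ``interpolation'' scheme, in which the $i$-th of the four children of a cell inherits sub-angles $[\theta_1 + (i-1)\frac{\theta_2-\theta_1}{4},\, \theta_1 + i\frac{\theta_2-\theta_1}{4}]$, is exactly the addressing/parametrization convention under which $s$ becomes a function of the limit parameter $t = \phi^{-1}(x)$ in the way the theorem demands, so that the exponent appearing in the cell-mass estimate at $x$ is $Q(x)$ and not some average. Once that bookkeeping is matched up, everything else is a direct citation. Since the intended level of this example is illustrative rather than self-contained, I would present the proposition with the one-line justification already given in the excerpt (``it follows from Theorem 3.4 of \cite{Sob}, as $s$ is Lipschitz'') and refer the reader to \cite{Sob} for the construction-specific details.
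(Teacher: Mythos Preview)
Your proposal is correct and matches the paper's approach exactly: the paper states this proposition without proof, remarking only that it follows from Theorem~3.4 of \cite{Sob} since the function $s(t)=\frac{1}{2+2\cos(\theta_1+t(\theta_2-\theta_1))}$ is Lipschitz on $[0,1]$ with values in $(1/4,1/2)$. Your additional verification that $s$ is Lipschitz (as a composition of Lipschitz maps on compact ranges) and your identification of $Q$ as the local self-similarity dimension are helpful elaborations, but the core argument is identical to the paper's one-line citation.
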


\begin{corollary} $Q$ is the local dimension and $H_{\loc}$ is Ahlfors $\alpha\mhyphen$regular. In particular, $0<H_{\loc}(K)<\infty.$ Moreover, if $c\geq 0$ then $H^c(K)$ is $0$ or $\infty.$

\end{corollary}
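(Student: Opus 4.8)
The plan is to read the corollary off from Theorem \ref{main} together with the proposition just proved. First I would invoke Theorem \ref{main} with $X=K$ and $\nu=\lambda^{Q_c}$: since the preceding proposition asserts that $\lambda^{Q_c}$ is an Ahlfors $Q$-regular Borel measure on the compact space $K$, the theorem immediately yields both $Q=\alpha$ and $\lambda^{Q_c}\asymp H_{\loc}$. This already gives the first assertion.

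Next I would transfer Ahlfors regularity from $\lambda^{Q_c}$ to $H_{\loc}$. Strong equivalence provides a constant $C'>0$ with $\tfrac1{C'}\lambda^{Q_c}(E)\le H_{\loc}(E)\le C'\lambda^{Q_c}(E)$ for every Borel set $E$; combining this with the Ahlfors $Q$-regularity estimates for $\lambda^{Q_c}$ and the identity $Q=\alpha$ gives a constant $C''>0$ with $\tfrac1{C''}r^{\alpha(x)}\le H_{\loc}(B_r(x))\le C'' r^{\alpha(x)}$ for all $x\in K$ and $0<r\le\diam(K)/2$, i.e. $H_{\loc}$ is Ahlfors $\alpha$-regular. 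For $0<H_{\loc}(K)<\infty$ I would note that $\lambda^{Q_c}$ is a finite Borel measure of full support on the compact space $K$ (the $Q$-amenability built into the construction), so $0<\lambda^{Q_c}(K)<\infty$, and the claim follows from $H_{\loc}\asymp\lambda^{Q_c}$.

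Finally, for the dichotomy of $H^c(K)$, set $d_0:=\dim(K)$. By Proposition \ref{dimmax}, $d_0=\sup_{x\in K}\alpha(x)=\sup_{x\in K}Q(x)$, and since $\phi$ is a continuous bijection of the compact interval $[0,1]$ onto $K$ (hence a homeomorphism) and $t\mapsto\cos\bigl(\theta_1+t(\theta_2-\theta_1)\bigr)$ is strictly decreasing on $[0,1]$ (as $[\theta_1,\theta_2]\subset(0,\pi/2)$), the function $Q\circ\phi$ is strictly increasing; thus $Q$ attains the value $d_0$ at the single point $\phi(1)$ and nowhere else, so $\alpha^{-1}(\{d_0\})=\{\phi(1)\}$. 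By Proposition \ref{3.6}, $H^{d_0}(K)=H^{d_0}\bigl(\alpha^{-1}(\{d_0\})\bigr)=H^{d_0}(\{\phi(1)\})=0$, since $d_0>0$. For $c>d_0$ we have $H^c(K)=0$ by the definition of Hausdorff dimension, and for $0\le c<d_0$ we have $H^c(K)=\infty$ (otherwise $H^s(K)=0$ for all $s>c$, forcing $\dim(K)\le c<d_0$). Hence $H^c(K)\in\{0,\infty\}$ for every $c\ge0$. This corollary is essentially immediate given the established machinery; the only mildly delicate point, and the step I would be most careful about, is the identification $\alpha^{-1}(\{d_0\})=\{\phi(1)\}$, which is where the explicit form of $Q$, the injectivity of $\phi$, and the monotonicity of cosine on $[0,\pi/2]$ are actually used.
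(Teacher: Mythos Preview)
Your argument is correct and follows the same route as the paper: invoke Theorem~\ref{main} (applied to the Ahlfors $Q$-regular measure $\lambda^{Q_c}$ from the preceding proposition) for the first claims, and Proposition~\ref{3.6} for the dichotomy. The paper's own proof is terse to the point of being a citation list; you supply the details the paper leaves implicit, most notably the identification of $\alpha^{-1}(\{d_0\})$ as the single point $\phi(1)$ via the strict monotonicity of $Q\circ\phi$, which is exactly what is needed to conclude $H^{d_0}(K)=0$ from Proposition~\ref{3.6}.
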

\begin{proof}
The first claim follows immediately from Theorem \ref{main}. The last follows from Proposition \ref{3.6}.
\end{proof}
\end{subsection}
\subsection{A variable dimensional Sierpi\'nski gasket}

Given a length $L$ and a scaling parameter $r\in [0,1/2],$ one may construct a generator of parameters $L$ and $r$ by exercising from a filled equilateral triangle of side length $L$ everything but the the three corner equilateral triangles with side length $rL$,  as may be seen in Figure 3 below.

\vspace*{1 in}
\begin{figure}[H]
\centering
\includegraphics[scale=.25]{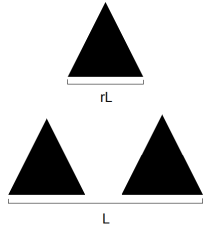}
\caption{A generator of parameters $L$ and $r.$}
\end{figure}
\vspace*{1 in}

We then construct the gasket recursively as follows. Let $L>0$ be the length parameter and $r_1,r_2$ lower and upper ratios with $0\leq r_1< r_2\leq 1/2.$ For stage $0$ let $K_0$ be the filled in equilateral triangle in $\mathbb{R}^2$ of side length $L$ with vertices at $(0,0), (L,0),$ and $(L/2,\sqrt{3}L/2)$. At stage $1$, form $K_1$ by replacing $K_0$ with the generator of parameters $L$ and $\frac{r_1+r_2}{2}$. Then suppose we have constructed $K_0,K_1,...,K_n$ for any $L>0$ and $0\leq r_1< r_2\leq\frac{1}{2}$ where $K_n$ is made of 3 versions of $K_{n-1}$ each of overall length $L(\frac{r_1+r_2}{2})$. Label these versions by a parameter $i$ for $i=1,2,3$ where version $1$ occupies the bottom left triangle and version $2$ is on the bottom to the right and version $3$ is on the top. Then, for $L>0,0\leq r_1< r_2\leq \frac{1}{2},$ form $K_{n+1}$ by replacing the $i\mhyphen$th version of $K_{n-1}$ in $K_n$ with a $K_n$ of length $\frac{r_1+r_2}{2}L$ and lower and upper ratios $r_1+(i-1)\frac{(r_2-r_1)}{3}$ and $r_1+i\frac{(r_2-r_1)}{3}$. Hence by induction we may construct $K_n$ for any $n$ and any overall length $L>0$ and ratios $0\leq r_1< r_2\leq \frac{1}{2}.$ 
Note that since $K_{n+1}\subset K_n$ for each $n$ and each $K_n$ is a closed subset of a compact set in $\mathbb{R}^2$, we may define the compact gasket $K$ by $K=\cap_{n=0}^\infty K_n.$

The following figure (Figure 4) was constructed with $r_1=.4$ and $r_2=.5.$ 
\vspace*{1 in}
\begin{figure}[H]
\centering
\includegraphics[scale=.55]{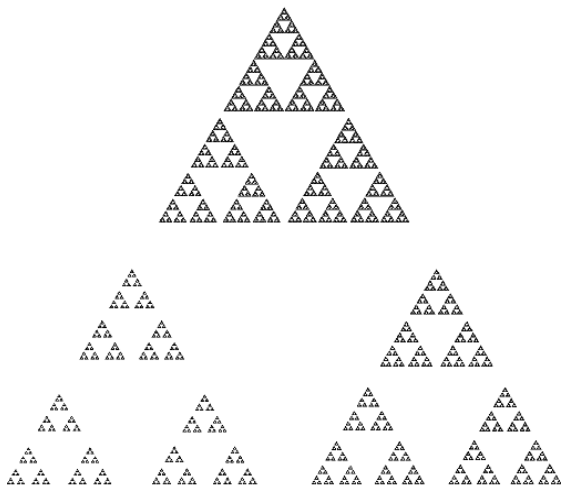}
\caption{A variable dimensional gasket with $r_1=.4, r_2=.5.$}
\end{figure}
\newpage
\subsection{A variable dimensional Sierpi\'nski carpet}
Given a base length $b$, a height $h$, and a scaling parameter $r\in [0,1],$ one may construct a generator of parameters $b,h,$ and $r$ by exercising from a filled rectangle of base $b$ and height $h$ the center open rectangle of base $br$ and height $hr.$ See the figure below.
\vspace*{1 in}
\begin{figure}[H]
\centering
\includegraphics[scale=.3]{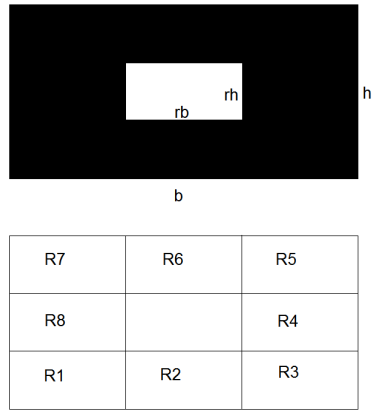}
\caption{(top) A generator of parameters $b,h,$ and $r$ and (bottom) its decomposition into sub-rectangles.}
\end{figure}
\vspace*{1 in}
As illustrated in the figure, we may decompose the generator into $8$ sub-rectangles $R1,R2,...,R_8.$ Rectangles $R1,R3,R5,$ and $R7$ have base $\frac{b-rb}{2}$ and height $\frac{h-rh}{2}.$ Rectangles $R2$ and $R6$ have base $rb$ and height $\frac{h-rh}{2}.$ Rectangle $R4$ and $R8$ have base $\frac{b-rb}{2}$ and height $rh.$

We then construct the carpet recursively as follows. Let $b, h>0$ be base and height parameters and $r_1,r_2$ lower and upper ratios with $0\leq r_1< r_2\leq 1.$ For stage $0$ let $K_0$ be the filled in rectangle in $\mathbb{R}^2$ with vertices at $(0,0), (b,0), (b,h),$ and $(0,h)$. At stage $1$, form $K_1$ by replacing $K_0$ with the generator of parameters $b, h,$ and $\frac{r_1+r_2}{2}$. Then suppose we have constructed $K_0,K_1,...,K_n$ for any $L>0$ and $0\leq r_1< r_2\leq 1$ where $K_n$ is made of 8 versions of $K_{n-1}$ where version $i$ sits in the spot for sub-rectangle $Ri$ of the generator for $i=1,2...,8.$ Then, for $L>0,0\leq r_1< r_2\leq 1,$ form $K_{n+1}$ by replacing the $i\mhyphen$th version of $K_{n-1}$ in $K_n$ with a $K_n$ of base and height equal to the base and height of sub-rectangle $Ri$ of $K_1$ and lower and upper ratios $r_1+(i-1)\frac{(r_2-r_1)}{8}$ and $r_1+i\frac{(r_2-r_1)}{8}$, respectively, for $i=1,...,8$. Hence by induction we may construct $K_n$ for any $n$ and any base $b$, height $h,$ and ratios $0\leq r_1< r_2\leq 1.$ 
Note that since $K_{n+1}\subset K_n$ for each $n$ and each $K_n$ is a closed subset of a compact set in $\mathbb{R}^2$, we may define the compact carpet $K$ by $K=\cap_{n=0}^\infty K_n.$

The following figure (Figure 6) was constructed with $b=h$ and $r_1=1/6,r_2=1/2.$
\vspace*{1 in}
\begin{figure}[H]
\centering
\includegraphics[scale=.75]{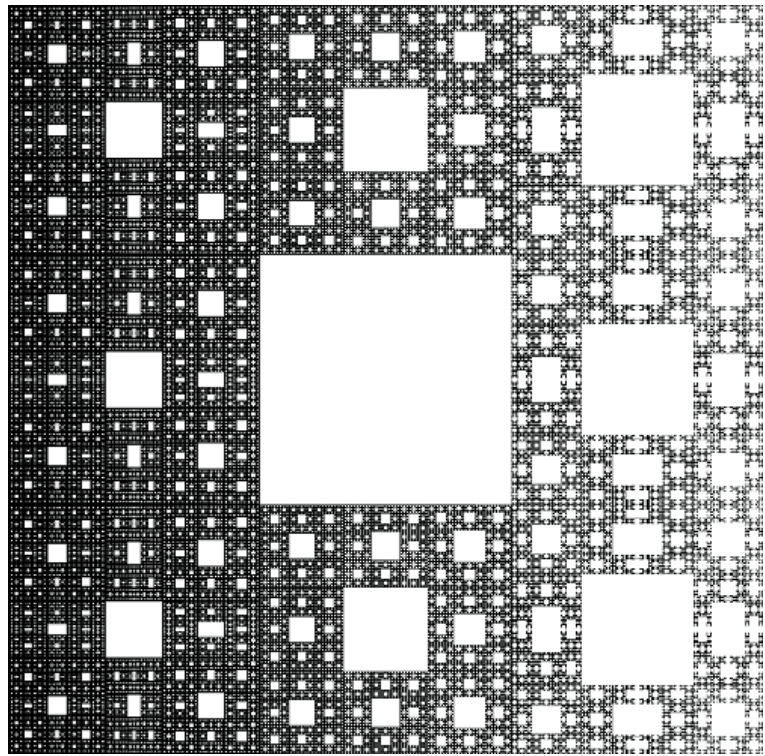}
\caption{A variable dimensional carpet constructed from $b=h$ and $r_1=1/6, r_2=1/2$.}
\end{figure}
\newpage
\subsection{A variable dimensional Vicsek tree}
Given a length $L$ and a scaling ratio $r\in [0, 1],$ one may construct a generator with parameters $L$ and $r$ as follows. Given a filled in square of side length $L$, drawing a square in the middle of side length $Lr$ induces a decomposition of the initial square into $9$ sub-rectangles. Label the rectangles from bottom left $R1,R2,R3$ on the bottom row, $R4,R5,R6$ on the middle row, and $R7,R8,R9$ on the top row. Then remove the interiors of $R2,R4, R6,$ and $R8.$ We emphasize that the ambient space is homeomorphic to $[0,L]\times [0, L]$, and the interior operation is taken with respect to the relative topology of this space. Hence the interior of the outer rectangles that are removed includes part of the outer ``boundary" of the original square; so the result is a union of 5 disjoint closed squares. See the figure below.\footnote{See also \cite{telcs2006} for the example of a weighted infinite Vicsek tree graph. } 
\vspace*{1 in}
\begin{figure}[H]
\centering
\includegraphics[scale=.22]{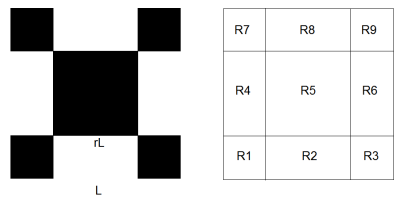}
\caption{(left) A generator of parameters $L$ and $r$ and (right) labeling.}
\end{figure}
\vspace*{1 in}
We then may perform the construction of the tree as follows. Given a length $L$ and ratios $r_1, r_2$ with $0\leq r_1< r_2\leq 1,$ let $K_0$ be the square $[0,L]\times [0,L]$. Then let $K_1$ be the result of replacing $K_0$ with the generator with parameters $L$ and $\frac{1}{2}(r_1+r_2).$  Having constructed $K_0, K_1,,,.K_n$ for some $n\geq 1,$ where $K_n$ is made up of $5$ copies of $K_{n-1}$ in rectangles $R1, R3, R5, R7,$ and $R9,$ 
we construct $K_{n+1}$ as follows. Replace $R1$ and $R7$ with copies of $K_n$ of length $\frac{1}{2}(L-rL)$ and ratios $r_1$ and $r_1+\frac{1}{3}(r_2-r_1).$ Replace $R5$ with a $K_n$ of length $rL$ and ratios $r_1+\frac{1}{3}(r_2-r_1)$ and $r_1+\frac{2}{3}(r_2-r_1).$ Replace $R3$ and $R9$ with copies of $K_n$ of length $\frac{1}{2}(L-rL)$ and ratios $r_1+\frac{2}{3}(r_2-r_1)$ and $r_2.$
Then let $K=\cap_{n=0}^\infty K_n$. 
\vspace*{1 in}
\begin{figure}[H]
\centering
\includegraphics[scale=.5]{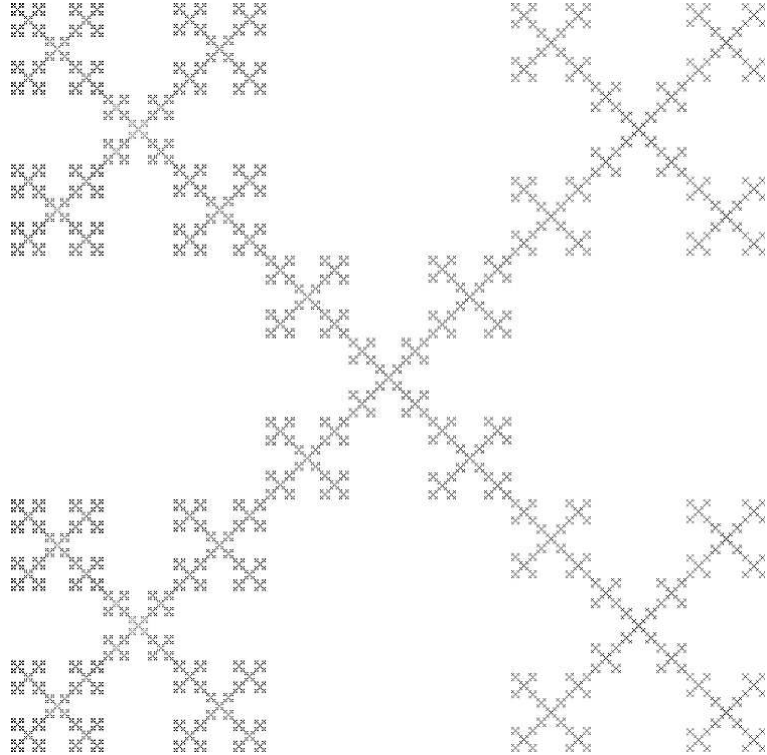}
\caption{Stage $6$ of a variable dimensional Vicsek tree constructed with $r_1=.25, r_2=.4$.}
\end{figure}

\chapter{The Local Walk Dimension}
In this chapter we provide a definition of a localized time scaling exponent $\beta$ on a compact metric space. The definition depends on the particular choice of approximate random walks, as defined in section 1.3. However, we strongly emphasize that the essential idea of defining $\beta$ as a critical exponent using mean exit times may be applied in much greater generality. Once a manner of approximate random walks is decided upon, a local walk dimension $\beta$ may be defined in a manner appropriate to that setting. Throughout this chapter, we assume $X$ is a compact metric space. 

\section{Discrete Approach}

\subsection{Uniform and Partially Symmetrized Walks on $\epsilon$-Nets}

Suppose $G$ is a graph. For $X\in V(G),$ let $V(G)_x=\{y\in V(G)\;|\;y\sim x\}$ denote the neighbors of $x$ in $G.$ A vertex $x$ is called isolated if it has no neighbors, that is $V(G)_x=\emptyset.$ Then define the \textit{uniform random walk} on $V(G)$ (see, for instance, \cite{art}) by the transition probabilities $p(G)_{x,y}$, for $x,y\in V(G),$ to jump from $x$ to $y$ in one time step, given by \begin{displaymath}
   P(G)_{x,y} = \left\{
     \begin{array}{lr}
       \frac{1}{\deg(x)}\chi_{\{z\in V(G)\;|\;z\sim x\}}(y) \;,& x\;\mbox{not isolated,} \\
       \delta_{x,y} \;,&  x\;\mbox{isolated.} 
     \end{array}
   \right.
\end{displaymath} 

Alternatively, given a graph $G$ with, we may define the \textit{partially symmetrized random walk} on $G$ as follows. First we assume $G$ has no isolated vertices. If it does and $x$ is such a vertex, we add a self edge $(x,x)$ at $x$. Then for $x\in V(G),$ let \[a(x):=\sum_{y\sim x}\frac{1}{\deg(x)}\left(1+\frac{\deg(x)}{\deg(y)}\right).\] Then we define the partially symmetrized walk by 
\[P(G)_{x,y}=a(x)^{-1}\left(\frac{1}{\deg(x)}\left(1+\frac{\deg(x)}{\deg(y)}\right)\right)\chi_{\{z\in V(G)\;|\;z\sim x\}}(y),\] for $x,y\in V(G)\footnote{Clearly if $G$ has constant degree then the uniform and partially symmetrized walks are equivalent.}\footnote{We call the walk ``partially symmetrized" since it is not symmetric with respect to the counting measure. However, it is is symmetric with respect to the measure $x\mapsto a_r(x),$ which is strongly equivalent to the counting measure given a uniform bound $\frac{1}{C}\leq \frac{\deg(x)}{\deg(y)}\leq C$ for some $C>0$ and all $x,y\in V(G).$}$

Either approach defines a discrete time Markov process $(\bm{x}(G)_{k})_{k\in \mathbb{Z}^+}$ with finite state space $V(G)$. Given $x\in V(G),$ let $\mathbb{P}^x$ be the probability defined on all paths starting at $x$, and let $\mathbb{E}^x$ be the expectation with respect to $\mathbb{P}^x.$ See section 2.3 for details.

Recall from that given an $\epsilon$-net $\mathcal{N}$ in a compact metric space $X,$ we may define an approximating graph with vertex set $\mathcal{N}$ in a number of ways, for example the proximity or covering graphs discussed in section 2.1. For $\mathcal{N}$ an $\epsilon$-net, let $G_\epsilon(\mathcal{N})$ be the $\epsilon$-covering graph on the vertex set $\mathcal{N}$, defined by the edge relation  $(x,y)\in E(G_\epsilon(\mathcal{N}))$ if and only if $x,y\in \mathcal{N}$ and $B_{\epsilon}(x)\cap B_{\epsilon}(y)\neq \emptyset.$ Note that $\deg(x)\geq 1$ for all $x\in \mathcal{N}$ since each $(x,x)$ is an edge.\footnote{We could also use the ``loopless covering graph``, but we add a self edge only at isolated vertices.}

Now suppose $\mathcal{N}$ is an $\epsilon$-net and $\bm{x}(G_\epsilon(\mathcal{N})_k)_{k=1}^\infty$ is the associated uniform or symmetrized random walk on $\mathcal{N}.$ Suppose we have chosen to use either the uniform or partially symmetrized random walk for all such nets $\mathcal{N}.$ Given a set $A\subset X,$ we let \[\tau(\mathcal{N},\epsilon)_A:=\inf\{k\in\mathbb{Z}^+\;|\;\bm{x}(G_\epsilon(\mathcal{N}))_k\notin \mathcal{N}\cap A\},\] where we recall the convention that $\inf \emptyset = \infty.$

Then we define, as in section 2.3, the \textit{mean exit time} (see also \cite{art}) from $A$ starting at $x\in \mathcal{N}$, denoted $E_{\mathcal{N}, \epsilon}(x, A),$ as follows\[E_{\mathcal{N}, \epsilon}(x, A):=\mathbb{E}^x\tau(\mathcal{N},\epsilon)_A.\] We let \[E^+_{\mathcal{N}, \epsilon}(A):=\max_{x\in \mathcal{N}}E_{\mathcal{N}, \epsilon}(x, A)=\max_{x\in \mathcal{N}\cap A}E_{\mathcal{N}, \epsilon}(x, A).\]

Given $A\subset X$ and $\gamma\geq 0,$ let 
\[\omega_{\gamma}(A):=\inf_{\delta>0}\sup\{E^+_{\mathcal{N}, \epsilon}(A)\epsilon^\gamma\;|\;\mathcal{N}\;\mbox{is an\;}\epsilon\mhyphen\mbox{net with\;}0<\epsilon<\delta\}.\]
\begin{proposition} For any $A\subset X,$ \[{\sup}_{\overline{\mathbb{R}}_+}\{\gamma\geq 0\;|\;\omega_{\gamma}(A)=\infty\}={\inf}_{\overline{\mathbb{R}}_+}\{\gamma\geq 0\;|\;\omega_{\gamma}(A)=0\}.\] We call the common value $\beta(A).$ Then if $A'\subset A$ we have $\beta(A')\leq \beta(A).$\end{proposition}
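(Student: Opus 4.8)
The plan is to prove the two assertions separately. For the first assertion, the statement that
\[
\sup_{\overline{\mathbb{R}}_+}\{\gamma\geq 0\mid \omega_\gamma(A)=\infty\}=\inf_{\overline{\mathbb{R}}_+}\{\gamma\geq 0\mid \omega_\gamma(A)=0\},
\]
I would mimic the proof given earlier for the well-definedness of the Hausdorff dimension (the argument just before the definition of $\dim(A)$). The key point is a monotonicity/rescaling estimate in $\gamma$: if $0\le \gamma_1<\gamma_2$, then for any $\epsilon$-net $\mathcal N$ and $0<\epsilon<\delta<1$ we have $E^+_{\mathcal N,\epsilon}(A)\epsilon^{\gamma_2}\le \delta^{\gamma_2-\gamma_1}E^+_{\mathcal N,\epsilon}(A)\epsilon^{\gamma_1}$, since $\epsilon^{\gamma_2-\gamma_1}\le\delta^{\gamma_2-\gamma_1}$. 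Taking suprema over nets with $0<\epsilon<\delta$ and then the infimum over $\delta$, one obtains $\omega_{\gamma_2}(A)\le \delta^{\gamma_2-\gamma_1}\sup\{\cdots\}$ for each $\delta$; letting $\delta\to 0^+$ gives that $\omega_{\gamma_1}(A)<\infty\implies \omega_{\gamma_2}(A)=0$ and, contrapositively, $\omega_{\gamma_2}(A)>0\implies\omega_{\gamma_1}(A)=\infty$. Hence the set $S_\infty:=\{\gamma\mid\omega_\gamma(A)=\infty\}$ is a downward-closed subset of $[0,\infty)$ (an interval with left endpoint $0$), the set $S_0:=\{\gamma\mid\omega_\gamma(A)=0\}$ is upward-closed, and no $\gamma$ can lie in both; moreover if $\gamma_1\in S_\infty$ and $\gamma_2>\gamma_1$ then $\gamma_2\in S_0$, which forces $\sup S_\infty\le\inf S_0$. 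For the reverse inequality $\sup S_\infty\ge\inf S_0$: if $\gamma<\inf S_0$ then $\omega_\gamma(A)>0$, and applying the rescaling estimate with this $\gamma$ as the larger exponent shows every smaller exponent gives $\omega=\infty$; so either $\gamma\in S_\infty$ or $\gamma=\sup S_\infty$ as a limit, and in all cases $\sup S_\infty\ge\gamma$, giving $\sup S_\infty\ge\inf S_0$. (One must be a little careful with the degenerate cases where $\omega_\gamma(A)$ is neither $0$ nor $\infty$ for some $\gamma$, but the monotonicity forces there to be at most one such $\gamma$, and it lies between the sup and the inf, so the common value is still well defined; this is exactly the same bookkeeping as in the Hausdorff dimension argument.)

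For the second assertion, $\beta(A')\le\beta(A)$ whenever $A'\subset A$, the key observation is that a walker confined to exit $A'$ exits no later than one confined to exit the larger set $A$ — or rather, more carefully, that the first exit time from $\mathcal N\cap A'$ is dominated by the first exit time from $\mathcal N\cap A$. Concretely, for a fixed $\epsilon$-net $\mathcal N$ and a fixed starting vertex $x$, along any path we have $\{k\mid \bm x(G_\epsilon(\mathcal N))_k\notin \mathcal N\cap A'\}\supseteq\{k\mid \bm x(G_\epsilon(\mathcal N))_k\notin \mathcal N\cap A\}$ because $\mathcal N\cap A'\subseteq \mathcal N\cap A$; taking infima reverses the inclusion, giving $\tau(\mathcal N,\epsilon)_{A'}\le \tau(\mathcal N,\epsilon)_{A}$ pointwise on path space. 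Taking $\mathbb E^x$ yields $E_{\mathcal N,\epsilon}(x,A')\le E_{\mathcal N,\epsilon}(x,A)$, and taking maxima over $x\in\mathcal N$ gives $E^+_{\mathcal N,\epsilon}(A')\le E^+_{\mathcal N,\epsilon}(A)$. Multiplying by $\epsilon^\gamma\ge 0$, taking the sup over nets with $0<\epsilon<\delta$, and then $\inf$ over $\delta$, we get $\omega_\gamma(A')\le\omega_\gamma(A)$ for every $\gamma\ge 0$. Consequently $\{\gamma\mid\omega_\gamma(A')=\infty\}\subseteq\{\gamma\mid\omega_\gamma(A)=\infty\}$, so $\beta(A')=\sup\{\gamma\mid\omega_\gamma(A')=\infty\}\le\sup\{\gamma\mid\omega_\gamma(A)=\infty\}=\beta(A)$.

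I expect the main obstacle to be purely a matter of careful bookkeeping rather than a deep difficulty: getting the quantifier order right in the $\inf_\delta\sup$ definition of $\omega_\gamma$ so that the rescaling inequality survives both the supremum over nets and the infimum over $\delta$, and handling the boundary/degenerate cases (where $\omega_\gamma(A)$ takes a finite nonzero value, or where $\beta(A)=0$ or $\infty$) in the proof that the sup and inf agree. The monotonicity argument $\tau_{A'}\le\tau_A$ is elementary once one writes the exit times as infima of index sets and uses that $\inf$ reverses inclusions, together with monotonicity of expectation; there is no probabilistic subtlety because everything is done pathwise for a fixed net before integrating. The only genuinely nontrivial input is the observation that multiplication by $\epsilon^\gamma$ and the suprema/infima all preserve the inequality, which is immediate. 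I would present the first assertion's proof in the same compressed style as the Hausdorff-dimension passage, and the second as a short two- or three-line computation.
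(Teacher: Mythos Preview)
Your proposal is correct and follows essentially the same approach as the paper: the rescaling estimate $E^+_{\mathcal N,\epsilon}(A)\epsilon^{\gamma}\le \delta^{\gamma-\gamma'}E^+_{\mathcal N,\epsilon}(A)\epsilon^{\gamma'}$ for $0<\epsilon<\delta$ to get the dichotomy $\omega_{\gamma'}(A)<\infty\Rightarrow\omega_\gamma(A)=0$ (equivalently $\omega_\gamma(A)>0\Rightarrow\omega_{\gamma'}(A)=\infty$), followed by the pathwise inequality $\tau(\mathcal N,\epsilon)_{A'}\le\tau(\mathcal N,\epsilon)_A$ for the monotonicity claim. The paper's version is slightly terser (it defines $\beta(A)$ as the supremum and derives a contradiction from strict inequality with the infimum, rather than arguing via the interval structure of $S_\infty$ and $S_0$), and it omits the unnecessary restriction $\delta<1$, but the substance is identical.
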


\begin{proof}
Suppose $0\leq \gamma'<\gamma.$ Then \begin{equation*}
\begin{split}
&\sup\{E^+_{\mathcal{N}, \epsilon}(A)\epsilon^\gamma\;|\;\mathcal{N}\;\mbox{an\;}\epsilon\mhyphen\mbox{net with\;}0<\epsilon<\delta\}\\&\leq \delta^{\gamma-\gamma'}\sup\{E^+_{\mathcal{N}, \epsilon}(A)\epsilon^{\gamma'}\;|\;\mathcal{N}\;\mbox{an\;}\epsilon\mhyphen\mbox{net with\;}0<\epsilon<\delta\},
\end{split} \end{equation*} for any $\delta>0.$ Hence if $\omega_{\gamma'}(A)<\infty,$ then $\omega_{\gamma}(A)=0;$ and if $\omega_{\gamma}(A)>0,$ then $\omega_{\gamma'}(A)=\infty.$ Let $\beta(A):=\sup\{\gamma\geq 0\;|\;\omega_{\gamma}(A)=\infty\}.$ If $0\leq\gamma\leq \beta(A)\leq \infty,$ then $\omega_{\gamma}(A)=\infty$. Hence $\inf\{\gamma\geq 0\;|\;\omega_{\gamma}(A)=0\}\geq \beta(A).$ If $\inf\{\gamma\geq 0\;|\;\omega_{\gamma}(A)=0\}> \beta(A),$ then there exists a $\gamma>\beta(A)$ with $\omega_{\gamma}(A)>0.$ But then if $\beta(A)<\gamma'<\gamma,$ then $\omega_{\gamma'}(A)=\infty$, a contradiction. It follows that $\beta(A)=\inf\{\gamma\geq 0\;|\;\omega_{\gamma}(A)=0\}$.

Now suppose $A'\subset A.$ Let $\mathcal{N}$ be an $\epsilon$-net. Clearly for any infinite $G_\epsilon(\mathcal{N})$ path $\omega$, $\tau(\mathcal{N},\epsilon)_{A'}(\omega)\leq \tau(\mathcal{N},\epsilon)_{A}(\omega).$ Hence $E_{\mathcal{N},\epsilon}(x, A')\leq E_{\mathcal{N},\epsilon}(x,A)$ for all $x\in \mathcal{N}.$ It follows, for any $\gamma\geq 0,$ that $\omega_{\gamma}(A')\leq \omega_{\gamma}(A).$ Hence \[\{\gamma\geq 0\;|\;\omega_{\gamma}(A')= \infty\}\subset\{\gamma\geq 0\;|\;\omega_{\gamma}(A)=\infty\}.\] Therefore $\beta(A')\leq \beta(A),$ as desired.
\end{proof}

We now define the local exponent $\beta:X\rightarrow [0,\infty]$ as follows.
\begin{definition} For $x\in X$ let \[\beta(x):=\lim_{r\rightarrow 0^+}\beta(B_r(x))=\inf_{r>0}\beta(B_r(x)).\]
\end{definition}

We now give a number of remarks. First, note that for each $A\subset X$ we have defined $\beta(A)$ by means of a ``$\limsup$" along $\epsilon$-nets. It may not be surprising that we also could have chosen to use a ``$\liminf$". That is, for $\gamma\geq 0$ we may define \[\omega^{-}_{\gamma}(A):=\sup_{\delta>0}\inf\{E^+_{\mathcal{N},A}\epsilon^\gamma\;|\;\mathcal{N}\;\mbox{is an\;}\epsilon\mhyphen\mbox{net with\;}0<\epsilon<\delta\}.\] Then we may define 
\[\beta^{-}(A):=\sup\{\gamma\geq 0\;|\;\omega^{-}_{\gamma}(A)=\infty\}=\inf\{\gamma\geq 0\;|\;\omega^{-}_{\gamma}(A)=0\}.\]
Then as before one may show that this is well defined and that if $A'\subset A$ then $\beta^{-}(A')\leq \beta^{-}(A).$ Hence we have that \[\beta^{-}(x):=\lim_{r\rightarrow 0^+}\beta^{-}(B_r(x))\] exists. Then since $\omega^{-}_\gamma(A)\leq\omega_{\gamma}(A),$ we have that $\beta^{-}(A)\leq \beta(A).$ Hence \[\beta^{-}\leq \beta.\] 

We will focus, however, on $\beta.$ One justification for this choice is as follows. Intuitively, $\beta$ may be seen as a localized ``walk packing dimension." That is, if one has a continuous curve $\gamma:[0,T]\rightarrow X$ and $\gamma^*$ is its image in $X$, the packing pre-measure of $\gamma^*$ at a given dimension $d$ is defined as a $\limsup$ as $\delta\rightarrow 0^+$ of terms of the form $\sum|B_i|^d$ where the $B_i$ are a collection of disjoint balls with centers in $\gamma^*$ and diameters less than $\delta.$ Given an $\epsilon<\delta$ and an $\epsilon$-net on $\gamma^*$ the balls of radius $\epsilon/2$ with centers elements of the net are then an allowed packing. Since the diameter of each ball of radius $\epsilon/2$ is bounded above by $\epsilon$ and below by $\epsilon/2$ one might consider, as an approximation, terms of the form $\sharp \mathcal{N}\epsilon^d$ where $\sharp \mathcal{N}$ is the number of elements of $\mathcal{N}$. If $B$ is a ball with center $x_0$ and $\gamma:[0,\infty)\rightarrow X$ is a continuous ``sample path" with $\gamma(0)=x_0,$ set $T=\inf\{t\geq 0\;|\;\gamma(t)\notin B\}$. Then $\gamma|_{[0,T]}^*$ is the image of the path in $B$. If $\mathcal{N}$ is an $\epsilon$-net on $\gamma|_{[0,T]}^*$ containing $x_0$, and if one considers $\mathcal{N}$ as an approximation to the continuous sample 
path, then the the $(\sharp \mathcal{N}-1)\mhyphen$th step along the approximation is the last step before exiting $B.$ Therefore, one may intuitively regard $\beta$ as a kind of local walk packing dimension. In that interpretation, the use of $\limsup$ is entirely natural.

In practice, such as is the case with many of the standard examples of self-similar fractals such as the Sierpi\'nski gasket or carpet, arising from the construction one may be given a sequence $\epsilon_k$ of positive numbers decreasing to $0$ and for each $k$ an $\epsilon_k$-net $\mathcal{N}_k$ in $X$. Then one might wish to consider a weaker definition of a $\beta$ defined only as a $\limsup$ of weighted mean exit times along the chosen sequence of nets $(\mathcal{N}_k)_{k=1}^\infty.$

\begin{example} Consider a variable dimensional Koch curve $K$ considered in section 3.3. Let $\alpha:X\rightarrow [0,\infty)$ be the local dimension. Let $\mu$ be the local Hausdorff measure as defined in section 3.1. Then $\alpha$ is continuous and $\mu$ is Ahlfors $\alpha$-regular. Let $C\geq 1$ such that for any $x\in K$ and any $0<r<1/2$, 
$\frac{1}{C}r^{\alpha(x)}\leq \mu(B_r(x))\leq Cr^{\alpha(x)}.$ Let $B=B_r(x)$ with $0<r<1/2.$ Then let $\mathcal{N}$ be an $\epsilon$-net in $K$ with $0<\epsilon<r.$ We use $\mu$ to estimate the number of points in $\mathcal{N}\cap B.$ Let $\alpha^{-}(B)=\inf_{y\in B}\alpha(y)$ and $\alpha^+(B)=\sup_{y\in B}\alpha(y).$ Note that 
\[\frac{1}{C}\sharp(\mathcal{N}\cap B)(\frac{\epsilon}{2})^{\alpha^+(B)}\leq \sum_{y\in \mathcal{N}\cap B}\mu(B_{\epsilon/2}(y))\leq \mu(B)\leq \sum_{y\in \mathcal{N}\cap B}\mu(B_{\epsilon}(y))\leq C\sharp(\mathcal{N}\cap B)\epsilon^{\alpha^{-}(B)}.\] 
Hence, using the Ahlfors regularity, 
\[\frac{r^{\alpha(x)}}{C^2\epsilon^{\alpha^{-}(B)}}\leq \sharp(\mathcal{N}\cap B) \leq \frac{2^{\alpha^+(B)}C^2r^{\alpha(x)}}{\epsilon^{\alpha^+(B)}}.\]

Let $n>2$ an integer and $G$ be the path graph with $n$ vertices and self-loops. That is $V=\{0,1,...n\}$ and $(j,k)\in E$ if and only if $|j-k|\leq 1.$ Then with the standard random walk on $G,$ let $E_n(x)$ be the expected number of steps needed for a walker starting at $x\in V$ to reach $0$ or $n.$ Then $E_n(k)=\frac{3}{2}n(n-k).$ This follows since for $0<k<n$ with $k\in V,$ $\frac{2}{3}E_n(k)=1+\frac{1}{3}E_n(k-1)+\frac{1}{3}E_n(k+1)$, and $E_n(0)=E_n(n)=0.$ Hence \[\frac{3n(n-1)}{4}\leq \max_{k\in V}E_n(k)\leq \frac{3n(n+1)}{4}.\]

Suppose given an $\epsilon$-net $\mathcal{N}$ in $K$ we define the graph $G(\mathcal{N})$ as a covering graph defined by the edge relation $x\sim y$ if $B_{\epsilon}(x)\cap B_{\epsilon}(y)\neq \emptyset.$ Then the portion of the graph $G(\mathcal{N})$ in $B$ is a path graph. Hence there exists a constant $D>1$, independent of $r$, such that 
\[\frac{r^{2\alpha(x)}}{D\epsilon^{2\alpha^{-}(B)}}\leq E^+_{\mathcal{N}, B} \leq \frac{Dr^{2\alpha(x)}}{\epsilon^{2\alpha^+(B)}}.\]
It follows that $2\alpha^{-}(B_r(x))\leq \beta(B_r(x))\leq 2\alpha^+(B_r(x)).$ Since $\alpha$ is continuous, letting $r\rightarrow 0^+$ yields \[\beta(x)=2\alpha(x).\] Hence $\beta$ is variable as well. 
This example also illustrates a substantial difference with the definition of $\beta$ often defined for infinite graphs. Barlow has shown that such an exponent is bounded below by $2$ and above by $\alpha+1$ \cite{barlowesc}. However, in the above example, $\beta=2\alpha>\alpha+1$. Intuitively, such a discrepancy comes from the fact that the graph distance does not well approximate the metric on $K.$

\end{example}

\subsection{Discrete Walks on Tilings in a Metric Measure Space}
If we are given the additional structure of a measure, we may define ``natural" approximate discrete walks with more general edge weights by using the measure. Recall from section 2.1, that by a metric measure space we mean a triple $(X,d,\mu)$ consisting of a metric space $(X,d)$ together with a non-negative Borel measure $\mu$ on $X$ of full support. For example, if $(X,d)$ is variable Ahlfors regular, $\mu$ could be the local Hausdorff measure. 

Recall an $\epsilon$-tiling is a Borel partition $T=(T_x)_{x\in \mathcal{N}(T)}$ of $X$, where $\mathcal{N}(T)$ is an $\epsilon$-net, and $B_{\epsilon/2}(x)\subset T_x\subset B_\epsilon(x)$ for $x\in \mathcal{N}(T).$ Suppose $(X,d,\mu)$ is a metric measure space. Then if $T$ is an $\epsilon$-tiling, $\mu(T_x)>0$ for all $x\in \mathcal{N}(T)$ since $T_x$ contains a ball and $\mu$ has full support. Note the assignment $x\mapsto \mu_{T}(x):=\mu(T_x)$ is a measure on $\mathcal{N}(T).$   Then given the covering graph $G_{\epsilon}(\mathcal{N}(T))$, let $V_x:=\sum_{y\sim x} \mu_{T}(y)$ for $x\in \mathcal{N}(T).$ Define a $\mu$-uniform walk $P(G)_{x,y},$ for $x,y\in \mathcal{N}(T)$ by
\begin{displaymath}
   P(G)_{x,y} = \left\{
     \begin{array}{lr}
       \frac{1}{V_x}\chi_{\{z\in \mathcal{N}(T)\;|\;z\sim x\}}(y) \;,& x\;\mbox{not isolated,} \\
       \delta_{x,y} \;,&  x\;\mbox{isolated.} 
     \end{array}
   \right.
\end{displaymath} 

Alternatively, given a covering graph $G_{\epsilon}(\mathcal{N}(T))$, we may define the a partially symmetrized  walk as follows. First, if $G_{\epsilon}(\mathcal{N}(T))$ has an isolated vertex $x,$ we add a self edge $(x,x)$ at $x$. Then for $x\in V(G),$ let \[a(x):=\sum_{y\sim x}\frac{1}{V_x}\left(1+\frac{V_x}{V_y}\right).\] Then we define the partially symmetrized walk by 
\[P(G)_{x,y}=a(x)^{-1}\left(\frac{1}{V_x}\left(1+\frac{V_x}{V_y}\right)\right)\chi_{\{z\in \mathcal{N}(T)\;|\;z\sim x\}}(y),\] for $x,y\in \mathcal{N}(T).$

Then $P(G)$ is a transition operator on $L^2(\mathcal{N},\mu_{T})$ since $\sum_{y\in \mathcal{N}(T)}P(G)_{x,y}\mu_{T}(y)=1$ for all $x\in \mathcal{N}.$ Let $\bm{x}(G_\epsilon(T)_k)_{k=1}^\infty$ be the associated random walk on $\mathcal{N}(T).$ 

Suppose we have chosen to use either the $\mu$-uniform or partially symmetrized walks for all such tilings $T.$ Given a set $A\subset X,$ we let \[\tau(T,\epsilon)_A:=\inf\{k\in\mathbb{Z}^+\;|\;\bm{x}(G_\epsilon(\mathcal{N}(T)))_k\notin \mathcal{N}(T)\cap A\}.\] As before, if $A\subset X,$ we let $E_{T, \epsilon}(x, A):=\mathbb{E}^x\tau(T,\epsilon)_A.$ and $E^+_{T, \epsilon}(A):=\max_{x\in \mathcal{N}(T)}E_{T, \epsilon}(x, A).$

Analogously as before, if $A\subset X$ and $\gamma\geq 0,$ let 
\[\omega_{\gamma}(A):=\inf_{\delta>0}\sup\{E^+_{T, \epsilon}(A)\epsilon^\gamma\;|\;T\;\mbox{is an\;}\epsilon\mhyphen\mbox{tiling with\;}0<\epsilon<\delta\}.\]
Then, as before, \[{\sup}_{\overline{\mathbb{R}}_+}\{\gamma\geq 0\;|\;\omega_{\gamma}(A)=\infty\}={\inf}_{\overline{\mathbb{R}}_+}\{\gamma\geq 0\;|\;\omega_{\gamma}(A)=0\}.\] We call the common value $\beta(A).$ Then if $A'\subset A$ we have $\beta(A')\leq \beta(A).$

Hence, as before, we may define a local exponent $\beta$ by $\beta(x):=\lim_{r\rightarrow 0^+}\beta(B_r(x))$ for $x\in X.$

One of the benefits of using tilings in an approach to discrete approximate walks is that it permits use of the \textit{averaging method}, which allows approximate probabilistic Laplacians to be considered on all of $L^2(X,\mu)$ instead of finite dimensional Hilbert spaces. If $T$ is an $\epsilon$-tiling, define the averaging operator $\av_{T}:L^2(X,\mu)\rightarrow L^2(\mathcal{N}(T),\mu_T)$ by $\av_T(f)(z)=\hat{f}(z):=\frac{1}{\mu(T_z)}\int_{T_z}f(x)d\mu(x)$ for $f\in L^2(X,\mu),\;z\in\mathcal{N}(T).$  Thus if $A$ is an operator on $L^2(\mathcal{N}(T),\mu_T)$, one may transfer $A$ to an operator $\hat{A}$ on $L^2(X,\mu)$ by $\hat{A}f=A\hat{f}$ for $f\in L^2(X,\mu).$  
 
\section{Continuous Approach}

Suppose $(X,d,\mu)$ is metric measure space where $(X,d)$ is a connected, compact metric space containing more than a single point.

\begin{lemma} \label{contmeas} 	Let $r>0.$ The map $x\mapsto \mu(B_r(x))$ is lower semi-continuous. In particular, there exists a $c>0$ such that $\mu(B_r(x))>c$ for all $x\in X.$
\end{lemma}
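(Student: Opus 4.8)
The plan is to prove lower semicontinuity directly from the definition: I need to show that for each $c \in \mathbb{R}$, the set $\{x \in X \mid \mu(B_r(x)) > c\}$ is open, or equivalently that if $x_n \to x$ then $\mu(B_r(x)) \leq \liminf_{n} \mu(B_r(x_n))$. The key observation is that the open ball $B_r(x)$ can be exhausted from inside by balls that are eventually contained in $B_r(x_n)$. Concretely, $B_r(x) = \bigcup_{k=1}^\infty B_{r - 1/k}(x)$, an increasing union, so by continuity of measure from below it suffices to show $\mu(B_{r-1/k}(x)) \leq \liminf_n \mu(B_r(x_n))$ for each fixed $k$.

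For that, first I would fix $k$ and set $\rho = r - 1/k$. Since $x_n \to x$, there is an $N$ such that $d(x_n, x) < 1/k$ for all $n \geq N$. Then for any $y \in B_\rho(x)$ and $n \geq N$ we have $d(y, x_n) \leq d(y,x) + d(x,x_n) < \rho + 1/k = r$, so $B_\rho(x) \subset B_r(x_n)$. By monotonicity of $\mu$, $\mu(B_\rho(x)) \leq \mu(B_r(x_n))$ for all $n \geq N$, hence $\mu(B_\rho(x)) \leq \liminf_n \mu(B_r(x_n))$. Letting $k \to \infty$ and using continuity of measure from below gives $\mu(B_r(x)) \leq \liminf_n \mu(B_r(x_n))$, which is precisely sequential lower semicontinuity; since $X$ is a metric space this is equivalent to lower semicontinuity (the preimage $\{x \mid \mu(B_r(x)) > c\}$ is open).

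For the second assertion, the function $x \mapsto \mu(B_r(x))$ is lower semicontinuous on the compact space $X$, hence it attains its infimum at some point $x_0 \in X$. Because $\mu$ has full support, $\mu(B_r(x_0)) > 0$; choosing $c$ with $0 < c < \mu(B_r(x_0))$ gives $\mu(B_r(x)) \geq \mu(B_r(x_0)) > c$ for all $x \in X$. (One should recall the standard fact that a lower semicontinuous function on a compact space attains its minimum — this follows since $\{x \mid \mu(B_r(x)) \leq c\}$ is closed hence compact, and intersecting the nested family over $c \downarrow \inf$ yields a nonempty intersection.)

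I do not anticipate a serious obstacle here; the only mild subtlety is that one must work with the \emph{open} ball $B_r(x)$, for which the naive inclusion $B_r(x) \subset B_r(x_n)$ fails, which is exactly why the interior exhaustion by $B_{r-1/k}(x)$ is needed. Everything else is monotonicity and continuity of measure.
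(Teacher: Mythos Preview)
Your proof is correct and follows essentially the same approach as the paper: both exploit the inclusion $B_{r-\delta}(x)\subset B_r(y)$ for $d(x,y)<\delta$ together with continuity of measure from below. The only cosmetic differences are that the paper proves directly that $\phi^{-1}(t,\infty)$ is open rather than using the sequential characterization, and for the second claim it uses the open cover $X=\bigcup_n \phi^{-1}(1/n,\infty)$ and compactness rather than the attainment of the infimum; both are equivalent packagings of the same argument.
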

\begin{proof} Let $\phi(x)=\mu(B_r(x))$ for $x\in X.$ Let $t\in\mathbb{R}$ and suppose $x\in \phi^{-1}(t,\infty).$ Let $\epsilon>0$ with $\epsilon<\phi(x)-t.$ By continuity of measure, there exists a $\delta>0$ so that $\mu(B_{r}(x)-\mu(B_{r-\delta}(x))<\epsilon.$ Let $y\in X$ with $d(x,y)<\delta.$ Suppose $z\in X$ with $d(x,z)<r-\delta.$  Then $d(z,y)<d(z,x)+d(x,y)<r.$ Hence $B_{r-\delta}(x)\subset B_r(y).$ Therefore $\mu(B_r(y))\geq \mu(B_{r-\delta}(x))>\mu(B_r(x))-\epsilon>t.$ Hence $\phi^{-1}(t,\infty)$ is open.

As for the second claim, since $\mu$ has full support, $\phi(x)>0$ for all $x.$ Hence $X=\cup_{n=1}^\infty \phi^{-1}(1/n,\infty).$ Since $X$ is compact, there exists a finite $N$ so that $X=\cup_{n=1}^N \phi^{-1}(1/n,\infty).$ Thus $\phi(x)>1/N$ for all $x\in X.$
\end{proof}

For $r>0,$ $x,y\in X$ let $u_r(x,y):=\frac{1}{\mu(B_r(x))}\chi_{B_r(x)}(y)$. For $f\in L^2(X,\mu),$ let 
\[(U_r f)(x):=\frac{1}{\mu(B_r(x))}\int_{B_r(x)} f(y)d\mu(y).\]  We then may consider the associated discrete time random walk generated by the Markov kernel $U_r$. In \cite{olliv} (with $r$ replaced by $\epsilon$), this walk is called the ``$\epsilon$-step random walk".

\subsection{Partially Symmetrized Walk}
However, for reasons that will become clear later, we will work primarily with the following ``partially symmetrized" version which we define as follows. For $r>0$ and $x\in X,$ let $v_r(x)=\mu(B_r(x))$ and \[a_r(x):=\frac{1}{\mu(B_r(x)}\int_{B_r(x)}\left(1+\frac{v_r(x)}{v_r(y)}\right)d\mu(y).\] Then for $r>0$ and $x,y\in X,$ define 
\[w_r(x,y):=a_r(x)^{-1}\frac{1}{\mu(B_r(x))}\chi_{B_{r}(x)}(y)\left(1+\frac{v_r(x)}{v_r(y)}\right).\]

By the previous lemma and since $\mu$ is a finite measure, there exists a $C_1>0$ so that $\frac{1}{C_1}\leq v_r(x) \leq C_1$ for all $x\in X.$ It follows that $\frac{1}{C_1^2}\leq \frac{v_r(x)}{v_r(y)}\leq C_1^2$ for all $x,y\in X.$ Hence there exists a $C>0$ so that  $p_r(x,y)\leq C$ for all $x,y\in X.$ As in section 2.3, define a Markov kernel $W_r$ by $dW_r(x,\cdot)=w_r(x,\cdot)d\mu$ for $x\in X.$ We also denote the corresponding Markov operator by $W_r.$ That is for $f\in L^2(X,\mu),$ \[W_r(f)(x)=\int w_r(x,y)d\mu(y)=a_r(x)^{-1}\frac{1}{\mu(B_r(x))}\int_{B_r(x)}\left(1+\frac{v_r(x)}{v_r(y)}\right)f(y)d\mu(y).\] Then let $\bm{y}(r)=(\bm{y}(r)_k)_{k=0}^\infty$ be the associated discrete time random walk induced by $W_r.$ It is a discrete time pure jump Markov process\footnote{Note if $v_r$ is constant, as in the case of $\mathbb{R}^n$ with Lebesgue measure, we have $W_r=U_r.$}.

Let $R>0$ and $x_0\in X.$ Let $B:=B_R[x_0].$  Then let $\tau_{B,r}:=\inf\{k\;|\;\bm{y}(r)_k\notin B\}.$ Then let $E_{B,r}(x)=\mathbb{E}^x\tau_{B,r}.$ We will often omit the $B$ in the notation, writing $E_r(x)$ instead of $E_{B,r}(x).$

Then we have $E_r(x)=0$ for $x\in B^c$. For $x\in B,$ by Proposition \ref{markov} we have 
\[E_r(x)=1+\int_{B_r(x)}w_r(x,y)E_r(y)d\mu(y).\] 

\begin{example} \label{euclid}
Let $B$ the closed ball of radius $R$ about the origin in $\mathbb{R}^n$ under the Euclidean norm $|\cdot|$. Then let $X$ be a compact subset of $\mathbb{R}^n$ containing $B_{R+1}(x).$ Let $x\in B^\circ.$ Let $r$ be small enough so that $B_r(x)\subset B.$ Then using the process defined by uniform jumps in a ball of radius $r$ according to the Lebesgue measure, we have \[E_r(x)=\left(\frac{n+2}{n}\right)\frac{R^2-|x|^2}{r^2}.\]
\end{example}

Let \[E^+_{r,B}:=\sup_{y\in B} E_{r,B}(y).\]
For $\beta>0$ let \[T_\beta(B):=\limsup_{r\rightarrow 0^+} E^+_{r,B} r^\beta.\] 
\begin{proposition} There exists a unique $\beta(B)\in [0,\infty]$ defined by \[\beta(B):=\sup\{\beta\;|T_\beta(B)=\infty\}=\inf\{\beta\;|\;T_\beta(B)=0\}\] with the property that if $\gamma<\beta(B)$ then $T_\gamma (B) = \infty,$ and if $\gamma>\beta(B)$ then $T_\gamma(B)=0.$ Moreover, if $B'=B_{R'}[x_0]$ with $R'<R$ then $\beta(B')\leq \beta(B).$
\end{proposition}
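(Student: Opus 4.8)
The plan is to mimic, almost verbatim, the earlier proposition about $\omega_\gamma$ and $\beta(A)$ for the discrete $\epsilon$-net walks, since the structural ingredients are identical: a scaling inequality under powers of $r$, and a monotonicity property of mean exit times under set inclusion. First I would establish the dichotomy giving a well-defined $\beta(B)$. Fix $0\le \gamma'<\gamma$ and observe that for any $r>0$ and any ball $B$, $E^+_{r,B}\, r^{\gamma}=\bigl(E^+_{r,B}\, r^{\gamma'}\bigr)r^{\gamma-\gamma'}$. Taking $\limsup$ as $r\to 0^+$ and using $r^{\gamma-\gamma'}\to 0$, one gets: if $T_{\gamma'}(B)<\infty$ then $T_\gamma(B)=0$; contrapositively, if $T_\gamma(B)>0$ then $T_{\gamma'}(B)=\infty$. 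This is exactly the monotonicity-in-$\gamma$ pattern used before. Hence $\{\gamma\ge 0\;|\;T_\gamma(B)=\infty\}$ is a downward-closed interval and $\{\gamma\ge 0\;|\;T_\gamma(B)=0\}$ is an upward-closed interval, and setting $\beta(B):=\sup\{\gamma\;|\;T_\gamma(B)=\infty\}$ I would verify, as in the discrete case, that $\beta(B)=\inf\{\gamma\;|\;T_\gamma(B)=0\}$: the $\le$ direction is immediate, and if the infimum strictly exceeded $\beta(B)$ there would be $\gamma>\beta(B)$ with $T_\gamma(B)>0$, forcing $T_{\gamma'}(B)=\infty$ for $\beta(B)<\gamma'<\gamma$, contradicting the definition of $\beta(B)$ as a supremum. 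The stated properties — $\gamma<\beta(B)\Rightarrow T_\gamma(B)=\infty$ and $\gamma>\beta(B)\Rightarrow T_\gamma(B)=0$ — then fall out of the interval structure.

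For the monotonicity claim $\beta(B')\le\beta(B)$ when $B'=B_{R'}[x_0]\subset B=B_R[x_0]$ (with $R'<R$), I would argue that $E^+_{r,B'}\le E^+_{r,B}$ for every $r>0$, whence $T_\gamma(B')\le T_\gamma(B)$ for all $\gamma\ge 0$ and therefore $\{\gamma\;|\;T_\gamma(B')=\infty\}\subset\{\gamma\;|\;T_\gamma(B)=\infty\}$, giving $\beta(B')\le\beta(B)$. The pointwise comparison $E_{r,B'}(x)\le E_{r,B}(x)$ should follow from a coupling / pathwise argument: for the jump process $\bm{y}(r)$ started at $x$, the first time it leaves $B'$ is no later than the first time it leaves $B$, i.e. $\tau_{B',r}(\omega)\le\tau_{B,r}(\omega)$ for every path $\omega$, so taking $\mathbb{E}^x$ preserves the inequality; then taking the sup over $x\in B'\subset B$ (and noting $E_{r,B}$ is only larger on the annulus) yields $E^+_{r,B'}\le E^+_{r,B}$. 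This is the direct analogue of the observation, made in the earlier proposition, that $\tau(\mathcal{N},\epsilon)_{A'}\le\tau(\mathcal{N},\epsilon)_{A}$ when $A'\subset A$, combined with Proposition~\ref{markov} / Proposition~\ref{eq1} on mean exit times.

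I do not anticipate a serious obstacle here — the argument is essentially bookkeeping — but the one point deserving care is the pathwise inequality $\tau_{B',r}\le\tau_{B,r}$: one must be precise that both exit times are defined for the \emph{same} walk $\bm{y}(r)$ (the kernel $W_r$ does not depend on the target set), so that on a fixed sample path $\omega$, as soon as $\bm{y}(r)_k(\omega)\notin B'$ we certainly have not yet necessarily left $B$, hence $\tau_{B',r}(\omega)=\min\{k\;|\;\bm{y}(r)_k\notin B'\}\le\min\{k\;|\;\bm{y}(r)_k\notin B\}=\tau_{B,r}(\omega)$, using $B'\subset B$. With that in hand everything else is the formal dichotomy already rehearsed twice in the preceding pages, so I would keep the write-up short and cross-reference the earlier proof.
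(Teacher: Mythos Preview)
Your proposal is correct and follows the same approach as the paper's proof, which is in fact even terser: it records the single identity $E^+_{r,B}\,r^\beta = r^{\beta-\gamma}E^+_{r,B}\,r^{\gamma}$ for $0\le\gamma<\beta$, notes the resulting dichotomy for $T_\beta$ and $T_\gamma$, and says ``monotonicity is clear from $E_{r,B'}\leq E_{r,B}$.'' Your more detailed justification of the pathwise inequality $\tau_{B',r}\le\tau_{B,r}$ and the sup/inf equality is fine but not strictly needed here, since the paper has already carried out the identical argument for the discrete $\omega_\gamma$ version.
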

\begin{proof} 
If $0\leq \gamma<\beta$ then $E_{r,B}r^\beta = r^{\beta-\gamma} E_{r,B}r^{\gamma}.$ Hence if $T_\beta(B)>0$ then $T_\gamma(B)=\infty,$ and if $T_\gamma(B)<\infty$ then $T_\beta(B)=0.$ Monotonicity is clear from $E_{r,B'}\leq E_{r,B}.$
\end{proof}
Then let $\beta(x):=\inf_{R>0} \beta(B_R[x]).$ We call $\beta$ the \textit{local time exponent.} 

\begin{proposition} \label{betasemi} The local time exponent $\beta$ is upper semicontinuous. In particular $\beta$ is Borel measurable and bounded above. Moreover, we have $\beta\geq 1.$\label{semicont1}
\begin{proof} The proof of the first statements follows the same steps as Proposition \ref{semicont} and is omitted. To show 
$\beta\geq 1,$ suppose $x\in X$ and $R>0.$ Let $0<r<R.$ Let 
$B=B_R(x).$ Then we must have that $E_{B,r}(x)$ is greater than or equal to the smallest $k$ such that $x_0,x_1,...,x_k\in X$ 
with $x_0=x,$ $d(x_i,x_{i+1})\leq r$ for $i=0,...,k,$ $x_i\in B$ 
for $i=1,...,k-1,$ and $x_k\notin B$. Then $R\leq d(x_0,x_k)\leq 
\sum_{i=0}^{k-1}d(x_i,x_{i+1})\leq kr.$ Hence $E_{B,r}(x)\geq 
\frac{R}{r}.$ It then follows that $\beta(x)\geq 1.$ \end{proof}
\end{proposition}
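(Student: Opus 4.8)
The statement to prove is Proposition \ref{betasemi} (equivalently \ref{semicont1}): the local time exponent $\beta$ is upper semicontinuous (hence Borel measurable and bounded above), and $\beta \geq 1$. Here is the plan.

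\textbf{Upper semicontinuity.} The plan is to mirror the proof of Proposition \ref{semicont}, replacing $\dim$ and $\mathscr{N}(x)$ with $\beta(B_R[x])$ and the family of closed balls about $x$. Fix $c \geq 0$; if $c = 0$ then $\beta^{-1}([0,c)) = \emptyset$ is open, so assume $c > 0$ and take $x \in \beta^{-1}([0,c))$. Since $\beta(x) = \inf_{R>0}\beta(B_R[x])$, there is an $R_0 > 0$ with $\beta(B_{R_0}[x]) < c$. The key geometric observation is that for $y$ close to $x$ — specifically $d(x,y) < R_0/2$, say — we have $B_{R_0/2}[y] \subset B_{R_0}[x]$, and by the monotonicity clause of the preceding proposition, $\beta(B_{R_0/2}[y]) \leq \beta(B_{R_0}[x]) < c$. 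Hence $\beta(y) \leq \beta(B_{R_0/2}[y]) < c$, so the open ball $B_{R_0/2}(x)$ is contained in $\beta^{-1}([0,c))$, proving that set is open. Boundedness above then follows by compactness: the sets $\beta^{-1}([0,n))$, $n \in \mathbb{Z}^+$, form an open cover of $X$, so a finite subcover, equivalently a single $\beta^{-1}([0,N))$, equals $X$. Borel measurability is immediate from upper semicontinuity.

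\textbf{The lower bound $\beta \geq 1$.} Fix $x \in X$ and $R > 0$, and let $B = B_R(x)$ (or $B_R[x]$; the argument is the same). Fix $0 < r < R$. The idea is a pure metric lower bound on the exit time: any path of the discrete walk $\bm{y}(r)$ starting at $x$ and exiting $B$ must take at least $R/r$ steps, because each step moves distance at most $r$ (the kernel $w_r(x,\cdot)$ is supported in $B_r(x)$), so to travel a metric distance $\geq R$ from $x$ requires, by the triangle inequality applied along the path $x = x_0, x_1, \dots, x_k$ with $x_k \notin B$, that $R \leq \sum_{i=0}^{k-1} d(x_i,x_{i+1}) \leq kr$, i.e. $k \geq R/r$. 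Since this holds for every path, $\tau_{B,r} \geq R/r$ surely, hence $E_{B,r}(x) = \mathbb{E}^x \tau_{B,r} \geq R/r$, and so $E^+_{r,B} \geq R/r$. Therefore $E^+_{r,B}\, r^{\gamma} \geq R\, r^{\gamma - 1}$, which tends to $\infty$ as $r \to 0^+$ whenever $\gamma < 1$; thus $T_\gamma(B) = \infty$ for all $\gamma < 1$, giving $\beta(B) \geq 1$. Taking the infimum over $R > 0$ yields $\beta(x) \geq 1$.

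\textbf{Expected obstacle.} Neither part is really hard: the upper semicontinuity is a verbatim adaptation of Proposition \ref{semicont} once one invokes the monotonicity $\beta(B') \leq \beta(B)$ for $B' \subset B$ already proved, and the $\beta \geq 1$ bound is an elementary deterministic step-counting argument. The only point requiring minor care is the containment of balls under small perturbation of the center (choosing the perturbation radius so that $B_{R_0/2}[y] \subset B_{R_0}[x]$) and making sure the monotonicity statement of the preceding proposition is phrased for the closed balls $B_R[x_0]$ that appear in the definition of $\beta(x)$; both are routine. I would also note in passing that the same argument shows $E^+_{r,B} \geq R/r$ is sharp in the Euclidean case only up to the quadratic correction seen in Example \ref{euclid}, which is consistent with $\beta = 2 > 1$ there, so the bound $\beta \geq 1$ is genuinely not tight in general — but that is a remark, not part of the proof.
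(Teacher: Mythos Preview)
Your proof is correct and follows essentially the same approach as the paper: the upper semicontinuity argument you spell out is precisely the adaptation of Proposition \ref{semicont} that the paper alludes to (and omits), and your $\beta\geq 1$ argument via the deterministic step-count bound $E_{B,r}(x)\geq R/r$ from the triangle inequality is exactly the paper's. The only minor remark is that the monotonicity clause in the preceding proposition is stated for concentric balls $B_{R'}[x_0]\subset B_R[x_0]$, whereas you need it for $B_{R_0/2}[y]\subset B_{R_0}[x]$; as you note in your ``Expected obstacle'' paragraph, the same one-line justification $E_{r,B'}\leq E_{r,B}$ for $B'\subset B$ covers this immediately.
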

\begin{example}
Consider a closed ball $B$ of radius $R$ about the origin in $\mathbb{R}^n$ as in Example \ref{euclid}. Then for $x\in B_R(0),$ $\beta(x)=2.$ This follows immediately from the formula in Example \ref{euclid}. Note that we also have that $T_2(B_R(x))\asymp R^2.$ 
\end{example}

\section{Variable Time Regularity}
Recall that the Hausdorff measure with constant dimension is used to calculate the (constant) Hausdorff dimension of open balls which in turn are used to determine the local dimension. Then, in a process of re-normalization, the local dimension is used in a new measure that takes into account the finer local properties given by the local dimension. Guided by this analogy, we re-normalize the walks $\bm{y}(r)$ to take into account the local time scaling.

However, in order to preserve approximate symmetry, it is necessary to first modify $w_r$ as follows. For $r>0$ and $x\in X,$ let $\tau_r(x):=r^{\beta(x)}$ and \[d_r(x):=\mu(B_r(x))r^{\beta(x)}=v_r(x)\tau_r(x).\] Then for $r>0$, $x\in X,$ let \[q_r(x):=\frac{1}{\mu(B_r(x))}\int_{B_r(x)}\left(1+\frac{d_r(y)}{d_r(x)}\right)d\mu(y).\] Then for $r>0$ and $x,y\in X,$ set 
\[p_r(x,y):=q_r(x)^{-1}\frac{1}{\mu(B_r(x))}\chi_{B_r(x)}(y)\left(1+\frac{d_r(x)}{d_r(y)}\right).\] Then we have a corresponding Markov operator $P_r$ defined for $f\in L^2(X,\mu)$ by \[P_rf(x)=\int p_r(x,y)f(y)d\mu(y).\] Observe that if $\beta$ is constant, then $P_r=W_r.$ Note by Proposition \ref{betasemi}, for $r>0$ fixed, we have that there exists a $C_1>0$ such that $\frac{1}{C_1}\leq \frac{d_r(x)}{d_r(y)}\leq C_1.$ It follows that there exists a $C>0$ such that for all $x,y\in X,$ $p_r(x,y)\leq C.$ In particular $P_r$ is bounded and defines a pure jump discrete time Markov process. 

\subsection{Time Renormalization}

  Let $\Omega=(\mathbb{R}_+\times X)^{\mathbb{Z}^+\setminus\{0\}}.$ For each $x_0\in X, r>0$, applying Proposition \ref{kol} with $\lambda(x)=\tau_r(x)$  yields a probability 
measure $\mathbb{P}^{x_0}_r$ on $\Omega$ defined on ``cylinder sets" as follows. For $A_1,A_2,...,A_n\subset \mathbb{R}_+$ measurable and $U_1,U_2,...,U_n\subset X$ measurable,
\begin{equation*} 
\begin{split}
&\mathbb{P}^{x_0}_r(\{\omega\in (\mathbb{R}_+\times X)^{\mathbb{Z}^+}\;|\;\omega(i)\in A_i\times U_i\;\mbox{for}\;i=1,...,n\})
\\ &= \left(\prod_{i=1}^n \int_{A_i\times U_i}  \frac{e^{-t_i/\tau_r(x_{i-1})}}{\tau_r(x_{i-1})}p_r(x_{i-1},x_i)\right)dt_nd\mu(x_n)...dt_1d\mu(x_1).
\end{split}
\end{equation*}
Then, by Proposition \ref{kol}, there exists a probability measure on $\Omega$ extending the above definition.
Let $\Omega_{x_0}=\{\omega:\mathbb{Z}^+\rightarrow [0,\infty)\times X\;|\; \omega(0)=(0,x_0)\}.$ Then $\mathbb{P}^{x_0}_r$ may be considered as a measure on $\Omega_{x_0}.$ For $t\geq 0,$ define $\hat{t}$ by \[\hat{t}(\omega)=\sup\{k\geq 0\;|\;\sum_{j=1}^{k} \omega(j)_1 \leq t\}.\] Then set $\bm{x}(r)_t(\omega):=\omega(\hat{t})_2.$ 

Since by Proposition \ref{semicont1}, there exists an $M>1$ 
such that $1\leq \beta \leq M$, we have that there exists a $C_1>0$ 
such that $C_1\leq r^{\beta(y)}$ for all $y\in X.$ By Lemma \ref{contmeas}, 
there exists $C_2, C_3>0$ such that $C_2\leq \mu(B_r(y))\leq C_3$ for all 
$y\in X.$ By Proposition \ref{gen}, we then have the following.

\begin{proposition} \label{generator} $(\bm{x}(r)_t)_{t\geq 0}$ is a continuous time Markov process with 
generator defined for $f\in L^2(X,\mu)$ and $x\in X,$ by \[-\frac{d}{dt}\Bigm |_{t=0} \mathbb{E}^{x} f(\bm{x}(r)_t) = \mathscr{L}_rf(x):=\frac{1}{r^{\beta(x)}}\int_{B_r(x)}p_r(x,y)(f(x)-f(y))d\mu(y).\]
\end{proposition}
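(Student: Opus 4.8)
The plan is to recognize Proposition \ref{generator} as a direct application of the general results already established for continuous-time pure jump Markov processes in Section 2.3, with the waiting-time function specialized to $\tau(x) = \tau_r(x) = r^{\beta(x)}$ and the transition kernel specialized to $p = p_r$. The two hypotheses needed to invoke that machinery are that $\tau_r$ and $1/\tau_r$ are bounded, and that $\sup_{x,y} p_r(x,y) < \infty$; both have just been verified in the paragraph immediately preceding the statement using Proposition \ref{semicont1} (which gives $1 \le \beta \le M$, hence $C_1 \le r^{\beta(y)} \le 1$ for fixed $r < 1$, or in general $r^M \le r^{\beta(y)} \le r$ up to relabeling), Lemma \ref{contmeas} (which gives $C_2 \le \mu(B_r(y)) \le C_3$), and the resulting uniform bound $p_r(x,y) \le C$.

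First I would observe that by Proposition \ref{kol}, applied with $\tau \mapsto \tau_r$, there is for each $x_0 \in X$ a probability measure $\mathbb{P}^{x_0}_r$ on $\Omega$ realizing the prescribed cylinder-set formula, and the associated process $(\bm{x}(r)_t)_{t \ge 0}$ constructed via $\hat t$ is exactly the pure jump process $\bm{x}^*$ of Section 2.3 for this choice of data; in particular it is a time-homogeneous Markov process with càdlàg paths, and the semigroup $P_t f(x) = \mathbb{E}^x f(\bm{x}(r)_t)$ satisfies the Chapman--Kolmogorov/semigroup identity. Then I would invoke Proposition \ref{gen} verbatim: its conclusion is that $\|\tfrac{1}{t}(P_t f - f) + \mathcal{L}f\|_{L^2(X,\mu)} \to 0$ as $t \to 0^+$, where $\mathcal{L}f(x) = \int_X \tfrac{1}{\tau(x)} p(x,y)(f(x) - f(y))\,d\mu(y)$. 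Substituting $\tau(x) = r^{\beta(x)}$ and $p = p_r$ gives precisely $\mathscr{L}_r f(x) = \tfrac{1}{r^{\beta(x)}} \int_{B_r(x)} p_r(x,y)(f(x) - f(y))\,d\mu(y)$ (using that $p_r(x,\cdot)$ is supported in $B_r(x)$), and since $-\mathcal{L}$ is the generator, $-\tfrac{d}{dt}\big|_{t=0}\mathbb{E}^x f(\bm{x}(r)_t) = \mathscr{L}_r f(x)$ in the $L^2$ sense, which is the claimed statement.

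Since this is essentially a specialization of already-proved general statements, there is no genuine obstacle; the only thing requiring a word of care is checking the boundedness hypotheses uniformly in the relevant variables — that $r$ is fixed (so $r^{\beta(\cdot)}$ and its reciprocal are bounded with constants depending on $r$ and $M$) and that the kernel bound $p_r(x,y) \le C$ holds with $C$ depending on $r$ — but both are handled by the displayed remarks preceding the proposition, so the proof amounts to citing Proposition \ref{kol} and Proposition \ref{gen}. I would keep the proof to essentially two sentences, noting the verification of hypotheses and then applying Proposition \ref{gen} with $\tau = \tau_r$ and $p = p_r$.
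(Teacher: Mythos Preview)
Your proposal is correct and matches the paper's approach exactly: the paper likewise verifies the boundedness hypotheses (via Proposition \ref{semicont1} and Lemma \ref{contmeas}) in the lines immediately preceding the statement and then invokes Proposition \ref{gen} with $\tau=\tau_r$ and $p=p_r$, giving no further argument.
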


Let \[\tau_{r,B}:=\inf\{t\;|\;\bm{x}(r)_t\notin B\}.\]
We now define \textit{exit time functions}. 
For $x\in X,$ let \[\phi_{r,B}(x):=\mathbb{E}^x\tau_{r,B}.\]

\begin{proposition} \label{equation} Let $B$ a non-empty ball and $r>0.$ We have $\mathscr{L}_r\phi_{r,B}(x)=1$ for $x\in B,$ and $\phi_{r,B}(x)=0$ for $x\notin B.$ 
\end{proposition}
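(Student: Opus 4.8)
The statement is a direct analog of Proposition \ref{eq1} (the continuous-time mean exit time equation) combined with Proposition \ref{gen} (the computation of the generator). The plan is to apply these two results to the specific jump process $(\bm{x}(r)_t)_{t\geq 0}$ constructed in this section, whose waiting-time function is $\tau_r(x) = r^{\beta(x)}$ and whose jump kernel is $p_r(x,y)$. First I would verify the hypotheses: by Proposition \ref{semicont1} we have $1 \leq \beta \leq M$ for some $M$, so $\tau_r$ and $1/\tau_r$ are bounded above and below (using also that $r$ is fixed and positive), and by the remark preceding Proposition \ref{generator} together with Lemma \ref{contmeas}, $p_r$ is bounded. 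Hence Proposition \ref{kol} applies and the process $(\bm{x}(r)_t)$ is well-defined, and Proposition \ref{eq1} is applicable with $B$ the given ball and $\tau(x) = \tau_r(x) = r^{\beta(x)}$.

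The second claim, $\phi_{r,B}(x) = 0$ for $x \notin B$, is immediate from the second claim of Proposition \ref{eq1}: if $x \notin B$ then a path starting at $x$ has already exited $B$ at time $0$, so $\tau_{r,B} = 0$ $\mathbb{P}^x_r$-almost surely, whence $\phi_{r,B}(x) = \mathbb{E}^x \tau_{r,B} = 0$. For the first claim, Proposition \ref{eq1} gives, for $x \in B$,
\[
\phi_{r,B}(x) = \tau_r(x) + \int p_r(x,y)\phi_{r,B}(y)\,d\mu(y) = r^{\beta(x)} + \int_{B_r(x)} p_r(x,y)\phi_{r,B}(y)\,d\mu(y),
\]
using that $p_r(x,\cdot)$ is supported in $B_r(x)$ and that $\int p_r(x,y)\,d\mu(y) = 1$. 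Rearranging,
\[
\phi_{r,B}(x) - \int_{B_r(x)} p_r(x,y)\phi_{r,B}(y)\,d\mu(y) = r^{\beta(x)},
\]
and since $\int_{B_r(x)} p_r(x,y)\,d\mu(y) = 1$, the left side equals $\int_{B_r(x)} p_r(x,y)(\phi_{r,B}(x) - \phi_{r,B}(y))\,d\mu(y)$. Dividing by $r^{\beta(x)}$ gives exactly $\mathscr{L}_r \phi_{r,B}(x) = 1$ by the formula for $\mathscr{L}_r$ in Proposition \ref{generator}.

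There is one genuine point to check rather than a pure citation: Proposition \ref{eq1} is stated for a process with a constant notation $p(x,y)$ and waiting time $\tau(x)$, so I must confirm that the process $(\bm{x}(r)_t)$ of this section is literally an instance of that construction, i.e. that the kernel $p_r$ satisfies $p_r(x,x) = 0$ and $\int p_r(x,y)\,d\mu(y) = 1$ and is bounded — the normalization is built into $q_r(x)$, and $p_r(x,x)=0$ holds since $\chi_{B_r(x)}(x)$ times the kernel still vanishes on the diagonal by the construction (or one notes the diagonal is $\mu$-null, which suffices for all integral identities). I also need $\phi_{r,B} \in L^2(X,\mu)$ so that the generator formula applies pointwise; this follows because $\tau_{r,B}$ has finite expectation uniformly — the same argument as in Proposition \ref{betasemi} bounds the number of jumps needed to exit, and waiting times are bounded in expectation — and because $X$ is compact so $L^\infty \subset L^2$. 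The main (minor) obstacle is thus bookkeeping: making sure the support/normalization conditions of Proposition \ref{eq1} are matched and that $\phi_{r,B}$ is finite and measurable; the algebraic manipulation itself is routine.
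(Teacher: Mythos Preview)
Your proposal is correct and follows essentially the same route as the paper: apply Proposition~\ref{eq1} with $\tau(x)=r^{\beta(x)}$ and $p=p_r$ to obtain $r^{\beta(x)}=(I-P_r)\phi_{r,B}(x)$ for $x\in B$, then invoke the generator formula (Proposition~\ref{generator}, which is just Proposition~\ref{gen} specialized to this process) to rewrite this as $\mathscr{L}_r\phi_{r,B}(x)=1$; the vanishing outside $B$ is immediate. The paper's proof is two lines and omits the hypothesis checks and the explicit algebra you spelled out, but the argument is the same.
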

\begin{proof} By Proposition \ref{eq1} we have that $r^{\beta(x)}=(I-P_r)\phi_{r,B}(x)$ for $x\in B.$ Hence, by Proposition \ref{generator}, $\mathscr{L}_r\phi_{r,B}(x)=1$ for $x\in B.$ It follows immediately from the definition of $\tau_{r,B}$ that $\phi_{r,B}$ vanishes outside of $B.$
\end{proof}

Let $\phi^+_{r,B}:=\sup_{y\in B} \phi_{r,B}(y).$
Then let $\mathcal{T}^+(B):=\lim \sup_{r\rightarrow 0^+} \phi^+_{r,B}.$

\begin{definition} For $\beta(\cdot)$ the local time exponent, we say $(X,d,\mu)$ satisfies $(E_\beta)$, or that $(X,d,\mu)$ is \textit{(variable) time regular} with exponent $\beta$ if there exists a $C>0$ such that for all $x\in X, 0<r<\frac{\diam(X)}{2},$ we have 
\[\frac{1}{C}r^{\beta(x)}\leq \mathcal{T}(B_r[x])\leq C r^{\beta(x)}.\]
\end{definition}

\begin{example}
Let $B$ the open ball of radius $R$ about the origin in $\mathbb{R}^n$ under the euclidean norm $|\cdot|$. Then let $X$ be a compact subset of $\mathbb{R}^n$ containing $B_{R+1}(x).$ Let $x\in B.$ Let $r$ small enough so that $B_r(x)\subset B.$ Then we have \[\phi_r(x)=\left(\frac{n+2}{n}\right)(R^2-|x|^2),\] so that $X$ satisfies $E_2.$
\end{example}

Recall that a function $\varphi$ on a metric space $(X,\rho)$ is log-H{\"o}lder continuous if 
there exists a $C>0$ such that $|\varphi(x)-\varphi(y)|\leq \frac{-C}{\log(\rho(x,y))}$ for all $x,y$ with $0<\rho(x,y)<\frac{1}{2}.$

The following is an analog of Lemma \ref{log} for $\beta$ instead of $\alpha$. The proof is analogous to that of Lemma \ref{log}.
\begin{lemma}
If $X$ is variable time regular with exponent $\beta$ then $\beta$ is log-H{\"o}lder continuous.
\end{lemma}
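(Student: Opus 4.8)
## Proof Proposal

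The plan is to mimic the proof of Lemma \ref{log} (the log-Hölder continuity of $\alpha$ under Ahlfors regularity) essentially verbatim, with $\mathcal{T}(B_r[\cdot])$ playing the role of $\nu(B_r(\cdot))$ and $r^{\beta(x)}$ playing the role of $r^{Q(x)}$. The single structural input we need, beyond boundedness of $\beta$ (Proposition \ref{semicont1}), is that $\mathcal{T}$ is \emph{monotone with respect to inclusion of balls} and has the right comparison behavior when a ball is enlarged by a bounded factor. The monotonicity $\mathcal{T}(B_r[x]) \le \mathcal{T}(B_{r'}[x'])$ when $B_r[x] \subset B_{r'}[x']$ follows from the monotonicity of exit times: $\tau_{\rho, B} \le \tau_{\rho, B'}$ pathwise when $B \subset B'$, hence $\phi_{\rho, B}^+ \le \phi_{\rho, B'}^+$ for each $\rho$, hence $\mathcal{T}^+(B) \le \mathcal{T}^+(B')$ after taking $\limsup_{\rho \to 0^+}$. (This is the exact analog of the $E_{r,B'} \le E_{r,B}$ remark already used in the excerpt.)

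First I would fix $x, y \in X$ with $0 < \rho := d(x,y) < \tfrac12$, and without loss of generality assume $\beta(y) \ge \beta(x)$. By variable time regularity $(E_\beta)$, let $C > 0$ be the constant with $\tfrac1C r^{\beta(z)} \le \mathcal{T}(B_r[z]) \le C r^{\beta(z)}$ for all $z \in X$ and $0 < r < \diam(X)/2$. Apply this at $z = y$ with $r = \rho$ and at $z = x$ with $r = 2\rho$: since $B_\rho[y] \subset B_{2\rho}[x]$ (triangle inequality, as $d(x,y) = \rho$), monotonicity of $\mathcal{T}$ gives
\[
\rho^{\beta(y)} \le C\, \mathcal{T}(B_\rho[y]) \le C\, \mathcal{T}(B_{2\rho}[x]) \le C^2 (2\rho)^{\beta(x)} = C^2 2^{\beta(x)} \rho^{\beta(x)}.
\]
Now let $M$ be a uniform upper bound for $\beta$ (from Proposition \ref{semicont1}), so $2^{\beta(x)} \le 2^M$, and set $C' := C^2 2^M$, which is independent of $x, y$. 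Then $\rho^{\beta(y) - \beta(x)} \le C'$, i.e. $\rho^{|\beta(y) - \beta(x)|} \le C'$. Taking $\log$ of both sides (and recalling $\log \rho < 0$) yields $|\beta(y) - \beta(x)| \log \rho \ge -\log C'$, hence $|\beta(x) - \beta(y)| \le \dfrac{-\log C'}{-\log(1/\rho)}\cdot\dfrac{1}{1} = \dfrac{\log C'}{-\log \rho}$; writing this with the constant $\tilde C := \log C' > 0$ (note $C' \ge 1$ so $\tilde C \ge 0$, and we may enlarge it to be positive) gives $|\beta(x) - \beta(y)| \le \dfrac{-\tilde C}{\log d(x,y)}$ for all $x,y$ with $0 < d(x,y) < \tfrac12$, which is exactly log-Hölder continuity.

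I do not anticipate a genuine obstacle here; the one point requiring a sentence of care is confirming that the only properties of $\mathcal{T}$ used are (i) the two-sided bound from $(E_\beta)$ and (ii) monotonicity under ball inclusion, and that the radii $\rho$ and $2\rho$ stay within the range $(0, \diam(X)/2)$ on which $(E_\beta)$ is asserted — this is arranged by restricting attention to $\rho < \diam(X)/4$ (and for $\diam(X)/4 \le d(x,y) < \tfrac12$, if that range is nonempty, $|\beta(x)-\beta(y)| \le M$ is bounded while $-1/\log d(x,y)$ is bounded below, so the inequality holds after enlarging $\tilde C$). The rest is the same elementary manipulation as in Lemma \ref{log}. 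A corollary worth stating immediately afterward, as the excerpt does for $\alpha$, is that $\beta$ is therefore continuous.
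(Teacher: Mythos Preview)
Your proposal is correct and follows essentially the same argument as the paper's own proof: both use monotonicity of $\mathcal{T}$ under ball inclusion (via the pathwise inequality $\tau_{\epsilon,B}\le\tau_{\epsilon,B'}$ when $B\subset B'$) together with the two-sided $(E_\beta)$ bound applied at radii $\rho$ and $2\rho$, and then the boundedness of $\beta$ to absorb the factor $2^{\beta(x)}$ into the constant. Your treatment is in fact slightly more careful than the paper's in explicitly handling the range restriction $2\rho<\diam(X)/2$.
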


\begin{proof} By time regularity, there exists a constant $D>1$ such that $\mathcal{T}(B_r[x])\leq Dr^{\beta(x)}$ and $r^{\beta(x)}\leq D\mathcal{T}(B_r[x])$ for all $0<r\leq\diam(X)/2,\;x\in X.$ Suppose $x,y\in X$ with $0<r:=d(x,y)<\frac{1}{2}.$ Say $\beta(x)\geq \beta(y).$ Since $\beta$ is bounded, let $R<\infty$ be an upper bound for $\beta,$ and let $C$ be defined by $e^C:=2^R D^2.$ Then since $B_r[y]\subset B_{2r}[x],$ we have $\phi_{\epsilon, B_{r}[y]}(z)\leq \phi_{\epsilon, B_{2r}[x]}(z)\leq \phi^+_{\epsilon, B_{2r}[x]}$ for all $\epsilon>0$ and $z\in B_{r}[y].$ So $\phi^+_{\epsilon, B_{r}[y]}   \leq \phi^+_{\epsilon, B_{2r}[x]}$ for all $\epsilon.$ Hence $\mathcal{T}(B_r[y])\leq \mathcal{T}(B_r[x]).$ Hence $r^{\beta(y)}\leq D\mathcal{T}(B_r(y))\leq D\mathcal{T}(B_{2r}(x))\leq D^2 2^R r^{\beta(x)} = e^Cr^{\beta(x)}.$ Thus $d(x,y)^{|\beta(y)-\beta(x)|}\geq e^{-C}.$ So $|\beta(x)-\beta(y)|\leq \frac{-C}{\log(d(x,y)}.$
\end{proof}

\section{Green's functions and Dirichlet spectrum}
Let $0<R<\diam(X)/2,\; x_0\in X,$ $B=B_R[x_0].$ For $r>0$ consider the random walk killed on exiting $B.$ Let $P_r^B$ be the Markov operator with kernel defined by the function
\[p^B_r(x,y)= \chi_{B\times B}(x,y) p_r(x,y)\] recalling that $p_r(x,y)=q_r(x)^{-1}\frac{1}{\mu(B_r(x))}\left(1+\frac{d_r(x)}{d_r(y)}\right)\chi_{B_r(x)}(y),$ where we have defined $q_r(x)=\frac{1}{\mu(B_r(x)}\int_{B_r(x)}\left(1+\frac{d_r(x)}{d_r(y)}\right),$ and $d_r(x)=\mu(B_r(x))r^{\beta(x)}=v_r(x)\tau_r(x)$ for $x\in X.$ We have $P_r$ is the corresponding Markov operator. Note then that $P^B_r=\chi_{B}P_B\chi_B.$ 

Observe that for the process defined by $P_r$, there is an equilibrium Borel measure $\rho_r$ with density \[\frac{d\rho_r}{d\mu}(x):=\frac{q_r(x)}{\tau_r(x)}=\int_{B_r(x)}\left(\frac{1}{d_r(x)}+\frac{1}{d_r(y)}\right)d\mu(y).\] Indeed, we have $P_r$ is $\rho_r$-symmetric, since 
\[\frac{q_r(x)}{\tau_r(x)}p_r(x,y)=\left(\frac{1}{d_r(x)}+\frac{1}{d_r(y)}\right)\chi_{B_r(x)}(y)=\frac{q_r(y)}{\tau_r(y)}p_r(y,x).\]

Similarly, for the process defined by $W_r,$ there is an equilibrium Borel measure $\nu_r$ with density \[\frac{d\nu_r}{d\mu}(x)=a_r(x)=\int_{B_r(x)}\left(\frac{1}{v_r(x)}+\frac{1}{v_r(y)}\right)d\mu(y).\]

For $f$ defined on $B$ we let $f\chi_B$ the function defined on all of $X$ by extending it to equal $0$ outside of $B.$ Hence if $f\in L^1(B,\nu_r),$ then 
\begin{align*}
\|P_r^Bf(x)\|_{L^1(B,\nu_r)}&=\int_B |P_r^Bf(x)|d\rho_r(x) \leq \int_B |P^B_r|f|(x)|d\rho_r(x) \\
&= \int \int \chi_B(x)\chi_B(y)p_r(x,y)\frac{q_r(x)}{\tau_r(x)}|f(y)|d\mu(y)d\mu(x)\\& = 
\int \int \chi_B(x)\chi_B(y)\frac{q_r(y)}{\tau_r(y)}p_r(y,x)|f(y)|d\mu(x)d\mu(y)\\
&\leq \int \chi_B(y)|f(y)|\frac{q_r(y)}{\tau_r(y)}d\mu(y)= \|f\|_{L^1(B,\rho_r)}.
\end{align*}
Therefore $\|P^B_r\|_{\mathscr{B}(L^1(B,\rho_r))}\leq 1.$ In particular its spectral radius, as an operator on $L^1(B,\rho_r),$ is at most one. 

Note also that we may consider $P^B_r$ as an operator on $L^2(B,\nu_r).$ In this capacity if $f,g\in L^2(B,\nu_r)$ then we have 
\begin{align*}
\langle f, P^B_rg \rangle_{L^2(B,\rho_r)} &= \int_B f(x)P^B_rg(x)\frac{q_r(x)}{\tau_r(x)}d\mu(x) \\
&= \int\int \chi_B(x)\chi_B(y)\frac{q_r(x)}{\tau_r(x)}p_r(x,y)g(y)f(x)d\mu(y)d\mu(x) \\
&= \int\int \chi_B(x)\chi_B(y)\frac{q_r(y)}{\tau_r(y)}p_r(y,x)g(y)f(x)d\mu(x)d\mu(y) \\
&= \int_B g(y)P^B_rf(y)\frac{q_r(y)}{\tau_r(y)}d\mu(y)= \langle P^B_rf, g \rangle_{L^2(B,\rho_r)}.
\end{align*}
Hence $P^B_r$ is self adjoint. Note that since $P^B_r$ has a square integrable kernel it is compact.

Let $L_r:=I-P_r.$ Note $I-P_r$ is self adjoint on $L^2(X,\rho_r).$ 
Let $\tau_r$ denote the multiplication operator by $x\mapsto \tau_r(x)=r^{\beta(x)}.$ Also let $q_r$ denote the multiplication operator by $x\mapsto q_r(x).$ Then $\mathscr{L}_r=\frac{1}{\tau_r}(I-P_r).$ Define a measure $\mu_r$ by \[d\mu_r(x)=q_r(x)d\mu(x),\] where $q_r(x)=\frac{1}{\mu(B_r(x))}\int_{B_r(x)}\left(1+\frac{d_r(x)}{d_r(y)}\right)d\mu(y).$ Note $d\mu_r=\tau_r(x)d\rho_r.$ Then for $f,g\in L^2(X,\mu_r),$ 
\begin{equation*}
\begin{split}
&\langle f,\mathscr{L}_r g\rangle_{\mu_r}=\langle f, \frac{1}{\tau_r}L_r g\rangle_{\mu_r}
=\langle f, \frac{q_r}{\tau_r}L_rg\rangle_{\mu}=\langle f, L_r g\rangle_{\rho_r}
\\&=\langle L_r f, g\rangle_{\rho_r}=\langle \frac{q_r}{\tau_r} L_r f, g\rangle_{\mu}
=\langle \frac{1}{\tau_r}L_r f, g\rangle_{\mu_r}.
\end{split}
\end{equation*}

Hence $\mathscr{L}_r$ is self-adjoint on $L^2(X,\mu_r).$ Note, since we have $\mu, \rho_r, \nu_r,$ and $\mu_r$ are mutually strongly equivalent to one another, we have $f\in L^2(X,\mu)$ if and only if $f$ is in the $L^2$ space of any of the other three measures. Then we have for $f\in L^2(X,\mu),$ by the Fubini-Tonelli Theorem and since $\chi_{B_r(x)}(y)=\chi_{B_r(y)}(x),$
\begin{equation}
\begin{split}
&\langle f, L_r f\rangle_{\rho_r} = \langle f, \mathscr{L}_r f\rangle_{\mu_r}= \int f(x)q_r(x)\mathscr{L}_r f(x)d\mu(x) \\
&=\int \int \chi_{B_r(x)}(y)\left(\frac{1}{d_r(x)}+\frac{1}{d_r(y)}\right)f(x)(f(x)-f(y))d\mu(y)d\mu(x)\\
&= \frac{1}{2}\int \int \chi_{B_r(x)}(y)\left(\frac{1}{d_r(x)}+\frac{1}{d_r(y)}\right)(f(x)-f(y))^2d\mu(y)d\mu(x)\\
&= \int \int \chi_{B_r(x)}(y)\frac{1}{d_r(x)}(f(x)-f(y))^2d\mu(y)d\mu(x)\\
&= \int \frac{1}{r^{\beta(x)}\mu(B_r(x))}\int_{B_r(x)}(f(x)-f(y))^2d\mu(y)d\mu(x).
\end{split}
\end{equation}

Similarly, $\langle f, (I-W_r)f\rangle_{\nu_r}=\int\frac{1}{\mu(B_r(x))}\int_{B_r(x)}(f(y)-f(x))^2d\mu(y)d\mu(x).$

For an operator $A$ on $L^2(X,\mu),$ we define $A^B$ on $L^2(B,\mu)$ by $\chi_BA\chi_B.$ In particular, we may consider $A^B$ as an operator on $L^2(X,\mu)$ as well.

For convenience we state without proof the following classic result about the spectrum of a compact operator on a Banach space.  For more information and proofs see, for instance, \cite{Simon}.

\begin{proposition} (Riesz-Schauder) 
If $A$ is compact on a Banach space then $\sigma(A)\setminus\{0\}$ is discrete and contains only eigenvalues of finite multiplicity. The eigenvalues may only accumulate at $0.$ 
\end{proposition}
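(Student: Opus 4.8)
The statement is the classical Riesz--Schauder theory of compact operators, and since the paper explicitly says it is stated ``without proof'' and refers the reader to \cite{Simon}, the appropriate thing here is a concise account of the standard argument rather than a from-scratch development. The plan is to reduce everything to a single analytic fact: for a compact operator $A$ on a Banach space and any $\lambda \neq 0$, the operator $\lambda I - A$ has closed range and finite-dimensional kernel, and the ascending chain of kernels $\Ker((\lambda I - A)^k)$ stabilizes. From these facts the discreteness of $\sigma(A)\setminus\{0\}$ and finiteness of multiplicities follow by a now-routine argument.

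First I would record the two structural lemmas about $T_\lambda := \lambda I - A$ with $\lambda \neq 0$. Step one: $\Ker(T_\lambda)$ is finite-dimensional, because on $\Ker(T_\lambda)$ the operator $A$ acts as multiplication by $\lambda \neq 0$, so the identity restricted to that subspace is compact, which by Riesz's lemma forces finite dimension. Step two: $\Ran(T_\lambda)$ is closed; this uses a standard argument splitting off a complement to the finite-dimensional kernel and showing that $T_\lambda$ is bounded below on that complement (if not, one extracts a sequence $x_n$ with $\|x_n\|=1$, $T_\lambda x_n \to 0$, passes to a convergent subsequence of $Ax_n$ by compactness, and derives a contradiction). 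Step three: the Fredholm alternative, i.e.\ $T_\lambda$ injective $\iff$ $T_\lambda$ surjective; one proves this by showing the chain $\Ran(T_\lambda^k)$ strictly decreases until it stabilizes and similarly $\Ker(T_\lambda^k)$ strictly increases until it stabilizes, again invoking Riesz's lemma on the quotients to prevent an infinite strictly monotone chain.

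Next I would deduce the spectral statements. For discreteness: suppose $(\lambda_n)$ is a sequence of distinct eigenvalues with $\lambda_n \to \lambda \neq 0$; choosing eigenvectors $e_n$ and forming $M_n = \mathrm{span}(e_1,\dots,e_n)$, which is strictly increasing (eigenvectors for distinct eigenvalues are independent) and $A$-invariant, one uses Riesz's lemma to pick $y_n \in M_n$ with $\|y_n\|=1$ and $\dist(y_n, M_{n-1}) \geq 1/2$, then checks $\|A y_n / \lambda_n - A y_m/\lambda_m\| \geq 1/2$ for $n > m$, contradicting compactness of $A$ (after dividing by the bounded quantities $\lambda_n$). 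Hence the only possible accumulation point of $\sigma(A)$ is $0$, so $\sigma(A)\setminus\{0\}$ is discrete. For ``only eigenvalues'': if $\lambda \neq 0$ is in $\sigma(A)$ but not an eigenvalue, then $T_\lambda$ is injective, hence by the Fredholm alternative surjective, hence bijective with bounded inverse by the open mapping theorem, so $\lambda \notin \sigma(A)$ --- contradiction. Finite multiplicity is immediate from step one (the eigenspace is $\Ker(T_\lambda)$, which is finite-dimensional).

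The main obstacle --- and the only place real work is hidden --- is the closed-range claim (step two) together with the stabilization of the kernel/range chains (step three): these are the technical heart of Riesz theory, and carrying them out carefully requires Riesz's lemma on almost-orthogonal elements in normed spaces and a compactness extraction argument. Everything after that is bookkeeping. Given that the paper cites \cite{Simon} and presents the proposition as a quoted classical result, I would keep the write-up at the level of this sketch, emphasizing that all three lemmas rest on the single mechanism ``an infinite strictly monotone chain of closed subspaces, combined with compactness, is impossible,'' and refer to \cite{Simon} for the detailed estimates.
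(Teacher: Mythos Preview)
Your proposal is correct and aligns with the paper's treatment: the paper gives no proof at all, explicitly stating the result ``without proof'' and referring to \cite{Simon}, exactly as you noted. Your sketch of the standard Riesz--Schauder argument (finite-dimensional kernel via Riesz's lemma, closed range, Fredholm alternative, and the strictly-increasing-subspace contradiction for accumulation of eigenvalues) is the canonical route and would be an appropriate elaboration beyond what the paper itself provides.
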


Hence the non-zero spectrum of $P^B_r$ and $W^B_R$ consists entirely of (real) eigenvalues. Although we will focus on $P^B_r$, note that the results apply equally well to $W^B_r$ with the appropriate substitutions of $\rho_r$ for $\nu_r,$ etc. Suppose $\lambda$ is an $L^2(B,\rho_r)$ eigenvalue of $P^B_r$. Then if $f\in L^2(B,\rho_r)$ is a corresponding eigenvector, since $\rho_r$ is a finite measure, $f\in L^1(B,\rho_r)$ . So $\lambda$ is also in the spectrum of $P^B_r$ as an operator on $L^1(B,\rho_r)$. Hence $|\lambda|\leq 1.$ It follows that the $L^2(B,\rho_r)$ spectrum of $P^B_r$ is contained in $[-1,1].$ Moreover, since $P^B_r$ is normal, its spectral radius equals its operator norm (see Theorem 2.2.11 in \cite{Simon}).

We now show that neither $1$ nor $-1$ are in the spectrum. 

\begin{lemma} \label{Pbound} $\|P^B_r\|_{\mathscr{B}(L^2(B,\rho_r))}<1$ and $\|W^B_r\|_{\mathscr{B}(L^2(B,\nu_r))}<1.$
\end{lemma}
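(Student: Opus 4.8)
The plan is to show the operator norm of $P^B_r$ on $L^2(B,\rho_r)$ is strictly less than $1$ by exhibiting a spectral gap. Since $P^B_r$ is a compact self-adjoint operator whose spectrum lies in $[-1,1]$, its norm equals $\max\{\lambda_{\max},|\lambda_{\min}|\}$ where $\lambda_{\max},\lambda_{\min}$ are the largest and smallest eigenvalues. Hence it suffices to rule out both $+1$ and $-1$ as eigenvalues. For the $+1$ case, suppose $P^B_r f = f$ for some nonzero $f\in L^2(B,\rho_r)$. Using the Dirichlet-form identity (equation analogous to (4.1) in the excerpt, restricted to $B$), one has
\[
\langle f, (I-P^B_r)f\rangle_{\rho_r} = \int_B \int_{B_r(x)\cap B}\frac{1}{d_r(x)}(f(x)-f(y))^2\,d\mu(y)\,d\mu(x) + \int_B \left(\int_{B_r(x)\setminus B}\frac{1}{d_r(x)}\,d\mu(y)\right)f(x)^2\,d\mu(x),
\]
so $\langle f,(I-P^B_r)f\rangle_{\rho_r}=0$ forces both terms to vanish; the second term, together with the fact that for every $x\in B$ near the boundary of $B$ the set $B_r(x)\setminus B$ has positive $\mu$-measure (since $B$ is a ball of radius $R$ and $r>0$, points within $r$ of $\partial B$ see mass outside), forces $f=0$ on a neighborhood of $\partial B$ inside $B$, and then the first (vanishing difference) term propagates $f\equiv 0$ across $B$ by a connectedness/covering argument using that $X$ is connected and the $r$-balls overlap. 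Thus $1\notin\sigma(P^B_r)$.

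For the $-1$ case, I would argue that $P^B_r$ has a strictly positive kernel on a set of positive measure linking any two points through a chain (the walk can stay put in the sense that $p_r(x,y)>0$ whenever $d(x,y)<r$), so $P^B_r$ is irreducible/positivity-improving in the appropriate sense; then an eigenfunction $g$ with $P^B_r g = -g$ would satisfy $P^B_r|g|\geq |g|$ pointwise, hence $\langle |g|,(I-P^B_r)|g|\rangle_{\rho_r}\leq 0$, which by the previous paragraph forces $|g|=0$. Alternatively, and perhaps more cleanly, note that $-1\in\sigma(P^B_r)$ would make $\|P^B_r\|=1$; but one can instead directly bound $\|P^B_r\|<1$ by combining two facts: $P^B_r$ is a sub-Markovian contraction on $L^1(B,\rho_r)$ (shown in the excerpt, $\|P^B_r\|_{\mathscr{B}(L^1)}\leq1$) and on $L^\infty$, and it is \emph{strictly} sub-Markovian near $\partial B$ because $\int_B p^B_r(x,y)\,d\mu(y) = 1 - \int_{B_r(x)\setminus B}p_r(x,y)\,d\mu(y) < 1$ on a set of positive measure; interpolation plus the self-adjoint spectral representation then yields a quantitative gap. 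The identical argument with $(d_r,q_r,\tau_r)$ replaced by $(v_r,a_r,1)$ and $\rho_r$ replaced by $\nu_r$ handles $W^B_r$.

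The main obstacle I anticipate is making the "propagation of the zero set" rigorous in a purely metric setting without additional regularity: from the vanishing of the quadratic difference term $\int_B\int_{B_r(x)\cap B}\frac{1}{d_r(x)}(f(x)-f(y))^2 d\mu(y)d\mu(x)=0$ one concludes $f(x)=f(y)$ for $\mu\otimes\mu$-a.e. pair with $d(x,y)<r$ and $x,y\in B$, and one must upgrade this to "$f$ is $\mu$-a.e. constant on each connected component of $B$" and then combine with vanishing near $\partial B$. Here I would use that $B=B_R[x_0]$ is the closed ball in a connected compact space, cover $B$ by finitely many $r/2$-balls, and run a chaining argument on centers, taking care that the "a.e." statements are handled by Fubini (the set of bad pairs is $\mu\otimes\mu$-null, so for $\mu$-a.e. $x$ the fiber is $\mu$-null, etc.). The boundary-layer positivity $\mu(B_r(x)\setminus B)>0$ for $x$ in an annular region of positive measure inside $B$ follows because $X$ is connected with $\mu$ of full support and $R<\diam(X)/2$ guarantees $B\neq X$, so there are points of $X\setminus B$ within distance $r$ of some points of $B$. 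Once these two ingredients are in place, the conclusion $\|P^B_r\|_{\mathscr{B}(L^2(B,\rho_r))}<1$ (and likewise for $W^B_r$) follows from compactness and self-adjointness as above.
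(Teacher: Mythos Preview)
Your overall strategy---compact self-adjoint, spectrum in $[-1,1]$, rule out $\pm1$ as eigenvalues---is exactly the paper's. For the eigenvalue $+1$ the paper avoids your propagation worry by extending $f$ by zero to all of $X$ from the outset and writing
\[
\langle f,(I-P^B_r)f\rangle_{\rho_r}=\int_X\int_{B_r(x)}\frac{1}{d_r(x)}\bigl(\chi_Bf(x)-\chi_Bf(y)\bigr)^2\,d\mu(y)\,d\mu(x)=0,
\]
so that local constancy of $\chi_Bf$ at scale $r$ holds on the whole of $X$, and then a finite cover of $X$ by $r$-balls together with connectedness of $X$ (not of $B$, which need not be connected) forces $\chi_Bf$ to be $\mu$-a.e.\ constant; since $\mu(B^c)>0$, that constant is zero. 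Your two-term decomposition is just a splitting of this same integral (you dropped the contribution from $x\notin B$, $y\in B$, but all pieces are nonnegative so the conclusion is unaffected), and the propagation you flagged as an obstacle is precisely what the extension-by-zero trick makes automatic.

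For $-1$ the paper takes the parallel route: if $(I+P^B_r)f=0$ then the corresponding integral with $|\chi_Bf(x)+\chi_Bf(y)|^2$ vanishes, so $\chi_Bf(y)=-\chi_Bf(x_i)$ a.e.\ on each $B_r(x_i)$; in particular $\chi_Bf$ is again a.e.\ constant on each ball, and the same connectedness argument kills it. Your first alternative---$|g|=|P^B_rg|\le P^B_r|g|$, hence $\langle|g|,(I-P^B_r)|g|\rangle_{\rho_r}\le0$, hence $=0$, then invoke the $+1$ case for $|g|$---is a clean and genuinely different reduction that works and is arguably slicker than the paper's direct $-1$ computation. Your second alternative via interpolation does \emph{not} work as stated: strict sub-Markovianity on a set of positive measure gives $P^B_r\mathbf 1<\mathbf 1$ on that set but not $\|P^B_r\|_{L^\infty}<1$, so Riesz--Thorin still only yields $\|P^B_r\|_{L^2}\le1$; drop that branch and keep the $|g|$ argument.
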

\begin{proof}
We prove this for $P^B_r$ only. The result for $W^B_r$ is completely analogous. We already know that $\sigma(P^B_r)\subset[-1,1]$. If $1,-1$ are in the spectrum then they are eigenvalues since $P^B_r$ is compact. 
Suppose $(I-P^B_r)f=0$ for some $f\in L^2(B,\rho_r).$ Then \begin{equation*}
\begin{split}
&\langle f, L^B_rf\rangle_{L^2(B,\rho_r)}=\langle f, \mathscr{L}^B_r f\rangle_{\rho_r}=\\&\int_X \int_{B_r(x)}\frac{1}{d_r(x)}|\chi_{B}(y)f(y)-\chi_{B}(x)f(x)|^2d\mu(y)d\mu(x)=0.
\end{split}
\end{equation*}

Hence, since $X$ is compact there exist finitely many balls $B_r(x_1),...,B_r(x_n)$ covering $X$ with $f$ $\mu\mhyphen$a.e. constant on each $B_r(x_i).$ Since $X$ is connected, the graph formed with vertices $x_1,...,x_n$ and edge relation $x_i\sim x_j$ if and 
only if $B_r(x_i)\cap B_r(x_j)$ is connected. It follows that $\chi_Bf$ is $\mu\mhyphen$a.e constant, where we may extend $\chi_Bf$ outside of $B$ by setting it equal to $0.$ Then since $B_R[x_0]$ is closed and we have assumed its complement is non-empty, its complement contains an open set. Since $\mu$ has full support, $\mu(B^c)>0.$ But $f\chi_B$ is identically $0$ on $B^c.$ It follows that $f\chi_B$ is $0$ $\mu\mhyphen$a.e. Note that $d_r$ is bounded below by a positive constant. Hence $f\chi_B$ is $0$ $\rho_r\mhyphen$a.e. In particular $f=0 \in L^2(B,\rho_r).$

Now suppose $(I+P^B_r)f=0$ for some $f\in L^2(B,\rho_r).$ Then 
\[\langle f, (I+P^B_r)f\rangle_{L^2(B,\rho_r)}=\int_X \frac{1}{d_r(x)}\int_{B_r(x)} |\chi_{B}(y)f(y)+\chi_{B}(x)f(x)|^2d\mu(y)d\mu(x)=0.\]  
Hence there exist $x_1,...,x_n$ in $X$ with $(B_r(x_j))_{j=1}^n$ an open cover of $X$ 
such that $\chi_Bf(y)=-\chi_Bf(x_i)$ for $\mu\mhyphen$a.e. $y\in B_r(x_i)$ for each $i=1,..,n.$ In particular $f\chi_B$ is $\mu\mhyphen$a.e constant on each $B_r(x_i)$. Suppose $i\neq j$. 
Then since the graph formed with vertices $x_1,...,x_n$ by the non-empty intersection relation is connected, it follows that there is a path in the graph connecting $x_i$ and $x_j$. However, if $x_k\sim x_i$ then $B_r(x_i)\cap B_r(x_k)$ is non-empty and open. Moreover, for $\mu\mhyphen a.e.$ $y$ in the intersection, $\chi_b(y)f(y)=-\chi_B(x_i)f(x_i)=-\chi_B(x_k)f(x_k).$ Hence 
$\chi_B(x_i)f(x_i)=\chi_B(x_k)f(x_k).$ Continuing in this way along the path leads to $\chi_B(x_i)f(x_i)=\chi_B(x_j)f(x_j).$ Therefore $\chi_Bf$ is constant on $x_1,...,x_n.$ It follows by the connectivity of the induced graph that $\chi_Bf$ is 
$\mu\mhyphen a.e.$ constant. This implies that it is $\rho_r\mhyphen$a.e. constant. Since $B^c$ has non-zero measure, $f$ is $\rho_r\mhyphen$a.e. equal to $0.$

Therefore, since the spectrum away from $0$ is discrete, there exists an $\epsilon>0$ so that $\sigma(P^B_r)\subset [-1+\epsilon,1-\epsilon].$ In particular $\|P^B_r\|<1.$
\end{proof}
Now consider the following Dirichlet problem on $B,$
\[\mathscr{L}_ru(x)=f(x),\;\;\mbox{for}\;x\in B,\;u,f\in L^2(X,\mu),\;u(x)=0\;\;\mbox{for\;} x\in B^c.\]
We show that we can find a Green function $G^B_r\in L^1(X\times X, \;\mu\otimes \mu)$ solving the Dirichlet problem with 
\[u(x)=\int G^B_r(x,y)f(y)d\mu(y).\] Recall $d\nu_r(x)/d\mu=r^{\beta(x)}\mu(B_r(x));$ and $\mathscr{L}^B_r:L^2(B,\nu_r)\rightarrow L^2(B,\nu_r)$ is defined by 
\[\mathscr{L}^B_rf(x)=\frac{1}{r^{\beta(x)}}(I-P^B_r)f(x).\]

\begin{theorem} We have that $(L^B_r)^{-1}$ and $(W^B_r)^{-1}$ are bounded operators on $L^2(B,\mu)$ possessing integral kernels, $K_r^B$, $H_r^B,$ respectively, in $L^1(B\times B,\;\mu \otimes \mu)$. Moreover, $(L^B_r)^{-1}$ and $(W^B_r)^{-1}$ are bounded positive operators on $L^2(B,\rho_r)$ $L^2(B,\nu_r),$ respectively, and the functions \[g_r^B(x,y):=\frac{K^B_r(x,y)\tau_r(y)}{q_r(y)}\] and
\[h_r^B(x,y)=\frac{H^b_r(x,y)}{a_r(y)}\] are symmetric integral kernels for $(L^B_r)^{-1}$ in $L^2(B,\rho_r)$ and $(W^B_r)^{-1}$ in $L^2(B,\nu_r),$ respectively. Also, $(\mathscr{L}^B_r)^{-1}$ is a bounded operator on $L^2(B,\mu)$ which has an integral kernel $G^B_r\in L^1(B\times B,\;\mu \otimes \mu)$. Moreover, $(\mathscr{L}^B_r)^{-1}$ is a bounded positive operator on $L^2(B,\mu_r)$, and the function \[g_r^B(x,y)=\frac{G^B_r(x,y)}{q_r(y)}=\frac{K^B_r(x,y)\tau_r(y)}{q_r(y)}\] is a symmetric integral kernel for $(\mathscr{L}^B_r)^{-1}$ in $L^2(B,\mu_r).$ 
\end{theorem}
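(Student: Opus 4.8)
The strategy is to first establish the result for the operator $L^B_r = I - P^B_r$ acting on $L^2(B,\mu)$, then transfer everything to $L^2(B,\rho_r)$ and finally to $\mathscr{L}^B_r = \frac{1}{\tau_r}L^B_r$ on $L^2(B,\mu_r)$ by the measure-change identities already derived in the preceding display computations. First I would invoke Lemma \ref{Pbound}: since $\|P^B_r\|_{\mathscr{B}(L^2(B,\rho_r))} < 1$, the Neumann series $\sum_{k=0}^\infty (P^B_r)^k$ converges in operator norm on $L^2(B,\rho_r)$, so $L^B_r = I - P^B_r$ is boundedly invertible there with $(L^B_r)^{-1} = \sum_{k=0}^\infty (P^B_r)^k$. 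Because $P^B_r$ has a bounded (indeed square-integrable) kernel $p^B_r(x,y) = \chi_{B\times B}(x,y)p_r(x,y)$, each iterate $(P^B_r)^k$ has a kernel given by $k$-fold convolution against $\mu$, and the bounds $p_r(x,y) \le C$ together with finiteness of $\mu$ and the geometric decay of $\|(P^B_r)^k\|$ let me sum these kernels in $L^1(B\times B, \mu\otimes\mu)$; this produces $K^B_r \in L^1(B\times B,\mu\otimes\mu)$ which is an integral kernel for $(L^B_r)^{-1}$. Boundedness on $L^2(B,\mu)$ follows since $\mu \asymp \rho_r$ (strong equivalence, stated in the excerpt), so the $L^2$ spaces coincide with equivalent norms.

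Next I would record the symmetry and positivity. Since $P^B_r$ is self-adjoint on $L^2(B,\rho_r)$ and its spectrum lies in $[-1+\epsilon, 1-\epsilon]$, the operator $(L^B_r)^{-1} = (I-P^B_r)^{-1}$ is self-adjoint and positive on $L^2(B,\rho_r)$ by the spectral mapping theorem (the function $\lambda \mapsto (1-\lambda)^{-1}$ is positive on that interval). Self-adjointness on $L^2(B,\rho_r)$ forces the kernel with respect to $\rho_r$ to be symmetric; since $d\rho_r = \frac{q_r}{\tau_r}d\mu$, the $\rho_r$-kernel is $g^B_r(x,y) = K^B_r(x,y)\frac{\tau_r(y)}{q_r(y)}$, and self-adjointness gives $g^B_r(x,y) = g^B_r(y,x)$. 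The same argument applied to $W^B_r$ on $L^2(B,\nu_r)$, using the $\nu_r$-symmetry of $W_r$ and $\|W^B_r\|_{\mathscr{B}(L^2(B,\nu_r))} < 1$ from Lemma \ref{Pbound}, yields $H^B_r \in L^1(B\times B,\mu\otimes\mu)$, boundedness and positivity of $(W^B_r)^{-1}$ on $L^2(B,\nu_r)$, and the symmetric $\nu_r$-kernel $h^B_r(x,y) = H^B_r(x,y)/a_r(y)$, recalling $d\nu_r = a_r\,d\mu$.

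Finally I would handle $\mathscr{L}^B_r$. Since $\mathscr{L}^B_r = \frac{1}{\tau_r}L^B_r$ where $\tau_r$ is multiplication by the bounded, bounded-below function $x\mapsto r^{\beta(x)}$ (bounded by Proposition \ref{semicont1} and Lemma \ref{contmeas}), it is invertible with $(\mathscr{L}^B_r)^{-1} = (L^B_r)^{-1}\tau_r$, a bounded operator on $L^2(B,\mu)$. Its integral kernel is therefore $G^B_r(x,y) = K^B_r(x,y)\tau_r(y)$, which lies in $L^1(B\times B,\mu\otimes\mu)$ since $\tau_r$ is bounded. The measure-change computations in display (and preceding) of the excerpt show $\langle f, \mathscr{L}_r g\rangle_{\mu_r} = \langle f, L_r g\rangle_{\rho_r}$, i.e. $\mathscr{L}^B_r$ is self-adjoint and positive on $L^2(B,\mu_r)$ exactly because $L^B_r$ is so on $L^2(B,\rho_r)$ and $d\mu_r = \tau_r\,d\rho_r$; hence $(\mathscr{L}^B_r)^{-1}$ is bounded and positive on $L^2(B,\mu_r)$. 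Since $d\mu_r = q_r\,d\mu$, the $\mu_r$-kernel of $(\mathscr{L}^B_r)^{-1}$ is $\frac{G^B_r(x,y)}{q_r(y)} = \frac{K^B_r(x,y)\tau_r(y)}{q_r(y)} = g^B_r(x,y)$, which is symmetric by the previous paragraph. The main obstacle I anticipate is the careful bookkeeping in the Neumann-series step: showing that the kernels of the iterates $(P^B_r)^k$ genuinely converge in $L^1(B\times B,\mu\otimes\mu)$ (as opposed to merely the operators converging), which requires combining the pointwise bound $p_r \le C$, an $L^1 \to L^\infty$ smoothing estimate for a single application of $P^B_r$, and the geometric operator-norm decay, so that the tail kernels are uniformly controlled. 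Everything downstream is routine change-of-measure algebra using the strong equivalences $\mu \asymp \rho_r \asymp \nu_r \asymp \mu_r$ already noted in the excerpt.
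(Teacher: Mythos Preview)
Your proposal is correct and follows essentially the same route as the paper: Neumann series via Lemma~\ref{Pbound}, construction of the kernel by summing the iterated kernels, then the measure-change bookkeeping to pass between $\mu$, $\rho_r$, $\mu_r$ and obtain symmetry and positivity.

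The one place where your argument differs from the paper is precisely the ``main obstacle'' you flagged. You propose to control the tail of $\sum_j K^B_{r,j}$ in $L^1(B\times B,\mu\otimes\mu)$ by combining the pointwise bound $p_r\le C$ with an $L^1\to L^\infty$ smoothing step and the geometric $L^2$ operator-norm decay; this works but requires sandwiching through $L^2$. The paper instead exploits that each $K^B_{r,j}\ge 0$: it defines $K^B_r=\sum_j K^B_{r,j}$ pointwise (well-defined in $[0,\infty]$ by non-negativity), verifies it is the kernel of $(I-P^B_r)^{-1}$ via monotone convergence applied to $|f|$ followed by dominated convergence, and only afterward checks $K^B_r\in L^1$ in one stroke by computing $\int K^B_r\,d(\mu\otimes\mu)=\|(I-P^B_r)^{-1}\chi_B\|_{L^1}\le C\|(I-P^B_r)^{-1}\chi_B\|_{L^2(\rho_r)}<\infty$. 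This sidesteps any direct tail estimate on the kernel series.
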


\begin{proof} First note that since $r$ is fixed and the maps $x\mapsto r^{\beta(x)}$ and $x\mapsto \mu(B_r(x))$ are positive and uniformly bounded both above and away from $0$, we have $\mu\asymp \mu_r \asymp \nu_r\asymp \rho_r$ So $L^2$ convergence in any of the four measures implies convergence in all of them.

We prove the result for $L^B_r.$ The result for $W^B_r$ is analogous. Lemma \ref{Pbound} implies that the Neumann series $\sum_{k=0}^\infty (P^B_r)^k$ converges in the operator norm topology to the bounded operator $(I-P^B_r)^{-1}.$
For each $j,$ let $K^B_{r,j}(\cdot,\cdot)$ be the integral kernel for $(P^B_r)^j.$ 

Set \[K^B_r(x,y):=\sum_{j=0}^\infty k^B_{r,j}(x,y).\] This is well defined since it is a sum of non-negative functions. 

Let $f\in L^2(B,\rho_r).$ Then let $h=(I-P^B_r)^{-1}f\in L^2(B,\rho_r).$ Let $h_n=\sum_{k=0}^n (P^B_r)^kf.$ Then by the convergence of the Neumann series, 
\[h_n\convn h\;\;\mbox{in\;} L^2(B,\rho_r).\] Note also that for each $x\in B$ and $n\in \mathbb{Z}^+$, 
\[\sum_{j=0}^n |\int K^B_{r,j}(x,y)f(y)d\mu(y)|\leq \sum_{j=0}^n (P^B_{r})^j|f|(x)\leq (I-P^B)^{-1}|f|(x).\] It follows for $\rho_r$ almost all $x\in B$ that 
\[\sum_{j=0}^\infty \int K^B_{r,j}(x,y)f(y)d\mu(y)\;\;\mbox{converges absolutely.}\]
Let $q$ denote the pointwise limit of this series. Then since $h_n\convn h$ in $L^2(B,\rho_r),$ there exists a subsequence $h_{n_j}$ such that \[h_{n_j} \convk h \;\;\mbox{pointwise}\;\rho_r\mhyphen\mbox{a.e.}.\] It follows that $h=q$ a.e. But by monotone convergence, \[K^B_r(x,\cdot)|f(\cdot)|\in L^1(B,\rho_r)\;\;\mbox{for}\;\rho_r\mhyphen\mbox{a.e\;} x\in B.\]
Since this function dominates $|\sum_{j=0}^n K^B_{r,j}(x,\cdot)f(\cdot)|$ for each $n,$ by the dominated convergence theorem it follows that 
\[(I-P^B_{r})^{-1}f(x)=\int K^B_r(x,y)f(y)d\mu(y).\]

Note further that $K^B_r\in L^1(\mu \otimes\mu).$ Indeed, since $\mu(B_r(x))$ is bounded above by a constant independent of $x,$ $\int K^B_r d(\mu\otimes \mu)=\|(I-P^B_r)^{-1}\chi_B \|
_{L^1(X,\mu)}=\|(I-P^B_r)^{-1}\chi_B\|_{L^1(B,\mu)}\leq C\|(I-P^B_r)^{-1}\chi_B\|
_{L^2(B,\rho_r)}<\infty$ for some constant $C>0.$

Note $L^B_r$ is self-adjoint on $L^2(B,\rho_r).$ Hence $(L^B_r)^{-1}$ is also self-adjoint. Therefore the integral kernel with respect to $\rho_r$ is symmetric. 
However, $(L^B_r)^{-1}f(x)=\int \frac{K_r^B(x,y)\tau_r(y)}{q_r(y)}f(y)d\rho_r(y).$

Therefore let $g_r^B(x,y)=\frac{K_r^B(x,y)\tau_r(y)}{q_r(y)}.$ Then $g^B_r$ is symmetric. Moreover, since 
\[\langle f, L^B_r f\rangle_{L^2(B,\rho_r)} = \int_X \int_{B_r(x)}\frac{1}{d_r(x)}|\chi_B(y)f(y)-\chi_B(x)f(x)|^2d\mu(y)d\mu(x)\geq 0,\]  $\mathscr{L}^B_r$ is a positive operator on $L^2(X,\rho_r).$

Set \[G^B_r(x,y)=K^B_r(x,y)r^{\beta(y)}=K^B_r(x,y)\tau_r(y).\]

We set $G^B_r(x,y)=0$ for $x\notin B$ or $y\notin B.$ Then if $\mathscr{L}_ru(x)=f(x)$ where $f=0$ outside of $B,$ then $\mathscr{L}^B_ru(x)=\chi_B(x)f(x).$

Hence \[(I-P^B_r)^{-1}(\chi_Bf)(x)=r^{\beta(x)}u(x).\]
Therefore, by our previous considerations, 

\[\chi_B(x) f(x) = \int K^B_r(x,y)r^{\beta(y)}u(y)d\mu(y)=\int G^B_r(x,y)u(y)d\mu(y).\] Since $K^B_r\in L^1(B\times B,\mu\otimes \mu),$ we also have $G^B_r\in L^1(B\times B,\mu\otimes \mu).$

Note $\mathscr{L}^B_r$ is self-adjoint on $L^2(B,\mu_r).$ Hence $(\mathscr{L}^B_r)^{-1}$ is also self-adjoint. Therefore the integral kernel with respect to $\mu_r$ is symmetric. 
However, \[(\mathscr{L}^B_r)^{-1}f(x)=\int \frac{G_r^B(x,y)}{q_r(y)}f(y)d\mu_r(y).\]

Therefore $\frac{G_r^B(x,y)}{q_r(y)}=\frac{K^B_r(x,y)\tau_r(y)}{q_r(y)}=g^B_r(x,y)$ is symmetric. Moreover, since 
\[\langle f, \mathscr{L}^B_r f\rangle_{L^2(B,\mu_r)} = \int_X \frac{1}{d_r(x)}\int_{B_r(x)}|\chi_B(y)f(y)-\chi_B(x)f(x)|^2d\mu(y)d\mu(x)\geq 0,\] $\mathscr{L}^B_r$ is a positive operator. Since $(I-P^B_r)^{-1}$ is bounded and $x\mapsto r^{\beta(x)}$ is bounded below, it follows that $(\mathscr{L}^B_r)^{-1}$ is also bounded. 
\end{proof}

\begin{corollary} $\phi_{r,B}(x)=\int_B G^B_r(x,y)d\mu(y)=\int_B g^B_r(x,y)d\mu_r(y).$
\end{corollary}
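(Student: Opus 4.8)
The plan is to identify $\phi_{r,B}$ with $(\mathscr{L}^B_r)^{-1}\chi_B$ and then simply read off the two integral formulas from the kernels $G^B_r$ and $g^B_r$ constructed in the preceding theorem. First I would record the two facts that make the identification work. Since $\tau_{r,B}$ depends only on $\bm{x}(r)$ restricted to $B$, the function $\phi_{r,B}$ vanishes off $B$, so $\chi_B\phi_{r,B}=\phi_{r,B}$; and by Proposition \ref{eq1}, applied to the continuous-time jump process $\bm{x}(r)$ with waiting times $\tau_r$, one has for $x\in B$
\[
\phi_{r,B}(x)=\tau_r(x)+\int p_r(x,y)\phi_{r,B}(y)\,d\mu(y),
\]
that is $(I-P^B_r)\phi_{r,B}=\tau_r\chi_B$, hence $\mathscr{L}^B_r\phi_{r,B}=\tfrac{1}{\tau_r}(I-P^B_r)\phi_{r,B}=\chi_B$. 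This is exactly the statement of Proposition \ref{equation}, recast using the operator $\mathscr{L}^B_r=\chi_B\mathscr{L}_r\chi_B$.

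Next I would check the regularity needed to invert, namely $\phi_{r,B}\in L^2(B,\mu)$. Iterating the identity above gives the pointwise expansion $\phi_{r,B}=\sum_{k=0}^\infty (P^B_r)^k(\tau_r\chi_B)$ as a sum of nonnegative functions, and since $\tau_r$ is bounded above and $\int (p^B_r)^{(k)}(x,\cdot)\,d\mu\le 1$ one obtains $\phi_{r,B}\le\|\tau_r\|_\infty\,E_{B,r}$, where $E_{B,r}=(I-P^B_r)^{-1}\chi_B$ is the discrete-time mean exit time. By Lemma \ref{Pbound} the spectral radius of $P^B_r$ on $L^2(B,\rho_r)$ is strictly less than $1$, so the Neumann series for $E_{B,r}$ converges in that space, whence $E_{B,r}\in L^2(B,\rho_r)$; because $\mu\asymp\rho_r$ for fixed $r$, we get $\phi_{r,B}\in L^2(B,\mu)$. (Equivalently one notes the same geometric estimate bounds $\phi_{r,B}$ uniformly, so it lies in every $L^2$ over the finite measures $\mu,\mu_r,\rho_r,\nu_r$.)

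Applying the bounded operator $(\mathscr{L}^B_r)^{-1}$ of the preceding theorem to $\mathscr{L}^B_r\phi_{r,B}=\chi_B$ then yields $\phi_{r,B}=(\mathscr{L}^B_r)^{-1}\chi_B$, and substituting the integral-kernel representations from that theorem gives
\[
\phi_{r,B}(x)=\int_B G^B_r(x,y)\,d\mu(y)=\int_B g^B_r(x,y)\,d\mu_r(y),
\]
where the second equality is immediate from $g^B_r(x,y)\,d\mu_r(y)=\frac{G^B_r(x,y)}{q_r(y)}\,q_r(y)\,d\mu(y)=G^B_r(x,y)\,d\mu(y)$. The only mildly delicate point in the whole argument is the membership $\phi_{r,B}\in L^2$, which legitimizes applying the inverse; this is precisely where the spectral gap of Lemma \ref{Pbound} is used, and everything else is bookkeeping with identities already in hand.
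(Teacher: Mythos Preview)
Your proof is correct and follows the same approach as the paper, which simply notes that the result follows from the exit time equation $\mathscr{L}^B_r\phi_{r,B}=\chi_B$. You have filled in details the paper leaves implicit, including the $L^2$ membership check that justifies applying $(\mathscr{L}^B_r)^{-1}$.
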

\begin{proof} This follows from the exit time equation $\mathscr{L}^B_r\phi_{r,B}=\chi_B.$
\end{proof}
\subsection{Faber-Krahn Inequality}
The following proof is inspired by the elegant Lemma 2.3 in Telcs \cite{art}.
\begin{theorem} \label{main1} There exists an $r_0>0$ so that if $\sigma^B_r$ is the bottom of the spectrum of $W^B_r$ on $L^2(B,\nu_r)$ then \[\sigma^B_r\geq \frac{c}{E^+_{r,B}}\] for all $0<r<r_0$ and some constant $c$ independent of $r,$ $R.$ and $x_0.$ 

Suppose $(E_\beta)$ holds. Then there exists an $r_0>0$ so that if $\lambda^B_r$ is the bottom of the spectrum for $\mathscr{L}^B_r$ on $L^2(B,\mu_r)$ then  \[\lambda^B_r\geq \frac{c}{R^{\beta(x_0)}}\] for all $0<r<r_0$ and some constant $c>0$ independent of $r,R,$ and $x_0.$
\end{theorem}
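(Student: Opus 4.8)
The plan is to prove the two statements in sequence, since the second follows from the first by substituting the time-regularity condition $(E_\beta)$. For the first statement, I would exploit the variational characterization of $\sigma^B_r$, the bottom of the spectrum of $W^B_r$ on $L^2(B,\nu_r)$. Since $W^B_r$ is self-adjoint, compact, and $\|W^B_r\|_{\mathscr{B}(L^2(B,\nu_r))}<1$ by Lemma~\ref{Pbound}, we have $\sigma^B_r = 1 - \|W^B_r\|$, and equivalently the bottom of the spectrum of $I - W^B_r$ equals
\[
\sigma^B_r = \inf_{f\in L^2(B,\nu_r),\, f\neq 0} \frac{\langle f,(I-W^B_r)f\rangle_{\nu_r}}{\langle f,f\rangle_{\nu_r}}.
\]
The key comparison, following the idea in Lemma~2.3 of Telcs~\cite{art}, is between the exit-time function and the resolvent. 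Let $\psi = E^+_{r,B}$ and recall that the discrete-time mean exit time $E_{r,B}(\cdot)$ for the walk $\bm{y}(r)$ satisfies $(I - W^B_r)E_{r,B} = \chi_B$ (this is Proposition~\ref{markov} applied to the killed walk, noting the step distribution here gives unit-time steps). So $E_{r,B} = (I - W^B_r)^{-1}\chi_B$, and this is a bounded positive operator with a symmetric kernel $h^B_r$ on $L^2(B,\nu_r)$ by the Green's function theorem above.

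The main step is: for any $f \in L^2(B,\nu_r)$ with $f \geq 0$,
\[
\langle f, (I-W^B_r)^{-1} f\rangle_{\nu_r} \leq \|E_{r,B}\|_{L^\infty(B)}\, \langle f, \chi_B \rangle_{\nu_r}^{?}
\]
— more precisely, I would show $(I-W^B_r)^{-1}$ is bounded on $L^\infty(B)$ with norm $E^+_{r,B}$, since $(I-W^B_r)^{-1}\mathbf{1} = E_{r,B} \leq E^+_{r,B}$ pointwise and $(I-W^B_r)^{-1}$ is positivity-preserving. Interpolating between the $L^1$ and $L^\infty$ bounds — or directly, using that for a positive self-adjoint operator $A$ on $L^2(B,\nu_r)$ with $\|A\|_{L^\infty \to L^\infty} \leq M$ one has $\|A\|_{L^2(B,\nu_r)\to L^2(B,\nu_r)} \leq M$ (by symmetry of the kernel, $\|A\|_{L^1\to L^1} = \|A\|_{L^\infty\to L^\infty} \leq M$, then Riesz–Thorin gives $\|A\|_{2\to 2}\leq M$) — yields $\|(I-W^B_r)^{-1}\|_{L^2(B,\nu_r)} \leq E^+_{r,B}$. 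Hence
\[
\sigma^B_r = \frac{1}{\|(I-W^B_r)^{-1}\|_{\mathscr{B}(L^2(B,\nu_r))}} \geq \frac{1}{E^+_{r,B}},
\]
so the first claim holds with $c=1$ (any $r_0 < \diam(X)/2$ works; the role of $r_0$ is just to ensure all the uniform bounds $\mu\asymp\nu_r$, $v_r(x)\asymp 1$ etc. from Lemma~\ref{contmeas} hold).

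For the second statement, I would run the identical argument with $\mathscr{L}^B_r = \frac{1}{\tau_r}(I - P^B_r)$ in place of $W^B_r$, working on $L^2(B,\mu_r)$ where $\mathscr{L}^B_r$ is self-adjoint and positive. Here $(\mathscr{L}^B_r)^{-1}\chi_B = \phi_{r,B}$ by the exit time equation $\mathscr{L}^B_r \phi_{r,B} = \chi_B$ (Proposition~\ref{equation}), and the same $L^\infty$–$L^1$–$L^2$ interpolation gives $\|(\mathscr{L}^B_r)^{-1}\|_{\mathscr{B}(L^2(B,\mu_r))} \leq \phi^+_{r,B}$, hence $\lambda^B_r \geq 1/\phi^+_{r,B}$. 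Now invoke $(E_\beta)$: taking $\limsup$ as $r\to 0^+$ and using $\mathcal{T}(B_R[x_0]) = \limsup_{r\to 0^+}\phi^+_{r,B} \leq C R^{\beta(x_0)}$, we get $\phi^+_{r,B} \leq 2C R^{\beta(x_0)}$ for all sufficiently small $r$ (say $r < r_0$, with $r_0$ possibly depending on how fast the $\limsup$ is approached — I should check this can be made uniform in $x_0$ and $R$, which is where care is needed). This yields $\lambda^B_r \geq \frac{c}{R^{\beta(x_0)}}$ with $c = 1/(2C)$.

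The main obstacle I anticipate is the uniformity of $r_0$ in the second part: the condition $(E_\beta)$ is stated as a bound on $\mathcal{T}(B_r[x]) = \limsup_{r'\to 0^+}\phi^+_{r',B_r[x]}$, and passing from this $\limsup$ bound back to a bound on $\phi^+_{r,B}$ valid for all small $r$ \emph{uniformly over all $x_0$ and all $R < \diam(X)/2$} requires either an a priori equicontinuity/monotonicity argument in $r$ or a compactness argument over $x_0 \in X$ and $R$ in a compact interval. I would handle this by first establishing, from the structure of the problem, that $r \mapsto \phi^+_{r,B}$ cannot oscillate too wildly (e.g. via a doubling-type comparison of nets at scales $r$ and $2r$), or alternatively by slightly strengthening the hypothesis to a genuine bound $\phi^+_{r,B} \leq C r^{\beta(x)}$ valid for all small $r$, which is what is effectively used in the Mosco-convergence arguments later. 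The interpolation / positivity-preservation step and the identification of the resolvent with the exit-time function are routine given the machinery already developed above.
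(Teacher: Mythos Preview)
Your proposal is correct and follows essentially the same approach as the paper. Both arguments hinge on the identity $(\mathscr{L}^B_r)^{-1}\chi_B = \phi_{r,B}$ (resp.\ $(I-W^B_r)^{-1}\chi_B = E_{r,B}$), positivity of the Green operator, and the symmetry of its kernel to bound $\|(\mathscr{L}^B_r)^{-1}\|_{2\to 2}$ by $\phi^+_{r,B}$; the only cosmetic difference is that you invoke Riesz--Thorin between the $L^1$ and $L^\infty$ bounds, whereas the paper does the $L^1$ estimate directly via $\int_B g^B_r(x,y)\,d\mu_r(x) = \phi_{r,B}(y)$ and an approximating sequence for the spectral edge. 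Your observation about the uniformity of $r_0$ in $x_0$ and $R$ is well taken: the paper's proof, like yours, extracts $r_0$ from the $\limsup$ in $(E_\beta)$ without explicitly verifying uniformity, so the care you propose is appropriate (and indeed the later Mosco-convergence arguments implicitly use the stronger pointwise-in-$r$ bound you mention).
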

\begin{proof} 
We prove the second claim. The first follows from a similar argument using the symmetric Green function $h^B_r$ for $W^B_r$ since $\int h^B_r(x,y)d\nu_r(y)=E_{r,B}(x).$

If $\frac{1}{\lambda^B_r}$ is in the spectrum of $(\mathscr{L}^B_r)^{-1}$ we may choose a sequence $(f_n)_{n=1}^\infty$ with each $f_n$ non-zero and in $L^2(B,\mu_r)$ with 
\[\|((\mathscr{L}^B_r)^{-1}-\frac{1}{\lambda^B_r})f_n\|_{L^2(B,\mu_r)} \convn 0.\]
Hence also \[\|((\mathscr{L}^B_r)^{-1}-\frac{1}{\lambda^B_r})f_n\|_{L^1(B,\mu_r)} \convn 0.\]
We may assume each $\|f_n\|_{L^1(B,\mu_r)}=1.$ Let $\epsilon>0.$ Then let $f_n$ with 
\[\frac{1}{\lambda^B_r}=\|\frac{1}{\lambda^B_r}f_n\|_{L^1(B,\mu_r)}\leq \|(\mathscr{L}^B_r)^{-1}f_n\|+\epsilon.\]
Since $\limsup_{r \rightarrow 0^+} \phi^+_{r,B}\leq c_1R^{\beta(x_0)}$ for some $c_1>0$ independent of $x$ and $B,$ if $0<\epsilon<R^{\beta(x_0)},$ then there exists an $r_0>0$ so that if $0<r<r_0$ then $\phi^+_{r,B}\leq c_1R^{\beta(x_0)}+\epsilon<(c_1+1)R^{\beta(x_0)}=cR^{\beta(x_0)}.$ Then for $0<r<r_0,$ 
\begin{align*}
\|(\mathscr{L}^B_r)^{-1}f_n\|_{L^1(B,\mu_r)} &\leq \int_B\int_B g^B_r(x,y)|f_n(y)|d\mu_r(y)d\mu_r(x) \\ &= \int_B\int_B g^B_r(y,x)|f_n(y)|d\mu_r(x)d\mu_r(y) \\&= \int_B\phi_{r,B}(y)|f_n(y)|d\mu_r(y) \leq \phi_{r,B}^+ \|f_n\|_{L^1(B,\mu_r)}\\ &= \phi_{r,B}^+ \leq cR^{\beta(x_0)}.
\end{align*}

Hence $(\lambda^B_r)^{-1} \leq cR^{\beta(x_0)}+\epsilon.$ Since $\epsilon>0$ was arbitrary we have \[\frac{1}{\lambda^B_r} \leq cR^{\beta(x_0)}.\]
\end{proof}
We remark that since $\mu_r\asymp \mu$,  $\lambda-\mathscr{L}^B_r$ is invertible in $\mathscr{B}(L^2(B,\mu))$ if and only if it is invertible in $\mathscr{B}(L^2(B,\mu_r)).$ Hence the spectrum of $\mathscr{L}^B_r$ is the same on $L^2(B,\mu_r)$ and $L^2(B,\mu).$ Therefore the inequality for the bottom of the spectrum given above also holds for $L^2(B,\mu).$
\begin{corollary} \label{maincor}
There exists an $r_0>0$ such that for all $0<r<r_0$ and $f\in L^2(B,\mu_r)$ we have 
\[\langle f, \mathscr{L}^B_rf\rangle_{L^2(B,\mu_r)}\geq cR^{-\beta(x_0)}\|f\|_{L^2(B,\mu)}\] for some constant $c>0$ independent of $r,R,$ and $x_0.$
\end{corollary}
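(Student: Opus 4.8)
The plan is to deduce Corollary \ref{maincor} directly from Theorem \ref{main1} together with the spectral theory already established for $\mathscr{L}^B_r$. Recall that $\mathscr{L}^B_r$ is a bounded, self-adjoint, positive operator on $L^2(B,\mu_r)$, and that $(\mathscr{L}^B_r)^{-1}$ is bounded (hence $0$ is in the resolvent set, so $\mathscr{L}^B_r$ is boundedly invertible). Consequently its spectrum lies in $[\lambda^B_r,\infty)$ with $\lambda^B_r>0$ the bottom of the spectrum, and the spectral theorem for self-adjoint operators gives the variational characterization
\[
\langle f,\mathscr{L}^B_r f\rangle_{L^2(B,\mu_r)}\;\geq\;\lambda^B_r\,\|f\|^2_{L^2(B,\mu_r)}
\]
for every $f\in L^2(B,\mu_r)$; equivalently $\lambda^B_r=\inf_{f\neq 0}\langle f,\mathscr{L}^B_r f\rangle_{\mu_r}/\|f\|^2_{\mu_r}$. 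First I would invoke Theorem \ref{main1}: under $(E_\beta)$ there is $r_0>0$ so that $\lambda^B_r\geq c_0 R^{-\beta(x_0)}$ for all $0<r<r_0$, with $c_0$ independent of $r,R,x_0$. Combining the two displays yields $\langle f,\mathscr{L}^B_r f\rangle_{\mu_r}\geq c_0 R^{-\beta(x_0)}\|f\|^2_{\mu_r}$.

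The only remaining point is to replace $\|f\|^2_{L^2(B,\mu_r)}$ by $\|f\|^2_{L^2(B,\mu)}$ on the right-hand side. Here I would use the strong equivalence $\mu_r\asymp\mu$ with constants independent of $r$ for $r$ small, which was established earlier: since $d\mu_r(x)=q_r(x)\,d\mu(x)$ and $q_r(x)=\frac{1}{\mu(B_r(x))}\int_{B_r(x)}\bigl(1+\frac{d_r(x)}{d_r(y)}\bigr)d\mu(y)$ satisfies $1\leq q_r(x)\leq 1+C_1$ by Lemma \ref{contmeas} and the uniform bound $\tfrac{1}{C_1}\leq\tfrac{d_r(x)}{d_r(y)}\leq C_1$ noted before Proposition \ref{generator}, there is a constant $c_1>0$, independent of $r$, with $\|f\|^2_{L^2(B,\mu_r)}\geq c_1\|f\|^2_{L^2(B,\mu)}$. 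Substituting gives
\[
\langle f,\mathscr{L}^B_r f\rangle_{L^2(B,\mu_r)}\;\geq\;c_0 c_1\,R^{-\beta(x_0)}\|f\|^2_{L^2(B,\mu)},
\]
and setting $c:=c_0 c_1$ proves the corollary for $0<r<r_0$. (One subtlety: the statement as written has $\|f\|_{L^2(B,\mu)}$ to the first power rather than squared; I would read this as a typographical slip for $\|f\|^2_{L^2(B,\mu)}$, which is what the argument — and dimensional analysis of the two sides — produces; alternatively one normalizes $\|f\|_{L^2(B,\mu)}=1$.)

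I do not expect any real obstacle here: this corollary is a packaging statement, converting the spectral-bottom estimate of Theorem \ref{main1} into the quadratic-form inequality that will be convenient for the later $\Gamma$/Mosco-convergence arguments. The one place requiring a little care is making sure the measure-comparison constant $c_1$ does not depend on $r$; this is exactly what the uniform two-sided bounds on $v_r$, $d_r$, and hence $q_r$ guarantee, so it is immediate. If one wanted to avoid even mentioning $\lambda^B_r$ explicitly, one could instead argue via the Green operator directly: for $g=\mathscr{L}^B_r f$ one has $f=(\mathscr{L}^B_r)^{-1}g$ and $\|(\mathscr{L}^B_r)^{-1}\|_{\mathscr{B}(L^1(B,\mu_r))}\leq\phi^+_{r,B}\leq cR^{\beta(x_0)}$ as in the proof of Theorem \ref{main1}, but passing from the $L^1$ bound on the inverse to the $L^2$ quadratic form is less clean than the spectral argument above, so I would use the spectral route.
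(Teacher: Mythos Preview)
Your proposal is correct and follows essentially the same route as the paper: the min-max (variational) characterization of $\lambda^B_r$ combined with Theorem \ref{main1}, then a comparison of $\|\cdot\|_{\mu_r}$ with $\|\cdot\|_\mu$ via the density $q_r$. The paper's proof is slightly more economical in the last step: it simply observes that the integrand $1+\tfrac{d_r(x)}{d_r(y)}\geq 1$ forces $q_r(x)\geq 1$ for every $x$, so $\|f\|_{L^2(B,\mu_r)}\geq \|f\|_{L^2(B,\mu)}$ with constant exactly $1$ --- there is no need to invoke Lemma \ref{contmeas} or the two-sided bound on $d_r(x)/d_r(y)$, and no auxiliary constant $c_1$ enters. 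Your reading of the missing square on $\|f\|_{L^2(B,\mu)}$ as a typographical slip is also correct.
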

\begin{proof}
By the min-max principle $\lambda^B_r=\inf_{\|f\|_{L^2(B,\mu_r)}=1}\langle  f,\mathscr{L}^B_r f\rangle_{L^2(X,\mu_r)}.$ However $d\mu_r(x)=q_r(x)d\mu(x)$ and $q_r(x)=\frac{1}{\mu(B_r(x))}\int_{B_r(x)}\left(1+\frac{d_r(x)}{d_r(y)}\right)d\mu(y)\geq 1$ for all $x.$ It follows that $\|f\|_{L^2(X,\mu_r)}\geq \|f\|_{L^2(X,\mu)}.$
\end{proof}
Part of the proof of the following lemma essentially follows the idea of the proof of Lemma 2.2 in Telcs \cite{telcs1}. See also the proof of Theorem 4.8 in \cite{grigorlau}.
\begin{theorem} Suppose $\mu$ is doubling and $X$ is variable time regular of exponent $\beta(\cdot)$. Then $\beta(x)\geq 2$ for all $x\in X.$ 
\end{theorem}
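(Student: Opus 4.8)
The goal is to show $\beta(x) \ge 2$ under the hypotheses that $\mu$ is doubling and $(X,d,\mu)$ is variable time regular of exponent $\beta(\cdot)$. The natural route, following Telcs's argument, is to combine the Faber--Krahn-type lower bound for $\lambda^B_r$ already established (Corollary~\ref{maincor}, or rather Theorem~\ref{main1} and its corollary) with an \emph{upper} bound on the bottom of the spectrum obtained by testing the quadratic form against a suitable cutoff function. Concretely, fix $x_0 \in X$ and a small radius $R$, and set $B = B_R[x_0]$. For the upper bound one uses a test function supported in a ball of radius comparable to $R$ that equals $1$ on a smaller concentric ball: a good choice is $f(y) = (R - d(x_0,y))_+$ restricted appropriately, or a normalized tent function $f_\rho$ which is $1$ on $B_{R/2}[x_0]$, drops linearly to $0$ over the annulus $B_R[x_0]\setminus B_{R/2}[x_0]$, and vanishes outside. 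The key computation is to bound
\[
\langle f, \mathscr{L}^B_r f\rangle_{L^2(B,\mu_r)} = \int \frac{1}{r^{\beta(x)}\mu(B_r(x))}\int_{B_r(x)}(f(x)-f(y))^2\,d\mu(y)\,d\mu(x),
\]
where $|f(x)-f(y)| \le d(x,y)/R \le 2r/R$ on the support of the inner integrand (since $f$ is Lipschitz with constant $1/R$ and the jump size is at most $2r$). This gives the inner double integral bounded by $C (r/R)^2 \mu(\{x : f \text{ not locally constant}\})$; after dividing by $r^{\beta(x)}$ and using $1\le\beta\le M$ plus the doubling bound $\mu(B_r(x)) \asymp$ const (for fixed $r$, by Lemma~\ref{contmeas}), one gets
\[
\langle f, \mathscr{L}^B_r f\rangle_{L^2(B,\mu_r)} \le C\, r^{2-\beta^+(B)}\, R^{-2}\,\mu(B_R[x_0]),
\]
roughly speaking, where $\beta^+(B) = \sup_{y\in B}\beta(y)$. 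Meanwhile $\|f\|^2_{L^2(B,\mu)} \ge c\,\mu(B_{R/2}[x_0]) \asymp \mu(B_R[x_0])$ by doubling.

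**Assembling the contradiction.** Comparing the upper bound on the Rayleigh quotient with the lower bound $\lambda^B_r \ge c R^{-\beta(x_0)}$ from Corollary~\ref{maincor} (valid for $0 < r < r_0$), and using the min-max characterization $\lambda^B_r \le \langle f,\mathscr{L}^B_r f\rangle_{L^2(B,\mu_r)}/\|f\|^2_{L^2(B,\mu_r)} \le \langle f,\mathscr{L}^B_r f\rangle_{L^2(B,\mu_r)}/\|f\|^2_{L^2(B,\mu)}$, one obtains
\[
c\, R^{-\beta(x_0)} \le C\, r^{2-\beta^+(B)}\,R^{-2}.
\]
Now suppose toward a contradiction that $\beta(x_0) < 2$. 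By upper semicontinuity of $\beta$ (Proposition~\ref{betasemi}) and the log-H\"older continuity of $\beta$ under $(E_\beta)$, we may choose $R$ so small that $\beta^+(B_R[x_0])$ is close to $\beta(x_0)$, in particular $\beta^+(B) < 2$, so the exponent $2 - \beta^+(B) > 0$ and $r^{2-\beta^+(B)} \to 0$ as $r \to 0^+$. Then the inequality $c R^{-\beta(x_0)} \le C r^{2-\beta^+(B)} R^{-2}$ forces, for fixed $R$ and $r \downarrow 0$, the left side bounded below by a positive constant while the right side tends to $0$ --- a contradiction. Hence $\beta(x_0) \ge 2$. Since $x_0$ was arbitrary, $\beta \ge 2$ everywhere.

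**The main obstacle.** The delicate point is making the Rayleigh-quotient upper bound uniform and correctly tracking all the constants and measure comparisons: one must ensure that the constant $C$ in $\langle f,\mathscr{L}^B_r f\rangle \le C r^{2-\beta^+(B)} R^{-2}\mu(B)$ does not secretly depend on $r$ through $\mu(B_r(x))$ --- this is where Lemma~\ref{contmeas} (uniform positive lower bound on $\mu(B_r(x))$) and the doubling property are essential. A second subtlety is the interchange between $\beta(x_0)$ and $\beta^+(B_R[x_0])$: one needs that as $R \to 0$, $\beta^+(B_R[x_0]) \to \beta(x_0)$, which follows from upper semicontinuity since $\beta^+(B_R[x_0]) = \sup_{B_R[x_0]}\beta \to \beta(x_0)$ (indeed $\limsup$ of an upper semicontinuous function along shrinking neighborhoods equals its value). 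One must also verify that the tent test function $f$ genuinely lies in $L^2(B,\mu)$ and is supported in $B$ (choosing it supported in $B_R[x_0]$ with the cutoff over $[R/2,R]$ handles this), and that the inner integral $(f(x)-f(y))^2$ is only nonzero when at least one of $x,y$ lies in the annulus where $f$ is non-constant, so that the ``bad set'' has $\mu$-measure $\lesssim \mu(B_R[x_0])$. Once these bookkeeping issues are handled, the contradiction argument closes cleanly.
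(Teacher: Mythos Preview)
Your argument is correct, but the paper follows a cleaner route that avoids the $\beta^+(B)$ bookkeeping entirely. Instead of working with the time-renormalized operator $\mathscr{L}_r$ and the $(E_\beta)$-dependent bound $\lambda^B_r \ge cR^{-\beta(x_0)}$, the paper uses the \emph{un}-renormalized walk $W_r$ and the first part of Theorem~\ref{main1}, namely $\sigma^B_r \ge c/E^+_{r,B}$. The test function is scaled by $1/r$ rather than $1/R$: with $\psi(y)=\frac{R-d(x_0,y)}{r}\chi_{B_R[x_0]}(y)$ one has $|\psi(y)-\psi(z)|\le 1$ whenever $d(y,z)<r$, so the numerator of the Rayleigh quotient for the $W_r$-form is bounded by $\mu(B_R[x_0])$ with no stray power of $r$. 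Combined with $\|\psi\|^2 \gtrsim (R/r)^2\mu(B_{R/2}[x_0])$ and doubling, this gives $\sigma^B_r \lesssim r^2/R^2$, hence $E^+_{r,B}\gtrsim R^2/r^2$. The critical-exponent definition of $\beta(B)$ then yields $\beta(B_R[x_0])\ge 2$ directly for \emph{every} $R$, without first shrinking $R$ to control $\sup_B\beta$.

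The trade-off: your route invokes the stronger, $(E_\beta)$-dependent Faber--Krahn bound and pays for it with the semicontinuity step; the paper's route uses only the elementary half of Theorem~\ref{main1} and the raw definition of $\beta(B)$, so it is shorter and in fact shows that the time-regularity hypothesis is not needed for the conclusion (only doubling is used). Your observation that $\beta^+(B_R[x_0])\to\beta(x_0)$ by upper semicontinuity is valid, and your concern about hidden $r$-dependence via $\mu(B_r(x))$ is unnecessary since that factor cancels in the inner average.
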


\begin{proof}  Let $x\in X,$ $0<R<\diam(X)/2,$ and $0<r<1.$ Let $\psi_{R,r,x}(y):=(\frac{R-d(x,y)}{r})\chi_{B_R[x]}(y).$ Then if $d(y,z)<r,$ \[|\psi_{R,r,x}(y)-\psi_{R,r,x}(z)|^2\leq 1.\] By the min-max principle, 
\[\sigma^{B_R[x]}_r\leq \frac{\langle \psi_{R,r,x}, H^B_r \psi_{R,r,x}\rangle_{L^2(B_R[x],\nu_r)}}{\|\psi_{R,r,x}\|_{L^2(B_R[x],\nu_r)}}\leq \frac{\mu(B_R[x])}{\int_{B_{R/2}[x]}|\psi_{R,r,x}(y)|^2a_r(y)d\mu(y)}.\] Since $a_r\geq 1,$ $|\psi_{R,r,x}|^2a_r$ is bounded below by $R^2/4r^2$ on $B_{R/2}[x].$ Hence \[\sigma_r^{B_R(x)}\leq \frac{4r^2\mu(B_R[x])}{R^2\mu(B_{R/2}[x])}.\] However, since $\mu$ is doubling, there exists a $C_1>0$ such that for all $\rho>0, y\in X,$ 
\[0<\mu(B_{2\rho}(y))\leq C_1\mu(B_\rho(y))<\infty.\] Let $C=4C_1$. Then by Theorem \ref{main1}, there exists a $r_0>0$ and a $c>0$ such that $\sigma^{B_R(x)}_r\geq \frac{c}{E^+_{r,B_{R}(x)}}$ for all $0<r<r_0,0<R<\diam(X)/2, x\in X.$ Therefore, putting these inequalities together we have,
\[cr^{-2}\leq \frac{C}{R^2}E^+_{r,B_R[x]}.\] It then follows that 
\[\beta(B_R[x])\geq 2,\] since otherwise there would exist a $\gamma$ with $\beta(B_R[x])<\gamma<2.$ Then $\eta:=2-\gamma>0$ and \[\frac{c}{r^\eta}\leq \frac{C}{R^2}E^+_{r,B_R[x]}r^\gamma.\] Taking a $\limsup$ of both sides as $r\rightarrow 0^+$ then would imply a contradiction since the right hand side would be $0,$ by the definition of $\beta(B_R[x])$ as a critical exponent, and the left $\infty,$ since $\eta>0.$ Hence $\beta(B_R[x])\geq 2,$ as claimed.  Since $0<R<\diam(X)/2$ was arbitrary, \[\beta(x)\geq 2.\]
\end{proof}

\subsection{Additional Conditions}
We formulate additional conditions that may or may not hold on $(X,d,\mu).$ The first is a stronger versions of the time regularity condition. The second is a time comparison condition, analogous to the condition $(\bar{E})$ in \cite{grigortelcs}. It states that the maximum mean exit time $\phi_{r,B}^+$ from a ball $B_R(x)$ is comparable to $\phi_{r,B}(x)$ with a constant independent of of $B$ and $r$ for $r$ small enough. The last condition, which we call the ``thin boundary condition" is a constraint on the measure $\mu$ that spheres  of the form $\{y\in X\;|\;d(x,y)=r\}$ have $\mu$-measure $0$ provided $r$ is small enough. The second condition will not be used in the sequel, although we show that it implies continuity of the mean exit time functions $\phi_{r,B}$ for $r$ small enough.

Consider again the walk $(\bm{x}(r)_t)_{t\geq 0}$ on a compact, connected metric measure space $(X,d,\mu)$ and the mean exit time $\phi_{r,B}(x)$ of the walk from a ball $B$ starting at $x.$ 

\begin{definition} We say $(X,d,\mu)$ is \textit{strongly (variable) time regular} of exponent $\beta$ if there exists a $C>0,$ $r_0>0$, such that if $0<R<\diam(X)/2$, $0<r<r_0$, and $x\in X,$ then \[\frac{1}{C}R^{\beta(x)}\leq \phi^+_{r,B_R[x]}\leq CR^{\beta(x)}.\]
\end{definition} Clearly if $(X,d,\mu)$ is strongly time regular of exponent $\beta$ then it is time regular of exponent $\beta.$

\begin{definition} $(X,d,\mu)$ satisfies the \textit{time comparability condition} if there exist constants $0<R_0<\diam(X)/2$, $0<r_0<R_0$, $c>0,$ such that for all $0<r<r_0$, if $B=B_R[x]$ for $x\in X$ and $0<R<\diam(X)/2,$ then \[c\phi_{r,B}^+\leq \phi_{r,B}(x).\]
\end{definition}
\begin{proposition} \label{comparison} Suppose $(X,d,\mu)$ satisfies the time comparability condition and the variable time regularity condition. Then if $B=B_R[x_0]$ is a ball with $0<R<\diam(X)/2,$ there exist $c_1>0,\;r_1>0$ so that if $x\in B_{R/4}[x_0]$ and $0<r<r_1$, then $\phi_{r,B}(x)\geq c_1.$ 
\end{proposition}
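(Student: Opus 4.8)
The plan is to reduce the statement to a lower bound on the maximal exit time from a ball \emph{centred at $x$ itself}, where the time comparability and time regularity hypotheses apply directly. Fix $x\in B_{R/4}[x_0]$ and set $B_x:=B_{R/2}[x]$. Since $d(x_0,x)\le R/4$, any $y$ with $d(x,y)\le R/2$ satisfies $d(x_0,y)\le 3R/4\le R$, so $B_x\subseteq B=B_R[x_0]$. Because $B_x\subseteq B$, the walk $(\bm{x}(r)_t)_{t\geq 0}$ cannot leave $B$ before it leaves $B_x$, hence $\tau_{r,B_x}\le\tau_{r,B}$ pointwise on $\Omega_x$, and taking $\mathbb{E}^x$ gives
\[
\phi_{r,B}(x)\ \ge\ \phi_{r,B_x}(x).
\]

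Next I would apply the time comparability condition to the ball $B_x$, which is centred at $x$ and has radius $R/2<\diam(X)/2$: there are constants $r_0>0$ and $c>0$, independent of $x$ and $r$, with $\phi_{r,B_x}(x)\ge c\,\phi^{+}_{r,B_x}$ for all $0<r<r_0$. It then remains to bound $\phi^{+}_{r,B_x}$ below by a constant uniform in $x$. By the time regularity condition $(E_\beta)$, $\mathcal{T}(B_{R/2}[x])\ge \tfrac1C(R/2)^{\beta(x)}$; since $1\le\beta(x)\le M$ for a fixed $M$ by Proposition~\ref{semicont1} and $R$ is fixed, $(R/2)^{\beta(x)}\ge\delta_R:=\min\{R/2,(R/2)^{M}\}>0$, so $\mathcal{T}(B_x)\ge\delta_R/C$ uniformly in $x$. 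Recalling $\mathcal{T}(B_x)=\limsup_{r\to 0^+}\phi^{+}_{r,B_x}$, this forces $\phi^{+}_{r,B_x}\ge\delta_R/(2C)$ for all sufficiently small $r$, and combining the displayed inequality with the comparability bound yields $\phi_{r,B}(x)\ge c\,\delta_R/(2C)=:c_1$ for $0<r<r_1$, where $c_1,r_1>0$ depend only on $R$ (through $\delta_R$, $C$, $c$, $r_0$), hence uniformly over $x\in B_{R/4}[x_0]$.

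The step I expect to be the main obstacle is the last implication: $\mathcal{T}$ is defined as a $\limsup$ as $r\to 0^+$, whereas the conclusion demands a lower bound on $\phi^{+}_{r,B_x}$ for \emph{every} small $r$ and uniform in $x$. The uniformity in $x$ is handled by the uniform constants in both hypotheses together with the two-sided bound on $\beta$; the genuine difficulty is upgrading ``$\limsup\ge\delta_R/C$'' to a bound at all small scales, since a priori only a subsequence of scales is controlled. This is precisely where the full strength of the time comparability condition is needed — it anchors the exit-time profile $y\mapsto\phi_{r,B_x}(y)$ to its central value $\phi_{r,B_x}(x)$ simultaneously for all admissible scales — and I would combine it with the elementary monotonicity $\phi^{+}_{r,B'}\le\phi^{+}_{r,B''}$ for $B'\subseteq B''$ (to compare $B_x$ with slightly larger concentric balls). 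All remaining ingredients — monotonicity of exit times under shrinking the domain, the strong Markov property, and the estimate $(R/2)^{\beta(x)}\ge\delta_R$ — are routine.
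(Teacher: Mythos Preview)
Your chain $\phi_{r,B}(x)\ge\phi_{r,B_x}(x)\ge c\,\phi^+_{r,B_x}$ is exactly the paper's, and you correctly identify the right place to worry. But your handling of uniformity in $x$ is where the gap sits. The bound $\mathcal{T}(B_x)\ge\delta_R/C$ is indeed uniform in $x$; what is \emph{not} uniform is the scale threshold. From $\limsup_{r\to 0^+}\phi^+_{r,B_x}\ge\delta_R/C$ you get, for each $x$ separately, some $r_1(x)>0$ along which the bound holds --- but nothing in the hypotheses prevents $r_1(x)\to 0$ as $x$ varies over $B_{R/4}[x_0]$. Your suggested remedy, comparing $B_x$ with slightly \emph{larger} concentric balls, runs the wrong way: monotonicity gives an upper bound on $\phi^+_{r,B_x}$, not the lower bound you need.

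The paper's fix is a single extra inclusion that removes the $x$-dependence entirely. Since $x\in B_{R/4}[x_0]$, one has $B_{R/4}[x_0]\subset B_{R/2}[x]=B_x$, and therefore
\[
\phi^+_{r,B_x}\ \ge\ \phi^+_{r,\,B_{R/4}[x_0]}.
\]
Now the right-hand side involves one \emph{fixed} ball, independent of $x$, and you need only a single threshold $r_1$ for it. Time regularity applied to that fixed ball then yields the uniform constant $c_1$. So the missing idea is not a clever analytic upgrade of the $\limsup$; it is to pass down to a smaller ball that does not move with $x$.
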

\begin{proof} Let $x\in B_{r/4}[x_0].$ Then if $0<r<r_0,$ $B_{r/4}[x_0]\subset B_{r/2}[x]\subset B.$ Hence \[\phi_{r,B}(x)\geq \phi_{r,B_{R/2}[x]}(x)\geq c\phi^+_{r,B_{R/2}[x]}\geq c\phi^+_{r,B_{R/4}[x_0]}.\] However, by the time regularity condition there exists a $c_1>0$ and $0<r_1<r_0$ such that if $0<r<r_1$ then $\phi_{B_{R/4}[x_0]}^+\geq c_1.$
\end{proof}

It will be desirable for us to have a condition to ensures that the map $(r,x)\mapsto \mu(B_r(x))$ is continuous for $r$ small enough. Note, that there are examples of connected, compact, Ahlfors regular spaces in which the map $(r,x)\mapsto \mu(B_r(x))$ has discontinuities at all scales. See, for example, the following figure.

\begin{definition}
We will say that a Borel measure $\mu$ on $X$ satisfies the \textit{thin boundary condition} if there exists an $r_0>0$ such that for all $0<r\leq r_0,$ $\mu(B_r(x))=\mu(B_r[x]).$ 
\end{definition}

The thin boundary condition is equivalent to the spheres $\{y\in X\;|\;d(x,y)=r\}$ having $\mu$-measure $0$ for all $r<r_0$ and $x\in X.$ It is also equivalent to the existence of an $r_0>0$ such that the map $r \mapsto \mu(B_r(x))$ is continuous on $[0,r_0]$\footnote{A priori there are at most countably many discontinuities since the map $r\mapsto \mu(B_r(x))$ is non-decreasing.}  for all $x\in X.$
\vspace*{1 in}
\begin{figure}[H]
\centering
\includegraphics[scale=.2]{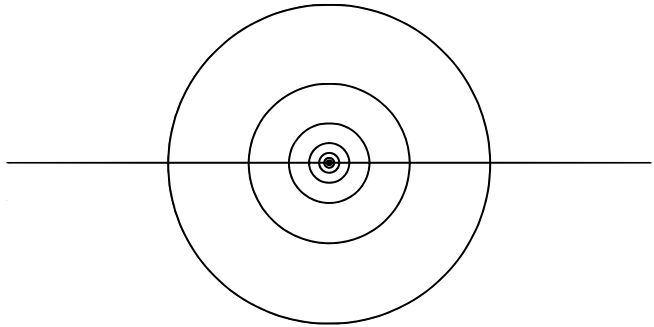}
\caption{Consider the segment $[-1,1]\times \{0\}$ in the plane together with the infinite union of concentric circles of the form $\sqrt{x^2+y^2}=\frac{1}{2^n}$ for $n\in\mathbb{Z}^+$ with the induced metric from the Euclidean plane. The space is then connected, compact, and Ahlfors regular with Ahlfors regular measure the $1$-dimensional Hausdorff measure $\mu$. However, the map $((x,y),r)\mapsto \mu(B_r(x))$ is discontinuous at $((0,0),\frac{1}{2^n})$ for all $n\in \mathbb{Z}^+.$}
\end{figure}
\vspace*{1 in}
The thin boundary condition is sufficient to prove the joint continuity of $\mu(B_r(x))$ in $r$ and $x$ for $r\leq r_0.$ 
It is needed for the proof of the following lemma.
 
 \begin{lemma} \label{contmeas1} Suppose $\mu$ satisfies the thin boundary condition with constant $r_0.$ Then for $r\leq r_0$ and $x\in X,$ $\lim_{y\rightarrow x}\mu(B_r(x)\triangle B_r(y))=0.$ Moreover, the function $(r,x)\mapsto \mu(B_r(x))$ is continuous on $[0,r_0]\times X,$ where $r_0>0$ is the constant from condition (c). In particular, $(r,x)\mapsto \mu(B_r(x))$ is uniformly continuous on $[0,r_0]\times X.$
 \end{lemma}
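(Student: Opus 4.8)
The plan is to control the $\mu$-measure of a symmetric difference of two balls by the measure of a thin annulus, then let the annulus shrink, using continuity of measure (available since $\mu$ is finite) together with the thin boundary condition to kill the residual gap between open and closed balls. For the first assertion, fix $r\le r_0$ and $x\in X$ and set $\delta=d(x,y)$; the triangle inequality gives $B_{r-\delta}(x)\subseteq B_r(y)\subseteq B_{r+\delta}(x)$, and the same sandwich trivially holds with $B_r(y)$ replaced by $B_r(x)$, so $B_r(x)\triangle B_r(y)\subseteq B_{r+\delta}(x)\setminus B_{r-\delta}(x)$ and hence $\mu(B_r(x)\triangle B_r(y))\le \mu(B_{r+\delta}(x))-\mu(B_{r-\delta}(x))$. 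Letting $\delta\to 0^+$, continuity of measure from above and below gives $\mu(B_{r+\delta}(x))\to\mu(\bigcap_{\delta>0}B_{r+\delta}(x))=\mu(B_r[x])$ and $\mu(B_{r-\delta}(x))\to\mu(\bigcup_{\delta>0}B_{r-\delta}(x))=\mu(B_r(x))$; since $r\le r_0$, the thin boundary condition forces $\mu(B_r[x])=\mu(B_r(x))$, so the bound tends to $0$. This proves $\lim_{y\to x}\mu(B_r(x)\triangle B_r(y))=0$ and in particular continuity of $x\mapsto\mu(B_r(x))$.

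For joint continuity I would run the same argument allowing the radius to vary as well. Fix $(r_*,x_*)\in[0,r_0]\times X$, put $s=|r-r_*|+d(x,x_*)$, and adopt the convention $B_\rho(\cdot)=\emptyset$ for $\rho\le 0$. The triangle inequality yields $B_{r_*-s}(x_*)\subseteq B_r(x)\subseteq B_{r_*+s}(x_*)$, and these inclusions also hold with $B_r(x)$ replaced by $B_{r_*}(x_*)$, whence $|\mu(B_r(x))-\mu(B_{r_*}(x_*))|\le \mu(B_{r_*+s}(x_*))-\mu(B_{(r_*-s)^+}(x_*))$. As $(r,x)\to(r_*,x_*)$ we have $s\to 0^+$, and continuity of measure together with the thin boundary condition (giving $\mu(B_{r_*}[x_*])=\mu(B_{r_*}(x_*))$ for $r_*\le r_0$) drives the right-hand side to $0$; the corner $r_*=0$ additionally uses that $\mu$ has no atoms, which is automatic in the situations considered here since a doubling measure on a connected space with more than one point satisfies reverse doubling and hence $\mu(\{x\})=0$. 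Thus $(r,x)\mapsto\mu(B_r(x))$ is continuous on $[0,r_0]\times X$.

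Finally, $[0,r_0]\times X$ is compact as a product of two compact spaces, so a continuous real-valued function on it is uniformly continuous, giving the last claim. The whole argument is elementary; the only point where the hypothesis genuinely enters is the identification of the two one-sided limits of measure, and the main (modest) obstacle is the bookkeeping of choosing a single parameter $s=|r-r_*|+d(x,x_*)$ so that one annulus simultaneously sandwiches both balls, while handling the empty-ball convention at $r_*=0$.
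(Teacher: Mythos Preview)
Your argument is correct and is essentially the same as the paper's: both sandwich $B_r(x)\triangle B_{r'}(y)$ inside an annulus $B_{r_*+\delta}(x_*)\setminus B_{r_*-\delta}(x_*)$ via the triangle inequality, then use continuity of measure together with the thin boundary condition to make the annulus measure small, and finally invoke compactness of $[0,r_0]\times X$ for uniform continuity. The only difference is cosmetic---you package $|r-r_*|+d(x,x_*)$ into a single parameter $s$, whereas the paper fixes an $\epsilon$, chooses $\delta$ so that $\mu(B_{r+\delta}(x))-\mu(B_{r-\delta}(x))<\epsilon$, and then takes $d(x,y)<\delta/2$ and $|r-r'|<\delta/2$---and you are slightly more explicit about the corner case $r_*=0$ requiring $\mu(\{x\})=0$, which in the paper's setting follows from variable Ahlfors regularity.
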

 \begin{proof} We adopt the notation $B_s(x)=\emptyset$ if $s\leq 0.$ Let $0\leq r\leq r_0\; x\in X.$ Let $\epsilon>0.$ Since $\mu(S_r(x))=0$ by the boundary condition (c); by continuity of measure there exists a $\delta>0$ such that $\mu(B_{r+\delta}(x))-\mu(B_{r-\delta}(x))<\epsilon.$ Suppose $y\in X$ with $d(x,y)<\delta/2$ and $r'\in[0,r]$ with $|r-r'|<\delta/2.$ Then if $z\in B_{r-\delta}(x)$ then $d(z,y)\leq d(z,x)+d(x,y)<r-\delta+\delta/2=r-\delta/2<r'.$ Hence $B_{r-\delta}(x)\subset B_{r}(x)\cap B_{r'}(y).$ Now if $z\in B_{r'}(y)$ then $d(z,x)\leq d(z,y)+d(y,x)<r'+\delta/2<\delta+r.$ So $B_r(x)\cup B_{r'}(y)\subset B_{r+\delta}(x).$ Then \begin{equation*}
 \begin{split}
 &\left|\mu(B_r(x))-\mu(B_{r'}(y))\right|\leq \mu(B_r(x))-\mu(B_r(x)\cap B_{r'}(y))+\mu(B_{r'}(y))-\mu(B_{r'}(y)\cap B_r(x))\\&=\mu(B_r(x)\triangle B_{r'}(y))=\mu((B_r(x)\cap B_{r'}(y)^c) \cup (B_{r'}(y)\cap B_{r}(x)^c))\\&= \mu((B_r(x)\cup B_{r'}(y))\cap (B_r(x)\cap B_{r'}(y))^c)=\mu(B_r(x)\cup B_{r'}(y))-\mu(B_r(x)\cap B_{r'}(y))\\&
 \leq \mu(B_{r+\delta}(x))-\mu(B_{r-\delta}(x))<\epsilon.
 \end{split}
 \end{equation*} Since $[0,r_0]\times X$ is compact as a product of compact spaces, it follows that the map $(r,x)\mapsto \mu(B_r(x))$ is uniformly continuous on $[0,r_0]\times X.$
\end{proof}

\begin{lemma} Suppose $\mu$ is variable Ahlfors regular and $(X,d,\mu)$ is time regular of exponent $\beta.$ Then for any $r_1>0,$ the maps $(r,x)\mapsto r^{\alpha(x)}$ and $(r,x)\mapsto r^{\beta(x)}$ are uniformly continuous on $[0,r_1]\times X.$\end{lemma}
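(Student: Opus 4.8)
The plan is to reduce the joint continuity of $(r,x)\mapsto r^{\alpha(x)}$ and $(r,x)\mapsto r^{\beta(x)}$ on $[0,r_1]\times X$ to the continuity of the exponents $\alpha$ and $\beta$ together with a uniform bound on them, and then invoke compactness to upgrade continuity to uniform continuity. Recall that $\mu$ being variable Ahlfors regular forces $Q=\alpha$ to be log-H\"older continuous (Lemma \ref{log}), hence continuous; and time regularity with exponent $\beta$ forces $\beta$ to be log-H\"older continuous, hence continuous, by the lemma immediately preceding this one. Moreover, by Proposition \ref{semicont} (for $\alpha$) and Proposition \ref{betasemi} (for $\beta$), both exponents are bounded above, say by some $M<\infty$; and $\beta\geq 1$, while $\alpha\geq 0$, so both take values in a fixed compact interval $[0,M]$.

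First I would handle the subtlety at $r=0$: the map $(r,s)\mapsto r^s$ on $[0,r_1]\times[0,M]$ is continuous, including at points $(0,s)$ with $s>0$ where it equals $0$, and also at $(0,0)$ where we must adopt the standing convention $0^0=1$ used throughout the paper for $|U|^0$. The only genuine discontinuity of $(r,s)\mapsto r^s$ on $[0,\infty)\times[0,\infty)$ is at $(0,0)$ if one insists on $0^0=1$; but since $\alpha$ may vanish (e.g. at isolated points), I would note that near any $x_0$ with $\alpha(x_0)=0$ the relevant issue does not arise because for $\beta$ we have $\beta\geq 1>0$, and for $\alpha$ one checks directly that if $\alpha(x_0)=0$ then by upper semicontinuity $\alpha$ is small near $x_0$, but $r^{\alpha(x)}\to 1 = r^{0}$ as $r\to 0^+$ is exactly what the convention gives, so continuity at $(0,x_0)$ with the convention holds as well. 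Thus in all cases the composite $(r,x)\mapsto r^{\alpha(x)}$ is a composition of the continuous map $x\mapsto \alpha(x)$ (into $[0,M]$) with the continuous map $(r,s)\mapsto r^s$ on $[0,r_1]\times[0,M]$ (with the $0^0=1$ convention), hence continuous; similarly for $\beta$, where one does not even need the convention since $\beta\geq 1$.

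Then, since $[0,r_1]\times X$ is compact — being the product of the compact interval $[0,r_1]$ with the compact space $X$ — any continuous real-valued function on it is automatically uniformly continuous, by the Heine--Cantor theorem. This yields both claimed uniform continuity statements.

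I do not expect a serious obstacle here; the one point requiring a little care is the behavior at $r=0$ combined with the possibility $\alpha(x)=0$, which is why I would explicitly invoke the paper's $0^0=1$ convention and note that continuity of $(r,s)\mapsto r^s$ at $(0,0)$ under that convention can fail along sequences with $s_n\to 0$ and $r_n\to 0$ at incomparable rates — but this does \emph{not} affect us because $\alpha$ is continuous, so if $\alpha(x_n)\to 0$ then we are approaching a point $x_0$ with $\alpha(x_0)=0$, and along $r_n\to 0$ one has $r_n^{\alpha(x_n)}\to 1$ since $\alpha(x_n)\log r_n$ need not go to $0$ — so I should double-check: in fact $r^{\alpha(x)} = e^{\alpha(x)\log r}$ and if $\alpha(x_0)=0$ this is a genuine discontinuity point unless $\alpha\equiv 0$ in a neighborhood. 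The honest fix is to observe that in the intended applications (and as the conclusion is stated) one really wants continuity on $[0,r_1]\times X$; if $\alpha$ can vanish without vanishing on a neighborhood, one restricts to $(0,r_1]\times X$ and notes the extension to $r=0$ is by the constant $1$ only where $\alpha=0$ persists. The cleanest route, which I would adopt in the write-up, is: prove uniform continuity on $[\eta,r_1]\times X$ for every $\eta>0$ by the compactness argument above (no $r=0$ issue), which is all that is actually needed downstream, and remark that when $\alpha,\beta$ are bounded below by a positive constant (true for $\beta$, and true for $\alpha$ on the relevant fractals) one gets uniform continuity on all of $[0,r_1]\times X$.
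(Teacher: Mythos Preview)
Your approach is essentially the paper's: use continuity of $\alpha,\beta$ (via log-H\"older continuity from Ahlfors/time regularity), compose with the continuous map $(r,s)\mapsto r^s$, and invoke Heine--Cantor on the compact product $[0,r_1]\times X$. The paper's proof is exactly this, in three lines.

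Your long detour about the possibility $\alpha(x_0)=0$ and the $0^0$ convention is unnecessary and is the one place you drift from the paper. Variable Ahlfors regularity is \emph{defined} for an exponent $Q:X\to(0,\infty)$, and Theorem~\ref{main} gives $Q=\alpha$; hence $\alpha$ is strictly positive under the hypothesis. Since $X$ is compact and $\alpha$ is continuous, $\alpha$ attains a positive minimum $m>0$, so $(r,x)\mapsto(r,\alpha(x))$ lands in $[0,r_1]\times[m,M]$, where $(r,s)\mapsto r^s$ is genuinely continuous (including at $r=0$, where it equals $0$). The same holds for $\beta\ge 1$. So there is no $(0,0)$ issue to worry about, and you can state the result cleanly on all of $[0,r_1]\times X$ without the $[\eta,r_1]$ hedge.
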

\begin{proof} Let $r_1>0$ and $\psi(r,s)=r^s$ for $r,s\in [0,\infty)\times (0,\infty).$ Then $\psi$ is continuous. However, by Ahlfors regularity and time regularity, $\alpha$ and $\beta$ are positive and continuous. Hence $\psi(\cdot, \alpha(\cdot))$ and $\psi(\cdot,\beta(\cdot))$ are continuous on $[0,r_1]\times X.$ The uniform continuity follows by compactness of $[0,r_1]\times X.$
\end{proof}

The thin boundary condition allows us to show that the approximate exit time functions $\phi_{r,B}$ are continuous on $B.$

\begin{lemma} Suppose $\mu$ satisfies the thin boundary condition. Then there exists an $r_0>$ such that if $0<r<r_0$ and if $f$ if a bounded measurable function, then the map $x\mapsto A_rf(x):=\frac{1}{\mu(B_r(x))}f(y)d\mu(y)$ is continuous. 
\end{lemma}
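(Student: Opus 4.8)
The plan is to reduce everything to Lemma \ref{contmeas1}, which already contains the analytic heart of the matter. Let $r_0>0$ be the constant from the thin boundary condition, and fix $0<r<r_0$. First I would record two consequences of Lemma \ref{contmeas1}: (i) for each $x\in X$ we have $\mu(B_r(x)\triangle B_r(y))\to 0$ as $y\to x$, and (ii) the map $x\mapsto \mu(B_r(x))$ is continuous on $X$; combined with the positivity lower bound from Lemma \ref{contmeas} (applicable since $\mu$ has full support and $X$ is compact), there is a constant $c_r>0$ with $\mu(B_r(x))\ge c_r$ for all $x\in X$. Also $\mu(B_r(x))\le\mu(X)<\infty$.

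Next I would fix $x\in X$ and a sequence (or net) $x'\to x$ and estimate, using the add-and-subtract trick,
\[
|A_rf(x)-A_rf(x')|\le \frac{1}{\mu(B_r(x))}\Bigl|\int_{B_r(x)}f\,d\mu-\int_{B_r(x')}f\,d\mu\Bigr| + \Bigl|\int_{B_r(x')}f\,d\mu\Bigr|\,\Bigl|\frac{1}{\mu(B_r(x))}-\frac{1}{\mu(B_r(x'))}\Bigr|.
\]
For the first summand, since $f$ is bounded,
\[
\Bigl|\int_{B_r(x)}f\,d\mu-\int_{B_r(x')}f\,d\mu\Bigr|\le \int_{B_r(x)\triangle B_r(x')}|f|\,d\mu\le \|f\|_\infty\,\mu(B_r(x)\triangle B_r(x')),
\]
which tends to $0$ by (i), and the prefactor $1/\mu(B_r(x))$ is a fixed finite number. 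For the second summand, $|\int_{B_r(x')}f\,d\mu|\le\|f\|_\infty\,\mu(X)$ is bounded, and
\[
\Bigl|\frac{1}{\mu(B_r(x))}-\frac{1}{\mu(B_r(x'))}\Bigr| = \frac{|\mu(B_r(x'))-\mu(B_r(x))|}{\mu(B_r(x))\,\mu(B_r(x'))}\le \frac{1}{c_r^2}\,|\mu(B_r(x'))-\mu(B_r(x))|\to 0
\]
by (ii). Hence $A_rf(x')\to A_rf(x)$, proving continuity of $A_rf$ for every bounded measurable $f$ and every $0<r<r_0$.

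There is essentially no obstacle here beyond correctly packaging Lemma \ref{contmeas1}: the only points to be careful about are that boundedness of $f$ is genuinely used (to pass from the symmetric-difference measure estimate to a bound on the integral difference, and to bound $\int_{B_r(x')}f$) and that the uniform positive lower bound $\mu(B_r(x))\ge c_r$ is what makes the reciprocal $1/\mu(B_r(\cdot))$ continuous. If one wants the conclusion with a single $r_0$ working simultaneously for a whole interval of radii, one can additionally note that Lemma \ref{contmeas1} gives joint continuity (indeed uniform continuity) of $(r,x)\mapsto\mu(B_r(x))$ on $[0,r_0]\times X$, but for the statement as written the fixed-$r$ version above suffices.
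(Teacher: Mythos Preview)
Your argument is correct and is essentially the same as the paper's: both split $A_rf(x)-A_rf(x')$ into a term controlled by $\mu(B_r(x)\triangle B_r(x'))$ and a term controlled by $|\mu(B_r(x))-\mu(B_r(x'))|$, invoking Lemma~\ref{contmeas1} for these and the uniform positive lower bound on $\mu(B_r(\cdot))$ together with boundedness of $f$ to finish. The only difference is the order in which you add and subtract, which is cosmetic.
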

\begin{proof} By the thin boundary condition we may choose $r_0>0$ such that for some $C>0$ and $0<r<r_0,$ the conclusions of Lemma \ref{contmeas1} are satisfied. Since $f$ is bounded, let $C>0$ so that $|f|\leq C.$ Let $\epsilon>0.$ Since the map $z\mapsto \mu(B_r(z))$ is continuous and positive, by compactness there exists a $c>0$ so that $c\leq \mu(B_r(z))$ for all $z\in X.$ Moreover, by the proof of Lemma \ref{contmeas1} there exists a $\delta>0$ so that if $d(x,y)<\delta$ then $|\mu(B_r(x))-\mu(B_r(y))|<\frac{c\epsilon}{2C}$ and $|\mu(B_r(x)\triangle B_r(y))|<\frac{c\epsilon}{2C}.$ Then for $d(x,y)<\delta,$  \begin{equation*}
\begin{split}
& |A_rf(x)-A_rf(y)| \leq \\&
\left|\left(1-\frac{\mu(B_r(x))}{\mu(B_r(y))}\right)\frac{1}{\mu(B_r(x))}\int_{B_r(x)}|f(w)|d\mu(w)\right| +\left|\frac{1}{\mu(B_r(y))}\int_{B_r(x)\triangle B_r(y)}|f(w)|d\mu(w)\right|\\&\leq \left( \left|\frac{\mu(B_r(y))-\mu(B_r(x))}{\mu(B_r(y))}\right|+\left|\frac{\mu(B_r(x)\triangle B_r(y))}{\mu(B_r(y))}\right|\right)C<\epsilon. \end{split}
\end{equation*}
\end{proof}

\begin{proposition}
\label{exitcont} Suppose $B=B_R[x_0]$ for $x_0\in X$ and $0<R<\diam(X)/2.$ Suppose $(X,d,\mu)$ is variable Ahlfors regular, satisfies the variable time regularity condition, and $\mu$ satisfies the thin boundary condition. Then there exists an $r_0>0$ such that for all $0<r<r_0,$ $\phi_{r,B}$ is continuous on $B.$ 
\end{proposition}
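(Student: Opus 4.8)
The plan is to establish continuity of $\phi_{r,B}$ by showing it is a fixed point of a contraction on the Banach space $C(B)$ of continuous functions on $B$, exploiting that the relevant averaging operator preserves continuity under the thin boundary condition. First I would recall from Proposition \ref{markov} (applied to the discrete-time walk $\bm{y}(r)$ generated by $P_r$, or rather its continuous-time renormalization via Proposition \ref{eq1}) that $\phi_{r,B}$ satisfies the exit-time equation $\phi_{r,B}(x) = \tau_r(x) + \int p_r(x,y)\phi_{r,B}(y)\,d\mu(y) = \tau_r(x) + P_r^B\phi_{r,B}(x)$ for $x \in B$, with $\phi_{r,B}$ vanishing on $B^c$. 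Since $\tau_r(x) = r^{\beta(x)}$ is continuous on $X$ (by the lemma just above, using that $\beta$ is continuous under time regularity), it suffices to show that $f \mapsto P_r^B f$ maps $C(B)$ into $C(B)$ and that $\phi_{r,B}$, which is the unique $L^2$ solution of $(I - P_r^B)u = \tau_r\chi_B$ by Lemma \ref{Pbound}, actually agrees with the unique continuous solution obtained by iterating the contraction.

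Next I would handle the continuity of $P_r^B$ on $C(B)$. Writing out $P_r^B f(x) = \chi_B(x) q_r(x)^{-1}\frac{1}{\mu(B_r(x))}\int_{B_r(x)\cap B}\left(1+\frac{d_r(x)}{d_r(y)}\right)f(y)\,d\mu(y)$, I note that $d_r(x) = \mu(B_r(x))r^{\beta(x)}$ is continuous and bounded away from $0$ and $\infty$ (for fixed $r$), and $q_r(x)$ is likewise continuous and bounded below by $1$. The only potential source of discontinuity is the dependence of the domain of integration $B_r(x) \cap B$ on $x$, together with the weight $\frac{1}{\mu(B_r(x))}$. The thin boundary condition, through Lemma \ref{contmeas1}, gives $\lim_{y\to x}\mu(B_r(x)\triangle B_r(y)) = 0$ for $r \le r_0$, and the preceding lemma (on $x \mapsto A_r g(x)$) already packages exactly the estimate needed: if $g$ is bounded measurable then $x \mapsto \frac{1}{\mu(B_r(x))}\int_{B_r(x)} g(y)\,d\mu(y)$ is continuous. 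Applying this to $g = \left(1+\frac{d_r(\cdot)}{d_r(y)}\right)f(y)\chi_B(y)$ — absorbing the $x$-dependence in $d_r(x)$ outside the integral since it factors as $1 \cdot \int f\chi_B + d_r(x)\int \frac{f\chi_B}{d_r(y)}$ — shows $P_r^B f \in C(B)$ whenever $f \in C(B)$ (and the endpoint continuity on $\partial B$ is automatic as $\phi_{r,B}$ and each iterate vanish on $B^c$, matching by closedness of $B$... here one must be slightly careful, but since we only claim continuity on $B$ as a subspace this is fine).

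Then I would invoke Lemma \ref{Pbound}: $\|P_r^B\|_{\mathscr{B}(L^2(B,\rho_r))} < 1$, and since $\mu \asymp \rho_r$ with fixed $r$ this is a genuine bound. However, for the $C(B)$ argument I actually want a sup-norm contraction. To get it, observe that $P_r^B 1 \le 1$ pointwise with $P_r^B 1 < 1$ strictly on $B_r$-neighborhoods of $B^c$; iterating, since $X$ is connected and covered by finitely many balls $B_r(x_i)$, there is an $m$ with $\sup_{x\in B}(P_r^B)^m 1(x) =: \theta < 1$. Hence $(P_r^B)^m$ is a sup-norm contraction on $C(B)$, so $\sum_{k\ge 0}(P_r^B)^k \tau_r\chi_B$ converges uniformly to a continuous function $\tilde\phi$ solving $(I - P_r^B)\tilde\phi = \tau_r\chi_B$. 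By uniqueness of the $L^2$ solution (Lemma \ref{Pbound}), $\tilde\phi = \phi_{r,B}$ in $L^2$, hence $\mu$-a.e., and since both sides are... well, $\phi_{r,B}$ is only a priori $L^2$, so I conclude instead that $\phi_{r,B}$ has a continuous representative, namely $\tilde\phi$, which is what the statement asserts (identifying $\phi_{r,B}$ with this representative). The main obstacle I anticipate is precisely the bookkeeping around the Neumann series converging in $C(B)$ rather than merely $L^2$ — i.e., upgrading the $L^2$ contraction of Lemma \ref{Pbound} to a sup-norm statement via the connectedness/finite-cover argument — and making sure the thin boundary condition is used correctly to control the moving domain $B_r(x) \cap B$ uniformly; everything else is a routine consequence of the lemmas already proved in this section.
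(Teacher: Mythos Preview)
Your approach is correct, but it takes a longer route than the paper's. The paper uses a one-step bootstrap rather than a Neumann series in $C(B)$: since $\phi_{r,B}$ is already known to be \emph{bounded} (from $\phi_{r,B}^+<\infty$, via time regularity or simply because $(I-P_r^B)^{-1}$ is bounded), the preceding averaging lemma applies directly to the bounded measurable functions $\phi_{r,B}$ and $\phi_{r,B}/d_r$ themselves. Writing
\[
P_r^B\phi_{r,B}(x)=q_r(x)^{-1}A_r\phi_{r,B}(x)+q_r(x)^{-1}d_r(x)\,A_r\!\bigl(\tfrac{\phi_{r,B}}{d_r}\bigr)(x),
\]
each factor on the right is continuous ($q_r$, $d_r$ by the regularity assumptions; each $A_r$-term by the averaging lemma). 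Plugging this into the exit-time equation $\phi_{r,B}=\tau_r+P_r^B\phi_{r,B}$ exhibits $\phi_{r,B}$ directly as a sum and product of continuous functions on $B$. No contraction in $C(B)$, no iteration, and no identification step are needed.

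What your approach gains is uniform convergence of the Neumann series $\sum_k (P_r^B)^k\tau_r$ on $B$, a slightly stronger conclusion; what it costs is the extra connectedness/finite-cover argument for the sup-norm contraction of $(P_r^B)^m$, plus the final identification step. On that last point, note that $\phi_{r,B}(x)=\mathbb{E}^x\tau_{r,B}$ is defined \emph{pointwise}, so equality with $\tilde\phi$ must hold pointwise, not merely $\mu$-a.e.; this does follow by iterating the pointwise equation from Proposition~\ref{eq1}, but is one more detail to track that the paper's direct argument avoids entirely.
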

\begin{proof}
Let $r_0>0$ and $C_1>0$ so that for $0<r<r_0,$ the conclusions of Lemma \ref{contmeas1} are satisfied and $\phi_{r,B}^+\leq C_2.$ For $x\in B,$ since $\mathscr{L}_r\phi_{r,B}(x)=1,$ $\phi_{r,B}(x)=\tau_r(x)-P_r\phi_{r,B}(x).$ Since the map $x\mapsto \tau_r(x)$ is continuous by the time regularity condition, we need only prove continuity of the map $x\mapsto P_r\phi_{r,B}(x)$ on $B.$ For $x\in B,$ using the notation $A_r$ for the averaging operator of the previous lemma, $P^B_r\phi_{r,B}(x)=q_r(x)^{-1}A_r\phi_{r,B}(x)+q_r(x)^{-1}d_r(x)A_r(\frac{\phi_{r,B}}{d_r})(x).$ However, since $x\mapsto d_r(x)$ is continuous and positive and $X$ is compact, $\frac{1}{d_r}$ is bounded. Hence $\phi_{r,B}$ and $\frac{\phi_{r,B}}{d_r}$ are bounded. Then by the previous lemma, $q_r$, $A_r\phi_{r,B},$ and $A_r(\frac{\phi_{r,B}}{d_r}$ are continuous. Since $q_r\geq 1,$ also $q_r^{-1}$ is continuous. The result then follows since $P^B_r\phi_{r,B}$ is a sum of a finite products of continuous functions. \end{proof}

\chapter{Convergence of Approximating Forms}
In this chapter we apply the theory of $\Gamma\mhyphen$ convergence outlined in section 2.4 in an attempt to construct a non-trivial limiting Dirichlet form under the conditions of variable Ahlfors regularity and variable time regularity. 

\subsubsection{Kumagai-Sturm Construction}
In \cite{sturm}, the authors consider a locally compact, separable metric space $(X,d)$ with a non-negative Radon measure $\mu$ with the property that $\mu(B_r(x))>0$ for all $x\in X,r>0.$ For each $r>0$ they fix a measurable non-negative function $k_r:X\times X\rightarrow \mathbb{R}$ with the property that $k_r$ vanishes outside the $r$-diagonal $\{(x,y)\in X\times X\;|\;d(x,y)\leq r\}$ and satisfies $\sup_{x\in K} \int_K(k_r(x,y)+k_r(y,x))d\mu(y)<\infty$ for each $K$ compact.

Then a Dirichlet form $\mathscr{E}_r$ is defined by $\mathscr{E}_r(u)=\int_X \int_X |
u(x)-u(y)|^2 k_r(x,y)d\mu(x)d\mu(y)$ with $\mathscr{D}(\mathscr{E}_r):=\{u\in L^2(\mu)\;|\;\mathscr{E}_r(u)<\infty\}.$

They fix a sequence $(r_n)$ of positive numbers decreasing to $0$ and set \[\mathscr{E}^*(u):=\limsup_{n\rightarrow \infty} \mathscr{E}_{r_n}(u)\] and \[\mathscr{F}^*:=\{u\in C_0(X)\;|\;\mathscr{E}^*(u)<\infty\}.\]

They then use the sequential compactness properties of $\Gamma$-limits to prove the following.

\begin{theorem} (Kumagai-Sturm \cite{sturm})
Suppose $\mathscr{F}^*$ is dense in $C_0(X)$. Then for a suitable subsequence  
${r_n}_k$ of $r_n$ the following $\Gamma-$limit exists for each $u\in L^2(\mu)$
\[ \mathscr{E}_0(u):=\lim_{\alpha \rightarrow 0^+} \limsup_k \inf_{v\in L^2,\|u-v\|\leq 
\alpha} \mathscr{E}_{{r_n}_k}(u).\] Moreover, $\mathscr{E}_0$ is dominated by $\mathscr{E}^*$ and can be extended to a regular Dirichlet form $(\mathscr{E},
\mathscr{F})$ on $L^2(\mu)$ with core $\mathscr{F}^*.$ Hence there exists a $\mu$-reversible strong Markov process associated with $(\mathscr{E},\mathscr{F}).$
\end{theorem}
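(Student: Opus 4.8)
The plan is to realize $(\mathscr{E},\mathscr{F})$ as the form-closure of the restriction of a $\Gamma$-limit to $\mathscr{F}^*$, producing the limit by the compactness machinery of Theorem \ref{gammacomp} and using the density hypothesis to force regularity. First I would record that every $\mathscr{E}_{r_n}$ is strongly Markovian: the normal contraction $\underline{\overline{u}}=\min\{\max\{u,0\},1\}$ is $1$-Lipschitz, so $|\underline{\overline{u}}(x)-\underline{\overline{u}}(y)|\le|u(x)-u(y)|$ pointwise, whence $\mathscr{E}_{r_n}(\underline{\overline{u}})\le\mathscr{E}_{r_n}(u)$ and, taking $\limsup$, $\mathscr{E}^*(\underline{\overline{u}})\le\mathscr{E}^*(u)$; in particular $\mathscr{F}^*$ is stable under normal contraction. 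Since $(X,d)$ is separable, $L^2(\mu)$ is a separable Hilbert space and hence second countable in its norm topology, so Theorem \ref{gammacomp} applies to the sequence $((\mathscr{E}_{r_n})_\triangle)_n$: there is a subsequence $r_{n_k}$ and a closed form $\mathscr{E}_0$ with $(\mathscr{E}_0)_\triangle=\Gamma\mhyphen\lim_k(\mathscr{E}_{r_{n_k}})_\triangle$ in the strong topology of $L^2(\mu)$, and $\mathscr{E}_0$ is a Dirichlet form because each $\mathscr{E}_{r_n}$ is strongly Markovian. Taking the balls $\{v:\|u-v\|<\alpha\}$ as basic neighbourhoods in the formula for $\Gamma\mhyphen\limsup$ identifies the $\Gamma$-limit with the quantity $\mathscr{E}_0(u)$ displayed in the statement, and existence of the $\Gamma$-limit is precisely the coincidence of the lower and upper $\Gamma$-limits along $r_{n_k}$.

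Next I would prove the domination $\mathscr{E}_0\le\mathscr{E}^*$. From the formula $(\Gamma\mhyphen\limsup_k\mathscr{E}_{r_{n_k}})(u)=\sup_U\limsup_k\inf_{v\in U}\mathscr{E}_{r_{n_k}}(v)$ and the trivial bound $\inf_{v\in U}\mathscr{E}_{r_{n_k}}(v)\le\mathscr{E}_{r_{n_k}}(u)$, one gets $\mathscr{E}_0(u)\le\limsup_k\mathscr{E}_{r_{n_k}}(u)\le\limsup_n\mathscr{E}_{r_n}(u)=\mathscr{E}^*(u)$, using only that a subsequential $\limsup$ is dominated by the full one. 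Consequently $\mathscr{F}^*\subset\mathscr{D}(\mathscr{E}_0)$, so $\mathscr{F}^*$ is a subspace of the Hilbert space $(\mathscr{D}(\mathscr{E}_0),\|\cdot\|_{\mathscr{E}_0})$ on which $\mathscr{E}_0$ is a dominated, closed, Markovian energy.

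I would then define $(\mathscr{E},\mathscr{F})$ to be the closure of $\mathscr{E}_0|_{\mathscr{F}^*}$: let $\mathscr{F}$ be the $\|\cdot\|_{\mathscr{E}_0}$-closure of $\mathscr{F}^*$ and $\mathscr{E}:=\mathscr{E}_0|_{\mathscr{F}}$. Because $\mathscr{E}_0$ is already closed, this restriction is automatically closable and $(\mathscr{F},\|\cdot\|_{\mathscr{E}_0})$ is a Hilbert space, so $(\mathscr{E},\mathscr{F})$ is a closed form. Regularity with core $\mathscr{F}^*$ is then immediate: $\mathscr{F}^*\subset\mathscr{F}\cap C_0(X)$ is dense in $\mathscr{F}$ in the form norm by construction of the closure, and dense in $C_0(X)$ in the uniform norm by hypothesis. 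For the strong Markov property of $(\mathscr{E},\mathscr{F})$ I would argue by approximation: given $u\in\mathscr{F}$ pick $u_k\in\mathscr{F}^*$ with $u_k\to u$ in $\|\cdot\|_{\mathscr{E}_0}$; then $\underline{\overline{u_k}}\in\mathscr{F}^*\subset\mathscr{F}$, $\underline{\overline{u_k}}\to\underline{\overline{u}}$ in $L^2$ (the contraction is an $L^2$-contraction), and $\mathscr{E}_0(\underline{\overline{u_k}})\le\mathscr{E}_0(u_k)$ stays bounded; extracting a weakly convergent subsequence in $(\mathscr{F},\|\cdot\|_{\mathscr{E}_0})$ by Alaoglu and matching its weak $L^2$ limit with the strong one forces $\underline{\overline{u}}\in\mathscr{F}$, while weak lower semicontinuity of the form norm yields $\mathscr{E}(\underline{\overline{u}})\le\mathscr{E}(u)$. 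Thus $(\mathscr{E},\mathscr{F})$ is a regular Dirichlet form dominated by $\mathscr{E}^*$ with core $\mathscr{F}^*$, and the asserted $\mu$-reversible strong Markov process is produced by the standard correspondence between regular Dirichlet forms and symmetric Markov (Hunt) processes \cite{fukushima}.

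The main obstacle is not the production of the $\Gamma$-limit, which Theorem \ref{gammacomp} delivers directly, but the passage to the regular sub-form. The limit $\mathscr{E}_0$ is a priori only a closed Markovian form on a domain that may be strictly larger than the closure of $\mathscr{F}^*$, and it need not satisfy the regularity condition on its own; the density hypothesis on $\mathscr{F}^*$ is exactly what is needed to exhibit a regular sub-form. Within this step the single delicate point is verifying that strong Markovianity survives the form-closure, i.e. that $\underline{\overline{u}}\in\mathscr{F}$ for every $u\in\mathscr{F}$ rather than merely $\underline{\overline{u}}\in\mathscr{D}(\mathscr{E}_0)$; this is where the weak-compactness argument in $(\mathscr{F},\|\cdot\|_{\mathscr{E}_0})$, parallel to the characterisation of closed forms via lower semicontinuity established earlier in the chapter, does the essential work.
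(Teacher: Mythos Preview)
The paper does not actually prove this theorem: it is stated as a result of Kumagai--Sturm and cited from \cite{sturm}, with no proof supplied in the thesis itself. So there is no ``paper's own proof'' to compare against; I can only assess whether your reconstruction is sound and whether it uses the thesis's machinery in the way the author evidently intends.

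Your argument is essentially correct and is the natural one. You invoke Theorem \ref{gammacomp} to extract a $\Gamma$-convergent subsequence and obtain a closed Markovian form $\mathscr{E}_0$; this is exactly the ``sequential compactness properties of $\Gamma$-limits'' the author alludes to just before stating the theorem. The domination $\mathscr{E}_0\le\mathscr{E}^*$ is obtained correctly from the constant-sequence bound on the $\Gamma$-$\limsup$ together with the fact that a subsequential $\limsup$ is dominated by the full one. Your identification of the displayed formula with the $\Gamma$-$\limsup$ over norm balls is right (modulo the apparent typo in the statement, where the $u$ inside the infimum should be $v$). The passage to the regular sub-form by closing $\mathscr{E}_0|_{\mathscr{F}^*}$ is the standard manoeuvre, and your weak-compactness argument in $(\mathscr{F},\|\cdot\|_{\mathscr{E}_0})$ to push the Markov property through the closure is the right idea and mirrors the lower-semicontinuity proof given earlier in the chapter. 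Density of $\mathscr{F}$ in $L^2(\mu)$, needed for the Fukushima correspondence, follows from $\mathscr{F}^*$ dense in $C_0(X)$ and $C_0(X)$ dense in $L^2(\mu)$ for Radon $\mu$; you might make that explicit. Otherwise nothing is missing.
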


The full theorem in \cite{sturm} also gives another criterion that implies strong locality of the limit. This condition is connected to the ``measure contraction property" on length spaces and curvature type estimates (see \cite{sturm3} \cite{sturm2}).

Moreover, in \cite{sturm} they make a remark (Remark 1 (b)) that the condition in the definition of $\mathscr{F}^*$ may be replaced by 
the weaker condition $\Gamma$-$\limsup_k \mathscr{E}_{{r_n}_k}(u)<\infty.$ However, in \cite{sturm}, they only consider time scaling functions of the form $h(r)$ independent of $x.$ In such a case the generator of the approximating Dirichlet forms with respect to $\mu$ is computed, but with a non-zero probability of the walker to stay at $x.$ In our case, the use of partial symmetrization instead of a full symmetrization allows the elimination of the discrete portion of the transition kernel. The key observation that allows for the use of partial symmetrization (that is symmetrization with respect to $\mu_r$ instead of $\mu)$ is that variable Ahlfors regularity and time regularity imply, due to the resulting log-H{\"o}lder continuity of the exponents, an upper bound for $\frac{d_r(x)}{d_r(y)}\chi_{B_r(x)}(y)$ independent of $r,$ provided $r$ is small enough. This in turn implies that $\mu_r$ and $\mu$ are strongly equivalent with constants independent of $r$ for $r$ small enough. Moreover, since $\beta$ is variable, in our approach the natural time scaling function depends both on the scale $r$ and the position $x.$

For our applications, where $X$ is compact with local measure $\mu$ and has variable time regularity with exponent $\beta,$ a candidate for $k_r$ is 
\[k_r(x,y):=\frac{1}{\mu(B_r(x))r^{\beta(x)}}\chi_{B_r(x)}(y).\]
This leads to the approximate strongly Markovian form \[\mathscr{E}_r(f):=\int_X \frac{1}{\mu(B_r(x))}\int_{B_r(x)}\left(\frac{u(x)-u(y)}{r^{\beta(x)/2}}\right)^2d\mu(y)d\mu(x),\] for $f\in L^2(X,\mu).$ However, as we have seen, this form is just $\langle f, \mathscr{L}_r f\rangle_{\mu_r}.$

\section{Construction of a Limiting Dirichlet Form}
Suppose $(X,d)$ is a connected, compact measure space such that \begin{equation*}
\begin{split} \mbox{(a)}&\; X \mbox{\;has a variable Ahlfors regular measure\;}\mu\;\mbox{of exponent\;}\alpha;\mbox{\;and}\\
\mbox{(b)}&\; X \mbox{\;is variable time regular of exponent\;}\beta;\mbox{\;and}\\
\mbox{(c)}&\; X \mbox{\;satisifes the time comparability condition.\;}
\end{split}
\end{equation*}

For $f\in L^2(X,\mu),$ let \[\mathscr{E}_r(f)=\langle f,\mathscr{L}_r f\rangle_{\mu_r}=\int_{X}\frac{1}{\mu(B_r(x))r^{\beta(x)}}\int_{B_r(x)}(f(y)-f(x))^2d\mu(y)d\mu(x).\] Then for each $r>0,$ $\mathscr{E}_r$ defines a strongly Markovian form, as defined in section 2.4.

For $B$ a ball, we let $\mathscr{L}_r^B=\chi_B\mathscr{L}_r\chi_B$. Then we let $\mathscr{E}_r^B(f)=\langle f, \mathscr{L}^B_r f\rangle_{\mu_r}=\mathscr{E}_r(\chi_Bf)$ for $f\in L^2(X,\mu).$ Note also we may consider $\mathscr{E}^B_r$ as a form on $L^2(B,\mu).$

Fix an open ball $B=B_R(x_0)$ with $0<R<\diam(X)/2.$ Since the map $r\mapsto \mu(B_r(x_0))$ is non-decreasing, it has at most countable many jump discontinuities. By perturbing $R$ slightly if needed we may assume $\mu(B_R(x_0))=\mu(B_R[x_0])$.

\begin{theorem} Suppose the variable time regularity and time comparability conditions hold. Let $(\epsilon_n)_{n=1}^\infty$ be a sequence of positive numbers decreasing to $0.$ Suppose $\mathcal{E}_r=\mathscr{E}^B_r$ for all $r.$ Then there exists a subsequence $(\epsilon_{n_k})_{k=1}^\infty$ such that the sequence $(\mathcal{E}_{\epsilon_{n_k}})_{k=1}^\infty$ Mosco converges, that is $\Gamma$-converges in both the norm and weak topologies of $L^2(B,\mu)$, to a Dirichlet form $\mathcal{E}.$ Then $\mathcal{E}$ is non-trivial and contains an algebra of functions separating points.
\end{theorem}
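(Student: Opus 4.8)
The plan is to apply Proposition \ref{mosco} to the sequence $(\mathcal{E}_{\epsilon_n})_{n=1}^\infty$ of strongly Markovian forms on $L^2(B,\mu)$, and then to establish non-triviality by exhibiting enough functions in the domain of the limit form. First I would verify the hypotheses of Proposition \ref{mosco}. The forms $\mathcal{E}_{\epsilon_n} = \mathscr{E}^B_{\epsilon_n}$ are strongly Markovian (as noted in the excerpt, each $\mathscr{E}_r$ is a strongly Markovian form, and restriction $\chi_B(\cdot)\chi_B$ preserves this). The missing ingredient is uniform coercivity: I need a $c>0$ with $\mathcal{E}_{\epsilon_n}(f,f) \geq c\|f\|^2_{L^2(B,\mu)}$ for all $n$ and all $f\in L^2(B,\mu)$. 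This is exactly what Corollary \ref{maincor} gives, via the Faber--Krahn inequality of Theorem \ref{main1}: for $r$ small enough, $\langle f,\mathscr{L}^B_r f\rangle_{L^2(B,\mu_r)}\geq cR^{-\beta(x_0)}\|f\|^2_{L^2(B,\mu)}$, and since $\mu_r\asymp\mu$ with constants independent of $r$ (for $r$ small, under variable Ahlfors regularity and time regularity) one gets the same bound with $\mathcal{E}_{\epsilon_n}(f,f)=\langle f,\mathscr{L}^B_{\epsilon_n}f\rangle_{\mu_{\epsilon_n}}$ on the left, possibly with a worse constant. Discarding finitely many terms of the sequence so that all $\epsilon_n<r_0$, Proposition \ref{mosco} then produces a subsequence $(\epsilon_{n_k})$ and a closed form $\mathcal{E}$ on $L^2(B,\mu)$ that is the Mosco limit, which by Proposition \ref{gammachar} is the same as $\Gamma$-convergence in both the norm and weak topologies; moreover $\mathcal{E}$ is a Dirichlet form since each $\mathcal{E}_{\epsilon_n}$ is strongly Markovian, and $\mathcal{E}(f,f)\geq c\|f\|^2$.

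Next I would prove $\mathcal{E}$ is non-trivial and that its domain contains an algebra separating points. The key idea — this is the novel point flagged in the introduction — is to use the exit time functions $\phi_{r,B}$ as test functions. I would show that for any closed ball $B'=B_{R'}[z]\subset B$, the functions $\phi_{r,B'}$ (extended by zero outside $B'$) stay in the domain of $\mathcal{E}$ in the limit. Concretely, from the identity $\langle \phi_{r,B'},\mathscr{L}^{B'}_r\phi_{r,B'}\rangle_{\mu_r} = \int_{B'}\phi_{r,B'}(x)\,d\mu_r(x) = \int_{B'}\mathscr{L}^{B'}_r\phi_{r,B'}\cdot\phi_{r,B'}\,d\mu_r$ together with $\mathscr{L}^{B'}_r\phi_{r,B'}=\chi_{B'}$ (Proposition \ref{equation}) and the uniform bound $\phi^+_{r,B'}\leq C(R')^{\beta(z)}$ (from time regularity, after passing to $\mathcal{T}^+$ and absorbing the $\limsup$), one obtains $\mathscr{E}_r(\phi_{r,B'})=\int_{B'}\phi_{r,B'}\,d\mu_r \leq C'$ with a bound uniform in $r$. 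Since $\|\phi_{r,B'}\|_{L^2}$ is likewise uniformly bounded, a diagonal/compactness argument (weak-$L^2$ limits along a further subsequence, combined with the lower-semicontinuity half of Mosco convergence, property (a)) shows the weak limit $\psi_{B'}$ of the $\phi_{\epsilon_{n_k},B'}$ satisfies $\mathcal{E}(\psi_{B'},\psi_{B'})\leq \liminf_k \mathcal{E}_{\epsilon_{n_k}}(\phi_{\epsilon_{n_k},B'},\phi_{\epsilon_{n_k},B'})<\infty$, so $\psi_{B'}\in\mathscr{D}(\mathcal{E})$. The time comparability condition, via Proposition \ref{comparison}, guarantees $\phi_{r,B'}(x)\geq c_1>0$ for $x$ in the concentric ball of radius $R'/4$ and $r$ small, so the limit $\psi_{B'}$ is not $\mu$-a.e. zero — in particular $\mathscr{D}(\mathcal{E})$ contains a non-constant function (it vanishes on $B\setminus B'$, which has positive measure), so $\mathcal{E}$ is non-trivial.

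To get an algebra separating points, I would take a countable family of such closed balls $\{B'_j\}$ with the property that for any two distinct $x,y\in B$ there is a $j$ with $x\in B'_j$ and $y\notin B'_j$ (possible by separability of $X$), obtain the corresponding limit functions $\psi_{B'_j}$ in $\mathscr{D}(\mathcal{E})$ by the argument above (arranging a single subsequence $(\epsilon_{n_k})$ that works simultaneously for all $j$ by a diagonal argument), and let $\mathcal{A}$ be the algebra they generate; one must check $\mathscr{D}(\mathcal{E})$ is closed under products of bounded elements, which follows from the strong Markov property / the fact that Dirichlet forms in the wide sense are closed under the operations making bounded functions an algebra (or, more directly, truncate and use $\mathscr{E}_r(fg)\lesssim \|f\|_\infty^2\mathscr{E}_r(g)+\|g\|_\infty^2\mathscr{E}_r(f)$ pointwise before passing to the limit). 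Since each $\psi_{B'_j}$ is $>0$ on a sub-ball of $B'_j$ and $\equiv 0$ off $B'_j$, the family $\{\psi_{B'_j}\}$ — hence $\mathcal{A}$ — separates points of $B$.

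The main obstacle I anticipate is the second half: controlling the limit of the exit time functions. Specifically, passing from the uniform energy bound $\mathscr{E}_{\epsilon_{n_k}}(\phi_{\epsilon_{n_k},B'})\leq C'$ to membership of a genuine limit function in $\mathscr{D}(\mathcal{E})$ requires some care, because Mosco/$\Gamma$-convergence controls the limit form only along sequences that converge in $L^2$; here I only have weak-$L^2$ precompactness of $(\phi_{\epsilon_{n_k},B'})$ a priori. The resolution is either (i) to invoke the weak lower-semicontinuity inequality (Mosco property (a)) directly, which only needs weak convergence, or (ii) to upgrade to strong $L^2$ convergence using an equicontinuity/tightness input — and indeed Proposition \ref{exitcont} shows, under the thin boundary condition, that the $\phi_{r,B'}$ are continuous, while one expects a uniform modulus-of-continuity estimate (an oscillation bound coming from the Faber--Krahn / Green function estimates) to yield strong convergence along a subsequence via Arzelà--Ascoli. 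I would pursue route (i) first as it is cleaner and uses only what is already proved, reserving the stronger continuity machinery for identifying the limit more explicitly if needed.
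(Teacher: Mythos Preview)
Your proposal is correct and follows essentially the same route as the paper: uniform coercivity from the Faber--Krahn inequality (Corollary~\ref{maincor}) feeds into Proposition~\ref{mosco} to extract a Mosco-convergent subsequence, and then the exit time functions $\phi_{r,B'}$ on a countable separating family of sub-balls---with uniformly bounded energy via $\mathscr{E}_r(\phi_{r,B'})=\int_{B'}\phi_{r,B'}\,d\mu_r$ and weak $L^2$ subsequential limits obtained by a diagonal argument---land in $\mathscr{D}(\mathcal{E})$ by the weak lower-semicontinuity half of Mosco convergence, with non-triviality coming from the time comparability lower bound (your ``route (i)'' is exactly what the paper does). The only minor cleanups are that Corollary~\ref{maincor} already has $\mu_r$ on the left and $\mu$ on the right, so no conversion is needed there, and the paper packages the passage of pointwise bounds through weak limits into a short lemma (Lemma~\ref{weak1}); for the algebra property the paper simply cites \cite{fukushima}, Theorem~1.4.2.
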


\begin{proof}
Note since $\mu(B_R(x_0))=\mu(B_R[x_0]),$ we have $\chi_{B}=\chi_{B_R[x_0]}.$ Hence $\mathcal{L}^B=\mathcal{L}^{B_R[x_0]}.$ Then by the Corollary \ref{maincor} of the Faber-Krahn inequality, Theorem \ref{main1}, we have that there exists a $C>0$ and $r_0>0$ so that $\mathcal{E}_r(f)\geq C\|f\|^2_{L^2(B,\mu)}$ for all $f\in L^2(B,\mu)$ and $0<r<r_0.$

Let $(\epsilon_n)_{n=1}^\infty$ be a sequence of positive numbers decreasing to $0.$ By starting a sequence at a large enough index we may assume each $\epsilon_n\in (0,r_0).$ Then since the forms $(\mathcal{E}_{\epsilon_n})_{n=1}^\infty$ are uniformly coercive, by Proposition \ref{mosco} there exists a subsequence $(s_n)_{n=1}^\infty$ of $(\epsilon_n)_{n=1}^\infty$ with $\mathcal{E}_{n_k}$ $\Gamma$-converging as $k\rightarrow \infty$ to a Dirichlet form $\mathcal{E}$ with respect to both the norm and weak topologies of $L^2(B,\mu).$ Moreover, by Proposition \ref{mosco} we have $\mathcal{E}(f)\geq C\|f\|^2_{L^2(B,\mu)}$ for all $f\in L^2(B,\mu).$

Since $X$ is compact, it is separable. Let $D$ be a countable dense set. Then let $\mathscr{C}=\{B_q[y]\;|\;y\in D,\;q\in\mathbb{Q}^+\}.$ Let $\mathscr{B}=\{C\in \mathscr{C}\;|\;C\subset B\}.$ Say $\mathscr{B}=(B_k)_{k=1}^\infty$ with $B_k=B_{r_k}[x_k]\subset B$. Then $\cup_{k=1}^\infty B_{r_k/4}(x_k)=B$. Indeed, for $x\in B$, let $\delta=\dist(x,B^c)>0.$ Then since $\mathcal{D}$ is dense, let $y\in \mathcal{D}\cap B_{\delta/6}(x)$. Then $\dist(y,B^c)\geq 5\delta/6.$ Then let $q$ rational with $\delta/6<q<5\delta/24.$ Then $B_{4q}[y]\subset B,$ and by construction $x\in B_{q}(y), B_{4q}[y]\in \mathscr{B}.$  Moreover, if $x,y\in B$ with $x\neq y$ then there exist $j, k$ with $x\in B_j,\; y\in B_k$ and $B_j\cap B_k=\emptyset.$

Then for $\epsilon>0$ let $\phi_{k,\epsilon}(x)$ be the mean exit time function on $B_k$ starting at $x.$ The time scaling assumption $\limsup_{r\rightarrow 0^+} \phi_{k,r}^+ \leq Cr_k^\beta$ implies that the functions $\phi_{k,\epsilon}$ are uniformly bounded above by the same constant for $\epsilon$ small enough. Hence, by Alaoglu's Theorem, for each $k$ and any sequence $(t_n)$ of positive numbers decreasing to $0$, there exists a subsequence of the $\phi_{k, t_n}$ converging weakly in $L^2(X,\mu)$ to some limit.

For $k=1$ choose a subsequence $(t_{1,n})_{n=1}^\infty$ of $(s_n)_{n=1}^\infty$ such that $\phi_{1,t_{1,m}} \;
\overrightarrow{_{m\rightarrow \infty}}\; \phi_1$ weakly in $L^2(X,\mu)$. Having chosen $t_1,...,t_n$ with $(t_{j+1,n})
_{n=1}^\infty$ a subsequence of $(t_{j,n})_{n=1}^\infty$ with $\phi_{j,t_{j,m}}\overrightarrow{_{m\rightarrow \infty}}\; \phi_j$ weakly in $L^2(X,\mu)$ for $j\leq n,$ since $\phi_{n+1,r}$ is uniformly bounded above for $r$ small, we 
may choose a subsequence $(t_{n,m})_{m=1}^\infty$ such that $\phi_{n+1,t_{n+1,m}} \;
\overrightarrow{_{m\rightarrow \infty}} \;\phi_{n+1}$ weakly in $L^2(X,\mu)$. Hence for each $j\in\mathbb{Z}^+$ we have a function $\phi_j\in L^2(X,\mu)$ and decreasing sequence $(t_{j,n})_{n=1}^\infty$ of positive numbers decreasing to $0$ with the property that $(t_{j+1,n})_{n=1}^\infty$ is a subsequence of $(t_{j,n})_{n=1}^\infty$ and $\phi_{j,t_{j,m}}\; \overrightarrow{_{m\rightarrow \infty}}\; \phi_j$ weakly in $L^2(X,\mu).$ By the diagonal argument, $\phi_{j,t_{n,n}}\; \overrightarrow{_{n\rightarrow \infty}} \;\phi_j$ weakly in $L^2(X,\mu)$ for all $j\in \mathbb{Z}^+.$ Let $t_n=t_{n,n}$ for $n\in \mathbb{Z}^+.$

\begin{lemma} \label{weak1} Suppose $(f_n)_{n=1}^\infty$ is a sequence of non-negative\footnote{The ordering $\leq$ on $L^2(X,\mu)$ is defined by $f\leq g$ if and only if $\{x\in X\;|\;f(x)>g(x)\}$ is a $\mu$-null set.} functions in $L^2(X,\mu)$ such that $\supp(f_n)\subset A$ for all $n,$ where $A$ is a closed set. If $f_n\rightarrow f$ weakly then $f$ is non-negative and $\supp f\subset A.$
\end{lemma}
\begin{proof} We first show $f\geq 0.$ For $k\in \mathbb{N}$, let $N_k=\{x\in X\;|\;f(x)<-1/k\}.$ Then $0\leq \langle f_n,\chi_{A_k}\rangle \overrightarrow{_{n\rightarrow \infty}}\;\langle f, \chi_{A_k}\rangle <\frac{-1}{k}\mu(A_k).$ Hence $\mu(A_k)=0$ for all $k.$ Thus $\mu(\{x\;|\;f(x)<0\})=0.$ Hence $f\geq 0$ in $L^2(X,\mu)$.
 
 Note if $g\in L^2(X,\mu),$ $\langle f_n\chi_{A^c},g\rangle = \langle f_n,\chi_{A^c}g\rangle\rightarrow \langle f, \chi_{A^c}g\rangle = \langle \chi_{A^c}f,g\rangle.$ Hence $\chi_{A^c}f_n\rightarrow \chi_{A^c}f$ weakly. But each $\chi_{A^c}f_n=0$ in $L^2(X,\mu).$ So the previous result implies $f\chi_{A^c}$ is both $\geq 0$ and $\leq 0$ in $L^2(X,\mu).$ Hence it is $0.$ So $\supp(f)\subset A.$
\end{proof}

\begin{lemma} \label{meascomp} There exists a constant $C>0$ such that for all $0<r<\frac{1}{2}$ with $r<\diam(X)/2,$ if $f$ is a non-negative measurable function on $X$, then $\int f d\mu_r \leq C\int f d\mu.$ 
\end{lemma}
\begin{proof}
Since $\mu$ is variable Ahlfors regular, there exists a constant $C_1>0$ such that for all $x$ and all $0<r<\diam(X)/2,$ $\frac{1}{C_1}r^{\alpha(x)}\leq \mu(B_r(x))\leq C_1r^{\alpha(x)}.$ By variable Ahlfors regularity and time regularity, both $\alpha$ and $\beta$ are log-H{\"o}lder continuous. By log-H{\"o}lder continuity, there exists a $C_2>0$ so that for $x,y\in X$ with $0<d(x,y)<\frac{1}{2},$ $|\alpha(x)-\alpha(y)|\leq \frac{-C_2}{\log(d(x,y))}$ and $|\beta(x)-\beta(y)|\leq \frac{-C_2}{\log(d(x,y))}$. Suppose $0<r<\frac{1}{2}$ and $r<\diam(X)/2.$ If $0<d(x,y)<r,$ then \[r^{\alpha(y)-\alpha(x)}\geq r^{|\alpha(y)-\alpha(x)|}\geq r^{-C_2/\log(d(x,y))}.\] Hence \[\frac{r^{\alpha(x)}}{r^{\alpha(y)}}=r^{\alpha(x)-\alpha(y)}\leq r^{C_2/\log(d(x,y))}\leq r^{C_2/\log(r)}=r^{C_2}.\] Similarly $\frac{r^{\beta(x)}}{r^{\beta(y)}}\leq e^{C_2}$ for $0<d(x,y)<r.$ Hence for $x\in X,$ \begin{equation*}
\begin{split} q_r(x)&=\frac{1}{\mu(B_r(x))}\int_{B_r(x)}\left(1+\frac{\mu(B_r(x))r^{\beta(x)}}{\mu(B_r(y))r^{\beta(y)}}\right)d\mu(y)\\&\leq \frac{1}{\mu(B_r(x))}\int_{B_r(x)}\left(1+C_1^2\frac{r^{\alpha(x)}r^{\beta(x)}}{r^{\alpha(y)}r^{\beta(y)}}\right)d\mu(y)\\
&\leq \frac{1}{\mu(B_r(x))}\int_{B_r(x)}\left(1+C_1^2e^{2C_2}\right)d\mu(y)=1+C_1^2e^{2C_2}:=C.
\end{split}
\end{equation*} Then if $f$ is a non-negative measurable function, \[\int f(x)d\mu_r(x)=\int q_r(x)f(x)d\mu(x)\leq C\int f(x)d\mu(x)\] for all $0<r<\frac{1}{2}$ and $r<\frac{\diam(X)}{2}.$ 
\end{proof}
Since $(t_n)_{n=1}^\infty$ is a subsequence of $(s_n)_{n=1}^\infty,$ by Lemma \ref{subsequence}, $\mathcal{E}=\Glim\mathcal{E}_{t_n}$
in both the norm and weak topologies of $L^2(B,\mu).$ Let $\mathcal{D}(\mathcal{E}):=\{f\in L^2(B,\mu)\;|\;\mathcal{E}(f)<\infty\;\}.$  Since $\phi_{k,t_n}$ converges weakly to $\phi_k,$ we have by Proposition \ref{gammachar} that $\mathcal{E}(\phi_k)\leq \liminf_{n\rightarrow \infty} \mathcal{E}(\phi_{k,t_n}).$ 
Note for each $k$, the exit time functions $\phi_{r,B_k}$ are non-negative, supported in $B_k$ and satisfy $\mathscr{L}^B_r\phi_{r,B_k}=\chi_{B_k}$. By Lemma \ref{weak1}, also $\phi_k$ is non-negative and supported in $B_k.$ The exit time assumption $\limsup_{r\rightarrow 0^+} \phi_{k,r}^+ \leq Cr_k^\beta$ also implies there exists a constant $C_1>0$ and an $N>0$ so that for all $n\geq N,$ $\phi_{k,t_n}\leq C_1$ and $\phi_{k}\leq C_1.$ We may assume $N$ is large enough so that $0<t_n<1/2$ for $n\geq N.$ Then, if $n\geq N,$ by Lemma \ref{meascomp}, there exists a $C_2>0$ such that $\int \phi_{k,t_n}d\mu_{t_n}\leq C_2\int \phi_k d\mu\leq C_1C_2\mu(X).$ Hence for $n\geq N$
\[\langle \phi_{k,t_n}, \mathscr{L}^B_{t_n}\phi_{k,t_n}\rangle_{\mu_{t_n}}=\int_B \phi_{k,t_n}(x)d\mu(x)\leq C_1C_2\mu(X)<\infty.\] It follows that \[\mathcal{E}(\phi_k)\leq \liminf_{n\rightarrow \infty} \mathcal{E}_{t_n}(\phi_{k,t_n})<\infty.\]Thus each $\phi_{k}\in \mathcal{D}(\mathcal{E}).$

The time comparison condition implies that for $r$ small enough, $\phi_{r,B_{r_k}}(x)\geq c_1>0$ for all $x\in B_{r_k/4}(x_k)$ and $r$ small enough. In particular, since $\cup_{k}B_{r_k/4}(x_k)=X,$ the time comparison condition implies that the $\phi_k$ separate points. It is known that domains of Dirichlet forms on locally compact spaces with a Borel measure of full support are always closed under finite products of $L^\infty$ functions. See, e.g. \cite{fukushima} Theorem 1.4.2, for a proof. Hence $\mathcal{D}(\mathcal{E})$ contains an algebra of functions separating points. Also, Lemma \ref{weak1} implies that $\phi_j\geq c_1$ on $B_{r_j/4}(x_j)$. So $\|\phi_j\|_{L^2(B,\mu)}^2>0.$ Hence, since $\mathcal{E}(\phi_j)\geq C\|\phi_j\|_{L^2(B,\mu)}^2,$ $\mathcal{E}(\phi_j)>0$ for all $j.$ Therefore $\mathcal{E}$ is non-trivial. 
\end{proof}

\begin{remark} It is not known wheter the form is regular or if the domain is dense in $L^2(X,\mu).$ In \cite{hinz} the authors provide a method to transfer a Dirichlet form to a regular Dirichlet form on an associated space via a Gelfand transform.  Moreover, if the form is regular, one may take the ``diffusion part" in the Beurling-Deny representation formula to obtain a regular, strongly-local form.
\end{remark}

\section{Probabilistic Convergence of Approximating Walks}

In this section we prove tightness of the approximate continuous time walks $(\bm{x}(r))_{r>0}$ and show any subsequential weak limit has continuous paths almost surely. We begin by recalling essential facts about weak convergence and tightness.

Suppose $S$ is a Polish space. Let $P(S)$ be the space of all Borel probability measures on $S$. Then $P(S)$ may be given a metric topology such that it is complete and separable (See \cite{stroock} for a proof.). Sequential convergence with respect to this metric topology is called weak convergence, and is characterized as follows. If $(\mu_n)_{n=1}^\infty$ is a sequence of Borel probability measures on $S$ and $\mu$ is another Borel probability measure on $S,$ then the sequence $\mu_n$ converges weakly to $\mu$ if for every bounded, continuous function $f$ on $S,$ $\int f d\mu_n\rightarrow \int f d\mu.$

Suppose $\mu_n\rightarrow \mu$ weakly. Let $U\subset S$ be open. Let $\epsilon>0.$ By Proposition \ref{reg}, there exists a $K$ subset $U$ with $K$ closed and $\mu(U)\leq \mu(K)+\epsilon.$ By the Tietze extension theorem, there exists a continuous function $f$ with $0\leq f\leq 1$ such that $f=1$ on $K$ and $f=0$ on $U^c$. Then $\mu(U)-\epsilon\leq \mu(K)\leq \int fd\mu =\liminf f_nd\mu \leq \liminf \mu_n(U).$ Since this holds for all $\epsilon>0,$ $\mu(U)\leq \liminf \mu_n(U).$

\begin{definition}
A subset $\Sigma\subset P(S)$ is called \textit{tight} if for all $\epsilon>0,$ there exists a compact $K\subset S$ with $\mu(K)>1-\epsilon$ for all $\mu\in P(S).$ \end{definition}

The following theorem, known as Prohorov's theorem, characterizes relatively compact subsets of $P(S).$ For a proof see \cite{billingsley}. 
\begin{proposition} (Prohorov) A set $\Sigma\subset P(S)$ is relatively compact in $P(S)$ if and only if it is tight. \end{proposition}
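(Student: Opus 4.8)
The plan is to prove the two implications of the equivalence separately, fixing once and for all a complete metric $d$ on $S$ compatible with its topology (possible since $S$ is Polish).

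\textbf{Tightness implies relative compactness.} The idea is to reduce to the compact case. Since $S$ is Polish it embeds homeomorphically into the Hilbert cube $[0,1]^{\mathbb{N}}$; let $\hat S$ be the closure of the image, a compact metric space, and identify $S$ with its Borel image in $\hat S$. Each $\mu \in P(S)$ pushes forward to a Borel probability measure on $\hat S$, so I may regard $\Sigma \subset P(\hat S)$. Because $\hat S$ is compact metric, $C(\hat S)$ is separable; via the Riesz representation theorem, $P(\hat S)$ is the set of norm-one positive functionals in $C(\hat S)^*$, which is a weak-$*$ closed subset of the (weak-$*$ metrizable, weak-$*$ compact) unit ball, hence itself weak-$*$ compact. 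Thus any sequence $(\mu_n)$ in $\Sigma$ has a subsequence $\mu_{n_k}$ converging weakly on $\hat S$ to some $\nu \in P(\hat S)$. I would then show $\nu$ is carried by $S$: tightness gives compact sets $K_m \subset S$, which I may take increasing, with $\mu_n(K_m) > 1 - 1/m$ for all $n$; each $K_m$ is closed in $\hat S$, so the upper-semicontinuity inequality for closed sets (the complement of the open-set inequality proved just before the statement) gives $\nu(K_m) \geq \limsup_k \mu_{n_k}(K_m) \geq 1 - 1/m$, whence $\nu(\bigcup_m K_m) = 1$ with $\bigcup_m K_m \subset S$, so $\nu$ restricts to an element of $P(S)$. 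Finally I would upgrade the convergence: for bounded continuous $f : S \to \mathbb{R}$ and $\epsilon > 0$, choose a compact $K \subset S$ with $\mu_n(K), \nu(K) > 1 - \epsilon$, extend $f|_K$ to $g \in C(\hat S)$ with $\|g\|_\infty \leq \|f\|_\infty$ by Tietze, and bound $\bigl|\int f\,d\mu_{n_k} - \int f\,d\nu\bigr|$ by $\bigl|\int g\,d\mu_{n_k} - \int g\,d\nu\bigr| + 4\|f\|_\infty\,\epsilon$; the first term goes to $0$, and $\epsilon$ is arbitrary, so $\mu_{n_k} \to \nu$ weakly in $P(S)$. Since $P(S)$ is metrizable, having every sequence in $\Sigma$ admit a convergent subsequence yields that $\overline{\Sigma}$ is sequentially compact, hence compact.

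\textbf{Relative compactness implies tightness.} Assume $\Sigma$ is relatively compact. The crux is the lemma: for every $\epsilon, \delta > 0$ there are finitely many open balls of radius $\delta$ whose union has measure $> 1 - \epsilon$ under every $\mu \in \Sigma$. I would prove it by contradiction using a countable dense set $\{x_i\}$ of $S$: with $B_i := B_\delta(x_i)$ one has $\bigcup_i B_i = S$, and if the lemma failed we could pick $\mu_n \in \Sigma$ with $\mu_n\bigl(\bigcup_{i \leq n} B_i\bigr) \leq 1 - \epsilon$; passing to a weak limit $\mu_{n_k} \to \nu$ and using the open-set inequality for the fixed open set $\bigcup_{i \leq m} B_i$ gives $\nu\bigl(\bigcup_{i \leq m} B_i\bigr) \leq \liminf_k \mu_{n_k}\bigl(\bigcup_{i \leq m} B_i\bigr) \leq 1 - \epsilon$ for each $m$, so letting $m \to \infty$ forces $\nu(S) \leq 1 - \epsilon$, impossible. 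Granting the lemma, fix $\epsilon > 0$; for each $k \in \mathbb{Z}^+$ apply it with radius $1/k$ and error $\epsilon 2^{-k}$ to get a finite union $A_k$ of closed balls of radius $1/k$ with $\mu(A_k) > 1 - \epsilon 2^{-k}$ for all $\mu \in \Sigma$, and set $K := \bigcap_{k} A_k$. Then $K$ is closed, and it is totally bounded since for each $k$ it lies in $A_k$, a finite union of sets of diameter $\leq 2/k$; as $d$ is complete, $K$ is compact. Finally $\mu(K^c) \leq \sum_k \mu(A_k^c) < \sum_k \epsilon 2^{-k} = \epsilon$ for every $\mu \in \Sigma$, establishing tightness.

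\textbf{Main obstacle.} The delicate part is the first implication, and within it the descent from $\hat S$ back to $S$: one has to rule out escape of mass (done via tightness and the inequality $\nu(F) \geq \limsup_k \mu_{n_k}(F)$ for closed $F$) and then recover weak convergence against \emph{all} bounded continuous functions on $S$ rather than only those extending continuously to $\hat S$, which again relies on tightness to confine the test functions to a compact set. The converse is a comparatively soft argument using only separability, completeness, and the open-set characterization of weak convergence recalled just before the statement.
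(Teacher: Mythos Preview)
The paper does not supply its own proof of Prohorov's theorem; it simply states the result and refers the reader to Billingsley. Your argument is correct and is essentially the classical one found there: compactify via an embedding in the Hilbert cube to extract a weak limit, use tightness together with the closed-set inequality $\nu(F)\geq\limsup_k\mu_{n_k}(F)$ to show no mass escapes, and then use tightness again with Tietze to upgrade convergence to all of $C_b(S)$; for the converse, use separability and the open-set inequality (which the paper does establish just before the statement) to obtain the finite-cover lemma, and assemble a totally bounded closed set from nested finite covers. There is nothing to correct.
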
 

Let $X$ be a compact metric space. Then let $\mathcal{D}$ be the space of c{\`a}dl{\`a}g paths $\gamma:\mathbb{R}_+\rightarrow X$. 
For an interval of the form $[0,t],$ and $\delta>0,$ let $\mathscr{P}_\delta(t)$ be the collection of all $(t_i)_{i=0}^n$ for $n\in \mathbb{N}$ with $0=t_0<t_1<...t_{n-1}<t\leq t_n$ with $\min_{0\leq i<n}(t_{i+1}-t_i)> \delta.$

The following proposition may be found in \cite{kallenberg}. See also \cite{ethier}.
\begin{proposition} There exists a topology on $\mathcal{D},$ called the Skorohod J1 topology, such that $\mathcal{D}$ is a complete, separable metric space and the Borel topology of $\mathcal{D}$ is generated by the projection maps $\pi_t$ for $t\geq 0.$ Moreover, if $I$ is a dense subset of $\mathbb{R}_+$ then a set $A\subset \mathcal{D}$ is relatively compact if and only if\footnote{We do not need the condition that $\pi_t^{-1}(A)$ is relatively compact in $X$ for all $t\in I$ since it holds automatically as $X$ is compact.} \[\lim_{h\rightarrow 0^+}\sup_{\gamma\in A} \tilde{\omega}(\gamma,t,h)=0\] for all $t\in I,\;t>0,$ where \[\tilde{\omega}(\gamma,t,h):=\inf\{\max_{0\leq i< n}\sup_{r,s\in [t_{i},t_{i+1})}d(\gamma(r),\gamma(s))\;|\;(t_i)_{i=1}^n\in \mathscr{P}_h(t)\}.\]\end{proposition}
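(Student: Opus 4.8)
The plan is to prove, in turn, the four assertions packaged in the statement — that the J1 topology is metrizable, that the resulting space is complete, that it is separable, and that its Borel $\sigma$-algebra coincides with $\sigma(\pi_t:t\ge0)$ — and then the relative-compactness criterion, which is where the real work lies. All of this is classical (see \cite{kallenberg}, \cite{ethier}, \cite{billingsley}); I indicate the route. First I would metrize J1 on a bounded horizon. For $T>0$ let $\Lambda_T$ be the strictly increasing homeomorphisms $\lambda$ of $[0,T]$, set $\|\lambda\|_T:=\sup_{0\le s<t\le T}\big|\log\tfrac{\lambda(t)-\lambda(s)}{t-s}\big|$, and for càdlàg $\gamma_1,\gamma_2\colon[0,T]\to X$ put
\[d_T(\gamma_1,\gamma_2):=\inf_{\lambda\in\Lambda_T}\Big(\|\lambda\|_T\ \vee\ \sup_{0\le t\le T}d\big(\gamma_1(t),\gamma_2(\lambda(t))\big)\Big).\]
That $d_T$ is a metric is routine: symmetry uses $\|\lambda^{-1}\|_T=\|\lambda\|_T$, the triangle inequality comes from composing time changes, and $d_T(\gamma_1,\gamma_2)=0\Rightarrow\gamma_1=\gamma_2$ from right-continuity. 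To pass to $[0,\infty)$ I would truncate: for $\gamma\in\mathcal{D}$ and $u>0$ let $\gamma^{(u)}$ denote $\gamma$ restricted to $[0,u]$, modified in a small left-neighbourhood of $u$ so that it is continuous at $u$ (this removes boundary artefacts), and set $d(\gamma_1,\gamma_2):=\int_0^\infty e^{-u}\big(1\wedge d_u(\gamma_1^{(u)},\gamma_2^{(u)})\big)\,du$. One then checks that $d(\gamma_n,\gamma)\to0$ iff for every continuity point $u$ of $\gamma$ there are $\lambda_n\in\Lambda_u$ with $\|\lambda_n\|_u\to0$ and $\sup_{t\le u}d(\gamma_n(t),\gamma(\lambda_n(t)))\to0$, the usual description of J1; compactness of $X$ makes the ``range'' hypotheses of the general theory automatic and ensures $d<\infty$.

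For completeness, given a $d$-Cauchy sequence I would pass to a subsequence along which $\sum_n d_u(\gamma_n^{(u)},\gamma_{n+1}^{(u)})<\infty$ for a cofinal set of horizons $u$, pick near-optimal time changes, show the composed time changes converge uniformly to a homeomorphism of $[0,u]$, and conclude that the reparametrized paths form a uniform Cauchy sequence in the complete space $X$; its limit, pulled back, is a càdlàg path realizing the $d$-limit. For separability, a countable dense subset of $X$ exists since $X$ is compact, and the step functions that are constant on intervals with rational endpoints and take values in that set form a countable $d$-dense family (any càdlàg path is, off a neighbourhood of its finitely many jumps above a given level, uniformly close to such a step function, and the time changes absorb the residual horizontal error). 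For the Borel structure I would argue both inclusions: each $\pi_t$ is Borel on $(\mathcal{D},d)$ because, embedding $X$ into the Hilbert cube, the coordinatewise averages $\gamma\mapsto\tfrac1\delta\int_t^{t+\delta}\gamma(s)\,ds$ are J1-continuous and converge pointwise to $\pi_t$ as $\delta\downarrow0$ by right-continuity; conversely $\sigma(\pi_t:t\ge0)$ contains every open $d$-ball, since for fixed $\gamma_0$ the map $\gamma\mapsto d(\gamma_0,\gamma)$ is $\sigma(\pi_t)$-measurable — in the formulas above the integral over $u$ becomes a countable sum, the infimum over $\Lambda_u$ restricts to a countable dense family of piecewise-linear time changes with rational data, and for each such $\lambda$ the supremum over $t$ restricts to rational $t$ by right-continuity.

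The relative-compactness criterion is the main obstacle, and I would reduce it to the finite-horizon Arzelà–Ascoli-type theorem for $\mathcal{D}([0,T],X)$: a set $A$ is relatively compact there iff $\lim_{h\to0^+}\sup_{\gamma\in A}\tilde{\omega}(\gamma,T,h)=0$, the range condition being vacuous since $X$ is compact. For ``only if'', relative compactness yields a finite $d_T$-net $\{\gamma_1,\dots,\gamma_N\}$; each $\gamma_i$, being a single càdlàg path, has $\tilde{\omega}(\gamma_i,T,h)\to0$ (choose a partition in $\mathscr{P}_h(T)$ isolating its finitely many large jumps), and $\tilde{\omega}$ is stable under $d_T$ — if $d_T(\gamma,\gamma_i)<\epsilon$ then $\tilde{\omega}(\gamma,T,h')\le\tilde{\omega}(\gamma_i,T,h)+2\epsilon$ for $h'$ shrunk by the factor $e^{\|\lambda\|}$ — so a uniform bound follows. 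For ``if'', from any sequence $(\gamma_n)$ in $A$ one uses, for each $k$, a partition with gaps exceeding some $h_k$ on which $\gamma_n$ stays within $1/k$ of a step function having at most $T/h_k+1$ pieces; the number of pieces, the breakpoints (in $[0,T]$) and the values (in the compact $X$) all range over compact sets, so a diagonal extraction over $n$ yields a subsequence along which these step functions converge in $d_T$ for every $k$, whence that subsequence is $d_T$-Cauchy and, by completeness, convergent. Passing to $[0,\infty)$: the truncation map $\mathcal{D}([0,\infty),X)\to\mathcal{D}([0,t],X)$ is continuous, so relative compactness of $A$ forces the modulus condition at every $t$; conversely, given that condition for all $t$ in the dense set $I$, I would diagonalize the finite-horizon extraction over a cofinal sequence $t_m\uparrow\infty$ in $I$ and use the definition of $d$ as an integral over horizons to conclude that the extracted subsequence converges in $\mathcal{D}([0,\infty),X)$. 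I expect the delicate points — consuming most of the effort — to be the stability of $\tilde{\omega}$ under the Skorokhod metric and the simultaneous control of breakpoints and values in the diagonal argument.
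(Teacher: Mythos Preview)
The paper does not give a proof of this proposition at all: it simply cites it from the literature, writing ``The following proposition may be found in \cite{kallenberg}. See also \cite{ethier}.'' Your detailed sketch is correct and follows the classical route of those references (the Billingsley--Skorokhod metric on finite horizons, the $e^{-u}$-weighted integral to pass to $[0,\infty)$, and the Arzel\`a--Ascoli-type compactness criterion via step-function approximation and diagonal extraction), so you have supplied what the paper omits.
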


Let $\mathcal{C}$ be the space of continuous $X$-valued paths on $[0,\infty)$ with the topology of uniform convergence. Then $\mathcal{C}$ is a complete, separable metric space. For $\gamma\in \mathcal{C},$ $h>0$, let $\omega(\gamma,h)=\sup\{d(\gamma(s),\gamma(t))\;|\;|s-t|\leq h\}.$ The corresponding relative compactness theorem for $\mathcal{C}$ involving the uniform modulus of continuity $\omega$ on $\mathcal{C},$ which follows directly from the usual Arzel\'a-Ascoli theorem, is as follows. A set $A\subset \mathcal{C}$ is relatively compact in $\mathcal{C}$ if and only if $\limsup_{h\rightarrow 0^+}\sup_{\gamma\in A} \omega(\gamma,h)=0$.\footnote{Again, the ``pointwise boundedness" assumption is trivially satisfied since $X$ has finite diameter.}

Suppose $(\mathbb{P}_n)$ is a sequence of probability measures on $\mathcal{D}.$ We then have, see \cite{billingsley}, that the sequence is tight if for all $T>0,$ all $\epsilon>0$, and all $\eta>0,$ there exists a $0<\delta<1$ and an integer $N>0$ such that 
$\mathbb{P}_n\{\gamma\in\mathcal{D}\;|\;\tilde{\omega}(\gamma,T,\delta)\geq \epsilon\}\leq \eta$ for all $n\geq N.$ Indeed, suppose this condition holds. Note first that we may suppose it holds for $N=1$ by sufficiently decreasing $\delta$ if needed. Let $\epsilon>0.$ For each $k,$ choose $\delta_{k}<\delta_{k-1}$ with $\mathbb{P}_n\{\gamma\in\mathcal{D}\;|\;\tilde{\omega}(\gamma,k,\delta_{k})\geq \frac{1}{k}\}\leq \frac{\epsilon}{2^k}$ for all $n\in \mathbb{Z}^+.$ Then let $A_k=\{\gamma\in\mathcal{D}\;|\;\tilde{\omega}(\gamma,k,\delta_{k})\geq \frac{1}{k}\}$ for each $k\in \mathbb{Z}^+.$ Let $K=\overline{\cap_{k=1}^\infty A_k^c}.$ Then $\mathbb{P}_n(K^c)\leq \sum_{k=1}^\infty \mathbb{P}_n(A_k)\leq \epsilon$ for all $n.$ Moreover, if $\gamma\in \cap_{k=1}^\infty A_k^c$ and $T>0$, then for $k\geq T,$ $0<h<\delta_k,$ $\tilde{\omega}(\gamma,T,h)\leq\tilde{\omega}(\gamma,T,\delta_k)\leq \tilde{\omega}(\gamma,k,\delta_k)<\frac{1}{k}.$ Hence $\sup_{\gamma\in K} \tilde{\omega}(\gamma,T, h)\leq \frac{1}{k}$ for for $0<h<\delta_k$ and $k\geq T.$ It follows $\lim_{h\rightarrow 0^+}\sup_{\gamma \in \cap_k A_k^c} \tilde{\omega}(\gamma, T, h)=0$ for all $T>0.$ Thus $K$ is compact. Hence the sequence $(\mathbb{P}_n)$ is tight.

A version of the following proposition may be found in Theorem 8.3 of \cite{billingsley}. 
\begin{proposition}  Suppose that  $(\mathbb{P}_n)_{n=1}^\infty$ is a sequence of probability measures on $\mathcal{D}$ such that for each $\epsilon>0,\;\eta>0,$ there exists a $0<\delta<1$ and an integer $N>0$ such that \[\frac{1}{\delta}\mathbb{P}_n(\{\gamma\in\mathcal{D}\;|\;\sup_{t\leq s\leq t+\delta}d(\gamma(s),\gamma(t))\geq \epsilon\})\leq \eta\] for all $n\geq N$ and $t\geq 0.$ Then the sequence $(\mathbb{P}_n)$ is tight. Moreover, if $\mathbb{P}_{n_k}\rightarrow \mathbb{P}$ is a subsequential weak limit, then $\mathbb{P}(\mathcal{C})=1.$ 
\end{proposition}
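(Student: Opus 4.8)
The plan is to upgrade the hypothesis — which only controls the forward oscillation $\sup_{t\le s\le t+\delta}d(\gamma(s),\gamma(t))$ from a single time $t$, weighted by $1/\delta$ — into a genuine modulus-of-continuity bound valid simultaneously for every pair of nearby times, and then to use that bound twice: once to invoke the tightness criterion recalled in the paragraph just above (the one proved via $K=\overline{\bigcap_k A_k^c}$), and once in a portmanteau argument forcing any subsequential limit onto $\mathcal{C}$. Throughout I would write $A_{t,\delta,c}=\{\gamma\in\mathcal{D}:\sup_{t\le s\le t+\delta}d(\gamma(s),\gamma(t))\ge c\}$ and, for $T>0$, $w^T_\gamma(\delta)=\sup\{d(\gamma(a),\gamma(b)):a,b\in[0,T],\ |a-b|\le\delta\}$; this last functional is non-decreasing in both $\delta$ and $T$.

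First I would prove the uniform bound: for all $T,\epsilon,\eta>0$ there are $\delta\in(0,1)$ and $N$ with $\mathbb{P}_n(\{w^T_\gamma(\delta)\ge\epsilon\})\le\eta$ for $n\ge N$. Apply the hypothesis with $\epsilon/5$ and $\eta/(T+2)$ to obtain such a $\delta$; using the grid $t_j=j\delta$ for $0\le j\le J:=\lceil T/\delta\rceil$, off the event $\bigcup_{j=0}^{J}A_{t_j,\delta,\epsilon/5}$ the oscillation of $\gamma$ on each closed cell $[t_j,t_{j+1}]$ is $<2\epsilon/5$, and any $a,b\in[0,T]$ with $|a-b|\le\delta$ meet at most two adjacent cells, so $d(\gamma(a),\gamma(b))<4\epsilon/5<\epsilon$; hence $\{w^T_\gamma(\delta)\ge\epsilon\}\subset\bigcup_j A_{t_j,\delta,\epsilon/5}$ and the union bound together with $(J+1)\delta\le T+2$ gives the claim. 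Since a partition of $[0,T]$ into cells of length $2\delta$ has all cell-oscillations bounded by $w^{T+2}_\gamma(2\delta)$, this yields $\tilde\omega(\gamma,T,\delta)\le w^{T+2}_\gamma(2\delta)$, and feeding the uniform bound (with $2\delta$ and $T+2$) into the $\tilde\omega$-tightness criterion recalled above gives tightness of $(\mathbb{P}_n)$, hence — by Prokhorov — existence of subsequential weak limits.

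Next I would show any such limit $\mathbb{P}$ satisfies $\mathbb{P}(\mathcal{C})=1$. Let $J_T(\gamma)=\sup_{0<s\le T}d(\gamma(s),\gamma(s-))$, a Borel functional on $\mathcal{D}$ (reduce the sup to rational times using right-continuity, or note $J_T=\inf_n w^T_\gamma(1/n)$); for c\`adl\`ag $\gamma$ one has $\bigcap_{T\in\mathbb{Z}^+}\{J_T=0\}\subset\mathcal{C}$, so it suffices to prove $\mathbb{P}(\{J_T\ge c\})=0$ for each $T\in\mathbb{Z}^+$, $c>0$. Observing a jump at $u$ from pairs $(a,u)$ with $a\uparrow u$ gives $J_T(\gamma)\le w^T_\gamma(\delta)$ for every $\delta>0$, so $\{J_T\ge c\}\subset\{w^T_\gamma(\delta)\ge c\}\subset\{w^T_\gamma(\delta)>c/4\}$. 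Given $\eta>0$, pick $\delta\in(0,1)$, $N$ from the uniform bound applied with $T+1$ and $c/4$. A right-continuity perturbation of the left endpoint shows $w^T_\gamma(\delta)=\sup\{d(\gamma(a),\gamma(b)):a,b\in[0,T],\ |a-b|<\delta\}$, and from this plus the Skorokhod description of convergence ($\gamma_k\to\gamma$ iff there are time changes $\lambda_k\to\mathrm{id}$ with $\gamma_k\circ\lambda_k\to\gamma$ locally uniformly) one gets $\liminf_{\gamma'\to\gamma}w^{T+1}_{\gamma'}(\delta)\ge w^T_\gamma(\delta)$; hence $G:=\mathrm{int}\{\gamma:w^{T+1}_\gamma(\delta)\ge c/4\}$ is open and contains $\{w^T_\gamma(\delta)>c/4\}\supset\{J_T\ge c\}$. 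Along $\mathbb{P}_{n_k}\to\mathbb{P}$, the portmanteau theorem then gives $\mathbb{P}(\{J_T\ge c\})\le\mathbb{P}(G)\le\liminf_k\mathbb{P}_{n_k}(G)\le\liminf_k\mathbb{P}_{n_k}(\{w^{T+1}_\gamma(\delta)\ge c/4\})\le\eta$; letting $\eta\to0$ and taking a countable union over $T\in\mathbb{Z}^+$ and $c=1/j$ yields $\mathbb{P}(\mathcal{D}\setminus\mathcal{C})=0$.

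The hard part will be this last step: the set of c\`adl\`ag paths with a genuine jump is neither open nor closed in the Skorokhod topology, so portmanteau cannot be applied to it directly. The remedy is to sandwich $\{J_T\ge c\}$ between a set whose $\mathbb{P}_n$-mass is controlled by the modulus estimate and an honestly open set, and this forces the small but real lower-semicontinuity bookkeeping for $w^{\,\cdot}_\gamma(\cdot)$ on $\mathcal{D}$ — the loss of a factor $1+o(1)$ in the mesh and of a unit in the time horizon under a small time change is precisely why the uniform bound must be applied with $T+1$, and why the strict-inequality (open-pair) reformulation of $w^T_\gamma$ is needed. By contrast, the first two steps are routine union bounds together with a direct appeal to the already-established $\tilde\omega$-tightness criterion.
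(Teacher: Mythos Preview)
Your proposal is correct and follows essentially the same strategy as the paper: a grid-and-union-bound argument to upgrade the pointwise hypothesis to a uniform modulus bound, the $\tilde\omega$-criterion for tightness, and then a Skorokhod time-change lower-semicontinuity argument combined with portmanteau to force the limit onto $\mathcal{C}$. The only differences are cosmetic bookkeeping choices. You work with the two-sided modulus $w^T_\gamma(\delta)$ and the jump functional $J_T$, and you absorb the time-change distortion by shifting the horizon $T\to T+1$ together with the strict-inequality reformulation of $w^T_\gamma(\delta)$; the paper instead works with the one-sided functional $A_{\epsilon,\delta,T}=\{\sup_{t\le T}\sup_{t\le s\le t+\delta}d(\gamma(s),\gamma(t))>\epsilon\}$ and absorbs the distortion by halving the mesh, proving directly that $A_{\epsilon,\delta/2,T}$ lies in the Skorokhod interior of $A_{\epsilon,\delta,T}$. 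Both routes arrive at the same portmanteau inequality and the same countable-union conclusion.
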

\begin{proof} Suppose the stated condition holds. Let $\epsilon$, $\eta>0,$ and $T>0.$ Choose $\delta\in (0,1)$ such that 
$\frac{1}{\delta}\mathbb{P}_n\{\gamma\in\mathcal{D}\;|\;\sup_{t\leq s\leq t+\delta}d(\gamma(s),\gamma(t))\geq \epsilon\}\leq \frac{\eta}{T}.$ Let \[A_{3\epsilon,\delta, T}:=\{\gamma\in \mathcal{D}\;|\;\sup_{t\leq T}\sup_{t\leq s\leq t+\delta} d(\gamma(s), \gamma(t))> 3\epsilon\}.\] For $t>0$ let $B_{\epsilon, \delta, t}:=\{\gamma\in \mathcal{D}\;|\;\sup_{t\leq s\leq t+\delta} d(\gamma(s),\gamma(t))\geq \epsilon\}.$ Let $\gamma\in \mathcal{D}.$ Suppose for all $j\leq k$ an integer, with $k$ an integer satisfying $k\delta\leq T< k(\delta+1),$ that $\gamma\notin B_{\epsilon, \delta, j\delta}.$ Then if $t\leq T$ and $t\leq s\leq t+\delta,$ then either $s,t$ are both in the same interval of the form $[j\delta, (j+1)\delta]$ or $t\in [(j-1)\delta, j\delta],$ $s\in [j\delta, (j+1)\delta]$. In the first case $d(\gamma(s),\gamma(t))\leq d(\gamma(s),\gamma(j\delta))+\d(\gamma(j\delta), \gamma(t))<2\epsilon$. In the second case $d(\gamma(s),\gamma(t))\leq d(\gamma(s),\gamma(j\delta))+\d(\gamma(j\delta), \gamma((j-1)\delta))+d(\gamma((j-1)\delta), \gamma(t))<3\epsilon.$ It follows that $\gamma\notin A_{3\epsilon, \delta, T}.$ Therefore $A_{3\epsilon, \delta, T}\subset \cup_{j=0}^k B_{\epsilon, \delta, j\delta}.$ Hence \[\mathbb{P}_n(A_{3\epsilon, \delta, T})\leq \frac{k\eta{\delta}}{T}\leq\eta\] for all $n\in \mathbb{Z}^+.$ Now suppose $\tilde{\omega}(\gamma, T, \delta/2)\geq 4\epsilon.$ Then there exists a $0\leq j\leq k,$ where $k\delta\leq T<(k+1)\delta$ and $\sup_{s,t\in [j\delta, (j+1)\delta)}d(\gamma(s),\gamma(t))\geq 3\epsilon.$ Hence $\gamma\in A_{\epsilon, T}.$ It follows that \[\mathbb{P}_n(\{\gamma\in \mathcal{D}\;|\; \tilde{\omega}(\gamma, T,\delta/2)\geq 4\epsilon\})\leq \eta\] for all $n\in \mathbb{Z}^+.$

It follows that the sequence is tight. By Prohorov's theorem, let $(\mathbb{P}_{n_k})$ be a subsequence converging weakly to some limit $\mathbb{P}$ on $\mathcal{D}.$ Fix $T>0.$ 
Following \cite{ethier}, let $\Lambda$ denote the set of strictly increasing, Lipschitz continuous functions $\lambda,$ mapping $[0,\infty)$ onto $[0,\infty)$ such that $\|\log(\lambda')\|_{L^\infty(\mathbb{R}_+)}<\infty.$ Then we have, see \cite{ethier}, a sequence $(\gamma_n)$ in $\mathcal{D}$ converges with respect to the J1 topology to $\gamma\in \mathcal{D}$ if and only if there exists a collection $(\lambda_n)_{n=1}^\infty\subset \Lambda$ such that \[\lim_{n\rightarrow \infty} \|\log(\lambda_n')\|_{L^\infty(\mathbb{R}_+)}=0\] and \[\lim_{n\rightarrow \infty} \sup_{0\leq t\leq T} d(\gamma_n(\lambda_n(t)),\gamma(t))=0\] for all $T>0.$ Let $T>0.$ Suppose $\gamma\in A_{\epsilon, \delta/2, T}.$ Suppose for some sequence $(\gamma_n)$ in $\mathcal{D}$ converging with respect to the J1 topology to $\gamma,$ for all $N$ there exists an $n\geq N$ such that $\gamma_n\in A_{\epsilon, \delta, T}^c=\{\gamma'\in \mathcal{D}\;|\;\sup_{0\leq t\leq T}\sup_{t\leq s\leq t+\delta} d(\gamma'(s),\gamma'(t))<\epsilon\}.$ Choose $(\lambda_n)\subset \lambda$ with $\lim_{n\rightarrow \infty} \|\log(\lambda_n')\|_{\infty}=0$ and $\lim_{n\rightarrow \infty} \sup_{0\leq t\leq T} d(\gamma_n(\lambda_n(t)),\gamma(t))=0$ for all $t\leq T.$ Choose $s,t$ with $t\leq T,$ $|s-t|\leq\delta/2$ and $d(\gamma(s),\gamma(t))\geq \frac{3\epsilon}{2}$.  Then 
\[\frac{3\epsilon}{2}\leq d(\gamma(s),\gamma(t))\leq d(\gamma(t),\gamma_n(\lambda_n(t)))+d(\gamma_n(\lambda_n(t)),\gamma_n(\lambda_n(s)))+d(\gamma_n(\lambda_n(s)),\gamma(s)).\] Choose $N$ and $n\geq N$ such that $\sup_{0\leq u\leq T+1} d(\gamma_n(\lambda_n(u)),\gamma(u))<\frac{\epsilon}{4},$ $\|\log(\lambda_n')\|_{\infty}<\log(2),$ and $\omega(\gamma_n,\delta)<\epsilon.$ Then for $u$ between $s$ and $t,$ $|\log(\lambda_n'(u))|<\log(2).$ Hence $0<\lambda_n'(u)<2.$ By the mean value theorem, $|\lambda_n(s)-\lambda_n(t)|\leq 2|s-t|\leq \delta.$ However, $\omega(\gamma_n,\delta)<\epsilon.$ It follows that $d(\gamma_n(\lambda_n(t)),\gamma_n(\lambda_n(s)))<\epsilon.$ Thus $\frac{3\epsilon}{2}\leq d(\gamma(s),\gamma(t))<\frac{\epsilon}{2}+\epsilon=\frac{3\epsilon}{2},$ a contradiction. It follows that $\gamma$ is in the interior of $A_{\epsilon, \delta, T}$. Hence, by weak convergence,
\[\mathbb{P}(A_{\epsilon, \delta/2, T})\leq \liminf_{n\rightarrow \infty} \mathbb{P}_n(A_{\epsilon, \delta, T}).\] Then for $k\in \mathbb{Z}^+,$ choose $\delta_k\in (0,1)$ such that $\mathbb{P}_n(A_{\frac{1}{k},\delta_k, T})\leq \frac{1}{k}.$ Let $A_T:=\cup_{n=1}^\infty \cap_{k\geq n} A_{\frac{1}{k},\delta_k, T}.$ Then $\mathbb{P}(A_T)=0$ for all $T>0.$ Let $A=\cup_{j=1}^\infty A_j.$ Then we have $\mathbb{P}(A)=0.$ Suppose $\gamma\in \mathcal{D}\setminus \mathcal{C}.$ Then let $t$ be a discontinuity of $\gamma$ and $j\geq t+1.$ Then there exists a $k$ such that $\sup_{s,\;|s-t|<\delta_k}d(\gamma(s),\gamma(t))\geq \frac{1}{k}.$ Hence $\gamma \in A.$ Thus $\mathbb{P}(\mathcal{D}\setminus \mathcal{C})=0.$
\end{proof}

We then apply this tightness criterion to the continuous time pure jump Markov processes $(\bm{x}(r))_{r>0}.$ Let $r_k$ be a sequence of positive numbers decreasing to $0.$ Let $\mathbb{P}^{x}_k$ be the measure on $\mathcal{D}$ induced by $\bm{x}(r_k)$ with $\bm{x}(r_k)(0)=x.$ Fix $x_0\in X.$ Note then that $\{\gamma\in \mathcal{D}\;|\;\sup_{t\leq s\leq t+\delta} d(\gamma(s),\gamma(t))\geq \epsilon\}\subset \{\gamma\in\mathcal{D}\;|\;\inf\{s\;|\;\gamma(s+t)\notin B_{\epsilon/2}(\gamma(t))\}\leq \delta\}.$ Hence by the strong Markov property, 
\[\mathbb{P}^{x_0}_k(\sup_{t\leq s\leq t+\delta}d(\gamma(s),\gamma(t))\geq \epsilon)\leq \sup_{x\in X}\mathbb{P}^x_k(\tau_{B_{\epsilon/2}(x)}\leq\delta).\] 

The proof of the following lemma is an adaptation of the proof of Lemma 3.16 in \cite{barlowdiff}.
\begin{lemma} (See also Barlow \cite{barlowdiff}.)
Suppose $(X, d,\mu)$ is variable Ahlfors regular of exponent $\alpha,$ \textit{strongly} variable time regular of exponent $\beta,$ and satisfies the time comparability condition. Then there exist constants $r_0>0,$ $p_0\in (0,1)$ and $c>0$ so that if $0<r<r_0,$ $t>0$, $x\in X$, and $0<R<\diam(X)/2$, then \[\mathbb{P}_r^x(\tau_{B_R[x]}\leq t)\leq p_0+ctR^{-\beta(x_0)}.\]
\end{lemma}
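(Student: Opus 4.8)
The plan is to combine the strong variable time regularity and the time comparability hypotheses with the Markov property applied at a fixed deterministic time $t$. Write $B=B_R[x]$ and $\tau_{r,B}=\inf\{s\ge 0\;|\;\bm{x}(r)_s\notin B\}$, so that $\phi_{r,B}(x)=\mathbb{E}^x\tau_{r,B}$ and $\phi^+_{r,B}=\sup_{y\in B}\phi_{r,B}(y)$. Choosing $r_0>0$ small enough and a constant $C>0$ large enough to serve both conditions, we have for all $0<r<r_0$, all $x\in X$, and all $0<R<\diam(X)/2$ that
\[
\tfrac1C R^{\beta(x)}\le \phi^+_{r,B}\le C R^{\beta(x)}
\qquad\text{and}\qquad
c\,\phi^+_{r,B}\le \phi_{r,B}(x),
\]
where $c\in(0,1]$ is the comparability constant (after shrinking $c$ slightly we may and do assume $c/C^2<1$, using that $C\ge1$ and $c\le1$ follow from the displays). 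Chaining these gives the two-sided estimate $\tfrac{c}{C}R^{\beta(x)}\le \phi_{r,B}(x)\le \phi^+_{r,B}\le C R^{\beta(x)}$ for $0<r<r_0$.

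Next I would decompose $\phi_{r,B}(x)$ by cutting the exit time at $t$:
\[
\phi_{r,B}(x)=\mathbb{E}^x\big[\tau_{r,B}\wedge t\big]+\mathbb{E}^x\big[(\tau_{r,B}-t)\,\mathbf{1}_{\{\tau_{r,B}>t\}}\big].
\]
The first term is bounded by $t$. For the second, note that on the event $\{\tau_{r,B}>t\}$ the c\`adl\`ag path has remained in the closed ball $B$ on $[0,t]$, so $\bm{x}(r)_t\in B$; applying the time‑homogeneous Markov property of the pure jump process $(\bm{x}(r)_s)_{s\ge0}$ at time $t$ yields
\[
\mathbb{E}^x\big[(\tau_{r,B}-t)\,\mathbf{1}_{\{\tau_{r,B}>t\}}\big]
=\mathbb{E}^x\big[\phi_{r,B}(\bm{x}(r)_t)\,\mathbf{1}_{\{\tau_{r,B}>t\}}\big]
\le \phi^+_{r,B}\,\mathbb{P}_r^x(\tau_{r,B}>t)
\le C R^{\beta(x)}\,\mathbb{P}_r^x(\tau_{r,B}>t).
\]
Hence $\phi_{r,B}(x)\le t+C R^{\beta(x)}\,\mathbb{P}_r^x(\tau_{r,B}>t)$.

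Finally I would insert the lower bound $\phi_{r,B}(x)\ge \tfrac{c}{C}R^{\beta(x)}$, divide by $C R^{\beta(x)}$, and rearrange to obtain $\mathbb{P}_r^x(\tau_{r,B}>t)\ge \tfrac{c}{C^2}-\tfrac1C\,t\,R^{-\beta(x)}$, so that
\[
\mathbb{P}_r^x(\tau_{r,B}\le t)=1-\mathbb{P}_r^x(\tau_{r,B}>t)\le \Big(1-\tfrac{c}{C^2}\Big)+\tfrac1C\,t\,R^{-\beta(x)}.
\]
Setting $p_0:=1-c/C^2\in(0,1)$ and absorbing $1/C$ into a new constant $c$ gives the asserted bound, with $\beta(x)$ playing the role of $\beta(x_0)$ in the statement (one may pass to $\beta(x_0)$ by fixing the base point, or, since $\beta$ is bounded and $R<\diam(X)/2$, by enlarging the constant when $R<1$). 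The only step requiring care is the conditional‑expectation identity in the middle paragraph: it rests on $\{\tau_{r,B}>t\}$ being measurable with respect to the information up to time $t$, on the c\`adl\`ag path lying in the closed ball at time $t$ on that event, and on the post‑$t$ law being $\mathbb{P}_r^{\bm{x}(r)_t}$ — all consequences of the Markov property of the pure jump process recorded in Section~2.3 (cf. Kallenberg, Theorem 10.20); everything else is elementary.
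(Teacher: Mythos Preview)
Your proof is correct and follows essentially the same route as the paper: both decompose the exit time via $\tau = (\tau\wedge t) + (\tau-t)\mathbf{1}_{\{\tau>t\}}$, apply the Markov property at time $t$ to bound the second piece by $\phi^+_{r,B}\,\mathbb{P}^x_r(\tau>t)$, and then insert the two-sided bounds on $\phi_{r,B}$ coming from strong time regularity plus time comparability to isolate $\mathbb{P}^x_r(\tau\le t)$. Your observation that the exponent should be $\beta(x)$ rather than $\beta(x_0)$ is also correct---the $x_0$ in the displayed conclusion is a typo in the paper.
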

\begin{proof} By strong time regularity and time comparability, there exist constants $C>1,$ $r_0>0$ such that if $0<r<r_0$ and $x\in X$ then \[\frac{1}{C}R^{\beta(x)}\leq \phi^+_{r,B_R(x)}\leq CR^{\beta(x)}\] and \[\frac{1}{C}\phi_{r, B_R[x]}^+\leq \phi_{r,B_R(x)}(x).\] 

Following Barlow in \cite{barlowdiff}, let $\tau=\tau_{B_R[x]}.$ Then $\tau\leq t+1_{(\tau>t)}(\tau-t).$ Hence $\mathbb{E}^x_r\tau\leq t+\mathbb{P}^x_r(\tau>t)\mathbb{E}^{\bm{x}_r(t)}(\tau-t)\leq t+ \mathbb{P}^x_r(\tau>t)\sup_{y\in B_R[x]}\mathbb{E}^y\tau.$ But $\sup_{y\in B_R[x]}\mathbb{E}^y_r\tau=\phi_{r,B_R[x]}^+.$ So let $0<r<r_0.$ Then \[\frac{1}{C^2}R^{\beta(x)}\leq \frac{1}{C}\phi_{r,B_R[x]}(x)\leq\mathbb{E}^x_r\tau=\phi_{r,B_R[x]}(x)\leq t+C\mathbb{P}^x_r(\tau\geq t)R^{\beta(x)}.\]
Hence \[\mathbb{P}^x_r(\tau\leq t)\leq \left(1-\frac{1}{C^3}\right)+\frac{1}{C}tR^{-\beta(x)},\] so that we take $p_0=1-\frac{1}{C^3}\in (0,1)$ and $c=1/C>0.$ \end{proof}

\begin{corollary} There exists a $p\in (0,1),$ $\delta>0$, and an $r_0>0$ so that if $0<r<r_0$ and $x\in X,$ then for all $0<R<\diam(X)/2,$
\[\mathbb{P}^x_r(\tau_{B_R[x]}\leq \delta R^{\beta(x)})\leq p.\]
\end{corollary}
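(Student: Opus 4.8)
The plan is to derive the corollary directly from the preceding lemma by making a judicious choice of the time parameter $t$ in terms of $R$ and $\beta(x)$. Recall the lemma gives constants $r_0>0$, $p_0\in(0,1)$, and $c>0$ with
\[\mathbb{P}_r^x(\tau_{B_R[x]}\leq t)\leq p_0+ctR^{-\beta(x)}\]
for all $0<r<r_0$, $t>0$, $x\in X$, and $0<R<\diam(X)/2$. (Note the lemma as stated writes $R^{-\beta(x_0)}$, but from its proof the exponent is $\beta(x)$ with $x$ the starting point; I would first remark on this so the substitution below is clean.)

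First I would set $t=\delta R^{\beta(x)}$ for a parameter $\delta>0$ to be chosen. Substituting into the lemma's bound gives
\[\mathbb{P}_r^x(\tau_{B_R[x]}\leq \delta R^{\beta(x)})\leq p_0+c\,\delta R^{\beta(x)}R^{-\beta(x)}=p_0+c\delta.\]
The $R$-dependence cancels exactly, which is the whole point of the choice. Now I would choose $\delta$ small enough that $c\delta<1-p_0$, e.g. $\delta:=\tfrac{1-p_0}{2c}$, and set $p:=p_0+c\delta=\tfrac{1+p_0}{2}$. Since $p_0\in(0,1)$ we have $p\in(0,1)$, and the inequality $\mathbb{P}^x_r(\tau_{B_R[x]}\leq \delta R^{\beta(x)})\leq p$ holds for all $0<r<r_0$, all $x\in X$, and all $0<R<\diam(X)/2$, with the same $r_0$ as in the lemma.

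There is essentially no obstacle here; the corollary is a one-line consequence of the lemma obtained by balancing scales. The only point requiring a word of care is the indexing of the exponent (confirming it is $\beta(x)$ rather than $\beta(x_0)$, which is what the lemma's own proof yields and what makes the cancellation exact), and then verifying that the resulting $p$ lies strictly between $0$ and $1$, which is immediate from $p_0\in(0,1)$ and $\delta>0$ chosen so that $c\delta<1-p_0$.
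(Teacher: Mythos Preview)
Your proof is correct and essentially identical to the paper's: substitute $t=\delta R^{\beta(x)}$ into the lemma, observe the cancellation $p_0+c\delta$, then choose $\delta$ so that $c\delta<1-p_0$ and set $p=\tfrac{1+p_0}{2}$. Your remark about the exponent being $\beta(x)$ rather than $\beta(x_0)$ correctly identifies a typo in the lemma's statement, as the lemma's own proof indeed yields $\beta(x)$.
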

\begin{proof} For $0<r<r_0,$ $x\in X,$ $\delta>0,$ and $0<R<R_0,$ by the previous lemma,
\[\mathbb{P}^x_r(\tau_{B_R[x]}\leq \delta R^{\beta(x)})\leq p_0+c\delta.\] So choose $0<\delta<\frac{1-p_0}{2c}$ and $p=p_0+\frac{1-p_0}{2}.$ \end{proof}

The following proposition is known as Hoeffding's inequality. See Theorem 1 of \cite{hoeffding} for a proof. 
\begin{proposition} (Hoeffding's inequality)
Suppose $T_1,T_2,...,T_n$ are independent random variables with $0\leq T_i\leq 1$ for all $i.$ If $\overline{T}=\frac{1}{n}\left(\sum_{i=1}^n T_i\right)$ and $\mu=\mathbb{E}(\overline{T}),$ then for $0<t<1-\mu,$
\[\mathbb{P}(\overline{T}-\mu \geq t)\leq e^{-2nt^2}.\]
\end{proposition}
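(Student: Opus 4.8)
The plan is to use the exponential moment (Chernoff) method. Write $\mu_i := \mathbb{E}(T_i)$, so that $\mu = \frac{1}{n}\sum_{i=1}^n \mu_i$ and $\overline{T}-\mu = \frac{1}{n}\sum_{i=1}^n (T_i-\mu_i)$. First I would fix $s>0$ and apply Markov's inequality to the nonnegative random variable $e^{s\sum_{i=1}^n (T_i-\mu_i)}$: for $0<t<1-\mu$,
\[
\mathbb{P}(\overline{T}-\mu\geq t)=\mathbb{P}\Big(\sum_{i=1}^n (T_i-\mu_i)\geq nt\Big)\leq e^{-snt}\,\mathbb{E}\Big[e^{s\sum_{i=1}^n (T_i-\mu_i)}\Big].
\]
By independence of the $T_i$, the right-hand expectation factors as $\prod_{i=1}^n \mathbb{E}[e^{s(T_i-\mu_i)}]$, so it suffices to bound each factor.

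The key step, and the main technical obstacle, is Hoeffding's lemma: if $Y$ is a random variable with $\mathbb{E}(Y)=0$ and $a\leq Y\leq b$ almost surely, then $\mathbb{E}[e^{sY}]\leq e^{s^2(b-a)^2/8}$ for all $s\in\mathbb{R}$. I would prove this by first using convexity of $y\mapsto e^{sy}$ to write $e^{sy}\leq \frac{b-y}{b-a}e^{sa}+\frac{y-a}{b-a}e^{sb}$ for $y\in[a,b]$, taking expectations and using $\mathbb{E}(Y)=0$ to get $\mathbb{E}[e^{sY}]\leq \frac{b}{b-a}e^{sa}-\frac{a}{b-a}e^{sb}=:e^{\varphi(s)}$, and then showing $\varphi(s)\leq s^2(b-a)^2/8$ via Taylor expansion about $s=0$: one checks $\varphi(0)=0$, $\varphi'(0)=0$ (this is where $\mathbb{E}(Y)=0$ enters), and $\varphi''(s)\leq (b-a)^2/4$ uniformly in $s$, since $\varphi''(s)$ is the variance of a two-point random variable taking values in $[a,b]$ and hence is at most a quarter of the squared range.

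Applying Hoeffding's lemma with $Y=T_i-\mu_i$, $a=-\mu_i$, $b=1-\mu_i$, so that $b-a=1$, yields $\mathbb{E}[e^{s(T_i-\mu_i)}]\leq e^{s^2/8}$. Therefore
\[
\mathbb{P}(\overline{T}-\mu\geq t)\leq e^{-snt}\prod_{i=1}^n e^{s^2/8}=e^{-snt+ns^2/8}.
\]
Finally I would minimize the exponent $-snt+ns^2/8$ over $s>0$; it attains its minimum at $s=4t$, giving $-4nt^2+2nt^2=-2nt^2$, which is exactly the claimed bound $\mathbb{P}(\overline{T}-\mu\geq t)\leq e^{-2nt^2}$. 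I expect the only delicate point to be the verification of the bound $\varphi''(s)\leq (b-a)^2/4$ in Hoeffding's lemma; the Chernoff step, the use of independence, and the final optimization over $s$ are all routine.
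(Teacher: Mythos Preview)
Your proof is correct and is the standard Chernoff--Hoeffding argument. The paper does not actually supply its own proof of this proposition; it simply states the result and refers the reader to Theorem~1 of Hoeffding's original paper, so there is nothing to compare beyond noting that your argument is precisely the classical one found there.
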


We now prove an exponential bound on the tail of exit time. Or approach adapts an argument given by Barlow in \cite{barlowcarpet1}. Fix a ball $B=B_R[x_0]$ with $0<R<\min\{1,\diam(X)/2\}.$ Let $p,$ $\delta,$ and $r_0$ as in the previous corollary. Let $0<r<R.$ Note since $0<r<1$ and $\beta(x)\leq \beta(B)$ for all $x\in B,$ we have $r^{\beta(B)}\leq r^{\beta(x)}.$ Hence the previous corollary implies \[\mathbb{P}^x_\epsilon(\tau_B\leq \delta r^{\beta(B)})\leq p\] for all $x\in B$ and $0<\epsilon<r_0.$ Let $N=N(r)=\lfloor \frac{R}{r} \rfloor.$  Set $0<\epsilon<\min\{r_0,r/2\}.$ Let $\tau_1=\tau_{B_{r-\epsilon}[x_0]}.$ For $1\leq j<N,$ having defined $\tau_j,$ define $\tau_{j+1}$ by \[\tau_{j+1}(\gamma):=\inf\{t\geq\tau_{j}(\gamma)\;|\;\gamma(t)\notin B_{r-\epsilon}[\gamma(\tau_j(\gamma))]\}.\] Then the variables $\tau_1,\tau_2-\tau_1,...,\tau_{N}-\tau_{N-1}$ are independent. Moreover, since the ``jump size" is at most $\epsilon,$ the the first jump leaving $B_{r-\epsilon}[x]$ lands within $B_r[x].$ Hence a walk exiting $B_R[x_0]$ starting at $x_0$ must leave at least $N=\lfloor N/r\rfloor$ balls of radius $r.$ Set $\tau_0=0.$ Then, if $\gamma$ is a path with $\gamma(0)=x_0,$ \[\tau_{B_R[x_0]}(\gamma)\geq \sum_{i=1}^N \tau_i(\gamma)-\tau_{i-1}(\gamma).\]  For $t=1,...,n$ let \[A_i:=\{\gamma\in \mathcal{D}\;|\;\gamma(0)=x_0\;,\tau_i(\gamma)-\tau_{i-1}(\gamma)\leq \delta (r-\epsilon)^{\beta(B)}\}.\] Define a Bernoulli random variable $T_i$ by $T_i:=1_{A_i}$, where we are using the probability measure $\mathbb{P}_{\epsilon}^{x_0}$ on $\mathcal{D}.$ Then by the corollary,
\[\mathbb{E}(T_i)\leq p\] for all $i.$ Let $T:=\sum_{i=1}^n T_i$ and $\overline{T}:=\frac{T}{N}.$ It follows that $\mathbb{E}
(\overline{T})\leq p.$ Let $A$ be the event that $T\geq \left(\frac{p+1}{2}\right) N,$ which is the same as the event that $
\overline{T}\geq \frac{p+1}{2}.$ But $\overline{T}\geq \frac{p+1}{2}$ implies, since $\mathbb{E}(\overline{T})\leq p,$ that $
\overline{T}-\mathbb{E}(\overline{T})\geq \frac{1-p}{2}.$ Since $\frac{1-p}{2}<1-\mathbb{E}(\overline{T})=1-p,$ by Hoeffding's 
inequality we have 
\[\mathbb{P}^{x_0}_\epsilon\left(T\geq \left(\frac{p+1}{2}\right) N\right)\leq e^{-2N(\frac{1-p}{2})^2}=e^{-2Nc^2},\] where $c=(1-p)/2.$
If $\gamma\in A^c$ then for at least $\lceil\left(\frac{1-p}{2}\right)N\rceil\geq \left(\frac{1-p}{2}\right)N $ of the $i$ we 
have that $\tau_i(\gamma)-\tau_{i-1}(\gamma)>\delta (r-\epsilon)^{\beta(B)}.$ Thus $\tau_{B_R[x_0]}(\gamma)> \left(\frac{1-p}
{2}\right)N \delta (r-\epsilon)^{\beta(B)}. $  It follows that $\tau_{B_R[x_0]}(\gamma)\leq \left(\frac{1-p}{2}\right)N \delta 
(r-\epsilon)^{\beta(B)}=cNr^{\beta(B)}$ with $\gamma(0)=x_0$ implies that $\gamma\in A.$ Thus \begin{equation}\mathbb{P}^{x_0}_{\epsilon}
\left(\tau_{B}\leq cN\delta (r-\epsilon)^{\beta(B)}\right)\leq e^{-2Nc^2},\end{equation} where $N=\lfloor \frac{R}{r}\rfloor.$ The only 
conditions needed are that $0<r<R<\min\{1,\diam(X)/2\}$ and $0<\epsilon<\min\{r_0,r/2\}.$ 

Hence we may prove the following theorem. 
\begin{theorem} Suppose $(X,d,\mu)$ is a compact, connected variable Ahlfors regular metric measure space satisfying the 
strong variable time regularity and time comparability conditions. Then given a sequence of positive numbers $(r_k)$ decreasing 
to $0$ there exists a subsequence $(r_{k_j})$ such that the sequence of continuous time processes $(\bm{x}({r_k}_j))$ starting at $x_0$ 
converges weakly to a continuous time process $\bm{x}$ with continuous paths in the sense that $\mathbb{P}^{x_0}
(\mathcal{C})=1.$
\end{theorem}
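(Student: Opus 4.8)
The plan is to combine the tightness estimate developed in the preceding pages with Prohorov's theorem and the characterization of continuity of subsequential limits. First I would invoke the displayed exponential exit-time bound (equation just before the theorem statement), namely
\[\mathbb{P}^{x_0}_{\epsilon}\left(\tau_{B}\leq cN\delta (r-\epsilon)^{\beta(B)}\right)\leq e^{-2Nc^2},\qquad N=\lfloor R/r\rfloor,\]
valid for $0<r<R<\min\{1,\diam(X)/2\}$ and $0<\epsilon<\min\{r_0,r/2\}$. The point is that this is a bound that is \emph{uniform in the starting point} (since $x_0$ was arbitrary) and uniform in $\epsilon$ small enough, so by the strong Markov property it gives a uniform modulus-of-continuity estimate of the type required by the tightness proposition quoted above.

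The key steps, in order: (1) Fix $\epsilon>0$ (the ``$\epsilon$'' of the tightness criterion) and $\eta>0$ and $t>0$. Given a small ball radius, use the strong Markov property to reduce the event $\{\sup_{t\le s\le t+\delta} d(\gamma(s),\gamma(t))\ge \epsilon\}$ to $\sup_{x\in X}\mathbb{P}^x_r(\tau_{B_{\epsilon/2}(x)}\le\delta)$, exactly as in the displayed inequality right before the Barlow-type lemma. (2) Apply the exponential bound with $R=\epsilon/2$: choosing $r$ a fixed fraction of $\epsilon/2$ makes $N$ a fixed large integer, so $e^{-2Nc^2}$ can be made as small as we like; then $\delta$ is taken of the form $cN\delta_0(r-\epsilon')^{\beta(B)}$, which is a fixed positive multiple of $\epsilon$ to a bounded power, hence bounded below by a constant times $\epsilon^{M}$ where $M$ is the uniform upper bound on $\beta$ from Proposition \ref{semicont1}. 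This yields $\frac{1}{\delta}\mathbb{P}^{x}_{r_k}(\sup_{t\le s\le t+\delta}d(\gamma(s),\gamma(t))\ge\epsilon)\le \eta$ for all $k$ large and all $t\ge 0$, which is precisely the hypothesis of the tightness/continuity proposition (the one proved just above, ``a version of Theorem 8.3 of \cite{billingsley}''). (3) Invoke that proposition: the family $(\mathbb{P}^{x_0}_{r_k})$ is tight in $P(\mathcal{D})$, so by Prohorov's theorem it is relatively compact; extract a weakly convergent subsequence $\mathbb{P}^{x_0}_{r_{k_j}}\to\mathbb{P}^{x_0}$, and the same proposition gives $\mathbb{P}^{x_0}(\mathcal{C})=1$. (4) Translate back: the measures $\mathbb{P}^{x_0}_{r_{k_j}}$ are the laws of the processes $\bm{x}(r_{k_j})$ started at $x_0$ (via the measurable map $\iota:\Omega^*\to\mathcal{D}$ from section 2.3.4), so weak convergence of the laws is weak convergence of the processes, and the limiting law being carried by $\mathcal{C}$ says the limit process has continuous paths almost surely.

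The main obstacle — really the only nontrivial point beyond bookkeeping — is verifying that the $\delta$ produced by the exponential bound is bounded below by a \emph{uniform} (independent of $k$ and of the base point) positive quantity depending only on $\epsilon$ and $\eta$, rather than degenerating as the scale $r_k\to 0$. This is where strong variable time regularity and the time comparability condition enter: they are exactly what make the constants $C,r_0,p_0,\delta_0$ in the Barlow-type lemma and its corollary independent of $r$ and of $x$, and the uniform upper bound $\beta\le M$ (Proposition \ref{semicont1}) controls $(r-\epsilon')^{\beta(B)}$ from below by $(\epsilon/4)^M$ when $r$ is comparable to $\epsilon$. Once that uniformity is in hand, everything else is a direct citation of the tightness proposition, Prohorov's theorem, and the identification of $\bm{x}(r)$ with its path-space law. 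I would also remark at the end that the argument is insensitive to the choice of starting point $x_0$, and that one could run it simultaneously for countably many starting points by a diagonal extraction if a limit semigroup (rather than a single limit law) were desired, though that is not claimed here.
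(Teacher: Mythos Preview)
Your approach is correct and is the same as the paper's: reduce via the strong Markov property to exit-time tail bounds, invoke the exponential estimate (5.1), and apply the Billingsley-type tightness/continuity proposition together with Prohorov's theorem. The only cosmetic difference is in the parameter coupling --- the paper takes $r=4\epsilon$ (so $N\to\infty$ as the approximation scale $\epsilon\to 0$), whereas you fix $N$ by choosing $r$ as a small fraction of the distance threshold, which more transparently produces a time window $\delta$ independent of the scale.
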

\begin{proof} By the previous discussion it suffices to show that for any $R>0$ with $R<\min\{1,\diam(X)/2\}$ and any $\eta>0,$ there exists a $t\in (0,1)$ and $\epsilon_0>0$ such that for all $0<\epsilon<\epsilon_0,$ $\mathbb{P}^{x_0}_\epsilon(\tau_{B_R[x_0]}\leq t)\leq \eta t.$ Now fix $r=4\epsilon.$ Then if $N=\lfloor \frac{R}{4\epsilon} \rfloor$ choose $0<\epsilon_1<r_0$ such that $\frac{e^{-2Nc^2}}{\eta}\leq t.$ Note this is still true for $0<\epsilon<\epsilon_1.$ Then since $\beta(B)\geq \beta(x_0)\geq 2,$ we may choose $0<\epsilon_0<\epsilon_1$ such that $cN\delta (3\epsilon)^{\beta(B)}\leq \frac{9cR\delta}{4}\epsilon^{\beta(B)-1}<1.$ Again note that this still holds for $0<\epsilon<\epsilon_0.$ So let $t=cN\delta (3\epsilon)^{\beta(B)}.$ Then by our previous inequality (5.1), for $0<\epsilon<\epsilon_0,$
\[\mathbb{P}^{x_0}_{\epsilon} \left(\tau_{B}\leq cN\delta (r-\epsilon)^{\beta(B)}\right)=\mathbb{P}^{x_0}_{\epsilon}
(\tau_{B}\leq t)\leq e^{-2Nc^2}\leq \eta t.\] \end{proof}




\begin{appendices}

\chapter{Additional Proofs}
We provide proofs of many of the stated classical results from Chapter 2. The proofs provided are not original; however they are included for completeness. 
\subsection{Basic Measure Theory}

\subsubsection{Proposition \ref{Car} }

\begin{proof} Since $\mu^*$ is subadditive, $A\in \mathscr{M}^*$ if and only if $\mu^*(E)\geq \mu^*(E\cap A)+\mu^*(E\cap A^c)$ for all $E\subset X.$ Then $\emptyset \in \mathscr{M}^*$, since if $E\subset X,$ $\mu^*(E\cap \emptyset)+\mu^*(E\cap \emptyset^c) = \mu^*(\emptyset)+\mu^*(E)=\mu^*(E).$ By symmetry of the definition of measurability, $A\in \mathscr{M}^*$ if and only if $A^c\in \mathscr{M}^*.$ Let $A,B\in \mathscr{M}^*.$ If $E\subset X,$ since $A$ and $B^c$ are measurable, $\mu^*(E)\geq \mu^*(E\cap A)+\mu^*(E\cap A^c)$ and $\mu^*(E \cap A^c) \geq \mu^*(E\cap A^c\cap B^c)+\mu^*(E\cap A^c \cap B).$ Moreover, since $E\cap(A\cup B)\subset (E\cap A) \cup (E\cap A^c \cap B ),$ $\mu^*(E\cap(A\cup B))\leq \mu^*(E\cap A)+\mu^*(E\cap A^c \cap B^c)+\mu^*(E\cap A^c \cap B).$ Putting it together, $\mu^*(E)\geq \mu^*(E\cap(A\cup B))+\mu^*(E\cap A^c \cap B^c).$ So $A\cup B\in \mathscr{M}^*.$ Hence $\mathscr{M}^*$ is an algebra. Let $(A_i)_{i=1}^\infty \subset \mathscr{M}^*$ disjoint. Suppose for all $E\subset X,$ $\mu^*(E\cap \cup_{i=1}^{n} A_i)=\sum_{i=1}^n \mu^*(E\cap A_i).$ Then since $A_{n+1}\in \mathscr{M}^*$ and since $\cup_{i=1}^n A_i = (\cup_{i=1}^{n+1} A_i) \cap A_{n+1},$ $A_{n+1} = (\cup_{i=1}^{n+1} A_i) \cap A_{n+1}, \mu^*(E\cap(\cup_{i=1}^{n+1}A_i))=\mu^*(E\cap A_{n+1}) + \mu^*(E\cap (\cup_{i=1}^n A_i)) = \sum_{i=1}^{n+1} \mu^*(A_i).$ Hence, by induction, for all $n$ $\mu^*(E\cap (\cup_{i=1}^n A_i)) = \sum_{i=1}^n \mu^*(E\cap A_i).$ Since $\mathscr{M}^*$ is an algebra, $\cup_{i=1}^n A_i\in \mathscr{M}^*$ for each $n.$ Then for all $n,$ since $(\cap_{i=1}^n A_i)^c \subset (\cap_{i=1}^\infty A_i)^c,$   $\mu^*(E)=\mu^*(E\cap(\cup_{i=1}^n A_i))+\mu^*(E\cap (\cup_{1=i}^n A_i)^c) \geq \sum_{i=1}^n \mu^*(E\cap A_i)+\mu^*(E\cap(\cup_{i=1}^\infty A_i)^c).$ Letting $n\rightarrow \infty$ and applying countable subadditivity, $\mu^*(E)\geq \sum_{i=1}^\infty \mu^*(E\cap A_i)+\mu^*(E\cap(\cup_{i=1}^\infty A_i)^c)\geq \mu^*(E\cap (\cup_{i=1}^\infty A_i))+\mu^*(E\cap (\cup_{i=1}^\infty A_i)^c).$ Hence $\cup_{i=1}^\infty A_i$ in $\mathscr{M}^*.$ Setting $E=\cup_{i=1}^\infty A_i,$ the above inequality yields $\mu^*(\cup_{i=1}^\infty A_i) \geq \sum_{i=1}^\infty \mu^*(A_i).$ So $\mu^*(\cup_{i=1}^\infty A_i)=\sum_{i=1}^\infty \mu^*(A_i).$ Let $(B_i)_{i=1}^\infty \subset \mathscr{M}^*.$ Let $B_0=\emptyset.$ Then set $A_n=B_n\setminus (\cup_{i=0}^{n-1}B_i).$ Then $(A_i)_{i=1}^\infty \subset \mathscr{M}^*$ is disjoint. Hence $\cup_{i=1}^\infty B_i = \cup_{i=1}^\infty A_i \in \mathscr{M}^*.$ It follows that $\mathscr{M}^*$ is a $\sigma$-algebra and $\mu^*|_{\mathscr{M}^*}$ is a measure. Suppose $\mu^*(A)=0.$ Then for $E\subset X,$ $E\cap A\subset A$ so $\mu^*(E\cap A)=0.$ Hence $\mu^*(E\cap A)+\mu^*(E\cap A^c) = \mu^*(E\cap A^c) \leq \mu^*(E).$ So $A\in \mathscr{M}^*.$ Let $N\in \mathscr{M}^*$ with $\mu^*(N)=0.$ Then by monotonicity if $A\subset N, \mu^*(A)=0.$ So $A\in \mathscr{M}^*.$ Hence $\mu|_{\mathscr{M}^*}$ is a complete measure. \end{proof}

\subsubsection{Proposition \ref{prop2.2} }

\begin{proof} Suppose $(A_j)_{j=1}^\infty \subset X$ increasing, $A=\cup_j A_j,$ and $\dist(A_j,A\setminus A_{j+1})>0$ for all $j\geq 1.$ Let $A_0=\emptyset, B_j=A_j\setminus(\cup_{i<j} A_i) = A_j\setminus A_{j-1}.$ Then $(B_j)_{j=1}^\infty$ is disjoint, $\cup_{i=1}^n B_i = \cup_{i=1}^n A_i = A_n.$ Since $\cup_{i<j} B_i = A_{j-1}, B_j \subset A\setminus A_{j-1},$ $\dist(\cup_{i<j} B_i, B_j)>0$ for $j>1.$ Hence, by induction, $\mu^*(\cup_{i=1}^n B_i) = \sum_{i=1}^n \mu^*(B_i)$ for all $n.$ Hence $\lim_{n\rightarrow \infty} \mu^*(A_n) = \sum_{i=1}^\infty \mu^*(B_i) \geq \mu^*(A).$ So $\mu^*(A)=\lim_{n\rightarrow \infty} \mu^*(A_n).$ 

Let $U$ open, $E\subset X.$ Let $U_n:=\{y\in U\;|\;d(x,y)\geq \frac{1}{n} \mbox{ for all } x \in U^c\;\}.$ Then $(U_j)_{j=1}^\infty$ is increasing. Let $y\in U.$ Then since $U$ is open there exists an $n>0$ so that $B_{\frac{1}{n}}(y)\subset U.$ So if $x\in U^c$ then $d(x,y)\geq \frac{1}{n}.$ Hence $\cup_n U_n = U.$ Then $\dist(E\cap U_n,E\cap U^c)\geq \frac{1}{n}>0.$ So, since $(E\cap U_n)\cup (E\cap U^c) \subset E,$  $\mu^*(E)\geq \mu^*(E\cap U_n)+\mu^*(E\cap U^c).$ Let $A_n=E\cap U_n, A=E\cap U=\cup_n A_n.$ Then if $x\in A_n, y\in A\setminus A_{n+1},$  $y\in U, y\notin U_{n+1}.$ So $d(y,z)<\frac{1}{n+1}$ for some $z\in U^c.$ But $x\in A_n.$ So $d(x,z)\geq \frac{1}{n}.$ Hence $d(x,y)\geq d(x,z)-d(y,z)<\frac{1}{n}-\frac{1}{n+1}.$ So $\dist(A_n,A\setminus A_{n+1})\geq \frac{1}{n}-\frac{1}{n+1}>0.$ So $\mu^*(A)=\lim_{n\rightarrow \infty} \mu^*(A_n).$ Since $\mu^*(E)\geq \mu^*(A_n)+\mu^*(E\cap U^c)$ for all $n,$ letting $n\rightarrow \infty$ yields $\mu^*(E)\geq \mu^*(A)+\mu^*(E\cap U^c)=\mu^*(E\cap U)+\mu^*(E\cap U^c).$ Hence $U$ is $\mu^*$ measurable. So $\mathscr{M}^*$ contains a generating set for the Borel $\sigma$-algebra. Since $\mathscr{M}^*$ is a $\sigma$-algebra, it contains all of the Borel sets. 
\end{proof}

\subsubsection{Proposition \ref{outer} }
\begin{proof} For $\delta>0, \emptyset \in \mathscr{C}_\delta(\emptyset).$ Hence 
$\mu^* _{\tau}(\emptyset)=0.$ If $A\subset B$ then $\mathscr{C}_\delta(B)\subset 
\mathscr{C}_\delta(A).$ Hence $\mu^* _{\tau, \delta}(A)\leq \mu^* _{\tau, \delta}(B)$. So $\mu^* _{\tau}(A)\leq \mu^* _{\tau}(B)$. Let $(A_k)_{k=1}^\infty \subset \mathscr{P}(X).$ Let $\epsilon>0.$ Let $\delta>0$ so that $\mu^* _{\tau, \delta}(\cup_k A_k)+\frac{\epsilon}{2}>\mu^* _{\tau}(\cup_k A_k).$ Then for each $k$ let $(U_{k,j})_{j=1}^\infty \in \mathscr{C}_\delta(A_k)$ with $\sum_j \tau(U_{j,k})<\mu^* _{\tau, \delta}(A_k)+\frac{\epsilon}{2^{k+1}}.$ Then $(U_{j,k})_{j,k=1}^\infty \in \mathscr{C}_\delta(\cup_k A_k)$ and $\mu^* _{\tau, \delta}(\cup_k A_k)\leq \sum_{j,k}\tau(U_{j,k})\leq \sum_k \mu^* _{\tau, \delta}(A_k)+ \frac{\epsilon}{2}\leq \mu^* _{\tau}(A_k)+ \frac{\epsilon}{2}.$ Hence $\mu^* _{\tau}(\cup_k A_k)\leq \sum_k \mu^* _{\tau}(A_k)+ \epsilon.$ Since $\epsilon>0$ was arbitrary, $\mu^* _{\tau}(\cup_k A_k)\leq \sum_k \mu^* _{\tau}(A_k).$

For $E\subset X$ let $\mathscr{C}_\delta(E)^\prime:=\{\mathscr{U}\in \mathscr{C}_\delta(E)\;|\;\forall U\in \mathscr{U}\;[\;U\cap E\neq \emptyset\;]\}.$ Since $\mathscr{C}_\delta(E)^\prime \subset \mathscr{C}_\delta(E),$
$\mu^* _{\tau, \delta}(E)\leq \inf\{\sum_{U\in \mathscr{U}}\tau(U)\;|\; 
\mathscr{U}\in \mathscr{C}_\delta(E)^\prime\}.$ But for $\mathscr{U}\in \mathscr{C}_\delta(E),$ letting $\mathscr{U}^\prime:=\{U\in\mathscr{U}\;|\;U\cap E\neq \emptyset\;\},$ then $\mathscr{U}^\prime \in \mathscr{C}_\delta(E)^\prime$ and $\inf \{\sum_{U\in \mathscr{U}}\tau(U)\;|\; 
\mathscr{U}\in \mathscr{C}_\delta(E)^\prime\}\leq \sum_{U\in \mathscr{U}^\prime} \tau(U) \leq \sum_{U\in \mathscr{U}}\tau(U).$ Hence $\inf\{\sum_{U\in \mathscr{U}}\tau(U)\;|\; 
\mathscr{U}\in \mathscr{C}_\delta(E)^\prime\}=\mu^* _{\tau,\delta}(E)$ for each $\delta>0$. Now suppose $A,B\subset X$ with $\delta_0:=\frac{1}{2}$dist$(A,B)>0.$ Then note for $\delta<\delta_0, \mathscr{U}\in \mathscr{C}_\delta(A\cup B)^\prime$ if and only if $\mathscr{U}=\mathscr{U}_1\cup \mathscr{U}_2$ where $\mathscr{U}_1 \in \mathscr{C}_\delta(A)^\prime, \mathscr{U}_2 \in \mathscr{C}_\delta(B)^\prime$ and for $U_1\in \mathscr{U}_1,U_2\in \mathscr{U}_2,$ $U_1\cap U_2=\emptyset.$ It follows that $\mu^* _{\tau}(A\cup B)=\mu^* _{\tau}(A)+\mu^* _{\tau}(B).$ 
\end{proof}
\subsubsection{Proposition \ref{reg} }
\begin{proof} Let $\mathscr{M}:=\{A\;| \sup\{\nu(F)\;|\;A \supset F,\; F \mbox{ closed}\}=\inf\{\nu(U)\;|\;A\subset U,\; U \mbox{ open}\}\}.$ Since $X$ is both open and closed, $X\in \mathscr{M}.$ Suppose $A\in \mathscr{M}.$ Then for $\epsilon>0$ if $F\subset A\subset U$ with $F$ closed, $U$ open, and $\nu(U\setminus F)<\epsilon,$ then $U^c\subset A^c \subset F^c,$ $U^c$ closed, $F^c$ open, and $\nu(F^c\setminus U^c)=\nu(U\setminus F)<\epsilon.$ It follows that $A^c \in \mathscr{M}.$ Let $(A_n)\subset \mathscr{M}.$ Then for $\epsilon>0$ and for each $n,$ let $F_n\subset A_n \subset U_n$ with $F_n$ closed, $U_n$ open, and $\nu(U_n\setminus F_n)<\frac{\epsilon}{2^{n+1}}.$ Then let $A=\cup A_n.$ Let $N$ be sufficiently large so that $\nu(\cup_{n=1}^N F_n)>\nu(\cup_n F_n)-\frac{\epsilon}{2}.$ Then let $F=\cup_{n=1}^N F_n, U=\cup U_n.$ Then $F\subset A \subset U,$ $F$ is closed, $U$ is open, and $\nu(U\setminus \cup_n F_n) = \nu(U)-\nu(\cup_n F_n)<\frac{\epsilon}{2}.$ So $\nu(U\setminus F)=\nu(U)-\nu(F)<\epsilon.$ So $A\in \mathscr{M}$. Hence $\mathscr{M}$ is a $\sigma-$algebra. Let $A\subset X$ closed. Then $\nu(A)=\sup\{\nu(F)\;|\;X \supset F \mbox{ closed }\}.$ Let $U_n=B_{\frac{1}{n}}(A).$ Then $A\subset U_n,$ each $U_n$ is open, and $\cap_n U_n = A.$ So $\inf_n \nu(U_n) = \nu(A)$ by continuity of measure, since $\nu(X)<\infty.$ Hence $\nu(A)\leq \inf\{\nu(U)\;|\;A\subset U \mbox{ open }\}\}\leq \inf_n \nu(U_n)=\nu(A).$ Hence $\mathscr{M}$ contains all Borel sets. 

If $X$ is compact, then all closed subsets of $X$ are compact. Hence $\nu$ is regular when $X$ is compact. Now, if $X$ is $\sigma-$compact, $X=\cup X_n$ where $X_i\subset X_j$ for $j\geq i$ and each $X_i$ is compact. Let $A\subset X$ measurable. Let $\epsilon>0.$ Let $K_1\subset X_1\cap A$ compact with $\nu(A\cap X_1)<\nu(K_1)+\epsilon.$ Then we may choose $K_2\subset X_2\cap A$ with $K_1\subset K_2$ and $\nu(A\cap X_1)<\nu(K_1)+\frac{\epsilon}{2}.$ Continuing by induction, we may choose a sequence $K_n$ with $K_n\subset X_n \cap A,$ $K_n$ compact, $K_n\subset K_{n+1}$  and $\nu(A\cap X_n)<\nu(K_n)+\frac{\epsilon}{n}$ for each $n$. Then by continuity of measure, $\nu(A)=\sup_n \nu(A\cap X_n) \leq \sup _n \nu(K_n).$ The result follows for the case of $X$ $\sigma$-compact.

Now suppose $X$ is separable and complete. Then, if $A$ is a Borel set, since we may approximate $A$ arbitrarily by closed sets, we may assume $A$ is closed. But then, since $A$ is again separable and complete, we may assume $A=X.$ Let $(x_n)_{n=1}^\infty$ be dense in $X.$ Then let $B_{m,n}:=B_{1/m}(x_n)$ for each $m,n\in\mathbb{Z}^+.$ Then $(B_{m,n})_{n=1}^\infty$ covers $X$ for each $m.$ Let $\epsilon>0.$ Then we may choose a sequence $(i_n)_{n=1}^\infty$ of positive integers such that $\nu(\cup_{k=1}^{i_n} B_{n,k})>\nu(X)-\frac{1}{2^n}$ for $n\in\mathbb{Z}^+.$ Then let $A:=\cap_{n=1}^\infty \cup_{k\leq i_n} B_{n,k}.$ Note that $A$ is totally bounded; since if $\delta>0,$ then for $N$ large with $N^{-1}<\delta,$ $A\subset \cup_{k=1}^{i_N} B_{\delta}(x_k)$. Therefore $K:=\overline{A}$ is compact, as it is complete and totally bounded. However, \[\nu(K^c)\leq \nu(A^c)\leq \sum_{n=1}^\infty \nu((\cup_{k=1}^{i_n} B_{n,k})^c)\leq \sum_{n=1}^\infty \frac{\epsilon}{2^n}=\epsilon.\] Therefore, \[\nu(K)>\nu(X)-\epsilon,\] proving the desired result.
\end{proof}

\subsubsection{Lemma \ref{mon}}
\begin{proof} Let $\mathscr{C}':=\cap\{\mathscr{C}\subset \mathscr{P}(S)\;|\;\mathscr{C}\;\mbox{a monotone class containing\;} \mathscr{A}\}.$ Then it is easy to see that $\mathscr{C}'$ is a monotone class containing $\mathscr{A}.$ It suffices to show that $\mathscr{C}'$ is a $\sigma$-algebra. It suffices then to show that it is an algebra, since any countable union is a monotone union of finite unions, similarly for intersections. Let $B\in \mathscr{C}'$ and let $\mathscr{E}=\{A\in\mathscr{C}'\;|\;A\cap B, A^c \cap B\in\mathscr{C}'\}.$ Then $\mathscr{A}\subset \mathscr{E}.$ Let $(A_i)_{i=1}^\infty \subset \mathscr{E}.$ Suppose $A_k\subset A_{k+1}$ for all $k.$ Then $A_k\cap B\subset A_{k+1}\cap B$ for all $k.$ Hence, since each $A_k\cap B\in \mathscr{E}\subset \mathscr{C}',$  $\cup_{k} A_k\cap B = (\cup_k A_k)\cap B\in\mathscr{C}'.$ Similarly, $B\cap A_k^c\supset B\cap A_k^c$ for all $k$ and $B\cap A_k^c\in \mathscr{E}\subset \mathscr{C}'$ for each $k.$ Hence $\cap_k B\cap A_k^c=B\cap(\cap_k A_k^c)=B\cap(\cup_k A_k)^c\in\mathscr{C}'.$ So $\cup_kA_k\in \mathscr{C'}.$ Now suppose $A_k+1\supset A_k$ for all $k.$ Then $A_k^c\cap B\subset A_{k+1}^c\cap B$ with $A_k\cap B\in\mathscr{E}\subset \mathscr{C}'$ for all $k$. Hence $B\cap(\cap_k A_k)^c=B\cap (\cup_k A_k^c)=\cup_k (B\cap A_k^c)\in \mathscr{C}'.$ Similarly, since $A_k\cap B\supset A_{k+1}\cap B$ with $A_k\cap B\in\mathscr{C}'$ for all $k,$ $(\cap_k A_k) \cap B=\cap_k(A_k\cap B)\in \mathscr{C}'.$ So $\cap_k A_k\in \mathscr{E}.$ Hence $\mathscr{E}$ is a monotone class containing $\mathscr{C}'.$ So $\mathscr{C}'\subset \mathscr{E}.$ Hence $A\cap B$, $A\cap B^c\in\mathscr{C}'$ for all $A, B\in \mathscr{C}'.$ Since $X\in \mathscr{A}\subset \mathscr{C}',$ $\mathscr{C}'$ contains finite intersections and complements. It follows that $\mathscr{C}'$ is an algebra. 
\end{proof}

\subsection{Kolmogorov Extension Theorem}

Our proof of the Kolmogorov Extension Theorem follows \cite{rogers}. The proof is based around the Carath\'{e}odory Extension Theorem and the inner regularity of a Borel measure on a Polish space. We first state and prove the Carath\'{e}odory Extension Theorem, following \cite{Folland}.

\subsubsection{Carath\'{e}odory Extension Theorem}

For $X$ a non-empty set, an algebra of subsets of $X$ is a collection $\mathscr{A}\subset \mathscr{P}(X)$ that is closed under finite unions and complementation. If $\mathscr{A}$ is an algebra of subsets of $X,$ a function $\mu_0:\mathscr{A}\rightarrow [0,\infty]$ is called a pre-measure if $\mathscr{C}\subset \mathscr{A}$ is countable and disjoint with $\cup\mathscr{C}\in\mathscr{A}$, then $\mu_0(\cup\mathscr{C})= \sum_{A\in\mathscr{C}}\mu_0(A).$

\begin{proposition} (Carath\'{e}odory Extension Theorem) \label{cara} Suppose $\mu_0$ is a pre-measure on an algebra $\mathscr{A}$ of subsets of $X.$ Then the map $\sigma(\mathscr{A})\ni A \mapsto \inf_{[0,\infty]}\{\sum_{U\in\mathscr{C}}\mu_0(U)\;|\;\mathscr{C}\subset \mathscr{A}, \sharp\mathscr{C}\leq \infty, A\subset \cup\mathscr{C}\}$ is a measure on $\sigma(\mathscr{A})$ extending $\mu_0.$ Moreover, if $\mu_0(X)<\infty,$ then $\mu$ is the unique measure on $\sigma(\mathscr{A})$ extending $\mu_0.$
\end{proposition}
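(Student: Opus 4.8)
The plan is to prove the Carath\'eodory Extension Theorem in the standard two-part form: first existence of an extension, then uniqueness under finiteness. For existence, I would define $\mu^*$ on all of $\mathscr{P}(X)$ by the stated formula $\mu^*(A) = \inf\{\sum_{U\in\mathscr{C}}\mu_0(U) \mid \mathscr{C}\subset\mathscr{A},\ \sharp\mathscr{C}\leq\infty,\ A\subset\cup\mathscr{C}\}$ and check that it is an outer measure (monotonicity is immediate from shrinking the class of admissible covers; countable subadditivity follows by the usual $\epsilon/2^k$ argument, splicing together covers of each $A_k$). Then I would invoke Proposition \ref{Car}: the collection $\mathscr{M}^*$ of Carath\'eodory-measurable sets is a $\sigma$-algebra on which $\mu^*$ restricts to a complete measure. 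The crux of the existence half is then to show $\mathscr{A}\subset\mathscr{M}^*$ and that $\mu^*|_{\mathscr{A}} = \mu_0$; once both hold, $\sigma(\mathscr{A})\subset\mathscr{M}^*$ and $\mu := \mu^*|_{\sigma(\mathscr{A})}$ is the desired extension.

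For $\mu^*|_{\mathscr{A}} = \mu_0$: given $A\in\mathscr{A}$, clearly $\mu^*(A)\leq\mu_0(A)$ by taking $\mathscr{C}=\{A\}$. Conversely, if $\mathscr{C}=\{U_k\}\subset\mathscr{A}$ covers $A$, disjointify by setting $V_k = A\cap U_k\setminus(\cup_{j<k}U_j)\in\mathscr{A}$, so the $V_k$ are disjoint, lie in $\mathscr{A}$, have union $A\in\mathscr{A}$, and $V_k\subset U_k$; the pre-measure property plus monotonicity of $\mu_0$ on $\mathscr{A}$ (which itself follows from additivity) give $\mu_0(A) = \sum_k\mu_0(V_k)\leq\sum_k\mu_0(U_k)$, hence $\mu_0(A)\leq\mu^*(A)$. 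For $\mathscr{A}\subset\mathscr{M}^*$: fix $A\in\mathscr{A}$ and arbitrary $E\subset X$ with $\mu^*(E)<\infty$; for any cover $\{U_k\}\subset\mathscr{A}$ of $E$ and any $\epsilon>0$ chosen to nearly realize the infimum, split each $U_k = (U_k\cap A)\sqcup(U_k\cap A^c)$ into members of $\mathscr{A}$, and use finite additivity of $\mu_0$ together with the fact that $\{U_k\cap A\}$ covers $E\cap A$ and $\{U_k\cap A^c\}$ covers $E\cap A^c$ to conclude $\mu^*(E\cap A)+\mu^*(E\cap A^c)\leq\sum_k\mu_0(U_k)\leq\mu^*(E)+\epsilon$; let $\epsilon\to0$. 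Subadditivity gives the reverse inequality automatically, so $A\in\mathscr{M}^*$.

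For uniqueness, assume $\mu_0(X)<\infty$ and let $\nu$ be another measure on $\sigma(\mathscr{A})$ with $\nu|_{\mathscr{A}}=\mu_0$. For $A\in\sigma(\mathscr{A})$ and any countable cover $\{U_k\}\subset\mathscr{A}$ of $A$, one has $\nu(A)\leq\sum_k\nu(U_k)=\sum_k\mu_0(U_k)$, so $\nu(A)\leq\mu(A)$. For the reverse inequality, given $\epsilon>0$ pick a cover with $\sum_k\mu_0(U_k)\leq\mu(A)+\epsilon$ and set $U=\cup_k U_k\in\sigma(\mathscr{A})$; then $\mu(U\setminus A)\leq\mu(U)-\mu(A)+\mu(A)-\nu(A)$ — more cleanly, both $\mu$ and $\nu$ agree on $\mathscr{A}$ and hence (by continuity from below applied to finite unions) on $U$, giving $\mu(U)=\nu(U)$; combining $\mu(A)+\epsilon\geq\mu(U)=\nu(U)=\nu(A)+\nu(U\setminus A)\geq\nu(A)$, since $\nu(U\setminus A)\leq\mu(U\setminus A)\leq\mu(U)-\mu(A)\leq\epsilon$, we get $\mu(A)\leq\nu(A)+\epsilon$; let $\epsilon\to0$. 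Finiteness of $\mu_0(X)$ is used precisely to subtract measures of subsets freely in this last step.

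I expect the main obstacle to be bookkeeping rather than conceptual: carefully justifying that $\mu^*|_{\mathscr{A}}=\mu_0$ requires the disjointification trick staying inside $\mathscr{A}$ and the (easy but necessary) preliminary lemma that a pre-measure is finitely additive and monotone on the algebra, and the uniqueness argument needs one to be careful that $U\setminus A$ need not lie in $\mathscr{A}$, only in $\sigma(\mathscr{A})$, so the inequality $\nu(U\setminus A)\leq\mu(U\setminus A)$ must be obtained from the already-established $\nu\leq\mu$ on $\sigma(\mathscr{A})$ rather than from equality on $\mathscr{A}$. All of this is routine once the order of steps is fixed as above.
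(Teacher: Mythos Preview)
Your existence argument is essentially identical to the paper's: same verification that $\mu^*$ is an outer measure via the $\epsilon/2^k$ splice, same disjointification $V_k=A\cap U_k\setminus(\cup_{j<k}U_j)$ to show $\mu^*|_{\mathscr{A}}=\mu_0$, and the same splitting $U_k=(U_k\cap A)\sqcup(U_k\cap A^c)$ to show each $A\in\mathscr{A}$ is Carath\'eodory-measurable, followed by invoking Proposition~\ref{Car}.

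For uniqueness the two proofs diverge. You argue directly: choose a near-optimal cover $\{U_k\}$ of $A$, set $U=\bigcup_k U_k$, observe $\mu(U)=\nu(U)$ by continuity from below applied to the increasing finite unions in $\mathscr{A}$, and conclude $\nu(A)\le\nu(U)=\mu(U)\le\mu(A)+\epsilon$. The paper instead runs a ``good sets'' argument: after establishing $\nu\le\mu$, it defines $\mathscr{F}=\{A:\mu(A)=\nu(A)\}$ and checks $\mathscr{F}$ is a $\sigma$-algebra (closure under complements uses $\mu(X)=\nu(X)<\infty$; closure under finite intersections uses the inequality $\nu\le\mu$ twice via inclusion-exclusion; countable unions follow from continuity). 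Your route is a bit more economical and avoids verifying algebra axioms; the paper's route is the Dynkin/monotone-class flavor that generalizes more directly to the $\sigma$-finite case. Both are standard and correct, and your bookkeeping caveats (notably that $U\setminus A$ lies only in $\sigma(\mathscr{A})$, so one must use $\nu\le\mu$ there rather than equality) are well-placed.
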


\begin{proof}
Suppose $\mathscr{A}\subset \mathscr{P}(X)$ is an algebra and $\mu_0$ is a pre-measure on $X.$ Define $\mu^*$ on $\mathscr{P}(X)$ by $\mu^*(A):=\inf_{[0,\infty]}\{\sum_{U\in\mathscr{C}}\mu_0(U)\;|\;\mathscr{C}\subset \mathscr{A}, \sharp\mathscr{C}\leq \infty, A\subset \cup\mathscr{C}\}$ for $A\in \mathscr{P}(X).$ Since $\mu_0(\emptyset)=0$ and $\emptyset\in\mathscr{A}$ is a countable cover of $\emptyset$, it follows that $\mu^*(\emptyset)=0.$ If $A\subset B$ then since any cover of $B$ is also a cover of $A$, it follows $\mu^*(A)\leq \mu^*(B).$ If $A\in \mathscr{A},$ then since $\{A\}$ is a countable cover of $A,$ it follows that $\mu^*(A)\leq \mu_0(A).$ Suppose $(A_k)_{k=1}^\infty\subset \mathscr{A}$ is a countable cover of $A.$ For each $k$, let $B_k:=A_k\setminus(\cup_{j<k} A_j).$ Then each $B_k,$ and hence also $B_k\cap A$, is in $\mathscr{A}.$ Moreover, the $B_k$ are disjoint with union $\cup_k A_k \supset A.$ Hence $\mu_0(A)=\mu_0(\cup_k(B_k\cap A))=\sum_k\mu_0(B_k\cap A)\leq \sum_k\mu_0(B_k)\leq \sum_k \mu_0(A_k).$ So $\mu_0\leq \mu^*(A).$ Hence $\mu^*|{\mathscr{A}}=\mu_0.$  Now suppose $(A_i)_{i=1}^\infty\subset X.$ Then we may assume $\mu^*(A_i)<\infty$ for all $i.$ Let $\epsilon>0.$ Then for each $i$ let $\mathscr{C}_i\subset \mathscr{A}$ be a countable cover of $A_i$ with $\sum_{U\in \mathscr{C}_i} \mu_0(U)<\mu^*(A_i)+\frac{\epsilon}{2^i}.$ Then let $\mathscr{C}=\cup_i\mathscr{C}_i.$ Then $\mathscr{C}\subset \mathscr{A}$ is a countable cover of $\cup_i A_i.$ Hence $\mu^*(\cup_i A_i)\leq \sum_{U\in\mathscr{C}}\mu_0(U)\leq \sum_i\mu^*(A_i)+\epsilon.$ Since $\epsilon>0$ was arbitrary, $\mu^*(\cup A_i)\leq \sum_i \mu^*(A_i).$ Hence $\mu^*$ is an outer measure. 

Let $\mathscr{M}^*$ be the collection of $\mu^*$-measurable sets. Then by the previous result, it suffices to show that $\sigma(\mathscr{A})\subset \mathscr{M}^*.$ However, this is equivalent to showing that $\mathscr{A}\subset \mathscr{M}^*$ since $\sigma(\mathscr{A})$ is the smallest $\sigma$-algebra containing $\mathscr{A}.$
Let $A\in\mathscr{A}.$ Let $E\subset X.$ We need only show $\mu^*(E)\geq \mu^*(E\cap A)+\mu^*(E\cap A^c)$. Let $\epsilon>0.$ We may assume that $\mu^*(E)<\infty.$ Then let $\mathscr{C}\subset \mathscr{A}$ be a countable cover of $E$ with $\sum_{U\in\mathscr{C}}\mu_0(U)<\mu^*(E)+\epsilon.$ However, for each $U,$ since $U\cap A, U\cap A^c$ are in $\mathscr{A}$ and disjoint with union $U\in \mathscr{A},$ $\mu_0(U)=\mu_0(U\cap A)+\mu_0(U\cap A^c).$ Let $\mathscr{C}':=\{A\cap U\;|\;U\in\mathscr{C}\}, \mathscr{C}'':=\{A^c\cap U\;|\;U\in\mathscr{C}\}.$ Then $\mathscr{C}',\mathscr{C}''$ are countable subsets of $\mathscr{A}$ that cover $E\cap A$ and $E\cap A^c,$ respectively. It then follows that 
\[\mu^*(E\cap A)+\mu^*(E\cap A^c)\leq \sum_{u\in\mathscr{C}}(\mu_0(U\cap A)+\mu_0(U\cap A^c))=\sum_{U\in\mathscr{C}}\mu_0(U)<\mu^*(E)+\epsilon.\] Since $\epsilon>0$ was arbitrary, $A\in \mathscr{M}^*.$ Hence $\mu:=\mu^*|_{\sigma(\mathscr{A})}$ is a measure that extends $\mu_0.$ 

Finally, we show uniqueness in the case that $\mu_0(X)<\infty.$ Suppose $\nu$ is another measure on $\sigma(\mathscr{A})$ extending $\mu_0.$ Let $A\in \sigma(\mathscr{A})$. Then if $\mathscr{C}\subset \mathscr{A}$ is a countable cover of $A,$ $\nu(A)\leq \sum_{U\in\mathscr{C}}\nu(U)=\sum_{U\in\mathscr{C}}\mu_0(U).$ Hence $\nu(A)\leq \mu^*(A)=\mu(A).$ Let \[\mathscr{F}=\{A\in\mathscr{P}(X)\;|\;\mu(A)=\nu(A)\}.\] Then $\mathscr{A}\subset \mathscr{F}.$ Note since $\mu_0(X)=\mu(X)=\nu(X)<\infty,$ we have if $A\in \mathscr{F},$ $\mu(A^c)=\mu(X)-\mu(A)=\nu(X)-\nu(A)=\nu(A^c).$ So $\mathscr{F}$ is closed under complementation. Let $A,B\in \mathscr{F}.$ Then $\mu(A\cap B)\leq \nu(A\cap B)=\mu(A)+\mu(B)-\nu(A\cup B)\leq \mu(A)+\mu(B)-\mu(A\cup B)=\mu(A\cap B).$ Hence $A\cap B\in\mathscr{F}.$ So $\mathscr{F}$ is an algebra. Now let $(A_i)_{i=1}^\infty \subset \mathscr{F}.$ Then for each $n,$ $\cup_{j\leq n} A_j\in \mathscr{F}$ and by continuity of measure, $\mu(\cup_k A_k)=\lim_n \mu(\cup_{j\leq n} A_j)=\lim_n\nu(\cup_{j\leq n} A_j)=\nu(\cup_k A_k).$ Hence $\mathscr{F}$ is a $\sigma$-algebra containing $\mathscr{A}.$ Thus $\sigma(\mathscr{A})\subset \mathscr{F}$ and $\mu=\nu.$
\end{proof}

\subsubsection{Proof of Proposition \ref{ext}}
\begin{proof} We follow a similar approach to \cite{kolnotes}. Let $\mathscr{C}$ be the collection of all cylinder sets $\pi_F^{-1}(\prod_{i\in F} A_i),$ where $F\subset T$ is finite and $A_i\in \Sigma_i$ for all $i\in F.$ Then $\mathscr{C}$ is an algebra with $\sigma(\mathscr{C})=\otimes_{t\in T} \Sigma_t.$ Define $\mu_0$ on $\mathscr{C}$ by $\mu_0(\pi_F^{-1}(\prod_{i\in F} A_i))=\mu_F(\prod_{i\in F} A_i).$ If $F'$, $F\subset T$ are finite with $F'\subset F$ and $\pi_F^{-1}(\prod_{i\in F} A_i)=\pi_{F'}^{-1}(\prod_{i\in F'} A_i),$ then $\pi_{F}\pi_F^{-1}(\prod_{i\in F} A_i)=\prod_{i\in F}A_i=\pi_F\pi_{F'}^{-1}(\prod_{i\in F'} A_i)=\pi_{F,F'}^{-1}(\prod_{i\in F'} A_i)$ since $\pi_{F}$ is surjective and $\pi_F\pi_{F'}^{-1}=\pi_{F,F'}^{-1}$. Hence, by the compatibility conditions, \[\mu_F(\prod_{i\in F} A_i)=\mu_{F}\pi_{F,F'}^{-1}(\prod_{i\in F'} A_i)=\mu_{F'}(\prod_{i\in F'} A_i).\] Thus $\mu_0$ is well-defined. Since each $\mu_F$ is a measure, $\pi_F^{-1}(\prod_{i\in F}A_i)$ and $\pi_F^{-1}(\prod_{i\in F}B_i)$ are disjoint if and only if $\prod_{i\in F}A_i$ and  $\prod_{i\in F}B_i$ are disjoint, $\mu_0$ is finitely additive. Let $(E_n)_{n=1}^\infty$ be a disjoint collection of sets in $\mathscr{C}.$ Suppose $E:=\cup_{n=1}^\infty E_n\in \mathscr{C}.$ 

Thus, if we can show that $\mu_0(E)=\sum_{n=1}^\infty \mu_0(E_n),$ then $\mu_0$ is a premeasure, and the desired result follows by Proposition \ref{cara}. However, $\mu_0(\cup_{i=1}^n E_i)=\sum_{i=1}^n \mu_0(E_i)\leq \mu_0(E)$ for all $n.$ Hence, we only need show that for all $\epsilon>0,$ there exists an $n$ such that $\mu_0(\cup_{i=1}^n E_i)\geq \mu_0(E)-\epsilon.$ Suppose this does not hold. Then there exists an $\epsilon>0$ such that \[\mu_0(\cup_{i=1}^n E_i)+\epsilon< \mu_0(E)\;\mbox{for all\;} n\in \mathbb{Z}^+.\] By taking complements relative to $E,$ this implies \[\epsilon< \mu_0(\cap_{i=1}^n (E\cap E_i^c))\;\mbox{for all\;} n\in \mathbb{Z}^+.\] For each $n,$ let $A_n:=\cap_{i=1}^n (E\cap E_i^c).$ Then each $A_n\in\mathscr{C}$, $\mu_0(A_n)>\epsilon$ for all $n,$ and the sequence of $A_n$ is decreasing with $\cap_{n=1}^\infty A_n=\emptyset.$  For $n\in \mathbb{Z}^+,$ since $A_n\in \mathscr{C},$ there exists a finite $F_n\subset T$ and an $\tilde{A}_n\in \Sigma_{F_n}$ with $A_n=\pi_{F_n}^{-1}(\tilde{A}_n).$ Let $F=\cup_{n=1}^\infty F_n.$ Then $F$ is countable. Let $t_1< t_2<...$ be an enumeration of $F.$ We may assume that $F$ is infinite, since otherwise the result would follow from the countable additivity of the measure $\mu_F$. Without loss of generality, we may assume that there exists a subsequence $t_{n_1}<t_{n_2}<...$ of the $t_n$ such that $A_k=\pi_{T_k}^{-1}(B_k)$ for each $k,$ where $T_k:=F\cap [0, t_{n_k}]$ 
and $B_k\in\Sigma_{T_k}.$ Since $\Omega_n:=\prod_{t\in T_n}W_t$ is Polish, $\mu_n:=\mu_{T_n}$ is inner regular by Proposition \ref{reg}. Hence there exists a compact $K_n\in \Omega_n,$ $K_n\subset B_n$ with $\mu_n(B_n)<\mu_n(K_n)+\frac{\epsilon}{2^n}.$ Let $C_n:=\pi_{T_n}^{-1}(K_n)$ for each $n.$ Then let $D_n:=\cap_{j=1}^n C_j$ for each $n.$ 
Then the $D_n$ are decreasing with \[\cap_{n=1}^\infty D_n=\cap_{n=1}^\infty C_n=\emptyset,\] where the last equality follows from $C_n\subset A_n$ for all $n.$ Note $D_n=\pi_{T_n}^{-1}(\cap_{i=1}^n \pi_{T_n,T_i}^{-1}(K_i)),$ since $\pi_{T_n,T_i}\circ\pi_{T_n}=\pi_{T_i}$ for all $i\leq n.$ Moreover, each $\tilde{D}_n=\cap_{i=1}^n \pi_{T_n,T_i}^{-1}(K_i)$ is compact as a closed subset of $K_n=\pi_{T_n,T_n}^{-1}(K_n).$ 
Note further that $\tilde{D}_n\neq \emptyset.$ Indeed, by construction $\mu_0(A_n)-
\mu_0(C_n)<\frac{\epsilon}{2^n}$ for all $n.$ Hence \[\mu_0(A_n)-\mu_0(D_n)=
\mu_0(\cap_{i=1}^n A_i \cap (\cup_{i=1}^n C_i^c))\leq \mu_0(\cup_{i=1}^n A_i\cap C_i^c)
\leq \sum_{i=1}^n(\mu_0(A_i)-\mu_0(C_i))<\epsilon.\] Since $\mu_0(A_n)>\epsilon,$ we must 
have $\mu_0(D_n)=\mu_{T_n}(\tilde{D}_n)>0.$ Hence $\tilde{D}_n\neq \emptyset.$ 

Set $T_0:=\emptyset.$ Then for $j\in \mathbb{Z}^+$ and $n\in \mathbb{Z}^+$ with $n\geq j,$
\[\pi_{T_{j}\setminus T_{j-1}}(D_{n+1})=\pi_{T_{n+1},T_{j}\setminus T_{j-1}}(\tilde{D}_{n+1})\subset \pi_{T_n,T_j\setminus T_{j-1}}(\tilde{D}_n)=\pi_{T_j\setminus T_{j-1}}(D_n).\] Since each $\pi_{T_{n},T_j\setminus T_{j-1}}(\tilde{D}_{n})$ is a closed subset of $\pi_{T_j,T_j\setminus T_{j-1}} \tilde{D}_j,$ which is compact as the continuous image of a compact set, and $\pi_{T_n,T_j\setminus T_{j-1}}(\tilde{D}_n)=\cap_{i=j}^n \pi_{T_{i},T_j\setminus T_{j-1}}(\tilde{D}_{i})\neq \emptyset$ for each $n,$  it follows that \[\prod_{t\in T_j\setminus T_{j-1}}W_t\supset S_j:=\cap_{n=j}^\infty \pi_{T_{n},T_j\setminus T_{j-1}}(\tilde{D}_{n})\neq\emptyset\] by the finite intersection property.  Hence, by the axiom of (countable) choice, \[\emptyset \neq S:=\prod_{j=1}^\infty S_j\subset \Omega:=\prod_{t\in T}W_t.\] Then let $\omega\in S\subset \Omega.$ Let $n\in \mathbb{Z}^+$. Then \[\pi_{T_n}(\omega)=\prod_{i=1}^n\pi_{T_n,T_i\setminus T_{i-1}}(\omega)\in \prod_{i=1}^n\cap_{k=i}^\infty \pi_{T_k,T_i\setminus T_{i-1}}(\tilde{D}_k)\subset \prod_{i=1}^n\pi_{T_n,T_i\setminus T_{i-1}}(\tilde{D}_n)=\tilde{D}_n.\] Hence $\omega\in D_n$ for all $n.$ Thus \[\omega\in \cap_{n=1}^\infty D_n=\cap_{n=1}^\infty C_n,\] a contradiction. 

We have thus shown that $\mu_0$ is a premeasure on $\mathscr{C}$. The result now follows from the Carath\'{e}odory Extension Theorem. \end{proof}

\addtocontents{toc}{\protect\renewcommand{\protect\cftchappresnum}{\appendixname\space}}
\addtocontents{toc}{\protect\renewcommand{\protect\cftchapnumwidth}{7em}}


\end{appendices}

\bibliography{references}
\bibliographystyle{amsplain}

\addcontentsline{toc}{chapter}{References}  

\end{document}

misc notes

\chapter{Localization of Forms}
Recall a Dirichlet form $Q$ on a real Hilbert space $(\mathcal{H},\langle\cdot,\cdot\rangle)$ is a Markovian, closed, symmetric, positive form, that is there exists a subspace $\mathcal{D}(Q)\subset \mathcal{H}$ such that $Q$ is a symmetric, bilinear map $Q:\mathcal{D}(Q)\times \mathcal{D}(Q)\rightarrow {\mathbb{R}}$ with $Q(f):=Q(f,f)\geq 0$ for all $f\in\mathcal{D}(Q)$, $(\mathcal{D}(Q),\langle \cdot,\cdot\rangle_{Q})$ is a Hilbert space, where $\langle f,g\rangle_Q=Q(f,g)+\langle f,g\rangle,$ and if $f\in \mathcal{D}(Q)$ then $\overline{\underline{f}}=\min\{\max\{f,0\},1\}\in\mathcal{D}(Q)$ and the (strong) Markovian condition $Q(\overline{\underline{f}},\overline{\underline{f}})\leq Q(f,f)$ holds.  We do not require the domain to be dense.

Suppose $(X,d)$ is a compact metric space with a Borel measure $\mu$. We shall call a collection $\mathscr{C}$ of closed Borel subsets of $X$ refining if there exists a $\delta_0>0$ such that if $0<\delta<\delta',$ then the set $\mathscr{C}_{\delta}(X):=\{\mathscr{A}\subset \mathscr{P}(X)\;|\;\sharp \mathscr{A}\leq \infty,\; \diam(A)\leq \delta\;\mbox{for all\;}A\in \mathscr{A},\; \cup \mathscr{A}=X\}\neq \emptyset.$ Suppose that for each $A\in\mathscr{C}$ there exists a Dirichlet form $\mathcal{E}_A$ on $L^2(B,\mu).$ We may extend the definition of $\mathcal{E}_A$ as a quadratic form to all of $L^2(A)$ by setting $\mathcal{E}_A(f)=\infty$ for $f\notin \mathscr{D}(\mathcal{E}_A).$ Then if $f\in L^2(X)$ with $\supp f\subset A$, we may consider $f\in L^2(A)$ and define $\mathcal{E}_A(f)=\mathcal{E}_A(\chi_A f).$ For $f\in L^2(X)$ and $\delta>0$, let $\mathscr{C}_\delta(f):=\{(A,f_A)_{B\in\mathscr{A}}\;|\;\mathscr{A}\in\mathscr{C}_\delta(X),\;(f_A)_{A\in\mathscr{A}}\subset L^2(X),\;\supp(f_A)\subset A\mbox{\;for all\;}A\in\mathscr{A},\;\sum_{A\in\mathscr{A}}f_A=f\}.$

Then if $f\in L^2(X),$ define $\mathcal{E}(f)$ by 
\[\mathcal{E}(f):=\lim_{\delta\rightarrow 0^+}\inf\{\sum_{A\in\mathscr{A}} \mathcal{E}_{A}(f_A)\;|\;(A,f_A)_{A\in \mathscr{A}}\in \mathscr{C}_\delta(f)\}.\]

\begin{proposition} If $\alpha$ and $\beta$ are constant, then there exist constants $C>0,$ $r_0>0$ such that for all $0<r<r_0,$
\[\frac{1}{C}\mathscr{E}_r\leq \mathcal{E}_r\leq C\mathscr{E}_r.\]
\end{proposition}

\begin{lemma} There exists a $C>0$ and $r_1>0$ so that if $0<r<r_1$ and $f\in L^2(X,\mu)$, then $D_r^{-1/2}\mathscr{L}_rf\leq C\mathscr{L}_rD_r^{-1/2}f.$
\end{lemma}
Since $\alpha$ and $\beta$ are log-H{\"o}lder continuous, there exists a $C_1>0$ so that if $0<d(x,y)<\frac{1}{2}$ then $|\alpha(x)-\alpha(y)|\leq \frac{-C_1}{d(x,y)}$ and $|\beta(x)-\beta(y)|\leq \frac{-C_1}{d(x,y)}.$ Let $0<r<1/2$ and $x,\;y\in X$ with $0<d(x,y)<r.$ Then $r^{|\alpha(y)-\alpha(x)|}\geq r^{-C_1/\log(d(x,y))}.$ So $|r^{\alpha(y)-\alpha(x)}|=r^{-|\alpha(y)-\alpha(x)|}\leq r^{C_1/\log(d(x,y))}.$ However, $\log(d(x,y))<\log(r)$ so $C_1/\log(d(x,y))>C_1/\log(r)$ which implies $r^{C_1/\log(d(x,y))}\leq r^{C_1/\log(r)}=e^{C_1}.$ Hence, since we may also make the same argument with $\beta$ in place of $\alpha,$ $r^{\alpha(x)}\leq e^{C_1}r^{\alpha(y)},$ $r^{\beta(x)}\leq e^{C_1}r^{\beta(y)}$, $r^{\alpha(y)}\leq e^{C_1}r^{\alpha(x)}$, and $r^{\beta(y)}\leq e^{C_1}r^{\beta(x)}$ for all $0<r<1/2$ and all $x\;y$ with $0<d(x,y)<r.$ By variable Ahlfors regularity there exists a $C_2>0$ so that $\frac{1}{C_2}r^{\alpha(x)}\leq \mu(B_r(x))\leq C_2r^{\alpha(x)}$ for all $0<r<\diam(X)/2$ and $x\in X.$ Let $r_1>0$ and smaller than both $\diam(X)/2$ and $1/2.$ Then if $0<r<r_1$ and $x,\;y\in X$ with $0<d(x,y)<r$ then 
\[r^{\beta(x)}\mu(B_r(x))\leq C_1C_2r^{\beta(y)}r^{\alpha(x)}\leq C_1^2C_2r^{\beta(y)+\alpha(x)}\leq C_1^2C_2^2r^{\beta(y)}\mu(B_r(y)).\] Hence also \[\frac{1}{\sqrt{r^{\beta(y)}\mu(B_r(y))}}\leq \frac{C_1C_2}{\sqrt{r^{\beta(x)}\mu(B_r(x))}}\] for $0<r<r_1$ and $0<d(x,y)<r.$ Let $f\in L^2(X,\mu).$ Let $C:=C_1C_2$. It follows that \begin{equation*}
\begin{split}
&P_rD_r^{-1/2}f(x)=\frac{1}{\mu(B_r(x))}\int_{B_r(x)}\frac{f(y)d\mu(y)}{\sqrt{r^{\beta(y)}\mu(B_r(y))}}\\
&\leq \frac{C}{\sqrt{r^{\beta(x)}\mu(B_r(x))}\mu(B_r(x))}\int_{B_r(x)}f(y)d\mu(y)=CD_r^{-1/2}P_rf.
\end{split}
\end{equation*} 
However, $D_r^{-1/2}$ commutes with $\frac{1}{r^\beta}$. Hence 
\[D_r^{-1/2}\mathscr{L}_rf=\frac{1}{r^\beta}\left(D_r^{-1/2}f-D_r^{-1/2}P_rf \right)\leq \frac{1}{r^{\beta}}\left(D_r^{-1/2}f-\frac{1}{C}P_rD_r^{-1/2}f\right).\]

 Moreover, by the Faber-Frahn inequality there exists an $r_0>0$ and $c_3>0$ so that if $0<r<r_0$ then $\mathscr{E}_r(f\chi_{B_j})\geq c_3\|f\chi_{B_j}\|^2_{\nu_r}$ for all $f\in L^2(X,\mu).$ Let $f_n\rightarrow \phi_j$ in $L^2(X,\mu)$ with $\mathscr{E}(f_n)\rightarrow \mathscr{E}(\phi_j).$ Since $f_n\rightarrow f$ in $L^2(X,\mu)$ there exists a subsequence $f_{n_k}$ such that $f_{n_k}\rightarrow \phi_j$ pointwise. By Egorov's Theorem, there exists a constant $N>0$, a measurable subset $A\subset B_{r_j/4}(x_j)$ with $\mu(A)>0$ and $f_{n_k}(x)\geq \frac{c_1}{2}$ for $x\in A.$ Let $s_{k_n}:=q_n$ for $n\in \mathbb{Z}^+.$ Then 
\[\mathscr{E}(\phi_j)=\limsup_{n\rightarrow \infty} \mathscr{E}_n(f_n)

Hence, since $\|\phi_{s_k,B_j}\|^2\overrightarrow{_{_{j\rightarrow \infty}}} \;\|\phi_j\|^2,$ it follows $\|\phi_j\|^2\geq c_1^2\mu(B_j):=c_2>0$.

However, the generator for this form is substantially more complicated than $\mathscr{L}_r$ and the connection to probabilistic jump processes is obscured. Since for each $r>0$, we have constructed an approximate Dirichlet form $\mathscr{L}_r$; a possible solution would be to use the approximate symmetric forms
\[\mathscr{E}^r_{\mbox{eq}}(f)=\langle f, \mathscr{L}_r f \rangle_{L^2(X,\nu_r)} = {\frac{1}{Z_r}}\int_X\int_{B_r(x)}|f(y)-f(x)|^2d\mu(y)d\mu(x),\]
where where $\nu_r$ here is the normalized equilibrium measure $d\nu_r=\frac{1}{Z_r}\mu(B_r(x))r^{\beta(x)}d\mu,$ with $Z_r=\int \mu(B_r(x))r^{\beta(x)}d\mu(x).$ However, the use of the equilibrium measure $\frac{1}{Z_r}d\nu_r$ is not desirable for $\alpha$ and $\beta$ non-constant since it leads to a loss of the local information contained in $x\mapsto \mu(B_r(x))r^{\beta(x)}.$  

Note the form $\langle f, \mathscr{L}_r f \rangle$ is not symmetric. Let $\nu_r$ be defined by $d\nu_r=\frac{\mu(B_r(x))r^{\beta(x)}}{Z_r},$ where $Z_r$ is the normalization factor $Z_r:=\int_X\mu(B_r(x))r^{\beta(x)}d\mu.$ Then the form $\mathscr{E}_r$ defined by \[\mathscr{E}_r(f):= \langle f, \mathscr{L}_r f\rangle_{\nu_r}=\frac{1}{Z_r}\int_X\int_{B_r(x)}|f(y)-f(x)|^2d\mu(y)d\mu(x),\] for $f\in L^2(X,\mu),$ is symmetric. 

Let $(\epsilon_n)_{n=1}^\infty$ be a sequence of positive numbers decreasing to $0.$ 

Since $X$ is compact, it is separable. Let $(B_{r_k}(x_k))_{k=1}^\infty$ be a basis for the metric topology on $X$ with $0<r_k<\diam(X)/2$ for all $k.$ Let $B_k=B_{r_k}[x_k]$ for each $k.$ Then if $x,y\in X$ with $x\neq y$, there exist $j, k$ with $x\in B_j,\; y\in B_k$ and $B_j\cap B_k=\emptyset.$ 

Then for $\epsilon>0$ let $\phi_{k,\epsilon}(x)$ be the mean exit time of $(\bm{x}(\epsilon)_t)_{t\geq 0}$ from $B_k$ starting at $x.$ The exit time assumption $\limsup_{r\rightarrow 0^+} \phi_{k,r}^+ \leq Cr_k^\beta$ implies that the functions $\phi_{k,\epsilon}$ are uniformly bounded above by the same constant for $\epsilon$ small enough. 

We then make the following assumption. We assume that for each $j$ there exists a subsequence $(t_n)_{j=1}^\infty$ of $(\epsilon_n)_{n=1}^\infty$ such that $\lim_{n\rightarrow \infty} \phi_{B_k,t_n}(x)$ exists for all $x\in X.$ Note that the convergence is also in $L^2(X,\mu)$ since the sequence is uniformly bounded.

For $k=1$ choose a subsequence $(t_{1,n})_{n=1}^\infty$ of $(s_n)_{n=1}^\infty$ such that $\phi_{1,t_{1,m}} \;
\overrightarrow{_{m\rightarrow \infty}}\; \phi_1$ in $L^2(X,\mu)$. Having chosen $t_1,...,t_n$ with $(t_{j+1,n})
_{n=1}^\infty$ a subsequence of $(t_{j,n})_{n=1}^\infty$ with $\phi_{j,t_{j,m}}\overrightarrow{_{m\rightarrow \infty}}\; \phi_j$ weakly in $L^2(X,\mu)$ for $j\leq n,$ by assumption, we 
may choose a subsequence $(t_{n,m})_{m=1}^\infty$ such that $\phi_{n+1,t_{n+1,m}} \;
\overrightarrow{_{m\rightarrow \infty}} \;\phi_{n+1}$ in $L^2(X,\mu)$. Hence for each $j\in\mathbb{Z}^+$ we have a function $\phi_j\in L^2(X,\mu)$ and decreasing sequence $(t_{j,n})_{n=1}^\infty$ of positive numbers decreasing to $0$ with the property that $(t_{j+1,n})_{n=1}^\infty$ is a subsequence of $(t_{j,n})_{n=1}^\infty$ and $\phi_{j,t_{j,m}}\; \overrightarrow{_{m\rightarrow \infty}}\; \phi_j$ in $L^2(X,\mu).$ By the diagonal argument, $\phi_{j,t_{n,n}}\; \overrightarrow{_{n\rightarrow \infty}} \;\phi_j$ in $L^2(X,\mu)$ for all $j\in \mathbb{Z}^+.$ Let $s_n=t_{n,n}$ for $n\in \mathbb{Z}^+.$

To proceed we need the definition of the $\Gamma$-lower limit given in \cite{maso}. If $F_h$ is a sequence of extended real valued functions from a topological space with neighborhoods of $x$ $\mathscr{N}(x)$ then $\Gamma$-$\limsup_{h\rightarrow \infty} F_h(x) := \sup_{U\in \mathscr{N}(x)} \limsup_{h\rightarrow \infty} \inf_{y\in U} F_h(y).$ 

We have the following characterization given in \cite{maso} (Proposition 8.1) in the case that $X$ is first countable. If $F=\Gamma$-$\limsup_{h\rightarrow \infty} F_h$ then $F$ is characterized by the following properties: (1)
$F(x)\leq \limsup _{h\rightarrow \infty} F_h(x_h)$ for every $x\in X$ and every sequence $x_h\rightarrow x,$ and (2) for every $x\in X$ there exists a sequence $x_h\rightarrow x$ with $F(x) = \limsup_{h\rightarrow \infty} F_h(x_h).$

Note for each $r>0,$ $f\in L^2(X\,u)$ if and only if $f\in L^2(X,\nu_r).$ Then each $r,$ $\mathscr{E}_r$ is a strongly Markovian form on $L^2(X,\mu).$ In particular, there exists a Dirichlet form $\mathscr{E}$ on $L^2(X,\mu)$ and a subsequence ${s_k}_j$ of the $s_k$ such that 
$\mathscr{E}=\Glim \mathscr{E}_{{s_k}_n}.$
Set $\mathscr{E}_n=\mathscr{E}_{{s_k}_n}.$ Then 
\[\mathscr{E}=\Glim \mathscr{E}_n.\]

Explicitly, using the notation $Z_r=\int v_r(x)r^{\beta(x)}d\mu(x)=\langle v_r r^\beta\rangle_\mu,$ where $v_r(x)=\mu(B_r(x)),$ for $f\in L^2(\mu)$ we have 
\[\mathscr{E}(f)=\Glim {\frac{1}{\langle v_{{s_k}_n} {{s_k}_n}^\beta \rangle_\mu}}\int_X\int_{B_{{s_k}_n}(x)}|f(y)-f(x)|^2d\mu(y)d\mu(x).\]

Let \[\mathscr{D}(\mathscr{E})=\{f\in L^2(X,\mu)\;|\;\mathscr{E}(f)<\infty\}.\]

\begin{proposition} \label{algebra} Each $\phi_j\in \mathscr{D}(\mathscr{E})$. In particular $\mathscr{D}(\mathscr{E})$ contains an algebra of functions separating points.
\end{proposition}
\begin{proof}
Fix $j\geq 1$ and for each $n$ let $\varphi_n=\phi_{j,{s_k}_n}.$ Then 
\[\varphi_n \convn \phi_j\;\;\mbox{in\;} L^2(X,\mu).\]

Then \begin{equation*}
\begin{split} 
\mathscr{E}_n(\varphi_n)&= \langle\phi_{j,s_{k_n}},\mathscr{L}_{s_{k_n}}\phi_{j,s_{k_n}}\rangle_{\nu_{s_{k_n}}}=\langle \phi_{j,s_{k_n}}, \chi_{B_j}\rangle_{\nu_{s_{k_n}}}\\ &=\int_B \phi_{j,s_{k_n}}d\nu_{s_{k_n}}\leq \phi^{+}_{j,s_{k_n}}\leq C\diam(X)^{\beta{(x_j)}}.
\end{split}
\end{equation*}
It then follows $\limsup_{n\rightarrow \infty} \mathscr{E}_n(\varphi_n)<\infty.$ However by the characterization of $\Gamma$- limit superior given above, this implies $\Gamma$-$\limsup_{n\rightarrow \infty} \mathscr{E}_n(\varphi_n)<\infty$. Therefore 
\[\phi_j\in\mathscr{D}(\mathscr{E})\;\;\mbox{for all\;}\;j\in\mathbb{Z}_+.\]

Let $f_n\rightarrow \phi_j$ in $L^2(X,\mu)$ with $\mathscr{E}_n(f_n)\rightarrow \mathscr{E}(\phi_j).$ By Proposition \ref{comparison}, there exists an $r_1>0$ and $c_1>0$ such that if $0<r<r_1$ and $x\in B_{r_j/4}[x_j]$ then $\phi_{r,B_j}(x)\geq c_1.$ In particular $\phi_j(x_j)>c_1.$ Since $\phi_j$ is supported in $B_j$ and the $B_j$ separate points, it follows that the $\phi_j$ separate points. Hence $\mathscr{D}(\mathscr{E})$ contains the algebra generated by the $\phi_j$ since domains of Dirichlet forms are closed under products (See Theorem 1.4.2 of \cite{fukushima}.).
\end{proof}

\subsection{Convergence of Normalized Laplacians}

An alternative approach is inspired by spectral graph theory. If $G=(V,E)$ is a finite graph with degree matrix $D$ and adjacency matrix $A,$ then the operator $D-A$ is called the graph Laplacian. It is positive definite and symmetric. The operator $I-D^{-1}A$ is called the probabilistic Laplacian. It is the generator of the uniform random walk on the graph defined by the transition operator $P=D^{-1}A.$ It is not, in general, symmetric with respect to the counting measure. However, it is symmetric with respect to the equilibrium measure $x\mapsto \deg(x).$ The operator $D^{-1/2}(D-A)D^{-1/2}=D^{1/2}(I-P)D^{-1/2}$ is known as the \textit{normalized Laplacian}. It has been extensively studied by many authors, notably by Fan Chung (e.g. \cite{chung}). It is symmetric with respect to the counting measure. Let $c$ be the counting measure on $V$ and $\deg$ the ``degree measure'' $x\mapsto \deg(x)$ on $V.$ Then the map $D^{-1/2}:L^2(V,c)\mapsto L^2(V,d)$ is unitary, and it follows that the normalized Laplacian is unitarily equivalent to the probabilistic Laplacian.

Moreover, 
$D_r^{-1/2}:L^2(X,\mu)\rightarrow L^2(X,\nu_r),$ is unitary. Indeed, if $f\in L^2(X,\mu),$ then \[\|D_r^{-1/2}f\|^2_{L^2(X,
\nu_r)} = \int (d\nu_r/d\mu)^{-1}(f(x))^2d\nu_r(x)=\int (f(x))^2d\mu(x)=\|f\|^2_{L^2(X,\mu)}.\]  Let $\widehat{\mathscr{L}}_r:=D_r^{-1/2}
L_rD_r^{-1/2}=D_r^{1/2}\mathscr{L}_rD_r^{-1/2}.$ Moreover, in $L^2(X,\mu),$ \[\langle f, \widehat{\mathscr{L}}_rf\rangle =\langle D^{-1/2}
f, L_rD^{-1/2}f\rangle = \langle {L}_rD^{-1/2}f, D_r^{-1/2}f\rangle=\langle \widehat{\mathscr{L}}_rf, f\rangle.\] It follows that $
\widehat{\mathscr{L}}_r$ is $\mu$-symmetric and is unitarily equivalent to $\mathscr{L}_r.$ In particular, $\widehat{\mathscr{L}}_r$ has the same 
spectrum as $\mathscr{L}_r.$ 

We then consider the approximate forms \[\langle f, \widehat{\mathscr{L}}_r f\rangle=\int_X\int_{B_r(x)}\left|\frac{f(y)}{\sqrt{\mu(B_r(y))r^{\beta(y)}}}-\frac{f(x)}{\sqrt{\mu(B_r(x))r^{\beta(x)}}}\right|^2d\mu(y)d\mu(x)\] for $f\in L^2(X,\mu).$

Then, for $r>0$, let $D_r$ be the multiplication operator by $d_r.$ Note that $D_r$ is 
bounded with bounded inverse since $x\mapsto r^{\beta(x)}\mu(B_r(x))$ is bounded both above and away from $0.$ 
For $B$ a ball of radius $R$ with $0<R<\diam(X)/2,$ we let $\mathscr{L}_r^B=\chi_B\mathscr{L}\chi_B$ and $\widehat{\mathscr{L}}_r^B=\chi_B\widehat{\mathscr{L}}_r\chi_B.$ Then we let $\mathscr{E}_r^B=\langle f, \mathscr{L}^B_r f\rangle_{\nu_r}=\mathscr{E}_r(\chi_Bf)$ and $\widehat{\mathscr{E}}_r(f)=\langle f, \mathcal{L}^B_r f\rangle = \mathcal{E}_r(\chi_B f)$ for $f\in L^2(X,\mu).$

Since $\widehat{\mathscr{L}}_r^B=\chi_BD_r^{1/2}\mathscr{L}_rD_r^{-1/2}\chi_B,$ we have \[(\widehat{\mathscr{L}}_r^B)^{-1}=\chi_BD_r^{1/2}(\mathscr{L}_r)^{-1}D_r^{-1/2}\chi_B.\] In particular, $\widehat{\mathscr{L}}$ has a Green's function \[\widehat{G_r^B}(x,y)=\sqrt{r^{\beta(x)}\mu(B_r(x))}G^B_r(x,y)\frac{1}{\sqrt{r^{\beta(y)}\mu(B_r(y))}}.\]

We then define the modified exit time functions \[\widehat{\phi}_{r,B}:=\int \widehat{G_r^B}(x,y)d\mu(y).\] We then modify the variable time regularity and time comparability conditions by everywhere replacing $\phi_{r,B}$ with $\widehat{\phi}_{r,B}.$ 

Fix an open ball $B=B_R(x_0)$ with $0<R<\diam(X)/2.$ By perturbing $R$ slightly if needed we may assume $\mu(B_R(x_0))=\mu(B_R[x_0])$.

For $B$ a ball of radius $R$ with $0<R<\diam(X)/2,$ we let $\mathscr{L}_r^B=\chi_B\mathscr{L}\chi_B$ and $\widehat{\mathscr{L}}_r^B=\chi_B\widehat{\mathscr{L}}_r\chi_B.$ Then we let $\mathscr{E}_r^B=\langle f, \mathscr{L}^B_r f\rangle_{\nu_r}=\mathscr{E}_r(\chi_Bf)$ and $\widehat{\mathscr{E}}_r(f)=\langle f, \mathcal{L}^B_r f\rangle = \mathcal{E}_r(\chi_B f)$ for $f\in L^2(X,\mu).$

Since $\widehat{\mathscr{L}}_r^B=\chi_BD_r^{1/2}\mathscr{L}_rD_r^{-1/2}\chi_B,$ we have \[(\widehat{\mathscr{L}}_r^B)^{-1}=\chi_BD_r^{1/2}(\mathscr{L}_r)^{-1}D_r^{-1/2}\chi_B.\] In particular, $\widehat{\mathscr{L}}$ has a Green's function \[\widehat{G_r^B}(x,y)=\sqrt{r^{\beta(x)}\mu(B_r(x))}G^B_r(x,y)\frac{1}{\sqrt{r^{\beta(y)}\mu(B_r(y))}}.\]

We then define the modified exit time functions \[\widehat{\phi}_{r,B}:=\int \widehat{G_r^B}(x,y)d\mu(y).\] We then modify the variable time regularity and time comparability conditions by everywhere replacing $\phi_{r,B}$ with $\widehat{\phi}_{r,B}.$ 

Fix an open ball $B=B_R(x_0)$ with $0<R<\diam(X)/2.$ By perturbing $R$ slightly if needed we may assume $\mu(B_R(x_0))=\mu(B_R[x_0])$.

We now show that $\mathcal{D}(\mathcal{E})$ is dense in $L^2(X,\mu).$ The idea for the following argument, using the Kaplansky density theorem, came from the answer to the posted question in \cite{overflow}. Let $\mathcal{H}=L^2(X,\mu, \mathbb{C})$ be the space of complex valued $\mu$-square integrable functions on $X.$ Let $\mathcal{B}$ be the space of all bounded, linear with scalar field $\mathbb{C}$ operators on $\mathcal{H}.$ A set $\mathscr{A}\subset \mathcal{B}$ is called a complex sub-algebra of $\mathcal{B}$ if it is a linear subspace of $\mathcal{B}$ that is closed under composition. Th algebra $\mathscr{A}$ is called self-adjoint if $A^*\in \mathscr{A}$ for all $A\in \mathscr{A},$ where the adjoint is defined via the sesquilinear inner product on $\mathcal{H}$ defined by $\langle f, g\rangle=\int \overline{f(x)}g(x)d\mu(x)$ for $f,g\in \mathcal{H}.$ The null space of the algebra $\mathscr{A}$ is the set $\{f\in \mathcal{H}\;|\;Af=0\mbox{\;for all\;}A\in \mathscr{A}\}.$ The commutant of $\mathscr{A},$ written $\mathscr{A}'$ is the set of all $B\in \mathscr{B}$ with $AB=BA$ for all $A\in\mathscr{A}.$ Let $(B_\alpha)$ be a net in $\mathscr{A}'$ with $B_\alpha\rightarrow B$ in the weak operator topology. Then for $f,g\in \mathcal{H},$ $A\in\mathscr{A},$ $\langle B_\alpha f, A^*g\rangle =\langle AB_\alpha f, g\rangle =\langle B_\alpha A f, g\rangle \rightarrow \langle BAf,g\rangle = \langle BF, A^*g\rangle = \langle ABf,g\rangle.$ It follows $AB=BA$. Hence $\mathscr{A}'$ is closed in the weak operator topology. Since any element of $\mathscr{A}$ commutes with all elements of $\mathscr{A}'$, we have $\mathscr{A}\subset \mathscr{A}''.$ A proof of the following proposition may be found in \cite{arveson}. 
\begin{proposition} Suppose $\mathscr{A}$ is a self-adjoint complex sub-algebra of $\mathcal{B}$ with null-space $\{0\}$. Then $\mathscr{A}$ is dense in $\mathscr{A}''$ in both the strong and weak operator topologies. \end{proposition} 

If $\mathcal{C}$ is a subset of $\mathscr{B},$ by the closed unit ball in $C\mathcal{C}$ we mean $\{B\;|\;\|B\|\leq 1\}.$ The following proposition is known as the Kaplansky density theorem. See Theorem 1.2.2 of \cite{arveson} for a proof. 
\begin{proposition} (Kaplansky density) Let $\mathscr{A}$ be a self-adjoint complex sub-algebra of $\mathcal{B}$ and $\mathscr{A}_s$ the closure in the strong operator topology. Then every self-adjoint element of $\mathscr{A}_s$ in the closed unit ball of $\mathscr{A}_s$ is the strong operator limit of a sequence of elements in the  closed unit ball of $\mathscr{A}.$
\end{proposition}

With notation from the previous theorem, let $\mathcal{A}$ be the real sub-algebra of $L^\infty(X,\mu)$ generated by the exit time functions $(\phi_k).$ Then $\mathcal{A}\subset \mathcal{D}(\mathcal{E}).$ Then let $\mathscr{A}=\mathcal{A}+i\mathcal{A}=\{f+ig\;|\;f,g\in \mathcal{A}\}.$ Then for $h\in \mathscr{A},$ we may consider the multiplication operator, which we also denote by $h$ in $\mathcal{B},$ defined by $hf(x)=h(x)f(x)$ for all $f\in \mathcal{H},\; x\in X.$ Then $\mathscr{A}$ is a complex sub-algebra of $\mathcal{B}.$ Note that it is self-adjoint. Hence $\mathscr{A}$ is dense in $\mathscr{A}''$ in both the norm and weak operator topologies. We show that $\mathscr{A}$ has trivial null-space. Suppose $f\in \mathcal{H}$ is non-zero. Let $P=\{x\in X\;|\;f(x)\neq 0\}.$ Then $\mu(P)>0.$ Since the sets $B_{r_k/4}(x_k)$ for $k\in \mathbb{Z}^+$ cover $X,$ there must be a $k$ with $\mu(P\cap B_{r_k/4}(x_k))>0.$ However, $\phi_k$ is strictly positive on $B_{r_k/4}(x_k)$. Hence $\phi_k(x)f(x)\neq 0$ for all $x$ in a set of positive measure. It follows $f$ is not in the null-space of $\mathscr{A}.$ Therefore the null-space of $\mathscr{A}$ is trivial. 

We compute the commutant of $\mathscr{A}.$ Clearly $L^\infty(X,\mu,\mathbb{C})\subset \mathscr{A}'.$ Suppose $A\in \mathscr{A}'.$ Let $x\in X.$ Choose $j$ with $x\in B_{r_j/4}(x_j).$ Then let $f(x)=\frac{1}{\phi_j(x)}{(A\phi_j)(x)}.$ Suppose also $x\in B_{r_k/4}(x_k).$ Since $\phi_j A= A\phi_j,$ $\phi_k(x)(A\phi_j)(x)=\phi_j(x)(A\phi_k)(x).$ Hence $f(x)=\frac{1}{\phi_k(x)}{A\phi_k(x)}.$ So $f$ is well defined. 

By variable Ahlfors regularity, there exists a constant $C_1>0$ such that for all $x,y\in X$ and $0<r<\diam(X)/2$, $\frac{\mu(B_r(x))}{\mu(B_{3r/8}(y))}\leq C_1\frac{r^{\alpha(x)}}{r^{\alpha(y)}}.$ Moreover, by the argument given in the proof of \ref{main}, there exists a constant $C_2>0$ such that $\frac{r^{\alpha(x)}}{r^{\alpha(y)}}\leq C_2$ for all $x\in X,$ $y\in X$ with $d(x,y)<r$ and $0<r<\min\{\diam(X)/2, 1/2\}.$  We recall the Lebesgue differentiation theorem for doubling spaces. See \cite{semmes} for a proof. 
\begin{proposition} (Lebesgue differentiation) Let $X$ be a metric space with doubling measure $\mu.$ Then if $g\in L^1(X,\mu),$ \[\limsup_{r\rightarrow 0^+} \frac{1}{\mu(B_r(x))}\int_{B_r(x)}|g(y)-g(x)|d\mu(y)=0\] for $\mu$-a.e. $x\in X.$
\end{proposition}

Recall, from the notation of Theorem \ref{main}, that $D$ is a countable dense set in $X,$ $\mathscr{C}$ is the collection of all closed balls centered at points in $D$ with positive rational radii, and $\mathscr{B}$ is the collection of the balls in $\mathscr{C}$ contained entirely in $B.$ Then $(B_k)_{k=1}^\infty=\mathscr{B}$ and $\phi_k$ is the mean exit time function on $B_k$ for each $k.$ Let $x\in X.$ Let $\delta=\dist(x,B^c)>0.$  Then for $0<r<\min\{\delta,1/2\},$ we choose a ball $C_r(x)\in \mathscr{B}$ as follows. Let $y\in D\cap B_{\frac{r}{4}}(x).$ Then $\dist(y,B^c)\geq \frac{3r}{4}.$  Then choose $q\in \mathbb{Q}\cap (\frac{3r}{8},\frac{3r}{4}).$ Then let $C_r(x)=B_q[y]\in \mathscr{B}.$ Then $x\in C_r(x)\subset B_r(x).$ Then for $g\in L^1(X,\mu),$
\[\frac{1}{\mu(C_r(x))}\int_{C_r(x)}|g(y)-g(x)|d\mu(y)\leq \frac{C}{\mu(B_r(x))}\int_{B_r(x)}|g(y)-g(x)|d\mu(y),\] where $C=C_1C_2$. Hence, for $g\in L^1(X,\mu),$ since 
\[\left| \frac{1}{\mu(C_r(x))}\int_{C_r(x)}g(y)d\mu(y)-g(x)\right| \leq \frac{1}{\mu(C_r(x))}\int_{C_r(x)}|g(y)-g(x)|d\mu(y),\] by the Lebesgue differentiation theorem, \[g(x)=\lim_{r\rightarrow 0^+}\frac{1}{\mu(C_r(x))}\int_{C_r(x)}g(y)d\mu(y)\; \mu\mbox{-a.e.}\] Then let $\phi_{r,x}$ be the mean exit time function on $C_r(x).$ Note $\phi_{r,x}$ is $0$ outside of $C_r(x).$ Then $\phi_{r,x}(x)f(x)=(A\phi_{r,x})(x)$ for all $0<r<\min\{\delta,1/2\}.$ Let $g\in L^2(B,\mu).$ functions in $L^2(X,\mu)$ are also in $L^1(X,\mu)$ since $\mu$ is a finite measure. Then for $0<r<\min\{\delta,1/2\},$ 
\begin{equation*}
\begin{split}
&\frac{1}{\mu(C_r(x))}\int_{C_r(x)}\overline{(A^*g)(y)}\phi_{r,x}(y)d\mu(y)=\frac{1}{\mu(C_r(x))}\langle A^*g, \phi_{r,x}\rangle\\ &= \frac{1}{\mu(C_r(x))}\langle g, A\phi_{r,x}\rangle=\frac{1}{\mu(C_r(x))}\int_{C_r(x)}(A\phi_{r,x})(y)\overline{g(y)}d\mu(y).
\end{split}
\end{equation*}
Hence by the Lebesgue differentiation theorem and the definition of $f,$ 
\[(A^*g)y
\begin{singlespace}  
	\setlength\bibitemsep{\baselineskip}  
	\printbibliography[title={References}]
\end{singlespace}